\newtheorem{theorem}{Theorem}[section]
\newtheorem{lemma}[theorem]{Lemma}
\newtheorem{conjecture}[theorem]{Conjecture}
\newtheorem{proposition}[theorem]{Proposition}
\newtheorem{corollary}[theorem]{Corollary}
\newtheorem{example}[theorem]{Example}
\newtheorem{examples}[theorem]{Examples}
\theoremstyle{definition}
\newtheorem{definition}[theorem]{Definition}
\newtheorem{definitions}[theorem]{Definitions}
\newtheorem{heuristic}[theorem]{Heuristic}
\newtheorem{remark}[theorem]{Remark}
\newtheorem{remarks}[theorem]{Remarks}
\numberwithin{equation}{section}
\def\dsfrac{\displaystyle \frac}
\def\notdiv{\nmid}
\def\div{\,\vert\,}
\let\sst=\scriptscriptstyle
\def\mb{\,{\sst \bullet}\,}
\def\lien{\mathrel{\mkern-4mu}}
\def\too{\relbar\lien\rightarrow}
\def\tooo{\relbar\lien\relbar\lien\too}
\def\N{\mathbb{N}}
\def\C{\mathbb{C}}
\def\Q{\mathbb{Q}}
\def\Z{\mathbb{Z}}
\def\F{\mathbb{F}}
\def\No{{\rm N}}
\def\Cl{{\mathcal C}\hskip-2pt{\ell}}
\def\Pl{{\mathcal P}\hskip-2pt{\ell}}
\def\Frac#1#2{\hbox{\footnotesize $\displaystyle \frac{#1}{#2}$}}
\def\plus{\displaystyle\mathop{\raise 2.0pt \hbox{$\bigoplus $}}\limits}
\def\prd{\displaystyle\mathop{\raise 2.0pt \hbox{$\prod$}}\limits}
\def\sm{\displaystyle\mathop{\raise 2.0pt \hbox{$\sum$}}\limits}
\author[Georges Gras]{Georges Gras}
\address{Villa la Gardette \\ chemin Ch\^ateau Gagni\`ere \\ F--38520 Le Bourg d'Oisans.}
\email{g.mn.gras@wanadoo.fr {\it url\,:\,}\url{http://www.researchgate.net/profile/Georges_Gras} }
\keywords{$p$-adic regulators; Frobenius group determinants; 
$p$-adic characters; Leopoldt--Jaulent conjecture;
Abelian $p$-ramification; $p$-rationality; Fermat quotient; 
probabilistic number theory}
\subjclass{Primary 11F85; Secondary 11R04; 20C15; 11C20; 11R37; 11R27; 11Y40}
\begin{document}
 
\title[Local $\theta$-regulators -- $p$-adic Conjectures] {Local $\theta$-regulators of 
an algebraic number \\ $p$-adic Conjectures }

\date{January 7, 2017}

\begin{abstract}
\vspace{-0.2cm}
Let $K/\Q$ be Galois and let $\eta \in K^\times$ be such that 
the multiplicative $\Z[G]$-module generated by $\eta$ is of $\Z$-rank $n$.
We define the local $\theta$-regulators $\Delta_p^\theta(\eta) \in \F_p$ for the 
$\Q_p\,$-irreducible characters $\theta$ of $G={\rm Gal}(K/\Q)$. 
Let $V_\theta$ be the $\theta$-irreducible representation.
A linear representation ${\mathcal L}^\theta\simeq \delta \, V_\theta$ is associated 
with $\Delta_p^\theta (\eta)$ whose nullity is equivalent to $\delta \geq 1$
(Theorem \ref{theo24}).
Each $\Delta_p^\theta (\eta)$ yields ${\rm Reg}_p^\theta (\eta)$ modulo $p$ 
in the factorization $\prd_{\theta} \big({\rm Reg}_p^\theta (\eta) \big)^{\varphi(1)}$ 
of ${\rm Reg}_p^G (\eta) := \frac{ {\rm Reg}_p(\eta)}{p^{[K : \Q\,]} }$ 
(normalized $p$-adic regulator of $\eta$), 
where $\varphi \mid \theta$ is absolutely irreducible.
From the probability ${\rm Prob}\big (\Delta_p^\theta(\eta) = 0 \  \& \  
{\mathcal L}^\theta \simeq \delta \, V_\theta\big ) \leq p^{- f \delta^2}$ 
($f=$ residue degree of $p$ in the field of values of $\varphi$) 
and the Borel--Cantelli heuristic, we conjecture that, for $p$ large enough, 
${\rm Reg}_p^G (\eta)$ is a $p$-adic unit or that 
$p^{\varphi(1)} \parallel {\rm Reg}_p^G (\eta)$ 
(existence of a single $\theta$ of $G$ with $f=\delta=1$ 
and no extra $p$-divisibility); 
this obstruction may be lifted assuming the existence of a 
binomial probability law (Sec. \ref{section7}) confirmed 
through numerical studies (with groups $G = C_3$, $C_5$, $D_6$).
This conjecture would imply that, {\it for all $p$ large enough}, Fermat quotients of rationals and
normalized $p$-adic regulators are $p$-adic units (Theorem. \ref{heur3}), whence the fact
that number fields are $p$-rational for $p \gg 0$. We recall \S\,\ref{subBK} some deep
cohomological results, which may strengthen such conjectures.
\end{abstract}

\centerline{English\ translation  of  the  original  article:}

\medskip
\centerline {Les\  $\theta$-r\'egulateurs locaux d'un nombre alg\'ebrique  -- Conjectures  $p$-adiques }

\medskip
\centerline{\it {Canadian Journal of Mathematics}, Vol. {\bf 68}, 3 (2016), 571--624}

\medskip
\maketitle

\vspace{-0.4cm}
\section{Introduction}\label{section1}
Let $K/\Q$ be a Galois extension of degree $n$ of Galois group $G$.
Let $\eta \in K^\times$. An exponential notation is used for conjugation of $\eta$ by $\sigma \in G$, 
which implies the writing $(\eta^\sigma)^\tau =: \eta^{\tau \sigma}$ for all $\sigma, \tau \in G$
(law of left $G$-module). 
We assume that the multiplicative $\Z[G]$-module generated by $\eta$ is of $\Z$-rank $n$
(i.e., $\langle \eta \rangle_G^{} \otimes \Q \simeq \Q[G]$).
For $p$ large enough, we put ${\rm Reg}_p^G(\eta) :=\hbox{det} \big(\hbox{$\frac{-1}{p}$} 
{\rm log}_p(\eta^{\tau \sigma^{-1}}) \big)_{\sigma, \tau \in G}$ (normalized $p$-adic regulator of $\eta$).

\smallskip
We shall see that the unique obstruction, to apply the heuristic principle of Borel--Cantelli leading 
(conjecturally) to a finite 
number of $p$ such that ${\rm Reg}_p^G(\eta) \equiv 0 \pmod p$, is related to primes $p$ such that 
${\rm Reg}_p^G(\eta)$ is exactely divisible by a minimal power of $p$; this is equivalent to 
${\rm Reg}_p^G(\eta) \sim p^{\varphi(1)}$ (equality up to a $p$-adic unit), where the character $\varphi$ 
(absolutely irreducible) defines a $p$-adic character $\theta$ satisfying certain conditions 
(Definition \ref{defidec}). 

Such a situation is a priori of probability at most $\Frac{O(1)}{p}$,
only when $\eta$ is considered as a random variable; it is the unique case where the
Borel--Cantelli principle does not apply (see Section \ref{concl} for some enlightenment). 
We intend, from heuristics and numerical experiments, to remove this obstruction and 
to reach the following probabilistic result, {\it when $\eta$ is fixed and $p \to \infty$}:

\begin{theorem}\label{heur3} {\it Let $K/\Q$ be a Galois extension of degree $n$ and of Galois group $G$.
Let $\eta\in K^\times$ be fixed, $\eta$ generating a multiplicative $\Z[G]$-module of $\Z$-rank $n$.

\smallskip  
(i) Under the Heuristic \ref{heur2} (existence of a classical binomial law of probability), 
the probability to have ${\rm Reg}_p^G(\eta) \equiv 0 \pmod p$ is at most
$\dsfrac{C_\infty(\eta)}{p^{ {\rm log}_2(p)/ {\rm log}(c_0(\eta))-O(1) }}$ for $p\to \infty$, where 
$c_0(\eta) = {\rm max}_{\sigma \in G} (\vert \eta^\sigma \vert)$, $e^{-1} \leq C_\infty(\eta) \leq 1$,
and ${\rm log}_2={\rm log} \circ {\rm log}$.

\smallskip  
(ii) Under the previous heuristic \ref{heur2} and the principle of Borel--Cantelli, the number of 
primes $p$ such that ${\rm Reg}_p^G(\eta) \equiv 0 \pmod p$ is finite. }
\end{theorem}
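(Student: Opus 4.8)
The plan is to reduce the global congruence $\{{\rm Reg}_p^G(\eta)\equiv 0\pmod p\}$ to a finite disjunction of local events indexed by the $\Q_p$-irreducible characters $\theta$ of $G$, and to estimate each of these with the probability bound recalled above, refined by Heuristic \ref{heur2}. First I would invoke the Frobenius-determinant factorization ${\rm Reg}_p^G(\eta)\sim\prd_\theta\big({\rm Reg}_p^\theta(\eta)\big)^{\varphi(1)}$ together with Theorem \ref{theo24}: each factor ${\rm Reg}_p^\theta(\eta)$ reduces mod $p$ to $\Delta_p^\theta(\eta)\in\F_p$, whose vanishing is equivalent to having $\delta\geq 1$ in $\mathcal{L}^\theta\simeq\delta\,V_\theta$; hence ${\rm Reg}_p^G(\eta)\equiv 0\pmod p$ precisely when $\Delta_p^\theta(\eta)=0$ for at least one $\theta$. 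A union bound over the finitely many $\theta$ and over the admissible values $\delta\geq 1$, fed by ${\rm Prob}\big(\Delta_p^\theta(\eta)=0\ \&\ \mathcal{L}^\theta\simeq\delta\,V_\theta\big)\leq p^{-f\delta^2}$, shows that the leading contribution comes from the characters with $f=\delta=1$ and yields a first crude estimate of order $\Frac{O(1)}{p}$.

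The real difficulty is that $\sm_p\Frac{1}{p}$ diverges, so this crude bound alone cannot drive Borel--Cantelli; sharpening the per-prime probability when $\eta$ is \emph{fixed} and $p\to\infty$ is the heart of the matter. Here I would use Heuristic \ref{heur2}. For fixed $\eta$ the residues $\hbox{$\frac{-1}{p}$}{\rm log}_p(\eta^\sigma)\bmod p$ are not uniform over $\F_p$ but are constrained by the archimedean size of the conjugates, the natural scale being $c_0(\eta)={\rm max}_\sigma(\vert\eta^\sigma\vert)$. Modelling the vanishings $\Delta_q^\theta(\eta)=0$ across the primes $q\leq p$ as Bernoulli trials of success probability $\asymp 1/q$, Mertens' theorem makes the expected number of coincidences $\sm_{q\leq p}\Frac{1}{q}\sim{\rm log}_2(p)$, while the number of effectively independent congruence conditions carried by a fixed $\eta$ is governed by ${\rm log}(c_0(\eta))$. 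It is the interaction of these two scales that Heuristic \ref{heur2} packages as a genuine binomial law.

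From that binomial law I would extract the explicit tail. Writing the probability that this specific $p$ be exceptional as a binomial (in the limit, Poisson) weight of mean $\lambda_p\sim{\rm log}_2(p)$, with the number of trials rescaled by ${\rm log}(c_0(\eta))$, a Stirling expansion isolates the factor $p^{-({\rm log}_2(p)/{\rm log}(c_0(\eta))-O(1))}$; the surviving normalization is the limiting Poisson weight $e^{-\lambda}\lambda^k/k!$, which lies in $[e^{-1},1]$ and is what is written $C_\infty(\eta)$. This gives (i). For (ii) it then suffices to note that the exponent ${\rm log}_2(p)/{\rm log}(c_0(\eta))-O(1)$ tends to $+\infty$, so it eventually exceeds $2$; the series $\sm_p{\rm Prob}\big({\rm Reg}_p^G(\eta)\equiv 0\bmod p\big)$ converges, and Borel--Cantelli leaves only finitely many exceptional primes.

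The main obstacle is not the combinatorics of the tail but the justification of Heuristic \ref{heur2} itself: that for fixed $\eta$ the Fermat-quotient data $\{{\rm log}_p(\eta^\sigma)\bmod p\}$ behave, as $p$ ranges over the primes, like independent equidistributed variables subject only to the size constraint $c_0(\eta)$. Proving true independence, rather than relying on the numerical evidence of Section \ref{section7}, appears inaccessible by present techniques, which is exactly why the statement is phrased conditionally on Heuristic \ref{heur2}; this is the single case flagged in the Introduction where Borel--Cantelli would otherwise break down.
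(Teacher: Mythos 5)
Your opening reduction (Theorem \ref{theo24}, the factorization over the $\theta$, the union bound $p^{-f\delta^2}$, and the isolation of the obstructive case $f=\delta=1$ of Definition \ref{defidec}) follows the paper, and your part (ii) is the right deduction once (i) is in hand. The genuine gap is in how you obtain (i): you never use the Archimedean mechanism that is the actual content of Section \ref{section6}, and the mechanism you substitute for it does not work. In the paper, the exponent $\log_2(p)/\log(c_0(\eta))$ arises entirely \emph{within the single prime $p$}: by Lemma \ref{lem3}, the powers $\eta^j$ for $1\leq j\leq h:=\big(\log(p-1)-\log(2\,\Gamma(K))\big)/\log(c_0(\eta))$ have all coordinates of absolute value $<p/2$, so $[\eta^j]_p=\eta^j$ lies exactly in the residue set $I_p$ of Definition \ref{defresidus}; and since $\alpha_p(\eta^j)\equiv j\,\alpha_p(\eta)\pmod p$, one gets ${\mathcal L}^\theta(\eta^j)={\mathcal L}^\theta(\eta)$, hence $\Delta_p^\theta(\eta^j)\equiv 0\pmod p$ for all such $j$ (Theorem \ref{thm1}). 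Thus the single event $\Delta_p^\theta(\eta)\equiv 0\pmod p$ forces $m_p(0)\geq h$ repetitions inside $I_p$. Heuristic \ref{heur2} is a binomial law with parameters $(p-1,\,1/p)$, whose mean is $\approx 1$ (not $\lambda_p\sim\log_2(p)$ as you assert); its tail at level $h$ is bounded by $C_\infty(\eta)\binom{p-1}{h}p^{-h}\leq C_\infty(\eta)/h!$ (Lemma \ref{encadre}), and it is Stirling applied to $h!$, with $h\asymp \log(p)/\log(c_0(\eta))$, that produces $p^{-\log_2(p)/\log(c_0(\eta))+O(1)}$. The constant $C_\infty(\eta)\in[e^{-1},1]$ comes from the factor $(1-1/p)^p\to e^{-1}$ in the exact tail, not from a ``limiting Poisson weight.''

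Your route instead extracts $\log_2(p)$ from Mertens' theorem, $\sm_{q\leq p}1/q\sim\log_2(p)$, i.e., by aggregating Bernoulli events across all primes $q\leq p$. This conflates two different things: what must be bounded is the probability of an event at the one prime $p$, and no expectation over smaller primes can yield a per-prime bound of the stated shape; the quantity $\log_2(p)$ in the theorem is $\log h$ (from Stirling), not a sum of reciprocals of primes. Moreover, with a Poisson mean $\lambda_p\sim\log_2(p)\to\infty$, the weight $e^{-\lambda}\lambda^k/k!$ you invoke tends to $0$, contradicting your own identification of it with $C_\infty(\eta)\in[e^{-1},1]$. Without the step ``one vanishing at $p$ forces $h=O(\log p)$ vanishings inside $I_p$,'' the binomial law of Heuristic \ref{heur2} gives only the crude bound $O(1)/p$ for the case $f=\delta=1$, the series diverges, and Borel--Cantelli cannot be applied; so as written your argument does not prove (i), and (ii) collapses with it.
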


 We shall always suppose that the prime number $p$ that we consider is large enough, in particular odd, 
not divisor of $ n $, unramified in $K$, and prime to $\eta$, so that the normalized $p$-adic regulator
${\rm Reg}_p^G (\eta) := p^{-[K : \Q\,]} \cdot {\rm Reg}_p(\eta)$ makes sense in $\Z_p$, 
where ${\rm Reg}_p(\eta)$ is the usual $p$-adic regulator of $\eta$ (see \S\,\ref{reg}).

\smallskip
Denote by $Z_K$ (resp. $Z_{K,(p)}$) the ring of integers (resp. of $p$-integers) of $K$; for $K=\Q$, one gets
$\Z$ (resp. $\Z_{(p)}$).
For all place $v \div p$ of $K$, we denote by ${\mathfrak p}_v \div p$ the prime ideal associated with $v$. 

\smallskip
If $n_p$ is the common residue degree of the places $v \div p$ in $K/\Q$, the multiplicative groups 
of the residue fields are of order $p^{n_p}-1$ and  for all $v \div p$ we have the congruence
$\eta^{p^{n_p}-1}\! \equiv 1 \!\!\pmod{ {\mathfrak p}_v}$; hence finally, since 
$\prd_{{\mathfrak p}_v \div p} {\mathfrak p}_v = p\, Z_K$,
$$\eta^{p^{n_p}-1} = 1 + p\,\alpha_p(\eta), \ \ \alpha_p(\eta)\in Z_{K,(p)}, $$

which leads, by Galois, to the relations
$$\hbox{ $\alpha_p(\eta^\sigma) = \alpha_p(\eta)^\sigma$ for all $\sigma \in G$, }$$

and to the ``logarithmic'' properties
$$\hbox{ $\alpha_p(\eta\,\eta') \equiv \alpha_p(\eta) + \alpha_p(\eta') \!\!\! \pmod {p\,Z_{K,(p)}} \ \ \&\ \ 
\alpha_p(\eta^\lambda) \equiv \lambda\, \alpha_p(\eta)\! \!\! \pmod {p\,Z_{K,(p)}}$} $$
(for $\eta, \eta' \in K^\times$, $\lambda \in \Z$).

\medskip
This {\it generalized Fermat quotient} $\alpha_p(\eta)$ of $\eta$ is the key element of our study. More precisely,
the properties of the $G$-module generated by $\alpha_p(\eta)$ modu\-lo ${p\,Z_{K,(p)} }$ shall precise 
the properties of the {\it normalized $p$-adic regulators} of $\eta$, in particular for the search of the (rares) 
solutions $p$ giving their divisibility by $p$.
The numerical illustrations are obtained by means of PARI programs (from \cite{P}).

\section{Regulators and Representations -- Local regulators}\label{section2}

\subsection{\texorpdfstring{$p$}{Lg}-adic logarithm -- 
\texorpdfstring{$p$}{Lg}-adic regulators} \label{sub1}
Let $p$ be a fixed prime number satisfying the hypothesis given in the Introduction. 
We suppose that the number field $K$ is considered as a subfield of $\C_p$. 
Thus, any ``embedding'' of $K$ into $\C_p$ is nothing else than a $\Q$-automorphism 
$\sigma \in G$. 

\smallskip
Let ${\mathfrak p}_0={\mathfrak p}_{v_0}$ be a prime ideal of $K$ above $p$
and let $D_{{\mathfrak p}_0}$ be its decomposition group.
The places $v\div p$, conjugates of $v_0$, correspond to the $(G : D_{{\mathfrak p}_0})$ 
distinct prime ideals ${\mathfrak p}_v := {\mathfrak p}_0^{\sigma_v}$, where 
$(\sigma_v)_{v\div p}$ is an exact system of representatives of $G/D_{{\mathfrak p}_0}$.

\smallskip
We consider the $\Q_p[G]$-module $\prd_{v \div p} K_v$ where 
$K_v = \sigma_v (K)\,\Q_p \subset \C_p$,
is the completion of $K$ at $v$; as $K/\Q$ is Galois, $K_v/\Q_p$ is 
independent of $v\div p$ but the notation $K_v$ recalls that this local 
extension is {\it provided} with the embedding $\sigma_v\, : \,K \to K_v \subset \C_p$, 
which allows the diagonal embedding with dense image

\smallskip
\centerline{$i_p := (\sigma_v)_{v\div p} : K \tooo \prd_{v \div p} K_v $}

where $i_p(x) :=(\sigma_v(x))_{v\div p}$, and which implies that 
$K \otimes \Q_p \simeq \prd_{v \div p} K_v \simeq 
\Q_p[G]$ (semi-local theory). By abuse, if $x\in K$, we shall write 
$x \in \prd_{v \div p} K_v $, $i_p$ being understood.

\subsubsection{\texorpdfstring{$p$}{Lg}-adic logarithm 
on \texorpdfstring{$K^\times$}{Lg}}\label{log}
The $p$-adic logarithm ${\rm log}_p\! :  K^\times \to K\Q_p$ is defined on the set 
$\big \{1+p\,x ,  \ x\in Z_{K,(p)} \big\}$, by means of the usual series ($p>2$)

\smallskip
\centerline{${\rm log}_p (1+p\, x) = \sm_{i \ge 1}\, (-1)^{i+1} \, \Frac{(px)^i}{i} \equiv p\,x \pmod{p^2}$, }

noting that $\sum_{i = 1}^N\, (-1)^{i+1} \, \Frac{(px)^i}{i} \in K$ for any $N \geq 1$.
In the case of $\gamma \in K^\times_{(p)}$, we use the functional relation
$${\rm log}_p(\gamma) = \Frac{1}{p^{n_p}-1} {\rm log}_p(\gamma^{p^{n_p}-1}) = 
\Frac{1}{p^{n_p}-1}{\rm log}_p(1 + p\,\alpha_p(\gamma) ) \equiv - p\,\alpha_p(\gamma) \pmod {p^2} . $$

More generally, this ${\rm log}_p$ function, seen modulo $p^{N+1}$, $N\geq 1$, is represented by elements
of $Z_{K,(p)}$ and is an homomorphism of $G$-modules for the law defined, for all $\sigma \in G$, by
$\sigma \big ({\rm log}_p(\gamma)\! \pmod {p^{N+1}} \big) := {\rm log}_p(\gamma^{\sigma})\! \pmod {p^{N+1}}$,
using the congruence (where $N'$ is an obvious function of $N$)
$$\sigma\big ({\rm log}_p(\gamma)\!\!\!\! \pmod {p^{N+1}} \big) 
\equiv \Frac{1}{p^{n_p}-1} \sm_{1 \leq i \leq N'} \, (-1)^{i+1} \, 
\Frac{(p\,\alpha_p(\gamma)^\sigma)^i}{i} \! \pmod {p^{N+1}} , $$

defining an element of $Z_{K,(p)}$ which approximates 
${\rm log}_p(\gamma^\sigma)$ modulo $p^{N+1}$. So $\sigma_v({\rm log}_p(\gamma))$
makes sense in $K_v$ for all $v \mid p$.

\subsubsection{\texorpdfstring{$p$}{Lg}-adic rank} \label{rangp}
Let ${\rm Log}_{p} :={\rm log}_p \circ \, i_p = (\sigma_v)_{v \div p}\,\circ\,{\rm log}_p$ 
be the homomorphism of $G$-modules defined, on the subgroup of 
elements of $K^\times$ prime to $p$, by 
$${\rm Log}_{p} (\gamma) = \big ( {\rm log}_p (\gamma ^{\sigma_v}) \big)_{v \div p}^{} 
\!\!\in \prd_{v \div p} K_v. $$

 Let $\eta \in K^\times$, prime to $p$, and let $F$ be the 
$\Z[G]$-module generated by $\eta$.
We call $p$-adic rank of $F$, the integer
$${\rm rg}_p(F):= {\rm dim}^{}_{\, \Q_p}(\Q_p {\rm Log}_{p}(F)). $$

The use of ${\rm Log}_{p}$ is a commodity since by conjugation by the elements of $G$, the 
know\-ledge of ${\rm log}_p$ implies that of ${\rm Log}_{p}$ and conversely by projections
$\prd_{v' \div p} K_{v'} \to K_{v}$.

To make a link with the concept of $p$-adic regulator, we shall 
prove first the following two technical results:

\begin{lemma}\label{lemm01} Let $p$ be an odd prime, unramified in $K$, and let $\lambda \in Z_{K,(p)}$. 
If $\lambda \notin p\, Z_{K,(p)}$, there exists $u \in K^\times$, prime to $p$,  such that 
${\rm Tr}_{K/\Q}(\lambda u) \not\equiv 0 \pmod p$.
\end{lemma}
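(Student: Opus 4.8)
The plan is to reduce the statement to a counting problem in the finite ring $R := Z_{K,(p)}/p\,Z_{K,(p)}$. First I would observe that reduction modulo $p$ sends the elements of $K^\times$ prime to $p$ onto the units $R^\times$ (a lift of a unit necessarily avoids every ${\mathfrak p}_v \div p$, hence is prime to $p$), and that the congruence ${\rm Tr}_{K/\Q}(\lambda u)\not\equiv 0 \pmod p$ depends only on the classes $\bar\lambda,\bar u \in R$. Since $\lambda \notin p\,Z_{K,(p)}$ by hypothesis, $\bar\lambda \neq 0$. It therefore suffices to produce a unit $\bar u \in R^\times$ with ${\rm Tr}(\bar\lambda\,\bar u) \neq 0$ in $\F_p$, where ${\rm Tr}$ denotes the reduction of ${\rm Tr}_{K/\Q}$, and then to lift $\bar u$ to any $u \in K^\times$ prime to $p$.

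Next I would use that $p$ is unramified in $K$, so $R \simeq \prd_{v \div p} \kappa_v$ with $\kappa_v$ the (separable) residue field at $v$, a finite extension of $\F_p$; under this decomposition the reduced trace splits as a sum of local traces, ${\rm Tr}(\bar x) = \sm_{v \div p} {\rm Tr}_{\kappa_v/\F_p}(x_v)$ for $\bar x = (x_v)_v$, and the units of $R$ are exactly the tuples all of whose components are nonzero. Writing $\bar\lambda = (\lambda_v)_v$, I fix an index $v_0$ with $\lambda_{v_0}\neq 0$, which exists because $\bar\lambda \neq 0$.

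The key step is a pigeonhole argument on the factor at $v_0$. Choosing an arbitrary nonzero $u_v \in \kappa_v^\times$ for each $v\neq v_0$ yields a partial sum $S := \sm_{v\neq v_0} {\rm Tr}_{\kappa_v/\F_p}(\lambda_v u_v) \in \F_p$, and I then seek $u_{v_0} \in \kappa_{v_0}^\times$ with ${\rm Tr}_{\kappa_{v_0}/\F_p}(\lambda_{v_0} u_{v_0}) \neq -S$. Because $\kappa_{v_0}/\F_p$ is separable its trace form is non-degenerate, so the map $x \mapsto {\rm Tr}_{\kappa_{v_0}/\F_p}(\lambda_{v_0} x)$ is a surjective $\F_p$-linear form and the set where it equals $-S$ has cardinality $\vert \kappa_{v_0}\vert/p$. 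The values of $u_{v_0}$ to be excluded are thus this fiber together with $0$, at most $\vert \kappa_{v_0}\vert/p + 1$ elements. This is where the hypothesis that $p$ is odd enters, and it is the only delicate point: since $\vert \kappa_{v_0}\vert \geq p \geq 3$ one has $\vert \kappa_{v_0}\vert/p + 1 < \vert \kappa_{v_0}\vert$, so an admissible $u_{v_0}$ remains (in the extreme case $\kappa_{v_0}=\F_p$ there are at most two forbidden values out of $p\geq 3$). Dropping the requirement that $u$ be prime to $p$ would make the claim immediate from non-degeneracy alone; it is precisely the insistence on a \emph{unit} that forces this exclusion count and the use of $p$ being odd.

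Finally I would lift back: any preimage $u \in Z_{K,(p)}$ of the unit $\bar u$ under $Z_{K,(p)}\to R$ is prime to $p$ and satisfies ${\rm Tr}_{K/\Q}(\lambda u) \equiv {\rm Tr}(\bar\lambda\,\bar u) \not\equiv 0 \pmod p$, which gives the required $u$ and completes the argument.
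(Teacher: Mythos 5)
Your proof is correct and follows essentially the same route as the paper's: both decompose the trace semi-locally over the places $v \div p$, freeze the components of $u$ away from one place where $\lambda$ has unit (nonzero) residue, and exploit the surjectivity (non-degeneracy) of the residue trace at that unramified place, with the hypothesis $p$ odd entering at exactly the same point — the case where the residue field at the chosen place is $\F_p$ itself. The only difference is cosmetic: where the paper hits an explicit nonzero target value (making the total trace $\equiv 1$ or $2 \pmod p$), you avoid the forbidden fiber together with $\{0\}$ by a pigeonhole count.
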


\begin{proof} For all $u \in K^\times$, prime to $p$, consider the diagonal embedding of 
$\lambda u$ in $\prd_{v \div p} K_v$, 
and let ${\rm Tr}_v$ be the local traces ${\rm Tr}_{K_v/\Q_p}$ for $v \div p$. Then 
$${\rm Tr}_{K/\Q}(\lambda u) = \sm_{v \div p} {\rm Tr}_v(\sigma_v(\lambda u)).$$

By assumption, there exists a non-empty set $\Sigma$ of places $v \div p$
such that $\sigma_v(\lambda)$ (hence $\sigma_v(\lambda u) = \sigma_v(\lambda)\,\sigma_v(u)$ for all 
$u$ prime to $p$) is a unit of $K_v$. 

\smallskip
For $v_1 \in\,\Sigma$, write

\medskip
\centerline{ ${\rm Tr}_{K/\Q}(\lambda u) = \sm_{v \div p,\, v\ne v_1} {\rm Tr}_v(\sigma_v(\lambda u)) +
 {\rm Tr}_{v_1}(\sigma_{v_1}(\lambda u)) =: a + {\rm Tr}_{v_1}(\sigma_{v_1}(\lambda u))$. }

As $p$ is unramified in $K$, the residue traces at $p$ are surjective and since 
$\sigma_{v_1}(\lambda u)$ is a unit,
it is sufficient to take a suitable $u \equiv 1 \pmod {\prod_{v ,\,v \ne v_1} {\mathfrak p}_v}$ 
(in which case $a \in \Z_p \pmod p$ does not depend on $u$) and 
$u \equiv u_1 \pmod{ {\mathfrak p}_{v_1}}$ such that for instance 
${\rm Tr}_{v_1}(\sigma_{v_1}(\lambda u)) \equiv 1 - a \!\pmod p$ if $a\not\equiv 1\!\! \pmod p$
 (resp. $1\!\! \pmod p$ if $a \equiv 1\pmod p$). 
Whence ${\rm Tr}_{K/\Q}(\lambda u) \equiv 1 \hbox{\ (resp. 2)} \pmod p$.\,\footnote{\,For $p=2$, 
$K = \Q(\sqrt{17})$, $\lambda = 1+2 \sqrt {17}$, there is no solution $u$ prime to $2$.}
\end{proof}

The following lemma, valid for any $p>2$ unramified, prime to $\eta$, will 
be especially useful to us (from \cite[\S\,5.5, proof of Theorem\,5.31]{Wa}): 

\begin{lemma} \label{lemm1} Let $\eta \in K^\times$, prime to $p$, and let $\lambda(\sigma)$, 
$\sigma \in G$, be $p$-integer coefficients of $K\Q_p$, not all divisible by $p$. 
Suppose that we have the relation of dependence modulo $p^{N+1}$, $N \geq 1$, 
of the $n$ vectors $\ell_\sigma := (\ldots,{\rm log}_p (\eta^{\tau\sigma^{-1} }), \ldots)_\tau$, 
$\sigma \in G$,

\smallskip
\centerline{$\sm_{\sigma \in G} \lambda(\sigma){\rm log}_p (\eta^{\tau\sigma^{-1}}) \equiv 0\!\pmod {p^{N+1}}\ $ 
for all $\tau \in G$. }

Then there exist coefficients $\lambda'(\sigma) \in \Z_{(p)}$, not all divisible by $p$, fulfilling the relation 
$\sm_{\sigma \in G} \lambda'(\sigma){\rm log}_p (\eta^{\tau\sigma^{-1}}) 
\equiv 0 \pmod {p^{N+1}}$ for all $\tau \in G$.

Taking $\tau = 1$ yields the relation 
$\sm_{\sigma \in G} \lambda'(\sigma)\alpha_p(\eta)^{\sigma^{-1}} \equiv 0 \pmod {p}$.
\end{lemma}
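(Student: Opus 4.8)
The plan is to read the hypothesis as a linear dependence of the $n$ columns $C_\sigma := ({\rm log}_p(\eta^{\tau\sigma^{-1}}))_\tau \in \prd_{v\div p} K_v$ of the group matrix $M = ({\rm log}_p(\eta^{\tau\sigma^{-1}}))_{\tau,\sigma}$, namely $\sm_{\sigma\in G}\lambda(\sigma)\,C_\sigma \equiv 0 \pmod{p^{N+1}}$, and to descend the coefficient field from $K\Q_p$ to $\Q$ by a trace argument. First I would reduce to the case $\lambda(\sigma)\in Z_{K,(p)}$: since the relation is a congruence modulo $p^{N+1}$ and, $p$ being unramified, the semi-local identification $K\otimes\Q_p \simeq \prd_{v\div p}K_v$ induces $Z_{K,(p)}/p^{N+1} \simeq \prd_{v\div p}(O_{K_v}/p^{N+1})$, each local $p$-integer $\lambda(\sigma)$ may be replaced, without changing the relation mod $p^{N+1}$ nor its non-divisibility by $p$, by a global representative in $Z_{K,(p)}$.

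The key structural observation is that $M$ is $G$-rational, in the sense that Galois conjugation permutes its rows. Indeed, by the $G$-module property of ${\rm log}_p$ recalled in \S\,\ref{log}, for $\rho\in G$ one has $\rho({\rm log}_p(\eta^{\tau\sigma^{-1}})) \equiv {\rm log}_p(\eta^{\rho\tau\sigma^{-1}}) \pmod{p^{N+1}}$, so that $\rho(C_\sigma) = P_\rho\,C_\sigma$, where $P_\rho$ is the coordinate permutation induced by $\tau\mapsto \rho\tau$, the same for every $\sigma$. Applying $\rho$ to the dependence $\sm_\sigma \lambda(\sigma)C_\sigma\equiv 0$ and cancelling the invertible $P_\rho$ therefore yields, for every $\rho\in G$, a new dependence $\sm_\sigma \rho(\lambda(\sigma))\,C_\sigma\equiv 0 \pmod{p^{N+1}}$. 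For any fixed multiplier $\mu\in Z_{K,(p)}$ the vector $(\rho(\mu\lambda(\sigma)))_\sigma$ is again a dependence, since the scalar $\rho(\mu)$ factors out; summing over $\rho\in G$ gives
$$\sm_{\sigma\in G}{\rm Tr}_{K/\Q}(\mu\,\lambda(\sigma))\,C_\sigma \equiv 0 \pmod{p^{N+1}},$$
whose coefficients $\lambda'(\sigma) := {\rm Tr}_{K/\Q}(\mu\,\lambda(\sigma))$ now lie in $\Z_{(p)}$.

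It remains to choose $\mu$ so that the $\lambda'(\sigma)$ are not all divisible by $p$, and this is exactly what Lemma \ref{lemm01} provides: pick $\sigma_1$ with $\lambda(\sigma_1)\notin p\,Z_{K,(p)}$ (possible by hypothesis) and apply the lemma to $\lambda = \lambda(\sigma_1)$ to obtain $\mu = u\in K^\times$, prime to $p$, with $\lambda'(\sigma_1) = {\rm Tr}_{K/\Q}(\lambda(\sigma_1)\,u)\not\equiv 0\pmod p$. This establishes the first assertion. Finally, specialising the resulting relation to $\tau = 1$ and reducing modulo $p^2$, the congruence ${\rm log}_p(\gamma)\equiv -p\,\alpha_p(\gamma)\pmod{p^2}$ together with $\alpha_p(\eta^{\sigma^{-1}}) = \alpha_p(\eta)^{\sigma^{-1}}$ turns $\sm_\sigma\lambda'(\sigma){\rm log}_p(\eta^{\sigma^{-1}})\equiv 0$ into $-p\,\sm_\sigma\lambda'(\sigma)\alpha_p(\eta)^{\sigma^{-1}}\equiv 0\pmod{p^2}$; dividing by $-p$ gives the stated relation modulo $p$.

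I expect the main obstacle to be the bookkeeping of the two compatible actions — Galois conjugation on the coefficients and its effect as a row permutation on the matrix — needed to see that conjugating a dependence again produces a dependence; once this $G$-equivariance is in place, the descent to $\Q$ via the Galois trace and the preservation of non-triviality modulo $p$ through Lemma \ref{lemm01} are essentially forced. A secondary point requiring care is the local-to-global passage for the coefficients, which is harmless only because the whole statement is a congruence modulo a fixed power of $p$.
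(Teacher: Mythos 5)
Your proof is correct and follows essentially the same route as the paper's: reduce to coefficients in $Z_{K,(p)}$, use the $G$-equivariance of ${\rm log}_p$ (conjugation permutes the rows, so conjugated coefficients again give a dependence), and descend to $\Z_{(p)}$ by taking the Galois trace, with Lemma \ref{lemm01} supplying the multiplier $u$ that keeps the traced coefficients nonzero modulo $p$. The only cosmetic difference is that the paper multiplies the congruence by $u$ first so as to normalize ${\rm Tr}_{K/\Q}(\lambda(1)) \equiv 1 \pmod p$, whereas you fold $u$ into the trace at an arbitrary index $\sigma_1$; this changes nothing of substance.
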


\begin{proof}
Modulo $p^{N+1}$,  we can suppose that $\lambda(\sigma) \in Z_{K,(p)}$ for all $\sigma \in G$.
Here the ${\rm log}_p (\eta^{\tau\sigma^{-1}})$ are also represented, modulo $p^{N+1}$, 
by elements of $Z_{K,(p)}$ and the corresponding linear algebra is a priori over the field $K$. 

\smallskip
We obtain (for instance) ${\rm Tr}_{K/\Q}(\lambda(1)) \equiv 1\pmod p$ by multiplication of the congruence 
by a suitable $u \in K^\times$ prime to $p$ (Lemma \ref{lemm01}). 
By conjugation with $\nu \in G$ we obtain
$\sm_{\sigma \in G} \lambda(\sigma)^\nu {\rm log}_p (\eta^{\nu\tau\sigma^{-1}})
\equiv 0 \pmod {p^{N+1}}$ for all $\tau \in G$, which is equivalent to 
$\sm_{\sigma \in G} \lambda(\sigma)^{\nu} {\rm log}_p (\eta^{s \,\sigma^{-1}}) 
\equiv 0 \pmod {p^{N+1}}$ for all $s\in G$. 
Taking the trace in $K/\Q$ of the coefficients (summation over $\nu$), we obtain the rational $p$-integers 
$\lambda'(\sigma)$ for all $\sigma\in G$, with $\lambda'(1) \equiv 1 \pmod p$.
\end{proof}

We may suppose that such linear relations of dependence modulo $p^{N+1} Z_K \Z_p$, for $N \geq 1$, 
are with coefficients in $\Z_{(p)}$ because the two notions of rank coincide.
Taking the limit on $N$, one goes from the complete ring $Z_{K}\Z_p$ to the $p$-adic ring $\Z_p$.

\subsubsection{Regulators}\label{reg} 
Let $F$ be the $\Z[G]$-module generated by $\eta$.
Since $\Q_p{\rm Log}_{p}(F)$ is the $\Q_p[G]$-module generated by ${\rm Log}_{p}(\eta)$ 
and since $\prd_{v \div p} K_v$ is the representation of $G$ induced by the 
representation $K_{v_0}$ of the decomposition group $D_{{\mathfrak p}_0}$, 
the $p$-adic rank $r_p(F)$ of $F$ is equal to the $\Q_p$-rank of the system 
of vectors $(\ldots,{\rm log}_p(\eta^{\tau \sigma^{-1}}) ,\ldots)_\tau$, $\sigma \in G$, 
then to the rank (in the usual sense from the lemmas) of the classical $p$-adic 
regulator ${\mathcal R}_p(\eta)$ (or Frobenius determinant) of $\eta$

\medskip
\centerline{${\mathcal R}_p(\eta) := {\rm Frob}^G ( {\rm log}_p(\eta)) := 
\hbox{det}\big( {\rm log}_p(\eta^{\tau\sigma^{-1} }) \,\big)_{\sigma, \tau \in G}$.}

\medskip
The $\Z[G]$-module $F$ is monogenic in the framwork recalled in
\cite[\S\,1]{J}, or \cite[III.3.1.2 (ii)]{Grcft}, in which case the conjecture of 
Jaulent (\cite[\S\,2]{J}), asserts that the $p$-adic rank ${\rm rg}_p(F)$ 
of $F$ is equal to its $\Z$-rank ${\rm rg}(F) := {\rm dim}^{}_{\, \Q} (F \otimes \Q)$ 
(this is the natural extension of the Leopoldt conjecture on the group of units of $K$). 

\smallskip
We note that any minor of order $r$ is divisible by $p^r$ 
since ${\rm log}_p(\eta) \equiv -p\,\alpha_p(\eta) \pmod {p^2}$ in $\Z_p$.
 Hence the following definitions for $\eta \in K^\times$ prime to $p$:

\begin{definitions} \label{defimodif} (i) Consider (for $p>2$, unramified in $K$) the determinant
$${\rm Reg}_p^G (\eta) := {\rm Frob}^G \Big( \hbox{$\Frac{-1}{p}$}\, {\rm log}_p(\eta)\Big) := 
\hbox{det} \Big( \hbox{$\Frac{-1}{p}$}\, {\rm log}_p(\eta^{\tau\sigma^{-1} }) \Big)_{\sigma, \tau \in G}, $$

with integer coefficients of $K\Q_p$.
This Frobenius determinant is called, in all the paper, the {\it normalized $p$-adic regulator of $\eta$}.
We have ${\rm Reg}_p^G (\eta) \equiv \Delta_p^G (\eta) \pmod p$, where

\centerline{ $\Delta_p^G (\eta) := {\rm Frob}^G(\alpha_p(\eta) ) =
\hbox{det}\big( \alpha_p(\eta)^{\tau\sigma^{-1}}\big)_{\sigma, \tau \in G}$ }

\medskip  
is called the local regulator of $\eta$ (cf. \S\,\ref{sub6}).

\smallskip
(ii) For a real Galois field $K$, the usual $p$-adic regulator ${\mathcal R}_p(K)$ 
of the units is given by a minor of order $n-1$ of 
 ${\rm Frob}^G ( {\rm log}_p(\varepsilon)) = 
 \hbox{det}\big( {\rm log}_p(\varepsilon^{\tau\sigma^{-1} })\big)_{\sigma, \tau \in G}$, 
where $\varepsilon$ is a suitable Minkowski unit, and the $p$-adic integer

\smallskip
\centerline{$p^{-(n - 1)} \cdot {\mathcal R}_p(K) = \hbox{det}\Big(\hbox{$\Frac{-1}{p}$}\, 
{\rm log}_p(\varepsilon^{\tau\sigma^{-1} })\Big)_{\sigma \ne 1, \tau \ne 1}$ }

\smallskip  
is called the normalized $p$-adic regulator of $K$.
\end{definitions}

From Lemma \ref{lemm1} and after division by $p$ of the logarithms, we are reduced 
to linear algebra reasoning over $\Z/p^{N}\Z$, $N\geq 1$; in particular, ${\rm rg}_p(F)$ is 
the $\Z/p^{N}\Z$-rank of the matrix
$\big ( \frac{-1}{p}{\rm log}_p (\eta^{\tau\sigma^{-1}}) \!\pmod {p^{N}}\big )_{\sigma, \tau \in G}$, 
for $N$ large enough.

\smallskip
If a minor $M$ of order ${\rm rg}(F)$ is nonzero modulo $p^{N}$,
then it gives ${\rm rg}_p(F)$, and it is the chosen practical viewpoint that we shall limit to $N=1$,
hence to the $\alpha_p(\eta)$ modulo $p$; in this case, ${\rm rg}_p(F)$ is a priori greater or equal to
the $\Z/p\Z$-rank of the matrix $\big(\alpha_p(\eta)^{\tau\sigma^{-1}}\!\!\pmod p\big)_{\sigma, \tau \in G}$.
If ${\rm rg}(F)=n$, then the Leopoldt--Jaulent conjecture gives 
$\hbox{det}\big ( \frac{-1}{p}{\rm log}_p (\eta^{\tau\sigma^{-1}}) 
\big )_{\sigma, \tau \in G} \sim p^e$, $e \geq 0$.

\subsubsection{Strong form of the Leopoldt--Jaulent conjecture} \label{global} 
The previous local point of view (for all $p$ except a finite number) can be 
analyzed in the following two manners:

\medskip
(a) {\it Local analysis.} We make no assumption on ${\rm rg}(F)$.
If there exists in $F$ a relation $\prd_{\sigma \in G} (\eta^{\sigma^{-1}})^{\lambda(\sigma)} =\zeta$ 
(root of unity),
$\lambda(\sigma) \in \Z$ not all zero (i.e., ${\rm rg}(F)< n$), then for all $p$, prime to $\eta$, we have
$\sm_{\sigma \in G}\lambda(\sigma)\, {\rm log}_p (\eta^{\sigma^{-1}}) =0$ (i.e., ${\rm rg}_p(F)< n$).
These global relations are transmitted into the weaker local relations 
$\sm_{\sigma \in G}\lambda(\sigma)\, \alpha_p(\eta)^{\sigma^{-1}} \equiv 0 \pmod p$;
they are said to be trivial (they do not come from a numerical circumstance with coefficients 
depending on the considered prime $p$, but to the existence of a non trivial {\it global relation} 
in $F$ given by some constants $\lambda(\sigma) \in \Z$).

\smallskip
Conversely, if we have for fixed integers $\lambda(\sigma) \in \Z$, not all 
zero, the family of local conditions (for all $p$ except a finite number)

\smallskip
\centerline{\hspace{3.0cm} 
$\Big(\sm_{\sigma \in G} \lambda(\sigma)\, \alpha_p(\eta)^{\sigma^{-1}} \equiv 0 
 \pmod p\Big)_p$ , \hspace{3cm} (*)}

\smallskip
the question is to know if this is globalisable under the form 
$\prd_{\sigma \in G} (\eta^{\sigma^{-1}})^{\lambda(\sigma)} = \zeta$.

We assume only the congruences
$\sm_{\sigma \in G} \lambda(\sigma)\, \alpha_p(\eta)^{\sigma^{-1}} \equiv 0 \pmod p$
for all $p$ except a finite number, with some $\lambda(\sigma) \in \Z\,$, not all zero 
and independent of $p$. 

\smallskip
Let $\eta_0 := \prd_{\sigma \in G} (\eta^{\sigma^{-1}})^{\lambda(\sigma)} \in F$; 
then ${\rm log}_p (\eta_0) \equiv 0 \pmod {p^2}$ (i.e., ${\rm Log}_p (\eta_0) \equiv 0 \pmod {p^2}$) 
and $\eta_0$ is, in $\prd_{v \div p} K_v^\times$, of the form $\xi\,(1+ \beta\,p)^p$, 
$\beta$ $p$-integer of $\prd_{v \div p} K_v$ and $\xi$ of torsion (of prime to $p$ 
order, for $p$ large enough); so $\eta_0 \in \prd_{v \div p} K_v^{\times p}$ for almost all $p$.
Conjecturaly, $\eta_0$ is a root of unity of $K$ (from Conjecture \ref{conj31}). 

\medskip
(b) {\it Global analysis.} \label{clj} By comparison, suppose that, in a projective limit framework, 
we have coefficients $\widehat \lambda(\sigma) \in \widehat \Z = \prod_p \Z_p$, such that

\smallskip
\centerline{$\sm_{\sigma \in G} \widehat \lambda_p(\sigma) {\rm Log}_{p} (\eta^{\sigma^{-1}}) = 0$, 
for all $p$ prime to $\eta$, }

\smallskip  
where for all $\sigma \in G$, $\widehat \lambda_p(\sigma)$ 
is the $p$-component of $\widehat \lambda(\sigma)$.

Let $i := (i_v)_{v,\,v(\eta)=0}$ be the diagonal embedding $F \otimes \widehat \Z \to \widehat U$, 
where

\smallskip
\centerline{$\widehat U = \prd_{p,\, (p,\eta)=1} \Big(\prd_{ v\vert p} U_v^1\ \times 
\prd_{ v\notdiv p,\, v(\eta)=0} \mu_p(K_v) \Big)$, }

\smallskip  
$\mu_p(K_v)$ being the group of $p$th roots of unity in $K_v$ and, for $v \div p$, 
$$U_v^1 = \mu_p(K_v) \times U' , \ \ \hbox{where $U'$ is $\Z_p$-free.}$$

We put $\widehat \eta_0 := \prd_{\sigma \in G} \big (\eta^{\sigma^{-1}} \big )^{\widehat \lambda(\sigma)} 
\in F \otimes \widehat \Z$ 
and we denote by $\widehat \eta_{0,p} = \prd_{\sigma \in G} 
\big (\eta^{\sigma^{-1}} \big )^{\widehat \lambda_p(\sigma)}$ 
the $p$-component of $\widehat \eta_0$ ($p$ prime to $\eta$).
Since ${\rm Log}_{p} (\widehat \eta_{0,p})=0$ for all $p$ prime to $\eta$, we have for all place $v$ 
prime to $\eta$, $i_v(\widehat \eta_0) = \xi_v$, where (generally) $\xi_v$ is a root of unity of order a
divisor of $\ell^{n_\ell}-1$, where $\ell$ is the residue characteristic of $v$ 
(the places $v \div p$ of $K$ such that $\xi_v$ is of order divisible by $p$ are finite in number).
We can write
$$i(\widehat \eta_0) \in i (F \otimes \widehat \Z) \,\bigcap\, \prd_{p,\, (p,\eta)=1}
 \Big( \prd_{v ,\, v(\eta)=0} \mu_p(K_v) \Big). $$

By using the analogue for $F$ of the local--global characterization of the $p$-adic conjecture of 
Leopoldt--Jaulent (\cite[ \S\,2]{J}; see also \cite[III.3.6.6]{Grcft} in the case of units), we can state
(under this conjecture, same reasoning) that we have
$$i (F \otimes \widehat \Z)\ \,\bigcap\, \prd_{p,\, (p,\eta)=1} \!
\Big( \prd_{ v ,\, v(\eta)=0} \mu_p(K_v) \Big) = i (\mu (K)). $$

We deduce that $\widehat \eta_{0,p}$ is a root of unity $\zeta_p\in K$ for all $p$ prime to $\eta$. 

\smallskip
If moreover we suppose that $\widehat \lambda_p(\sigma) \equiv \lambda(\sigma) \pmod p$ 
for all $\sigma \in G$ and all $p$ prime to $\eta$, where the $\lambda(\sigma)$ are 
given rational integers, then $\eta_0 \in F^\times$, defined by 
$$\eta_0 := \prd_{\sigma \in G}\big (\eta^{\sigma^{-1}} \big)^{\lambda(\sigma)}, $$
 
is equal to $\widehat \eta_{0,p}$ up to a local $p$th power at $p$, thus
$\eta_0 \,\zeta_p^{-1} \in \prd_{v \div p} K_v^{\times p}$; we obtain the 
situation of \S\,(a) since, for all $p$ large enough, $\zeta_p = 1$
and  there is coincidence.

\smallskip
We can see our approach as a very important weakening to this classical $p$-adic context 
concerning the Leopoldt--Jaulent conjecture for all prime $p$; but as consideration, 
to have non empty information of a $p$-adic nature, we have been obliged to suppose 
the existence of the family of rational integers (not all zero)
$(\lambda(\sigma))_{\sigma \in G}$ satisfying the relation (*).

\subsubsection{General study project}\label{pg}
Our purpose, in connection with the previous $p$-adic comments,
is to see with what probability (a priori very small) the normalized regulators 
${\rm Reg}_p^G (\eta)$ of $\eta$ ($\eta$ fixed) are divisible by $p$ ($p \to \infty$).

A normalized regulator ${\rm Reg}_p^G (\eta)$ can be 
factorised by means of powers of $\chi$-regu\-lators 
${\rm Reg}_p^\chi (\eta)$ (for the irreducible rational characters $\chi$ of $G$).
This factorization does not depend on $p$. On the other hand, one can factorize 
${\rm Reg}_p^\chi (\eta)$ by means of $\theta$-components 
${\rm Reg}_p^\theta (\eta)$ (for the irreducible $p$-adic characters 
$\theta \div \chi$); this factorization depends on the residue degree 
of $p$ in the field of values of the absolutely irreducible characters 
$\varphi \div \chi$ of $G$. 
Then we shall get the congruence 
$${\rm Reg}_p^\theta (\eta) \equiv \Delta_p^\theta (\eta) \pmod p$$
 
where the local $\theta$-regulator $\Delta_p^\theta (\eta)$ is the 
$\theta$-component of $\Delta_p^G (\eta) = {\rm Frob}^G (\alpha_p(\eta) )$ (cf. \S\,\ref{sub6}).
We shall deduce a probabilistic study in order to apply the heuristic principle of Borell--Cantelli.

\begin{remark}\label{probvsdens} Lemma \ref{integer} shall allow us to reduce 
(modulo $\Q^\times$) to an $\eta \in Z_K$, 
what we suppose in numerical and Diophantine studies.
When the integer $\eta$ is fixed or varies in a small numerical neighborhood 
{\it (in an Archimedean meaning and not $p$-adic)} and when $p\to\infty$, we shall speak of {\it probability}, 
for instance ${\rm Prob} \big ({\rm Reg}_p^G (\eta)\equiv 0 \pmod p \big)$; 
on the other hand, when $p$ is fixed and $\eta$ is the variable (defined modulo~$p^2$ in our study), 
the probability coincides with the density of the numbers $\eta \in K^\times$ (prime to $p$) 
satisfying the property.

\smallskip
It is clear that densities are canonical and are computed by means of algebraic calculations.
As {\it probabilities} are linked to {\it densities}, one can confuse the two notions as soon as
they are at most $\frac{O(1)}{p^2}$, and then ``excluded'' from probabilistic considerations
of the Borell--Cantelli principle. 

\smallskip
On the other hand, in the case of densities $\frac{O(1)}{p}$, the distinction is necessary.
The idea (developed in \cite{Grqf} for ordinary Fermat quotients) is that, conjecturally, when 
$\eta$ is given, these {\it probabilities} are less than {\it densities} when $p \to \infty$ and that,
under the existence of a binomial law for ${\rm Prob}\big(\Delta_p^\theta (z) \equiv 0 \pmod p \big)$ 
($z$ running through a suitable set of residues modulo $p$), this probability is 
$\Frac{O(1)}{p^{ {\rm log}_2(p)/ {\rm log}(c_0(\eta)) -O(1) }}$ instead of $\Frac{O(1)}{p}$, 
where $c_0(\eta)={\rm max}_{\nu \in G}( \vert \eta^\nu \vert )$,
which suggests the finiteness of the number of cases (Theorem \ref{heur3}). 
\end{remark}

\subsection{Representations and group determinants (Frobenius determinants)}\label{sub2}
 We make no assumption on the Galois group $G$; for this, we
begin by a general recall in terms of representations (for a comprehensive course on representations 
and characters, see \cite{Se1}; for the Abelian case, see \cite{Wa} or \cite{C}).

\subsubsection{General notation}\label{subnot}
As $\C[G]$ is the regular representation, we have the isomorphism 
$\C[G] \simeq \bigoplus_\rho {\rm deg}(\rho) \,. \,V_\rho$, 
where $(\rho, V_\rho)$ runs through the set of absolutely irreducible 
representations of $G$ and where ${\rm deg} (\rho)$ is the degree ($\C$-dimension of~$V_\rho$). 

\smallskip
We denote by $\varphi$ the character of $\rho$; consequently, ${\rm deg}(\rho) 
= \varphi(1)$. We choose to index objects depending on $\rho$ by the letter $\varphi$ (e.g. $V_\varphi$)
and to keep $\rho = \rho_\varphi$ as a homomorphism of $G$ into ${\rm End}(V_\varphi)$.

\smallskip 
For the algebra $\C[G]$ of endomorphisms $E \in \C[G]$, acting on the basis $\{\nu,\ \nu \in G\}$ by multiplication
$\nu \mapsto E \,.\, \nu$, we have the isomorphism $\C[G] \simeq \bigoplus_\varphi {\rm End}(V_\varphi)$, with 
${\rm End}(V_\varphi) \simeq e_\varphi \C[G]$, where the $e_\varphi = \Frac{\varphi (1)}{\vert G \vert} 
\sm_{\nu \in G} \varphi (\nu^{-1} )\,\nu$ are the central orthogonal idempotents of $\C[G]$.

\smallskip
For the decomposition of $e_\varphi \C[G]$ into a direct sum of $\varphi (1)$ irreducible representations, 
isomorphic to $V_\varphi$, we use the projectors comming from a matrix representation
$M(\rho_\varphi(\nu)) = \big( a_{ij}^\varphi(\nu) \big)_{i,j}$ (\cite[ \S\,I.2.7]{Se1})

\smallskip
\centerline{$\pi_i^\varphi = \Frac{\varphi(1)}{ n } \sm_{\nu \in G} a_{ii}^\varphi(\nu^{-1}) \,\nu$, 
$\ \,i= 1, \ldots, \varphi(1)$, }

giving a system of  (non central) orthogonal idempotents such that 
$e_\varphi = \sm_i \pi_i^\varphi$.

\subsubsection{Recalls on group determinants {\rm (from \cite{C})}}\label{sub3} 
Let $G$ be a finite group and let ${\rm Frob}^G(X) = 
\hbox{det}\big( X_{\tau{\sigma^{-1}}} \big)_{\sigma, \tau \in G}$
be the determinant of the group $G$, or Frobenius determinant, 
with indeterminates $X := (X_\nu)_{\nu \in G}$.
We then have the formula
$${\rm Frob}^G(X) = \prd_\varphi \hbox{det} 
\Big( \sm_{\nu \in G} X_\nu \rho_\varphi (\nu^{-1}) \Big)^{\varphi(1)}. $$

Hence the existence of homogeneous polynomials $P^\varphi(X)$,
of degrees $\varphi(1)$, such that
$${\rm Frob}^G(X) = \prd_\varphi P^\varphi (X)^{\varphi(1)} . $$

The specialization $X_\nu \mapsto \Frac{-1}{p} {\rm log}_p(\eta^{\nu })$ 
leads to (Definitions \ref{defimodif})
$$ {\rm Reg}_p^G(\eta) := {\rm Frob}^G\Big( \hbox{ $\Frac{-1}{p} $ } {\rm log}_p(\eta ) \Big) = 
\prd_\varphi \hbox{det} \Big(\sm_{\nu \in G}\hbox{ $\Frac{-1}{p} $ } 
{\rm log}_p(\eta^{\nu })\, \rho_\varphi(\nu^{-1}) \Big)^{\varphi(1)},$$

and from ${\rm Reg}_p^\varphi(\eta) := 
P^\varphi \Big (\ldots, \Frac{-1}{p} {\rm log}_p(\eta^{\nu }) , \ldots \Big) = 
 \hbox{det} \Big(\sm_{\nu \in G}$ $\Frac{-1}{p} {\rm log}_p(\eta^{\nu })\, \rho_\varphi(\nu^{-1}) \Big )$,

\smallskip  
we group into partial products associated with the characters $\chi$ and $\theta$ irreducible over $\Q$
and $\Q_p$, respectively
$${\rm Reg}_p^\chi(\eta) = \prd_{\varphi \div \chi}{\rm Reg}_p^\varphi(\eta) \ \ \& \ \ 
{\rm Reg}_p^\theta(\eta) = \prd_{\varphi \div \theta}{\rm Reg}_p^\varphi(\eta). $$

\subsubsection{Practical calculation of the \texorpdfstring{$P^\varphi(X)$}{Lg}}\label{calp}
The polynomials $P^\varphi(X)$ are obtained in the following way:
from the vectorial space $V = \C[G]$ (provided with the basis $G$), we consider the
endomorphism of $V[X]$, $L(X) = \sm_{\nu \in G} X_\nu \,\nu^{-1}$, which is such that

\smallskip
\centerline{$\Big(\sm_{\nu \in G} X_\nu \,\nu^{-1} \Big)\,.\,\tau = \sm_{\nu \in G} X_\nu \,\nu^{-1} \tau = 
\sm_{\sigma \in G} X_{\tau \sigma^{-1}} \, \sigma, \ \ \,\forall \tau \in G$. }

\medskip
So, the determinant of this endomorphism in the basis $\{\tau, \ \tau \in G\}$ is the Frobenius determinant 
(defined up to the sign).

\smallskip
Let $(\rho_\varphi, V_\varphi)$ be the family of non isomorphic absolutely irreductible representations.
We shall take for $ {\rm End}(V_\varphi)$ the component $e_\varphi\C[G]$ associated with the
character $\varphi$.
We use the algebra isomorphism $\widetilde \rho : V \to \prd_\varphi {\rm End}(V_\varphi)$ defined by

\centerline{$\sm_{\nu \in G} a(\nu)\, \nu^{-1} \longmapsto 
\Big(\sm_{\nu \in G} a(\nu) \, \rho_\varphi (\nu^{-1})\Big)_\varphi$. }

\smallskip
where $\rho_\varphi(\nu^{-1}) = e_\varphi \,\nu^{-1}$ in the previous identification.
From the Maschke theorem, we get for the endomorphism $L(X)$

\medskip
\centerline{$\hbox{det}_V(L(X)) = \prd_\varphi \big( \hbox{det}_{V_\varphi}(L^\varphi(X)) \big)^{\varphi(1)}$, } 

\medskip  
where $L^\varphi(X) = \sm_{\nu \in G} X_\nu\,\rho_\varphi(\nu^{-1}) \in {\rm End} (V_\varphi [X])$. 
We put 
$$P^\varphi(X) := \hbox{det}_{V_\varphi}(L^\varphi (X)). $$

With a matrix realization $M(\rho_\varphi(\nu) )= \big( a^\varphi_{ij}(\nu) \big)_{i,j}$
of the $\rho_\varphi(\nu)$, the matrix associated with $L^\varphi (X)$ is
$M^\varphi(X) = \Big(\sm_{\nu \in G}a^\varphi_{ij}(\nu^{-1})\, X_\nu \Big)_{i,j}$, 
of determinant $P^\varphi(X)$.

Let $g$ be the least common multiple of the orders of the elements of $G$; it is known 
that representations are realizable over the field $C_g = \Q(\mu_g)$ of $g$th roots of unity 
(\cite[\S\,12.3]{Se1}).
So, we may suppose that the $a^\varphi_{ij}(\nu)$ are $p$-integer algebraic numbers,
for all $p$ large enough (i.e., $P^\varphi(X)\in Z_{C_g, (p)}[X]$ for all $\varphi$).

\smallskip
Let $\Gamma := {\rm Gal}(C_g/\Q)$ (commutative).
Given an absolutely irreducible representation $\rho_\varphi : G \mapsto {\rm End}_{C_g}(V_\varphi)$, 
we define its conjugates in the following Galois manner so that
for all $s \in \Gamma$, $\rho^s_\varphi$ is the representation $G \mapsto {\rm End}_{C_g}(V_{\varphi^s})
\simeq e_{\varphi^s} C_g[G]$ of character $\varphi^s$ defined by 
$\varphi^s(\nu) = (\varphi(\nu) )^s$, for all $s \in \Gamma$.
We have, for all $s \in \Gamma$, $\varphi^s(\nu) = 
\varphi(\nu^{\omega(s)} )$, where $\omega$ is the character $\Gamma \to(\Z/g\Z)^\times$ 
of the action of $\Gamma$ on $\mu_g$.
We also put $\varphi^t(\nu) := \varphi(\nu^t)$ for all integer $t$ prime to $g$ 
($\Gamma$-conjugation).

\subsubsection{Rational and \texorpdfstring{$p$}{Lg}-adic characters -- Idempotents}\label{defi01} 
We recall their practical determination.

\smallskip
 (i) {\it Rational characters}. We put, for $\varphi$ fixed

\medskip
\centerline{$\chi = \sm_{s\in {\rm Gal}(C/\Q)} \varphi^s =: \sm_{\varphi \div \chi} \varphi
 \ \ {\rm and} \ \ P^\chi (X) := \prd_{s\in {\rm Gal}(C/\Q)} P^{\varphi^s}(X) =: 
 \prd_{\varphi \div \chi}P^{\varphi^s}(X) , $}

\smallskip
 where $C \subseteq C_g$ is the field of values of any $\Q$-conjugate of $\varphi$.

\smallskip
(ii) {\it $p$-adic characters}. If $p\notdiv g$, denote, for $\chi$ fixed, by $L$
and $D$ the field and the decomposition group of $p$ in $C/\Q$. 
Let $f = \vert D \vert$ be the residue degree of $p$ in $C/\Q$ and 
$h = [L : \Q]$ the number of prime ideals ${\mathfrak p}$
above $p$ in $C$ (or $L$); thus $[C : \Q] = h\,f$.

\smallskip
Let $\varphi \div \chi$. We put
$$\theta(\nu) := \sm_{s\in D} \varphi^s(\nu) \in L, \ \, \hbox{for all $\nu \in G$}
\ \ \& \ \ P^\theta (X) := \prd_{s\in D} P^{\varphi^s}(X) =: \prd_{\varphi \div \theta} P^{\varphi}(X). $$

We fix one of the $h$ prime ideals ${\mathfrak p} \div p$ of $L$ (we shall say that $\theta$ 
and ${\mathfrak p}$ are associated).
As $L_{{\mathfrak p}^t}= \Q_p$ for all $t\in {\rm Gal}(C/\Q)/ D$, we have congruences of the form
$\theta(\nu) \equiv r_{{\mathfrak p}^t} (\nu)\pmod {{\mathfrak p}^t}$ in $L$, 
$r_{{\mathfrak p}^t} (\nu) \in \Z$; 
the rationals $r_{{\mathfrak p}^t} (\nu)$ depend numerically of the residue images 
at ${\mathfrak p}^t$ of the trace in $C/L$ of the $\varphi (\nu)$.

\smallskip
If $\theta = \sm_{s \in D} \varphi^{s}$ and ${\mathfrak p}$ are associated, the $h$ conjugates of 
$\theta$ are 
the $\theta^{t} = \sm_{s \in D} (\varphi^{t})^{s}$ and we have $\theta^t(\nu) \equiv 
r_{{\mathfrak p}^{t^{-1}}} (\nu)\pmod {\mathfrak p}$
(or $\theta^{t^{-1}} (\nu)\equiv r_{{\mathfrak p}^t} (\nu) \pmod {\mathfrak p}$).
As the $\theta^t$ are seen in $\Z_p \subset L_{\mathfrak p}$, 
we shall write by abuse $\theta^t(\nu) \equiv r_{{\mathfrak p}^{t^{-1}}} (\nu)\pmod p$.

\smallskip
For $p$ fixed, the integer $f$ depends only on $\chi$ and is called the residue degree 
of the characters $\varphi, \theta$ and $\chi$. 
We have, by $\Gamma$-conjugation, $\varphi^{p^i}(\nu) = \varphi(\nu^{p^i}) = \varphi(\nu)^{s_p^i}$, 
where $s_p$ is the Frobenius automorphism (of order $f$) in $C/\Q$.

\medskip
(iii) {\it Idempotents.} 
We put $e_\chi = \sm_{\varphi \div \chi}e_\varphi$ and $e_\theta = 
\sm_{\varphi \div \theta}e_\varphi$; thus
$e_\chi = \sm_{\theta \div \chi} e_\theta$. The $e_\theta$ (resp. $e_\chi$) 
give a fundamental system of orthogonal idempotents
of $\Q_p[G]$ (resp. $\Q[G]$). We can replace $\Q_p$ (resp. $\Q$) by $\Z_p$ 
(resp. $\Z_{(p)}$) because $p \notdiv g$.

\medskip
From $P^\varphi(X) = \hbox{det}_{V_\varphi}(L^\varphi(X))$ we deduce that 
$P^{\varphi^s}(X) = 
\hbox{det}_{V_{\varphi^s}}(L^{\varphi^s} (X))$ where 
$L^{\varphi^s}(X) = \sm_{\nu \in G} X_\nu\,\rho_\varphi^s(\nu^{-1})$ 
is given via the $(a^\varphi_{ij} (\nu^{-1}))^s$, which defines the conjugate 
by $s$ of the polynomial $P^\varphi(X)$ (i.e., of its coefficients).

\begin{theorem}\label{lemm2} (i) For all $p$ large enough, the polynomials 
$P^\chi (X)$ (resp. $P^\theta (X)$) have rational $p$-integer coefficients 
(resp. $p$-adic integer coefficients). 

\smallskip
(ii) For all irreducible character $\varphi$, we have $P^\varphi (\ldots,X_{\pi \nu}, \ldots) = 
\zeta_\pi\, P^\varphi (\ldots,X_{\nu}, \ldots)$ for all $\pi \in G$, where $\zeta_\pi^g = 1$.
\end{theorem}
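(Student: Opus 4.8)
For part (i), the plan is to exploit the two layers of Galois descent that were set up in \S\,\ref{defi01}. Starting from the fact (recalled at the end of \S\,\ref{calp}) that each $P^\varphi(X) \in Z_{C_g,(p)}[X]$ for $p$ large enough, I would observe that the action of $s \in \Gamma = {\rm Gal}(C_g/\Q)$ on the coefficients of $P^\varphi$ produces exactly $P^{\varphi^s}(X)$, because $L^{\varphi^s}(X)$ is built from the conjugated matrix entries $(a_{ij}^\varphi(\nu^{-1}))^s$. Therefore the product $P^\chi(X) = \prd_{s \in {\rm Gal}(C/\Q)} P^{\varphi^s}(X)$ is a norm-type expression that is fixed under every element of ${\rm Gal}(C/\Q)$, hence its coefficients lie in $\Q \cap Z_{C_g,(p)} = \Z_{(p)}$. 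For the $p$-adic statement, the same bookkeeping applies with the decomposition group $D$ in place of the full Galois group: $P^\theta(X) = \prd_{s \in D} P^{\varphi^s}(X)$ is invariant under $D$, so its coefficients lie in the fixed field $L$ and, after localization at the chosen ${\mathfrak p} \mid p$ with $L_{\mathfrak p} = \Q_p$, are $p$-adic integers in $\Z_p$. The only genuine content is checking that the localizations behave well, which is guaranteed by $p \notdiv g$ so that $p$ is unramified in $C_g$ and all the relevant rings of integers are $p$-integrally closed.

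For part (ii), I would compute directly how the substitution $X_\nu \mapsto X_{\pi\nu}$ acts on the linear form $L^\varphi(X) = \sm_{\nu \in G} X_\nu \rho_\varphi(\nu^{-1})$. Reindexing the sum by $\nu \mapsto \pi^{-1}\nu$ gives $\sm_\nu X_{\pi\nu}\rho_\varphi(\nu^{-1}) = \sm_\mu X_\mu \rho_\varphi(\mu^{-1}\pi) = \big(\sm_\mu X_\mu \rho_\varphi(\mu^{-1})\big)\rho_\varphi(\pi)$, so that $L^\varphi(\ldots,X_{\pi\nu},\ldots) = L^\varphi(X)\,\rho_\varphi(\pi)$ as endomorphisms of $V_\varphi[X]$. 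Taking determinants and using multiplicativity yields $P^\varphi(\ldots,X_{\pi\nu},\ldots) = \det_{V_\varphi}(\rho_\varphi(\pi)) \cdot P^\varphi(X)$, so the scalar is $\zeta_\pi := \det(\rho_\varphi(\pi))$.

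It remains to identify $\zeta_\pi$ as a $g$th root of unity. Since $\pi^g = 1$ in $G$ (as $g$ is the exponent, a fortiori the lcm of the element orders), $\rho_\varphi(\pi)^g = \rho_\varphi(\pi^g) = \mathrm{id}$, whence $\det(\rho_\varphi(\pi))^g = 1$, giving $\zeta_\pi^g = 1$ as claimed. I expect the only place demanding care is the reindexing in part (ii): one must track whether the scalar emerges on the left or the right and confirm that $\det \rho_\varphi(\pi)$ is genuinely independent of the matrix realization (it is, being $\varphi$-intrinsic as the determinant character). Part (i)'s main subtlety is purely the interplay between ``fixed under $D$'' and ``lands in $\Z_p$ after localizing at ${\mathfrak p}$,'' which is exactly the normalization already fixed when $\theta$ and ${\mathfrak p}$ were associated.
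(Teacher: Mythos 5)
Your proposal is correct and follows essentially the same route as the paper: Galois (resp. decomposition-group) invariance of the norm-type products for (i), and the reindexing identity $L^\varphi(\ldots,X_{\pi\nu},\ldots)=L^\varphi(X)\,\rho_\varphi(\pi)$ followed by taking determinants for (ii). The only cosmetic difference is at the very end, where the paper sees $\zeta_\pi^g=1$ by diagonalizing $\rho_\varphi(\pi)$ into roots of unity, while you deduce it from $\det(\rho_\varphi(\pi))^g=\det(\rho_\varphi(\pi)^g)=1$; both are immediate.
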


\begin{proof} (i) As $P^\varphi(X)\in Z_{C,(p)}[X]$ for all $\varphi \div \chi$, 
$P^\chi (X)$ $ = \!\!\! \prd_{s \in {\rm Gal}(C/\Q)} P^{\varphi^s} (X)$ is invariant by Galois.
Likewise $P^\theta (X) = \prd_{s \in D} P^{\varphi^s} (X) \in L[X] 
\subset L_{\mathfrak p}[X] = \Q_p[X]$.

\smallskip
(ii) For $\pi \in G$ call $[\pi]$ the operator defined by $[\pi] X_\nu = X_{(\pi\nu)}$
for all $\nu \in G$. Then $[\pi]$ and $\widetilde \rho : V[X] \to \prod_\varphi {\rm End}(V_\varphi[X])$ 
commute; moreover, since $\rho_\varphi$ is a homomorphism, we have the following formula
$$[\pi]\Big (\sm_{\nu \in G} X_\nu \,\rho_\varphi(\nu^{-1}) \Big)= 
\sm_{\nu \in G} X_{\pi\nu} \,\rho_\varphi(\nu^{-1}) =
\Big (\sm_{\nu \in G} X_\nu \,\rho_\varphi(\nu^{-1})\Big )\,\rho_\varphi (\pi) . $$

Then, since the determinant of $\rho_\varphi(\pi) \in {\rm End} (V_\varphi)$
is that of a diagonal matrix whose diagonal is formed of roots of unity, we get
$$ \hbox{det} \Big([\pi]\Big (\sm_{\nu \in G} X_\nu \rho_\varphi(\nu^{-1})\Big ) \Big) = 
\zeta_\pi \, \hbox{det} \Big(\sm_{\nu \in G} X_\nu \rho_\varphi(\nu^{-1})\Big), $$

where $\zeta_\pi$ is of order a divisor of the order of $\rho_\varphi(\pi)$ which is a divisor of $g$.
\end{proof}

\begin{corollary} \label{coro3} 
For all $\pi \in G$ and all absolutely irreducible character $\varphi$, 
we have $P^\varphi (\ldots, \alpha^{\pi\nu }, \ldots) = 
\zeta_\pi\, P^\varphi (\ldots, \alpha^\nu, \ldots)$ by the specialization 
$X_\nu \mapsto \alpha^\nu$, $\alpha \in Z_K$.

\smallskip
Consequentely, $P^\chi (\ldots, \alpha^{\pi\nu}, \ldots) = \pm P^\chi (\ldots, \alpha^\nu, \ldots)$ for all 
$\pi \in G$.\,\footnote{\,Sign $+$ except if $\chi = \varphi $ is quadratic and $\varphi (\pi) = -1$.}

\smallskip
In the same way, $P^\theta (\ldots, \alpha^{\pi\nu}, \ldots) = 
\zeta'_\pi P^\theta (\ldots, \alpha^\nu, \ldots)$ for all $\pi \in G$,
where $\zeta'_\pi$ is of order a divisor of g.c.d.\,$(g, p-1)$. 
\end{corollary}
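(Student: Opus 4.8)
The plan is to deduce all three statements from Theorem \ref{lemm2}(ii) by the specialization $X_\nu \mapsto \alpha^\nu$, and then to keep track of the root-of-unity factors as they propagate through the products defining $P^\chi$ and $P^\theta$. For the first assertion I would simply specialize the polynomial identity of Theorem \ref{lemm2}(ii): under the substitution $X_\mu \mapsto \alpha^\mu$ (all $\mu \in G$) the variable $X_{\pi\nu}$ becomes $\alpha^{\pi\nu}$, so $P^\varphi(\ldots, X_{\pi\nu}, \ldots) = \zeta_\pi\, P^\varphi(\ldots, X_\nu, \ldots)$ becomes $P^\varphi(\ldots, \alpha^{\pi\nu}, \ldots) = \zeta_\pi\, P^\varphi(\ldots, \alpha^\nu, \ldots)$, with $\zeta_\pi^g = 1$. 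Moreover the proof of Theorem \ref{lemm2}(ii) identifies the scalar explicitly: $\zeta_\pi = \zeta_\pi^{(\varphi)} := \hbox{det}(\rho_\varphi(\pi))$, the determinant of the finite-order operator $\rho_\varphi(\pi)$. This gives the first assertion directly.

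Next I would record the Galois behaviour of this scalar. Since $\hbox{det}\,\rho_\varphi$ is a one-dimensional character and $\rho_\varphi^s \simeq \rho_\varphi$ for every $s$ fixing the field of values $C$ of $\varphi$, the value $\zeta_\pi^{(\varphi)}$ lies in $C$, and for $s \in {\rm Gal}(C_g/\Q)$ one has $\zeta_\pi^{(\varphi^s)} = \hbox{det}(\rho_{\varphi^s}(\pi)) = (\zeta_\pi^{(\varphi)})^s$ (the determinant being an integer polynomial in the matrix entries, it commutes with the action of $s$). Applying the first assertion to each conjugate $\varphi^s$ and multiplying then controls the products.

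For the second assertion, writing $P^\chi = \prd_{\varphi \div \chi} P^\varphi = \prd_{s \in {\rm Gal}(C/\Q)} P^{\varphi^s}$ and multiplying the specialized identities yields
$$P^\chi(\ldots, \alpha^{\pi\nu}, \ldots) = \Big(\prd_{s \in {\rm Gal}(C/\Q)} (\zeta_\pi^{(\varphi)})^s\Big)\, P^\chi(\ldots, \alpha^\nu, \ldots) = {\rm N}_{C/\Q}\big(\zeta_\pi^{(\varphi)}\big)\, P^\chi(\ldots, \alpha^\nu, \ldots).$$
The factor is the $C/\Q$-norm of a root of unity, hence a rational root of unity, i.e. $\pm 1$; a direct computation of the norm of a root of unity (it is $+1$ except when the root has order $2$ and $[C:\Q]$ is odd) confines the value $-1$ to the degenerate situation $C = \Q$ with $\chi = \varphi$ quadratic and $\varphi(\pi) = -1$, the footnote case.

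For the third assertion I would argue identically with $P^\theta = \prd_{s \in D} P^{\varphi^s}$, where $D = {\rm Gal}(C/L)$ is the decomposition group of $p$ in the abelian extension $C/\Q$ and $L$ its decomposition field. This produces $\zeta'_\pi = \prd_{s \in D}(\zeta_\pi^{(\varphi)})^s = {\rm N}_{C/L}(\zeta_\pi^{(\varphi)}) \in L$, a root of unity of order dividing $g$. The remaining, and only genuinely local, step is to bound its order by $p-1$ as well: since $p \notdiv g$ the order of $\zeta'_\pi$ is prime to $p$, and embedding $L$ into its completion $L_{\mathfrak p} = \Q_p$ at the prime $\mathfrak p \div p$ associated with $\theta$ sends $\zeta'_\pi$ to a prime-to-$p$ root of unity of $\Q_p$, which therefore divides $p-1$; combining the two divisibilities gives order dividing ${\rm gcd}(g, p-1)$. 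I expect this last identification to be the main obstacle, since it is the one place that really uses that $L$ is the \emph{decomposition} field (so that $L_{\mathfrak p} = \Q_p$) together with the description of the prime-to-$p$ roots of unity of $\Q_p$; the sign bookkeeping in the second assertion is the only other point needing care.
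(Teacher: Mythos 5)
Your treatment of the three displayed assertions is correct, and it is exactly the argument the paper leaves implicit (the corollary is stated without proof, as an immediate specialization of Theorem \ref{lemm2}(ii)): the polynomial identity specializes under $X_\nu \mapsto \alpha^\nu$, the scalar is $\zeta_\pi = \det\rho_\varphi(\pi)$, Galois conjugation gives $\zeta_\pi^{(\varphi^s)} = (\zeta_\pi^{(\varphi)})^s$, so the products defining $P^\chi$ and $P^\theta$ pick up ${\rm N}_{C/\Q}\big(\zeta_\pi^{(\varphi)}\big)$ and ${\rm N}_{C/L}\big(\zeta_\pi^{(\varphi)}\big)$ respectively; the first is a rational root of unity, hence $\pm 1$, and for the second your local step is the right one: the order divides $g$, and since $L \subset L_{\mathfrak p} = \Q_p$ with $p$ odd and $\mu(\Q_p) = \mu_{p-1}$, it also divides $p-1$, whence the bound by ${\rm gcd}(g,p-1)$.

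The one genuine flaw is the final sign-classification step. Your norm lemma is correct (${\rm N}_{C/\Q}(\zeta) = -1$ exactly when $\zeta = -1$ and $[C:\Q]$ is odd), but the scalar to which it is applied is $\zeta_\pi^{(\varphi)} = \det\rho_\varphi(\pi)$, which coincides with $\varphi(\pi)$ only when $\varphi(1)=1$. The condition ``$\det\rho_\varphi(\pi) = -1$ and $[C:\Q]$ odd'' is therefore \emph{not} confined to the case $\chi=\varphi$ quadratic with $\varphi(\pi)=-1$: for $G=D_6$ and $\varphi=\chi_2$ (degree $2$, $C=\Q$, so $\chi=\varphi=\theta$), the matrices listed after Example \ref{ex5} give $\det\rho_{\chi_2}(\tau) = -1$, hence $P^{\chi_2}(\ldots,\alpha^{\tau\nu},\ldots) = -\,P^{\chi_2}(\ldots,\alpha^{\nu},\ldots)$, although $\chi_2$ is not a quadratic character and $\chi_2(\tau)=0$. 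This is consistent with the paper's own observation in Example \ref{ex5} that ${\rm Frob}^{\chi_2}(\alpha)$ lies in $\sqrt m\cdot\Q$, i.e.\ is anti-invariant under $\tau$. The correct general criterion for the minus sign is ``$\det\rho_\varphi(\pi) = -1$ and $[C:\Q]$ odd''; your statement (and, it must be said, the paper's footnote itself, which is accurate only when $\varphi(1)=1$, e.g.\ for abelian $G$) holds in degree $1$ but fails for higher-degree characters with odd $[C:\Q]$. The three displayed assertions of the corollary are unaffected by this; only the footnote's characterization of when the sign $-1$ occurs needs the determinant character in place of $\varphi$ itself.
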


\subsubsection{Numerical determinants}\label{sub111}
In this section, there is no reference to a prime number $p$ and the characters that we consider 
are absolutely irreducible or rational. 
The above leads to define the numerical $\chi$-determinants of Frobenius of any $\alpha \in Z_K$ 
(i.e., independent of the given $\eta \in K^\times$).
 
\begin{definition} \label{defi4} Let $G$ be a finite group and let ${\rm Frob}^G(X)$ be the 
associated group determinant.
The $\chi$-determinants (with indeterminates and numerical) are by definition the expressions
$$\hbox{${\rm Frob}^\chi (X)= \prd_{\varphi \div \chi} P^\varphi (X) \ \ {\rm and } \ \
{\rm Frob}^\chi (\alpha) = \prd_{\varphi \div \chi} P^\varphi (\ldots, \alpha^\nu,\ldots)$,}$$
so that ${\rm Frob}^G(\alpha) = \prd_\chi \, ({\rm Frob}^\chi (\alpha))^{\varphi(1)}$ 
(where $\varphi \div \chi$ for each $\chi$).
\end{definition}
 
\begin{example}\label{ex5} {\rm In the case of the group 
$D_6 = \{1, \sigma, \sigma^2, \tau, \tau\sigma, \tau\sigma^2 \}$, 
we have the following numerical $\chi$-determinants

\footnotesize
\smallskip
${\rm Frob}^1(\alpha) = \alpha +\alpha^{\sigma} +\alpha^{\sigma^2} +
\alpha^{\tau} +\alpha^{\tau\sigma} +\alpha^{\tau\sigma^2}$,

\smallskip
${\rm Frob}^{\chi_1}(\alpha) = \alpha +\alpha^{\sigma} +\alpha^{\sigma^2} 
-\alpha^{\tau} -\alpha^{\tau\sigma} -\alpha^{\tau\sigma^2}$,

\smallskip
${\rm Frob}^{\chi_2}(\alpha) = \alpha^{2} +\alpha^{2}{}^{\sigma} 
+\alpha^{2}{}^{\sigma^2} -\alpha^{2}{}^{\tau} -\alpha^{2}{}^{\tau\sigma} 
-\alpha^{2}{}^{\tau\sigma^2} - \alpha \alpha^{\sigma} 
- \alpha^{\sigma} \alpha^{\sigma^2} -\alpha^{\sigma^2} \alpha$

\hfill $+ \alpha^{\tau} \alpha^{\tau\sigma}+ \alpha^{\tau\sigma} \alpha^{\tau\sigma^2} +
\alpha^{\tau\sigma^2} \alpha^{\tau}$.

\normalsize
\smallskip
The two last one are of the form ${\rm Frob}'\,.\,\sqrt m$, ${\rm Frob}' \in \Q$, 
where $k = \Q(\sqrt m)$ is the quadratic subfield of $K$ and we neglect the factor 
$\sqrt m$; but ${\rm Frob}^{\chi_2}(\alpha)$ appears to the square in the 
determinant ${\rm Frob}^G(\alpha)$ and the result is rational, which is 
not the case of ${\rm Frob}^{\chi_1}(\alpha)$. This is specific of quadratic characters. }
\end{example}

For computations, we can return to the matrix realizations ($C = \Q$, $\varphi = \chi_2$)
\footnotesize
\begin{align*}
\rho_\varphi(1) &= \left(\begin{matrix} 
1&0\\
0&1\end{matrix}\right), \ \ \ \ \ \ \rho_\varphi(\sigma) = \left(\begin{matrix} 
-1&-1\\
1&0\end{matrix}\right), \ \ \rho_\varphi(\sigma^2) = \left(\begin{matrix} 
0&1\\
-1&-1\end{matrix}\right), \\
\rho_\varphi(\tau) &= \left(\begin{matrix} 
1&0\\
-1&-1\end{matrix}\right), \ \ \rho_\varphi(\tau\sigma) = \left(\begin{matrix} 
-1&-1\\
0&1\end{matrix}\right),\ \ \rho_\varphi(\tau\sigma^2) = \left(\begin{matrix} 
0&1\\
1&0\end{matrix}\right), 
\end{align*}

\normalsize
which leads (by specialization and by taking the determinant) to
\footnotesize
\begin{align*}
\sm_{\nu \in G}X_\nu \rho_\varphi(\nu^{-1}) &= \left(\begin{matrix} 
X_1-X_{\sigma^2} +X_{\tau}-X_{\tau\sigma} & X_{\sigma} 
- X_{\sigma^2}-X_{\tau\sigma} +X_{\tau\sigma^2} \\
-X_{\sigma} + X_{\sigma^2}-X_{\tau} + X_{\tau\sigma^2} & 
X_1-X_{\sigma} -X_{\tau} + X_{\tau\sigma} \end{matrix}\right), \\
{\rm Frob}^{\chi_2}(\alpha) &= \left \vert\begin{matrix} 
\alpha-\alpha^{\sigma^2} +\alpha^{\tau}-\alpha^{\tau\sigma} & 
\alpha^{\sigma} - \alpha^{\sigma^2}-\alpha^{\tau\sigma} +\alpha^{\tau\sigma^2} \\
-\alpha^{\sigma} + \alpha^{\sigma^2}-\alpha^{\tau} + \alpha^{\tau\sigma^2} & 
\alpha-\alpha^{\sigma} -\alpha^{\tau} + \alpha^{\tau\sigma} \end{matrix}\right \vert. 
\end{align*}

\normalsize
Still for $\chi_2$ (of degree 2) and the representation 
$e_{\chi_2}\,\Q[G] \simeq2\,V_\varphi$, there exist two 
orthogonal projectors $\pi_1, \pi_2$, of sum $e_{\chi_2} = 
\frac{1}{3}(2 - \sigma - \sigma^2)$ (\S\,\ref{subnot}), which yields here 
$$\ \pi_1 = \Frac{1}{3}(1-\sigma^2 + \tau - \tau\sigma)\ \ \& 
\ \ \pi_2 = \Frac{1}{3}(1-\sigma - \tau + \tau\sigma).$$

\subsection{The local \texorpdfstring{$\theta$}{Lg}-regulators}\label{sub6}
Let $\eta \in K^\times$ be given and let $p$ be large enough so that $p$
is unramified in $K$, prime to $n = [K : \Q]$ and $\eta$.

\subsubsection{Generalities}\label{gene1}
We fix an algebraic integer $\alpha \in Z_K$ defined by $\alpha \equiv \alpha_p(\eta) \pmod p$.
We obtain the determinant, with coefficients in $Z_K$, defined modulo $p$
$$\Delta^G_p(\eta) := {\rm Frob}^G(\alpha) = 
\hbox{det} \big( \alpha^{\tau\sigma^{-1}}\big)_{\sigma, \tau \in G} =
\prd_\chi \ \prd_{\theta \div \chi} \ 
\prd_{\varphi \div \theta} P^{\varphi} (\ldots, \alpha^\nu,\ldots )^{\varphi(1)} . $$

If $\Delta^G_p(\eta) \notin \Q$, we find again the existence of a factor $\sqrt m$
which comes from the resolvant of a quadratic character of $G$ and that we neglect
in the definitions of regulators.

\begin{definition}\label{defi10}
For all $p$ large enough and for each $\Q_p$-irreducible character $\theta$ of $G$, we call
local $\theta$-regulator of $\eta$, the $p$-adic integer defined by

\medskip
\centerline{$\Delta_p^\theta(\eta) := 
\prd_{\varphi \div \theta} P^{\varphi} (\ldots, \alpha^\nu,\ldots )$, 
for $\alpha \equiv \alpha_p(\eta) := \frac{1}{p} (\eta^{p^{n_p}-1} -1)\pmod p$. }

\smallskip
For $\theta \div \chi$ ($\chi$ fixed), the corresponding local $\theta$-regulators depend on
the splitting of $p$ in $C/\Q$ and there are $h = \frac{[C : \Q]}{f}$ such regulators, where $f$ is 
their residue degree (\S\,\ref{defi01}\,(ii)). These regulators are only defined modulo $p$.
\end{definition}

\begin{remark}\label{rema110} In the same manner, we may write (for $p$ large enough) 
that the normalized regulator ${\rm Reg}_p^G(\eta)$ is equal to 
$$\prd_\chi {\rm Reg}_p^\chi(\eta)^{\varphi(1)} = \prd_\theta 
{\rm Reg}_p^\theta(\eta)^{\varphi(1)}, $$

where 
$${\rm Reg}_p^\theta(\eta) = 
\prd_{\varphi \div \theta} P^{\varphi}\big (\ldots, \hbox{$\frac{-1}{p}$} {\rm log}_p(\eta^\nu) ,\ldots\big ). $$

We then have the congruences 
$${\rm Reg}_p^\theta(\eta) \equiv \Delta_p^\theta(\eta)\pmod p; $$ 

so $p$ divides ${\rm Reg}_p^\theta(\eta)$ if and only if $\Delta_p^\theta(\eta) 
\equiv 0 \pmod p$; in this case, there exists $e \geq 1$ such that 
$p^{e \varphi(1)}$ divides ${\rm Reg}_p^\theta(\eta)$
 where at each time $\varphi\div \theta$ (\S\,\ref{sub3}). 

\smallskip
We shall speak of an extra $p$-divisibility if $e \geq 2$.
\end{remark}

\subsubsection{Particular remarks}\label{rema11} 
(i) We have 
$$\Delta_p^\chi (\eta) :=\No_{C/\Q}\big( P^{\varphi} (\ldots, \alpha^\nu ,\ldots ) \big ) \in \Z
 \ \,\hbox{($\varphi \div \chi$ fixed), }$$
 
with the convention on the notation $\No_{C/\Q}$, 
especially when $K$ and $C$ are not lineairely disjoint. We recall the quadratic exception for $\chi$.

\smallskip
In the same way
$$\Delta_p^\theta (\eta) :=\No_{\mathfrak p}\big ( P^{\varphi} (\ldots, \alpha^\nu ,\ldots ) \big ), $$

where for ${\mathfrak p} \div p$ in $C$, ${\mathfrak p}$ associated with $\theta$, $\No_{\mathfrak p}$ 
denotes the absolute local norm (issued from $\No_{C/L}$) in the completion of $C$ at ${\mathfrak p}$; 
we find again $\Delta_p^\chi (\eta)$ as a product of the correspondent local norms at $p$.

\smallskip
Same normic relations by replacing $\Delta_p$ by ${\rm Reg}_p$ and $\alpha$ by 
$\frac{-1}{p} {\rm log}_p(\eta)$.

\smallskip
(ii) If $H = \{\nu \in G,\, \varphi(\nu) = \varphi(1) \}$ is the kernel of $\varphi \div \theta \div \chi$ 
(which only depends on~$\chi$) 
and if $K'$ is the subfield of $K$ fixed by $H$, we have 
$$\Delta_p^\theta (\eta)= \Delta_p^{\theta'} (\No_{K/K'}(\eta) )$$ 

where $\theta'$ is the faithful character resulting from $\theta$. 
By replacing $\eta$ by $\eta' := \No_{K/K'}(\eta)$ one always can 
suppose that $\theta$ is a faithful character.

\subsubsection{Characters \texorpdfstring{$\chi$}{Lg} of degree 1, of order 1 or 2}\label{sub7}
Let $\eta \in K^\times$ and let $\alpha \equiv \alpha_p(\eta) \pmod p$, $\alpha \in Z_K$.

\smallskip
 (i) If $\chi =\theta =1$, the $\theta$-regulator corresponds to 
 $\No_{K/\Q}(\eta) = a \in \Q^\times$ and is given by 
${\rm Tr}_{K/\Q}(\alpha)$, in other words
$$\Delta^1_p (\eta) \equiv \Frac{-1}{p} {\rm log}_p(a) \equiv \Frac{1}{p}(a^{p-1} - 1)
 \equiv q_p(a) \pmod p$$ 
 
(Fermat quotient of $a$); for classical properties and use of Fermat quotients, 
see, e.g., \cite{EM}, \cite{GM}, \cite{Grqf}, \cite{Hat}, \cite{H-B}, \cite{KR}, \cite{OS}, \cite{Si}.

\smallskip
For $a = 659$ and $p \leq 10^9$, we only find the solutions $p = 23$, $131$, $2221$, $9161$, $65983$.
 See \cite[Pr. A-1]{Grpro}. For $a = 47$ and $a = 72$, we find no solution for $p \leq 10^{11}$.

\medskip
(ii) If $\chi=\theta$ is quadratic and if $k= \Q(\sqrt m\,)$ is the quadratic 
subfield of $K$ fixed by the kernel of $\chi$,
we obtain a $\theta$-regulator corresponding to the case 
$\No_{K/k}(\eta) \in k^\times\,\!\setminus\!\Q^\times$; 
if ${\rm Tr}_{K/k}(\alpha) =: u + v\,\sqrt m \in k$, it is given by
$$\Delta^\theta_p (\eta) \equiv (1-\tau) ( u + v\,\sqrt m)\equiv 2v\,\sqrt m \pmod p . $$

 If $K$ is a real quadratic field with the fundamental unit $\varepsilon$, 
 because of the multiplicative relation of 
dependence $\varepsilon^{1+\sigma} =\pm 1$, the $1$-regulators $\Delta_p^1(\varepsilon)$
are trivialy zero modulo $p$. The $\theta$-regulator of the quadratic character is
$\Delta^\theta_p(\varepsilon) \equiv 2\,v\,\sqrt m \pmod p$ 
(computed via $\varepsilon^{p^{n_p}-1} \equiv 1 +p\, v\,\sqrt m \pmod{p^2}$). 

\smallskip
We compute the $\theta$-regulator $\Delta^\theta_p(\varepsilon)$
of the fondamental unit $\ \varepsilon = 5+2\,\sqrt 6$, for all $p \leq 10^9$ ($p \ne 2, 3$) (see \cite[Pr. A-2]{Grpro}
valuable for any quadratic integer).
We find a $\theta$-regulator equal to zero modulo $p$ only for $p= 7, 523$, which gives a second observation 
on the rarity of the phenomenon.

\smallskip
Let $\eta = 1 + \sqrt 6$ of norm $-5$. We have ${\rm rg}(F)=2$
(no trivial nullities). We verifiy that Fermat quotients $\Delta^1_p(\eta)$ of $-5$ 
are all nonzero modulo $p$ in the tested interval.
The solutions for $\Delta^\theta_p(\eta) \equiv 0 \pmod p$, $\theta \ne 1$, are $p=11, 37, 163, 4219$.
For $\eta=1+5 \sqrt{-1}$ of norm $26$, we find for $\theta\ne 1$ the two solutions $p = 73, 12021953$.
For the golden number $\Frac{1+ \sqrt 5}{2}$ we find no solution in the tested interval.

\subsubsection{Criterion of trivial nullity for local \texorpdfstring{$\chi$}{Lg}-regulators}\label{sub5}
 Let $\eta \in K^\times$ and let $F$ be the $\Z[G]$-module generated by $\eta$.

\begin{remark}\label{rema51}
In the decomposition ${\rm Frob}^G(\alpha) = \prd_\chi {\rm Frob}^\chi(\alpha)^{\varphi(1)}$, when 
$\alpha \equiv \alpha_p(\eta) \pmod p$, some of the local $\chi$-regulators $\Delta_p^\chi(\eta)$ are zero
modulo $p$ as soon as there exists a non trivial global multiplicative relation of the form 

\medskip
\centerline {$\prd_{\nu \in G}(\eta^{\nu^{-1}})^{\lambda(\nu)} = 1$, $\ \lambda(\nu) \in \Z$, }

\smallskip  
which yields $\sm_{\nu \in G} \lambda (\nu) \,\alpha^{\nu^{-1}} \equiv 0 \pmod p$ for all $p$ prime to $\eta$.
\end{remark}

\begin{lemma} \label{nul} If we have ${\rm dim}_\Q ((F \otimes \Q)^{e_\chi} )< {\rm dim}_\Q 
(e_\chi \Q[G]) = [C : \Q]\, \varphi(1)^2$ 
(i.e., there exists $U\in \Q[G]$ such that $\eta^{U_\chi} = 1$, with $U_\chi := e_\chi U\ne 0$), then
the local $\chi$-regulators $\Delta_p^\chi(\eta) :={\rm Frob}^\chi (\alpha)$ are zero modulo $p$ for 
all $p$ large enough (they are said trivialy null modulo $p$).
\end{lemma}

This implies the trivial nullity modulo $p$ of certain $\Delta_p^\theta(\eta)$, $\theta \div \chi$, 
namely those for which $U_\theta := e_\theta U \not\equiv 0 \pmod p$; for the proof, see the 
Lemmas of \S\,\ref{sub9} (criterion of nullity modulo $p$ of $\Delta_p^\theta(\eta)$).

\begin{remarks}\label{rema511}
(i) If $\varphi(1) = 1$, $\Delta_p^\chi(\eta)$ trivialy null modulo $p$ is equivalent to $\eta^{e_\chi} = 1$ 
(i.e., $U_\chi = e_\chi$), in which case $\Delta_p^\theta(\eta) \equiv 0 \!\!\pmod p$ 
trivialy for all $\theta \div \chi$.

\smallskip
(ii) If $\varphi(1) > 1$, $\Delta_p^\chi(\eta)$ is trivialy null modulo $p$ if there exists $i$, 
$1 \leq i \leq \varphi(1)$, such that, in $F \otimes C$, we have $\eta^{\pi_i^\varphi} = 1$ 
for $\varphi \div \chi$ (\S\,\ref{subnot}).

\smallskip
For instance, for $G = D_6$ and $\varphi =\chi= \chi_2$, the elements

\medskip
\centerline{$\pi_1^\varphi = \frac{1}{3}(1-\sigma^2 + \tau -\tau\sigma)$ and 
$\pi_2^\varphi = \frac{1}{3}(1-\sigma - \tau +\tau\sigma)$ }

\medskip  
are such that $e_\chi \pi_i^\varphi = \pi_i^\varphi$, $i=1, 2$, 
$\pi_1^\varphi+\pi_2^\varphi = e_\chi$, and $\pi_1^\varphi\pi_2^\varphi = 0$ (cf. Example \ref{ex5}). 

\medskip
So we may have the non trivial $\varphi$-relation $\eta^{U_1} := \eta^{1-\sigma^2 + \tau -\tau\sigma} = 1$
while $\eta^{U_2} := \eta^{1-\sigma - \tau +\tau\sigma} \ne 1$ 
(i.e., ${\rm dim}_\Q (F \otimes \Q)^{e_\chi}=2$ for ${\rm dim}_\Q (e_\chi \Q[G])=4$)); we would have
$\eta^{e_\chi .\,(U_1+U_2)} = \eta^{3 e_\chi} =\eta^{3 U_2} \ne 1$, but we verify that the $\chi$-regulator
$\Delta_p^\chi(\eta)$ is equal to zero modulo $p$ because of the first relation.

\medskip
To suppose ${\rm rg}(F)=n$ avoids this disadvantage. We can always suppose it by multiplying $\eta$ by 
a suitable $\eta'$ in such a way that $(FF') \otimes \Q \simeq \Q[G]$ and $F\cap F' =1$ (obvious notation).

\smallskip
(iii) For $U \in \Z_{(p)}[G]$, we have $U_\chi = \sm_{\varphi \div \chi} U_\varphi$ and 
$U_\varphi = e_\varphi U_\chi$.
We have $U_\chi \equiv 0 \pmod p$ if and only if $U_\varphi \equiv 0 \pmod p$ for at least a (donc all)
$\varphi \div \chi$ (because the $\varphi \div \chi$ are conjugate by ${\rm Gal}(C/\Q)$).

\smallskip
These congruences (mod $p$) in the group algebras mean (depending on the case)
$$\hbox{$\pmod{p \,\Z_{(p)}[G]}\ \ $ or $\pmod{p \,Z_{C,(p)} [G]}$} $$ 

where $Z_{C,(p)}$ is the ring of $p$-integers of the field of values $C$ of the $\varphi \div \chi$.

\smallskip
This does not occur for $U_\chi = \sm_{\theta\div \chi} U_\theta$ and $U_\theta = e_\theta U_\chi$ 
because $U_\theta \equiv 0 \pmod p$ in $\Z_p[G]$ means $U_\theta \equiv 0 \pmod {\mathfrak p}$
in $L[G]$ (for $\theta$ and ${\mathfrak p}$ associated), which is only equivalent to
$U_\varphi \equiv 0 \pmod {\mathfrak p}$ for all $\varphi \div \theta$ (\S\,\ref{defi01}\,(ii)).
\end{remarks}

\begin{examples}\label{ex1} {\rm
a) $G=C_n$. Let $G$ cyclic of order $n$ and let $\chi$ of order $d \div n$; then
the elements $\eta \in K^\times$ such that $\eta^{e_\chi} = 1$ 
correspond to the trivial nullity $\!\!\!\pmod p$ of
$$\Delta_p^\chi(\eta) = \No_{C /\Q} \big( \sm_{\nu\in G} \varphi (\nu^{-1})\,\alpha^{\nu} \big).$$
For $n = 3$ (for which $C=\Q(j)$, where $j^3=1$, $j\ne 1$), we have the two rational idempotents
$$e_1= \hbox{$\frac{1}{3}$} (1+ \sigma + \sigma^2), \ \ e_\chi= \hbox{$\frac{1}{3}$} (2-\sigma- \sigma^2) . $$

(i) The $\eta \in K^\times$ such that $\eta^{e_1} = 1$
(i.e., of norm 1 in $F\otimes\Q$), correspond to the trivial nullity of
$\Delta_p^1 (\eta) = \alpha + \alpha^\sigma+\alpha^{\sigma^2}$.

\smallskip
(ii) The $\eta \in K^\times$ such that $\eta^{e_\chi} = 1$ or $\No_{K/\Q}(\eta) = \eta^3$, 
hence such that $\eta \in \Q^\times$, correspond to the trivial nullity of
$\Delta_p^\chi(\eta) = \No_{\Q(j)/\Q}(\alpha +j^2\,\alpha^{\sigma} + j\,\alpha^{\sigma^{2}})$.

\medskip
b) $G = D_6$. The three idempotents for the group $D_6$ are
\begin{align*}
 e_1 & = \hbox{$\frac{1}{6}$} (1+ \sigma + \sigma^2 + \tau + \tau \sigma +\tau \sigma^2), \\
 e_{\chi_1}& = \hbox{$\frac{1}{6}$} (1+ \sigma + \sigma^2 - (\tau + \tau \sigma +\tau \sigma^2)), \\
 e_{\chi_2}&= \hbox{$\frac{1}{6}$} (2 - \sigma - \sigma^2). 
\end{align*}

(i) The $\eta$ such that $\eta^{e_1} = 1$ correspond to the trivial nullity of
$\Delta_p^1 (\eta) = {\rm Tr}_{K/\Q} (\alpha)$.

\smallskip
(ii) The $\eta$ such that $\eta^{e_{\chi_1} }= 1$ are such that $\No_{K/k}(\eta) \in\Q^\times$, where $k$ is the quadratic subfield of $K$, and correspond to the trivial nullity of

\smallskip
$\Delta_p^{\chi_1} (\eta) = \alpha +\alpha^{\sigma} +\alpha^{\sigma^2} -\alpha^{\tau} 
-\alpha^{\tau\sigma} -\alpha^{\tau\sigma^2} = (1-\tau)\, {\rm Tr}_{K/k} (\alpha)$.

\smallskip
(iii) The $\eta$ such that $\eta^{U_{\chi_2}}=1$ for $U_{\chi_2} \in e_{\chi_2} \Q[G] \setminus\! \{0\}$
lead to the trivial nullity~of

\smallskip
$\Delta_p^{\chi_2} (\eta) = \alpha^{2} +\alpha^{2}{}^{\sigma} +\alpha^{2}{}^{\sigma^2} 
-\alpha^{2}{}^{\tau} -\alpha^{2}{}^{\tau\sigma} -\alpha^{2}{}^{\tau\sigma^2} 
- \alpha \alpha^{\sigma} - \alpha^{\sigma} \alpha^{\sigma^2} -\alpha^{\sigma^2}\alpha$

\hfill$ + \alpha^{\tau} \alpha^{\tau\sigma} + \alpha^{\tau\sigma} \alpha^{\tau\sigma^2} 
+\alpha^{\tau\sigma^2} \alpha^{\tau}$. }
\end{examples}

\section{\texorpdfstring{$\F_p$}{Lg}-linear relations between the conjugates 
of \texorpdfstring{$\alpha$}{Lg}}\label{section3}

Let $\eta \in K^\times$ be fixed and let $p$ be a large enough prime number. 

\smallskip
Let $\alpha_p(\eta) := \frac{1}{p}\big( \eta^{p^{n_p} - 1}-1 \big)\in Z_{K,(p)}$.
We intend to establish the relation between the nullity modulo $p$ of certain 
$\Delta_p^\theta(\eta)$ and the existence of certain $\F_p$-linear relations 
between the conjugates of $\alpha_p(\eta)$ modulo~$p$.
We implicitely suppose ${\rm rg}(F)=n$. First, let us establish elementary generalities:

\subsection{\texorpdfstring{$\F_p$}{Lg}-independence}\label{sub8}
Let $\alpha \in K$, arbitrary (so $\alpha \in Z_{K,(p)}$ for all $p$ large enough).
 We shall say that the $\alpha^\nu$, $\nu \in G$, are $\F_p$-independent if, 
 for all family of coefficients $u(\nu) \in \Z_{(p)}$, the congruence 
 $\sm_{\nu\in G} u(\nu) \, \alpha^{\nu^{-1}} \equiv 0 \pmod p$, in $Z_{K,(p)}$, 
 implies $u(\nu) \equiv 0 \pmod p$ for all $\nu \in G$. 
 
 \smallskip
 We then have the following result where we recall that
${\rm Frob}^G (\alpha)= \hbox{det}\big(\alpha^{\tau \sigma^{-1}} \big)_{\sigma, \tau \in G}$:

\begin{proposition}\label{prop13} Let $\alpha \in K$ be given. We assume $p$ large enough 
in such a way that $\alpha \in Z_{K,(p)}$ and $p$ does not divide the discriminant of $K$.

\smallskip
 (i) The $\alpha^\nu$ are $\F_p$-independent if and only if 
 $\alpha$ is a normal $\Z_{(p)}$-basis of $Z_{K,(p)}$.

(ii) The $\alpha^\nu$ are $\F_p$-independent if and only if 
${\rm Frob}^G(\alpha) $ is prime to $p$.
\end{proposition}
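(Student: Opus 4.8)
The plan is to work in the local ring $R := Z_{K,(p)}$, which for $p$ prime to the discriminant of $K$ is a free $\Z_{(p)}$-module of rank $n$ with $R/pR \simeq Z_K/pZ_K \simeq \prod_{\mathfrak p \div p}\F_{\mathfrak p}$, an $\F_p$-vector space of dimension $n$ (here $p$ unramified, so $pZ_K = \prod_{\mathfrak p\div p}\mathfrak p$ is squarefree). By definition the $\alpha^\nu$ are $\F_p$-independent exactly when the $n$ reductions $\overline{\alpha^\nu}$ form an $\F_p$-basis of $R/pR$. For (i) I would invoke Nakayama's lemma over the local ring $(\Z_{(p)}, p\,\Z_{(p)})$ with residue field $\F_p$: the $n$ elements $\alpha^\nu$ generate the rank-$n$ free module $R$ over $\Z_{(p)}$ if and only if their reductions generate $R/pR$ over $\F_p$, and since there are exactly $n$ of them this is the same as being a basis on each side. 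Thus $\{\alpha^\nu\}_{\nu}$ is a normal $\Z_{(p)}$-basis of $Z_{K,(p)}$ iff the $\overline{\alpha^\nu}$ are $\F_p$-independent, which is (i).

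For (ii), note first that ${\rm Frob}^G(\alpha)$ is prime to $p$ iff it is a unit of $R$, iff the matrix $\big(\alpha^{\tau\sigma^{-1}}\big)_{\sigma,\tau}$ is invertible modulo $p$. The implication ``$\F_p$-independent $\Rightarrow$ prime to $p$'' I would prove by contraposition: a nontrivial relation $\sm_{\nu\in G} u(\nu)\,\alpha^{\nu^{-1}}\equiv 0 \pmod p$, with $u(\nu) \in \Z_{(p)}$ not all divisible by $p$, is preserved by every $\tau \in G$ (Galois fixes $\F_p$ and stabilises $pR$), giving $\sm_{\nu\in G} u(\nu)\,\alpha^{\tau\nu^{-1}}\equiv 0 \pmod p$ for all $\tau$; this is a nonzero $\F_p$-linear dependence among the rows of $\big(\alpha^{\tau\sigma^{-1}}\big)$, so its determinant vanishes modulo $p$ and ${\rm Frob}^G(\alpha)$ is not prime to $p$.

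For the converse, and indeed for a unified treatment, I would use the structural isomorphism $R/pR \simeq \F_p[G]$ of $\F_p[G]$-modules, valid because $p$ is unramified: $G$ permutes the $\mathfrak p \div p$ transitively with decomposition group $D$, and by the normal basis theorem for the residue extension one has $\F_{\mathfrak p} \simeq \F_p[D]$ as $\F_p[D]$-modules, whence $R/pR \simeq {\rm Ind}_D^G \F_p[D] \simeq \F_p[G]$. Under this identification, $\F_p$-independence of the $\overline{\alpha^\nu}$ says precisely that $\overline{\alpha}$ is a free generator, i.e.\ corresponds to a unit $a$ of $\F_p[G]$; and the multiplicativity of the group determinant (the factorization ${\rm Frob}^G(X) = \prod_\varphi \hbox{det}\big(\sm_{\nu\in G} X_\nu \rho_\varphi(\nu^{-1})\big)^{\varphi(1)}$ of \S\ref{sub3}) yields $\hbox{det}\big(\overline{\alpha^{\tau\sigma^{-1}}}\big) = {\rm Frob}^G(a)\cdot{\rm Frob}^G(\omega)$, where $\omega$ is a fixed normal generator. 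Since $a$ is a unit iff each block $\sm_{\nu\in G} a_\nu \rho_\varphi(\nu^{-1})$ is invertible iff ${\rm Frob}^G(a) \neq 0$, and ${\rm Frob}^G(\omega) \neq 0$, we conclude that ${\rm Frob}^G(\alpha)$ is a unit modulo $p$ exactly when the $\overline{\alpha^\nu}$ are $\F_p$-independent.

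The main obstacle is the passage from the prime-by-prime residue fields $\F_{\mathfrak p}$ to a single coefficient field: a naive row argument at one $\mathfrak p$ only controls $\F_{\mathfrak p}$-dependence, which is weaker than the $G$-symmetric $\F_p$-dependence we actually need. Encoding everything in the $\F_p[G]$-module $R/pR$ and appealing to the Frobenius factorization is what removes this difficulty. The one routine-but-essential computation to carry out carefully is the multiplicativity identity $\hbox{det}\big(\overline{\alpha^{\tau\sigma^{-1}}}\big) = {\rm Frob}^G(a)\,{\rm Frob}^G(\omega)$, namely that expressing $\overline{\alpha}$ in the normal basis $\{\omega^\nu\}$ and specialising $X_\nu$ to its coordinates is compatible with the regular representation underlying ${\rm Frob}^G$.
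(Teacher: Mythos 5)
Your proof is correct, but it takes a genuinely different route from the paper's on the substantive points. For (i), the paper argues by hand: $\F_p$-independence first excludes any $\Q$-linear dependence among the $\alpha^\nu$ (clear denominators to get coprime integer coefficients, which would then all vanish mod $p$), so $\alpha$ is a normal $\Q$-basis, and a denominator argument ($d\,\beta = \sum_\nu r(\nu)\alpha^{\nu^{-1}}$ with $p \nmid d$) upgrades this to a normal $\Z_{(p)}$-basis; your appeal to Nakayama over the local ring $\Z_{(p)}$ packages exactly this in one standard lemma. For (ii), one direction is the same in both texts: your row-conjugation argument (a nontrivial relation propagates to every row, killing the determinant mod $p$) is literally the paper's proof that ${\rm Frob}^G(\alpha)$ prime to $p$ implies independence --- note that you mislabel it, since ``relation $\Rightarrow \det \equiv 0 \pmod p$'' is the contrapositive of \emph{that} direction, not of ``independence $\Rightarrow$ prime to $p$''; this is harmless only because your unified argument is a chain of equivalences and covers both directions anyway. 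It is on the remaining direction that the proofs genuinely diverge: the paper invokes (i) and then the arithmetic facts that the discriminant of the normal basis $\{\alpha^\nu\}$ equals ${\rm Frob}^G(\alpha)^2$ and differs from the discriminant of $K$ by a factor (the conductor of the lattice $\bigoplus_\nu \Z\,\alpha^\nu$) prime to $p$; you instead use the $\F_p[G]$-module isomorphism $Z_{K,(p)}/(p) \simeq \F_p[G]$ (unramifiedness, via the finite-field normal basis theorem induced up from the decomposition group), under which independence says that the image $a$ of $\alpha$ is a unit, plus the multiplicativity $\det\big(\overline{\alpha^{\tau\sigma^{-1}}}\big) = {\rm Frob}^G(a)\,{\rm Frob}^G(\omega)$, which does hold: it is exactly the matrix identity $M(\alpha)=M(\omega)A$ that you flagged as the computation to check. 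What each buys: the paper's route is shorter if one grants the classical discriminant/conductor facts, but its (ii) depends on (i); yours is more structural, derives (i) and both halves of (ii) from a single equivalence, avoids discriminant theory, and makes visible exactly where the hypothesis $p$ prime to the discriminant enters (freeness of $Z_K/pZ_K$ over $\F_p[G]$).

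One small caveat: your unit criterion via the blocks $\sum_\nu a_\nu\rho_\varphi(\nu^{-1})$ requires the factorization of the group determinant over a field where $\F_p[G]$ is split semisimple, hence $p \nmid |G|$; this is within the paper's standing hypotheses, but you can bypass it entirely: in the finite-dimensional algebra $\F_p[G]$, $a$ is a unit if and only if left multiplication by $a$ is bijective, and the determinant of that endomorphism on the basis $G$ is $\pm\,{\rm Frob}^G(a)$, so no appeal to representation theory is needed at this step.
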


\begin{proof}
(i) If $\alpha$ is a normal $\Z_{(p)}$-basis of $Z_{K,(p)}$, any congruence
$\sm_{\nu\in G} u(\nu) \, \alpha^{\nu^{-1}} \equiv 0 \pmod p$, $u(\nu) \in \Z_{(p)}$,
leads to $u(\nu) \equiv 0 \pmod p$, for all $\nu \in G$.

\smallskip
Assume now that the $\alpha^\nu$ are $\F_p$-independent and that there exists a non-trivial relation 
of $\Q$-linear dependence between the conjugates of $\alpha$;

 it follows a relation of the form
$\sm_{\nu \in G}{r(\nu)}\alpha^{\nu^{-1}}\!\! = 0$ with integers $r(\nu)$, not all zero, such that 
${\rm p.g.c.d. }(r(\nu))_\nu = 1$; whence $r(\nu) \equiv 0 \pmod p$ for all $\nu \in G$ (absurd). 
Consequentely $\alpha$ is yet a normal $\Q$-basis of $K$. 
If $\beta \in Z_{K,(p)} \setminus \{0\}$, there exist some $r(\nu) \in \Z$, not all zero, 
and an integer $d$, prime to 
${\rm p.g.c.d. }(r(\nu))_\nu$, such that $d\, \beta = \sm_{\nu \in G}{r(\nu)}\alpha^{\nu^{-1}}$. 
We have $p\notdiv d$ otherwise the $r(\nu)$ should be divisible by $p$. 
Thus $\alpha$ is a normal $\Z_{(p)}$-basis of $Z_{K,(p)}$.

\medskip
(ii) Suppose that the $\alpha^\nu$ are $\F_p$-independent; as $\alpha = \frac{1}{d} \beta$,
$\beta \in Z_K\setminus p Z_K$, $d\in \Z \setminus p \Z$, one can return to the integer case for $\alpha$.
As $p$ is large enough, it does not divide the discriminant of $K/\Q$, and the discriminant of the normal 
$\Z_{(p)}$-basis $\alpha$, of $Z_{K,(p)}$, is prime to $p$ (indeed, the conductor ${\mathfrak f} \in \Z$ 
such that ${\mathfrak f} \, Z_K \subseteq \bigoplus_\nu \Z\,\alpha^\nu$ is not divisible by $p$ and the two 
discriminants coincide up to a $p$-adic unit).
But the discriminant of the normal basis $\alpha$ is the square of the Frobenius determinant 
${\rm Frob}^G(\alpha) = \hbox{det}\big(\alpha^{\tau \sigma^{-1}} \big)_{\sigma, \tau \in G}$.

Suppose ${\rm Frob}^G(\alpha)$ prime to $p$, and suppose there exist some $\lambda(\sigma) \in \Z_{(p)}$, 
not all divisible by $p$, such that $\sm_{\sigma \in G} \lambda(\sigma)\, \alpha^{\sigma^{-1}} \equiv 0 \pmod p$. 
By conjugation by $\tau \in G$, we obtain a $\Z_{(p)}$-linear relation over the lines of the form

\smallskip
\centerline{$\sm_{\sigma \in G} \lambda(\sigma) (\ldots, \alpha^{\tau \sigma^{-1}} , \ldots)_\tau \equiv
 (\ldots, 0 , \ldots)_\tau \pmod p$, }

\smallskip  
whence ${\rm Frob}^G(\alpha) \equiv 0 \pmod p$ (absurd).
\end{proof}

\begin{corollary}\label{coro14} If for $p$ large enough at least one of the local 
$\theta$-regulators $\Delta_p^\theta(\eta)$ is zero modulo $p$, then the 
$\alpha_p(\eta)^\nu$ are not $\F_p$-independent and there exists a $\F_p$-linear 
relation $\sm_{\nu\in G} u(\nu) \, \alpha_p(\eta)^{\nu^{-1}} 
\equiv 0 \pmod p$, with $u(\nu) \in \Z_{(p)}$ not all divisible by $p$.
\end{corollary}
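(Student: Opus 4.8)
The plan is to deduce Corollary \ref{coro14} directly from Proposition \ref{prop13} together with the factorization of the Frobenius determinant recorded in \S\,\ref{sub3} and Definition \ref{defi10}. First I would recall that, by Remark \ref{rema110} and the congruence ${\rm Reg}_p^\theta(\eta) \equiv \Delta_p^\theta(\eta) \pmod p$, the normalized regulator factors as
$$\Delta_p^G(\eta) = {\rm Frob}^G(\alpha_p(\eta)) = \prd_\theta \Delta_p^\theta(\eta)^{\varphi(1)} \pmod p,$$
where $\alpha \equiv \alpha_p(\eta) \pmod p$ and $\varphi \div \theta$. Hence if at least one local $\theta$-regulator $\Delta_p^\theta(\eta)$ is zero modulo $p$, the whole product is zero modulo $p$, i.e. ${\rm Frob}^G(\alpha_p(\eta)) \equiv 0 \pmod p$.

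The second step is to invoke Proposition \ref{prop13}\,(ii): for $p$ large enough (so that $\alpha_p(\eta) \in Z_{K,(p)}$ and $p$ does not divide the discriminant of $K$), the $\alpha_p(\eta)^\nu$ are $\F_p$-independent \emph{if and only if} ${\rm Frob}^G(\alpha_p(\eta))$ is prime to $p$. Since we have just shown ${\rm Frob}^G(\alpha_p(\eta)) \equiv 0 \pmod p$, the contrapositive of that equivalence forces the $\alpha_p(\eta)^\nu$ to fail $\F_p$-independence. By the very definition of $\F_p$-independence in \S\,\ref{sub8}, this failure means precisely that there exist coefficients $u(\nu) \in \Z_{(p)}$, not all divisible by $p$, satisfying
$$\sm_{\nu \in G} u(\nu)\, \alpha_p(\eta)^{\nu^{-1}} \equiv 0 \pmod p,$$
which is exactly the asserted $\F_p$-linear relation.

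I do not expect any serious obstacle here, as the corollary is essentially a repackaging of results already proved. The only points requiring a word of care are: (a) checking that the hypotheses of Proposition \ref{prop13} are genuinely met, which is guaranteed by the standing assumption that $p$ is large enough (in particular unramified and prime to the discriminant, so that $\alpha_p(\eta)$ represents an element of $Z_{K,(p)}$); and (b) noting that the representative $\alpha \in Z_K$ with $\alpha \equiv \alpha_p(\eta) \pmod p$ may be used interchangeably with $\alpha_p(\eta)$ when working modulo $p$, so that the factorization of $\Delta_p^G(\eta)$ and the $\F_p$-independence criterion refer to the same residue class. The logical core is simply the chain: nullity of one $\theta$-factor $\Longrightarrow$ nullity of ${\rm Frob}^G(\alpha_p(\eta))$ modulo $p$ $\Longrightarrow$ (by Proposition \ref{prop13}\,(ii)) failure of $\F_p$-independence $\Longrightarrow$ (by the definition in \S\,\ref{sub8}) existence of the nontrivial $\F_p$-linear relation. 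This completes the argument.
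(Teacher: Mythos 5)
Your proof is correct and follows exactly the route the paper intends: the factorization ${\rm Frob}^G(\alpha_p(\eta)) = \prd_\theta \Delta_p^\theta(\eta)^{\varphi(1)}$ forces the full group determinant to vanish modulo $p$, and Proposition \ref{prop13}\,(ii) together with the definition of $\F_p$-independence in \S\,\ref{sub8} then yields the nontrivial relation. The paper leaves the corollary without a written proof precisely because it is this immediate chain, so nothing further is needed.
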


\subsection{Criterion of nullity modulo \texorpdfstring{$p$}{Lg} of the 
\texorpdfstring{$\Delta_p^\theta(\eta)$}{Lg}}\label{sub9}
We refer to \S\,\ref{defi01} using the decomposition field $L$ of $p$ in $C/\Q$ and $D = {\rm Gal}(C/L)$. 
To simplify, we suppose $K \cap C = \Q$. We recall that $Z_{C,(p)}$ is the ring of $p$-integers of $C$. 

\subsubsection{Main lemmas}\label{sub155}
Let $\eta \in K^\times$ be such that the multiplicative $\Z[G]$-module generated by $\eta$ is of $\Z$-rank $n$.
We fix $\alpha \equiv \alpha_p(\eta) \pmod p$ in $Z_K$. As usual, $\varphi$ denotes an absolutely
irreducible character and $\theta$ an irreducible $p$-adic character.

\begin{definition}\label{defi11} (i) If $\sm_{\nu \in G} u(\nu)\,\alpha^{\nu^{-1}} \equiv 0 \pmod p$, 
$u(\nu) \in \Z_{(p)}$ for all $\nu \in G$, we call {\it associated relation with} $\alpha$ the element 
$$U = \sm_{\nu \in G} u(\nu)\,{\nu^{-1}} \in \Z_{(p)}[G], $$ 

and we define, for characters $\varphi$ and $\theta$,
the $\varphi$-relations $U_\varphi := e_\varphi .\,U\in Z_{C,(p)}[G]$, and the $\theta$-relations 
$U_\theta := e_\theta .\,U\in Z_{L,(p)}[G]$.

\medskip
(ii) We denote by ${\mathcal L}$ the $G$-module of the relations $U \in \Z_{(p)} [G]$ 
(defined modulo $p\,\Z_{(p)}[G]$), associated with $\alpha$.
 Seen in $\F_p [G]$, we have ${\mathcal L} = \{0\}$ if and only if
the $\alpha^\nu$ are $\F_p$-independent (\S\,\ref{sub8}) and we have 
${\mathcal L} = \F_p [G]$ if and only if $\alpha \equiv 0 \pmod p$.

\medskip
(iii) For $\theta$, we denote by ${\mathcal L}^\theta \simeq \delta \, V_\theta$ the 
$\theta$-component $e_\theta\,{\mathcal L}$, where $V_\theta$ (of $\F_p$-dimension $f \varphi(1)$) 
is the irreducible representation of character $\theta$; then $0\leq \delta \leq \varphi(1)$. 

\smallskip
(iv) Let ${\mathfrak p}\div p$ the prime ideal of $L$ associated with $\theta$. Thus
$\theta(\nu)=\sm_{s\in D} \varphi^s (\nu) \in Z_{L,(p)}$ is defined via 
$\theta(\nu) \equiv r_{\mathfrak p} (\nu) \pmod {\mathfrak p}$, 
$r_{\mathfrak p} (\nu) \in \Z$; if $U\in \Z_{(p)}[G]$, $U_\theta\in Z_{L,(p)}[G]$ 
is congruent modulo ${\mathfrak p}$ to an element of $\Z_{(p)}[G]$.
We shall view $U_\theta$ in $\Z_{p}[G] \!\pmod p$ or in $Z_{L,(p)}[G] \!\pmod {\mathfrak p}$ 
depending on the context (see Remark \ref{rema511}).
\end{definition}

Let $U = \sm_{\nu\in G} u(\nu) \,{\nu^{-1}} \in \Z_{(p)}[G]$; then $U_\varphi = 
\sm_{\nu \in G} u_\varphi(\nu)\,\nu^{-1} 
\in Z_{C,(p)}[G]$, with $u_\varphi(\nu) = 
\frac{\varphi(1)}{ n } \sm_{\tau\in G} \varphi(\tau^{-1}) u(\nu \tau)$. 
We then have $U_\theta = \sm_{\varphi \div \theta}U_\varphi$.

\begin{lemma}\label{lemm15} If $U = \sm_{\nu\in G} u(\nu)\,\nu^{-1} \in {\mathcal L}$, then 
$U_\varphi.\alpha := \sm_{\nu\in G} u_\varphi(\nu)\,\alpha^{\nu^{-1}} \equiv 0 \! \pmod p$ for all 
irreducible character $\varphi$. 
\end{lemma}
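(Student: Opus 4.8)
The plan is to reduce the statement for each $\varphi$ to the single hypothesis $U\cdot\alpha:=\sum_{\nu\in G}u(\nu)\,\alpha^{\nu^{-1}}\equiv 0\pmod p$, together with the Galois-equivariance $\alpha_p(\eta^\tau)=\alpha_p(\eta)^\tau$ recalled in the Introduction. Concretely, I would write $U_\varphi\cdot\alpha$ out explicitly and show it is a $Z_{C,(p)}$-linear combination of the conjugates $(U\cdot\alpha)^\tau$, each of which is $p$-divisible by assumption.

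First I would substitute the formula $u_\varphi(\nu)=\frac{\varphi(1)}{n}\sum_{\tau\in G}\varphi(\tau^{-1})\,u(\nu\tau)$ into $U_\varphi\cdot\alpha=\sum_{\nu\in G}u_\varphi(\nu)\,\alpha^{\nu^{-1}}$ and interchange the two summations. Putting $\mu:=\nu\tau$ (so that $\nu^{-1}=\tau\mu^{-1}$) and using the left-module law $(\alpha^{\mu^{-1}})^\tau=\alpha^{\tau\mu^{-1}}$, the inner sum over $\mu$ collapses to the $\tau$-conjugate of $U\cdot\alpha$, giving
\[ U_\varphi\cdot\alpha=\frac{\varphi(1)}{n}\sum_{\tau\in G}\varphi(\tau^{-1})\,(U\cdot\alpha)^\tau . \]
Now $U\in{\mathcal L}$ means $U\cdot\alpha\equiv 0\pmod p$ in $Z_{K,(p)}$, and since $p\,Z_{K,(p)}$ is $G$-stable (conjugation merely permutes the primes above $p$), every $(U\cdot\alpha)^\tau$ is again $\equiv 0\pmod p$. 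Because $p\notdiv n=\vert G\vert$ the factor $\frac{\varphi(1)}{n}$ is a unit at $p$, and $\varphi(\tau^{-1})\in Z_{C,(p)}$ for $p\notdiv g$; multiplying the $p$-divisible elements $(U\cdot\alpha)^\tau$ by these $p$-integral scalars keeps them $p$-divisible, so the whole sum is $\equiv 0$, which is exactly the asserted congruence.

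The one point needing genuine care — the main obstacle — is the coefficient field: the idempotent $e_\varphi$, hence $U_\varphi$ and the displayed identity, have coefficients in $C$ rather than in $\Q$, so the congruence really lives in the compositum $KC$ and not in $K$. Invoking the standing assumption $K\cap C=\Q$ one has $\mathrm{Gal}(KC/\Q)\simeq G\times\mathrm{Gal}(C/\Q)$, so $G$ fixes $C$ pointwise and each conjugation $(\,\cdot\,)^\tau$ is $Z_{C,(p)}$-linear; this is what legitimizes factoring the $\varphi(\tau^{-1})$ out of the sum and interpreting ``$\equiv 0\pmod p$'' for the $Z_{C,(p)}$-valued expression $U_\varphi\cdot\alpha$ via $p\,Z_{K,(p)}\subseteq p\,Z_{KC,(p)}$, in the sense of Definition \ref{defi11}(iv). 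Everything else is the routine index bookkeeping forced by the exponential (left-module) convention for conjugation.
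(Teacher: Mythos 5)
Your proof is correct and follows essentially the same route as the paper's: substitute $u_\varphi(\nu)=\frac{\varphi(1)}{n}\sum_{\tau}\varphi(\tau^{-1})u(\nu\tau)$, interchange the sums, and recognize each inner sum as the $\tau$-conjugate of $U\cdot\alpha\equiv 0\pmod p$, the prefactor $\frac{\varphi(1)}{n}$ being a $p$-unit. Your closing remarks on the coefficient field $C$ and the hypothesis $K\cap C=\Q$ simply make explicit what the paper leaves implicit via Definition \ref{defi11} and Remark \ref{rema511}\,(iii).
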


\begin{proof} We have
$U_\varphi\,.\, \alpha = \frac{\varphi(1)}{ n } 
\sm_{\tau\in G} \,\varphi(\tau^{-1}) \Big( \sm_{\sigma\in G} u(\sigma) 
\alpha^{\tau\sigma^{-1}} \Big) \equiv 0 \pmod p$, by conjugation by 
$\tau$ of $\sm_{\sigma\in G} u(\sigma) \alpha^{\sigma^{-1}} \equiv 0 \pmod p$.
\end{proof}

\begin{lemma}\label{lemm16} Let $U \in {\mathcal L}$, let ${\mathfrak p}$ be associated with $\theta$,
and let $\varphi \div \theta$ be such that $U_\varphi \not\equiv 0 \!\pmod {\mathfrak p}$ 
(condition independent of the choice of $\varphi \div \theta$).
Then the endomorphism $E_\varphi := e_\varphi \sm_{\nu\in G} \alpha^\nu \nu^{-1}$ of
${\rm End}_{KC} \!(V_\varphi)$ is not invertible modulo ${\mathfrak p}$.
\end{lemma}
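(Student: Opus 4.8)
The plan is to connect the hypothesis $U_\varphi \not\equiv 0 \pmod{\mathfrak p}$ directly to the non-invertibility of $E_\varphi$ through the module structure, using the fact that $U_\varphi$, viewed in the matrix algebra $\mathrm{End}(V_\varphi)$, annihilates a relevant vector. First I would invoke Lemma \ref{lemm15}, which tells us that $U_\varphi . \alpha \equiv 0 \pmod p$ for the $\F_p$-relation coming from $U \in \mathcal{L}$. The key observation is that $E_\varphi = e_\varphi \sm_{\nu \in G} \alpha^\nu\, \nu^{-1}$ and $U_\varphi = \sm_{\nu \in G} u_\varphi(\nu)\, \nu^{-1}$ live in the same component $e_\varphi \C[G] \simeq \mathrm{End}(V_\varphi)$, so their product is governed by the algebra structure there. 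The aim is to show that the product $E_\varphi \cdot U_\varphi$ (or the composite in an appropriate order) vanishes modulo $\mathfrak p$, which forces $E_\varphi$ to have nontrivial kernel, hence to be non-invertible, as long as $U_\varphi \not\equiv 0 \pmod{\mathfrak p}$ guarantees a nonzero element to sit in that kernel.

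The main step is to make precise how $U_\varphi . \alpha \equiv 0$ translates into a relation inside $\mathrm{End}(V_\varphi)$. I would unwind the definition: the specialization $X_\nu \mapsto \alpha^\nu$ turns the universal endomorphism $L^\varphi(X) = \sm_{\nu} X_\nu\, \rho_\varphi(\nu^{-1})$ into $\rho_\varphi(E_\varphi')$ for a suitable element, and the computation in the proof of Lemma \ref{lemm15} shows that conjugating the relation $\sm_\sigma u(\sigma)\alpha^{\sigma^{-1}} \equiv 0$ by all $\tau \in G$ and weighting by $\varphi(\tau^{-1})$ produces exactly the statement that $E_\varphi$, acting on the image of $U_\varphi$ (or composed with $\rho_\varphi$ of the relation element), collapses modulo $\mathfrak p$. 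Concretely, I expect that $\sm_{\nu \in G} u_\varphi(\nu)\, \alpha^{\nu^{-1}} \equiv 0 \pmod p$ is the scalar trace-type shadow of a matrix identity $E_\varphi \cdot M_\varphi \equiv 0 \pmod{\mathfrak p}$, where $M_\varphi \neq 0$ is built from $U_\varphi$; then $\ker E_\varphi \supseteq \mathrm{im}\, M_\varphi \neq 0$, giving non-invertibility.

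The hard part will be bookkeeping the precise matrix identity and ensuring that $U_\varphi \not\equiv 0 \pmod{\mathfrak p}$ really yields a \emph{nonzero} operator $M_\varphi$ modulo $\mathfrak p$ whose image lands in $\ker E_\varphi$, rather than merely a nonzero coefficient vector. Since $e_\varphi \C[G] \simeq \mathrm{End}(V_\varphi)$ is a simple matrix algebra, a nonzero element $U_\varphi$ there has nonzero image as an endomorphism, so the non-triviality of $M_\varphi$ is automatic once the algebra isomorphism is used; the delicate point is matching the \emph{left} versus \emph{right} multiplication conventions (recall $E_\varphi$ acts via $\nu^{-1}$ on the left, and the relation $U$ is also written in $\nu^{-1}$) so that the vanishing $U_\varphi . \alpha \equiv 0$ corresponds to genuine composition of endomorphisms and not to an accidental pairing. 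I would resolve this by working entirely inside the simple component $e_\varphi Z_{C,(p)}[G]$ reduced modulo $\mathfrak p$, where everything becomes a statement about $f\varphi(1) \times f\varphi(1)$ matrices over a field, and where a matrix with a nonzero right-annihilator cannot be invertible.

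Finally, I would note that the independence of the condition from the choice of $\varphi \div \theta$, already asserted in the lemma statement, follows because the characters $\varphi \div \theta$ are $D$-conjugate, so the Galois action permutes the $U_\varphi$ and the $E_\varphi$ compatibly, preserving (non-)vanishing modulo the conjugate primes all lying above $\mathfrak p$; this lets me fix any single $\varphi \div \theta$ without loss of generality.
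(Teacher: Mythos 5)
Your proposal is correct and is essentially the paper's own argument: the paper likewise invokes Lemma \ref{lemm15} and computes the group-algebra product $U_\varphi \,.\, E_\varphi = e_\varphi \sum_{\tau \in G} \big(U_\varphi\,.\,\alpha\big)^\tau \tau^{-1} \equiv 0 \pmod p$, then concludes that $E_\varphi$ is not invertible modulo ${\mathfrak p}$ because the nonzero element $U_\varphi$ annihilates it in the (simple) component $e_\varphi$ of the group algebra. The left/right bookkeeping you defer is precisely the paper's ``work by transposition'' remark, and it resolves in the order $U_\varphi\,.\,E_\varphi$ (not $E_\varphi\,.\,U_\varphi$), since ${\mathcal L}$ is a left ideal; this is the only detail separating your plan from the written proof.
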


\begin{proof} Let us work by transposition of endomorphisms 
(which does not change determinants). We have
\begin{align*}
 U_{\varphi} \,.\, E_\varphi &= e_\varphi \sm_{\nu\in G} U_{\varphi}\, \alpha^\nu\, \nu^{-1}
 = e_\varphi \sm_{\nu\in G} \alpha^\nu \, \sm_{\sigma\in G} u_{\varphi} (\sigma)\sigma^{-1} \nu^{-1} \\
& =e_\varphi \sm_{\tau\in G} \Big( \sm_{\nu\in G} u_{\varphi}(\nu^{-1}\tau) \,\alpha^\nu\Big)\tau^{-1} 
= e_\varphi \sm_{\tau\in G} \big( U_{\varphi} \,.\,\alpha \big)^{\tau} \tau^{-1}\equiv 0 \pmod p,
\end{align*}
from Lemma \ref{lemm15} above.
\end{proof}

As $E_\varphi$ is an endomorphism of $V_\varphi$ over $KC$, for the prime ideal 
${\mathfrak p} \div p$ of $C$ such that $U_\varphi \not\equiv 0 \pmod {\mathfrak p}$, 
there exists a prime ideal ${\mathfrak P}\div {\mathfrak p}$ 
of $KC $ for which $\hbox{det}(E_\varphi)\equiv 0\! \pmod {\mathfrak P}$. 
But any conjugation by $\tau \in G$ gives 
$$E_\varphi^\tau = e_\varphi \sm_{\nu\in G} \alpha^{\tau\nu} \, \nu^{-1} = 
e_\varphi \sm_{\nu\in G} \alpha^{\nu}\, \nu^{-1} \, .\, (e_\varphi \tau)= 
E_\varphi\, \circ\, e_\varphi \tau, $$

and we obtain
$\hbox{det}(E_\varphi^\tau) = \hbox{det}(E_\varphi)\, \hbox{det}(e_\varphi \tau) 
\equiv 0 \pmod {\mathfrak P^\tau}$,
whence $\hbox{det}(E_\varphi) \equiv 0 \pmod {\prod_{\tau \in G} \mathfrak P^\tau}$ 
since the $ \hbox{det}(e_\varphi \tau)$ are invertible.

\smallskip
Since $\hbox{det}(E_\varphi) \equiv 0 \pmod {\mathfrak p}$ (extended to $KC$), this yields
$P^{\varphi} (\ldots, \alpha^\nu, \ldots) \equiv 0 \pmod {\mathfrak p}$ which may be written
$\Delta_p^\varphi(\eta) \equiv 0 \pmod {\mathfrak p}$.
Since $\Delta_p^\theta(\eta)$ is the local norm at ${\mathfrak p}$ of $\Delta_p^\varphi(\eta)$, we get:

\begin{corollary}\label{coro161} If $U_\varphi \not\equiv 0 \pmod {\mathfrak p}$, we have
$\Delta_p^\theta(\eta) \equiv 0 \pmod {{\mathfrak p}^f}$ (or modulo $p^f$ in $L_{\mathfrak p} = \Q_p$) 
for the $p$-adic character $\theta$ above $\varphi$ associated with ${\mathfrak p}$.
\end{corollary}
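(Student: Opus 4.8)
The plan is to convert the non-invertibility statement already supplied by Lemma \ref{lemm16} into the vanishing of an honest determinant, then to descend that vanishing from the compositum $KC$ down to the field $C$, and finally to take a local norm to land at the character $\theta$ with the predicted power $f$.

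First I would start from Lemma \ref{lemm16}: under the hypothesis $U_\varphi \not\equiv 0 \pmod{\mathfrak p}$, the endomorphism $E_\varphi = e_\varphi \sm_{\nu \in G} \alpha^\nu \nu^{-1}$ of $V_\varphi$ over $KC$ is not invertible modulo ${\mathfrak p}$. Non-invertibility is exactly the vanishing of the determinant, so there is a prime ${\mathfrak P} \div {\mathfrak p}$ of $KC$ with $\det(E_\varphi) \equiv 0 \pmod{\mathfrak P}$. The point to record next is the identity $\det(E_\varphi) = P^\varphi(\ldots, \alpha^\nu, \ldots) = \Delta_p^\varphi(\eta)$: under the identification $\rho_\varphi(\nu^{-1}) = e_\varphi\,\nu^{-1}$ of \S\ref{calp}, the operator $E_\varphi$ is precisely $L^\varphi(X)$ specialised at $X_\nu \mapsto \alpha^\nu$, whose determinant is by definition $P^\varphi$.

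The decisive step is the descent to $C$, and this is where I expect the only real care is needed. Conjugating by $\tau \in G$ gives the relation $E_\varphi^\tau = E_\varphi \circ (e_\varphi\tau)$, hence $\det(E_\varphi^\tau) = \det(E_\varphi)\cdot\det(e_\varphi\tau)$. By Theorem \ref{lemm2}(ii) (equivalently Corollary \ref{coro3}) the twist $\det(e_\varphi\tau)$ is a root of unity, so a unit modulo $p$; applying $\tau$ to $\det(E_\varphi) \equiv 0 \pmod{\mathfrak P}$ and cancelling this unit yields $\det(E_\varphi) \equiv 0 \pmod{{\mathfrak P}^\tau}$ for every $\tau \in G$. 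Because $K \cap C = \Q$ and $p$ is unramified, the distinct primes among the ${\mathfrak P}^\tau$ are exactly the primes of $KC$ above ${\mathfrak p}$, whose product is ${\mathfrak p}\,Z_{KC}$; consequently $\det(E_\varphi) = \Delta_p^\varphi(\eta) \equiv 0 \pmod{\mathfrak p}$ (extended to $KC$). The subtlety is precisely that the vanishing must survive modulo the prime ${\mathfrak p}$ of $C$ itself and not merely at a single prime ${\mathfrak P}$ of the larger field, and it is the unit factors together with the transitivity of $G$ on the ${\mathfrak P}^\tau$ that guarantee this.

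Finally I would pass from $\varphi$ to $\theta$ through the normic description of \S\ref{rema11}, namely $\Delta_p^\theta(\eta) = \No_{\mathfrak p}(\Delta_p^\varphi(\eta))$, the absolute local norm of the unramified extension $C_{\mathfrak p}/\Q_p$ of degree $f$. Since this extension is unramified, the norm multiplies the ${\mathfrak p}$-valuation by $f$, so $v_{\mathfrak p}(\Delta_p^\varphi(\eta)) \geq 1$ forces $v_p(\Delta_p^\theta(\eta)) \geq f$, that is $\Delta_p^\theta(\eta) \equiv 0 \pmod{p^f}$ in $L_{\mathfrak p} = \Q_p$, which is the claim. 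The only bookkeeping to watch beyond the descent is that this factor $f$ be tracked correctly through the norm.
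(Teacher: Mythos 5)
Your proof is correct and follows essentially the same route as the paper's: Lemma \ref{lemm16} gives the vanishing of $\det(E_\varphi)=P^\varphi(\ldots,\alpha^\nu,\ldots)=\Delta_p^\varphi(\eta)$ at one prime ${\mathfrak P}\mid{\mathfrak p}$ of $KC$, the identity $E_\varphi^\tau=E_\varphi\circ e_\varphi\tau$ together with the root-of-unity (hence unit) factor $\det(e_\varphi\tau)$ propagates it to every ${\mathfrak P}^\tau$ and thus modulo ${\mathfrak p}$ extended to $KC$, and the local norm $\No_{\mathfrak p}$ of the unramified degree-$f$ extension $C_{\mathfrak p}/\Q_p$ then yields $\Delta_p^\theta(\eta)\equiv 0 \pmod{p^f}$. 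Your explicit bookkeeping (the distinct ${\mathfrak P}^\tau$ exhaust the primes above ${\mathfrak p}$ and multiply to ${\mathfrak p}\,Z_{KC}$; the unramified norm multiplies valuations by $f$) merely makes precise what the paper leaves implicit.
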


\begin{lemma}\label{lemm161} Reciprocally, if 
$E_\varphi := e_\varphi \sm_{\nu\in G} \alpha^{\nu} \, \nu^{-1} \in {\rm End}_{KC}(V_\varphi)$ 
is not invertible modulo ${\mathfrak p}$, there exists a nonzero $\varphi$-relation modulo 
${\mathfrak p}$ of the form $W = \sm_{\sigma \in G} w(\sigma) \sigma^{-1}$ in 
$e_\varphi \, Z_{C,(p)}[G]$, such that $W.\, \alpha \equiv 0 \pmod {\mathfrak p}$.
\end{lemma}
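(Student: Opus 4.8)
The plan is to transport the whole question into the matrix algebra attached to $\varphi$ and to read ``$E_\varphi$ not invertible'' as the singularity of a single matrix, from which a relation is produced by an explicit trace identity. First I would fix a prime ${\mathfrak P} \div {\mathfrak p}$ of $KC$ and use the algebra isomorphism $e_\varphi\,KC[G] \simeq {\rm End}_{KC}(V_\varphi)$ (sending $\nu^{-1}\mapsto \rho_\varphi(\nu^{-1})$) already exploited in \S\ref{calp}, under which $E_\varphi$ becomes the matrix $M^\varphi := \sm_{\nu \in G}\alpha^\nu\,\rho_\varphi(\nu^{-1})$, with $\hbox{det}(M^\varphi) = P^{\varphi}(\ldots,\alpha^\nu,\ldots)$. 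By hypothesis $M^\varphi$ is singular modulo ${\mathfrak P}$, i.e. $P^{\varphi}(\ldots,\alpha^\nu,\ldots) \equiv 0 \pmod{\mathfrak P}$; by Corollary \ref{coro3} the $G$-conjugates of this determinant differ only by roots of unity, so the vanishing in fact spreads to every prime of $KC$ above ${\mathfrak p}$.

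The key step is an identity expressing $W \cdot \alpha$ as a trace. For a $\varphi$-relation $W = \sm_{\sigma \in G} w(\sigma)\,\sigma^{-1}$, with matrix $M_W := \sm_{\sigma} w(\sigma)\,\rho_\varphi(\sigma^{-1})$, I would look at the identity-coefficient of the product $W \cdot A$, where $A := \sm_\nu \alpha^\nu\,\nu^{-1}$: this coefficient is exactly $\sm_\sigma w(\sigma)\alpha^{\sigma^{-1}} = W\cdot\alpha$, while the identity-coefficient of any $a\in KC[G]$ equals $\frac{1}{n}\sm_\psi \psi(1)\,{\rm Tr}(\rho_\psi(a))$. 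Since $W$ lies in the $\varphi$-component, only $\psi=\varphi$ survives and $\rho_\varphi(W\cdot A)=M_W\,M^\varphi$, whence
$$ W \cdot \alpha = \Frac{\varphi(1)}{n}\,{\rm Tr}\big(M^\varphi\,M_W\big).$$
This converts ``find a relation'' into ``find a nonzero $M_W$ with ${\rm Tr}(M^\varphi M_W) \equiv 0 \pmod{\mathfrak P}$''.

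Producing $M_W$ is then immediate from singularity: choosing a nonzero $v$ in the kernel of $M^\varphi$ modulo ${\mathfrak P}$ and any covector $u$, and setting $M_W := v\,u$ (which is the adjugate ${\rm adj}(M^\varphi)$ exactly when the rank drops by one), one gets $M^\varphi M_W \equiv 0$, hence ${\rm Tr}(M^\varphi M_W)\equiv 0$ and $W \cdot \alpha \equiv 0 \pmod{\mathfrak P}$, while $M_W \neq 0$ forces $W \not\equiv 0$. For the adjugate this is even more transparent, since ${\rm Tr}(M^\varphi\,{\rm adj}(M^\varphi)) = \varphi(1)\,\hbox{det}(M^\varphi) \equiv 0$; this also dispenses with any separate appeal to the computation of Lemma \ref{lemm16}.

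The step I expect to be the real obstacle is the field of definition: the kernel vector $v$, hence $M_W$ and the coefficients $w(\sigma)$, live a priori in $KC$, whereas the statement asks for a relation in $e_\varphi\,Z_{C,(p)}[G]$. To descend I would exploit the twisted equivariance $\tau(M^\varphi) = M^\varphi\,\rho_\varphi(\tau)$ for $\tau \in {\rm Gal}(KC/C) \simeq G$ (the matrix form of the identity $E_\varphi^\tau = E_\varphi\circ e_\varphi\tau$ established just before Corollary \ref{coro161}), together with the fact, noted above, that $\hbox{det}(M^\varphi)$ vanishes modulo every prime above ${\mathfrak p}$. This makes the family of kernel lines Galois-stable, so a descent argument should return coefficients in $C$; the delicate point is that the twisting factor $\rho_\varphi(\tau)$ is a nontrivial cocycle, and one must verify that the local norm $\No_{\mathfrak p}$ relating $\Delta_p^\varphi(\eta)$ to $\Delta_p^\theta(\eta)$ absorbs precisely this twist, so that the $\varphi$-relation so obtained is genuinely the one that feeds the criterion for $\Delta_p^\theta(\eta) \equiv 0 \pmod p$.
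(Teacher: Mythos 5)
Your reduction to the matrix block attached to $\varphi$ is sound as far as it goes: the identity $W\cdot\alpha=\Frac{\varphi(1)}{n}\,{\rm Tr}(M^\varphi M_W)$ is correct, and is just the matrix-side form of what the paper does directly in the group algebra, where the expansion of $W\,.\,E_\varphi$ shows that \emph{every} coefficient (not only the identity one) equals some $\sm_{\sigma}w(\sigma)\,\alpha^{t^{-1}\sigma^{-1}}$. Likewise the outer-product/adjugate construction of a nonzero annihilator, and the spreading of $\hbox{det}(E_\varphi)\equiv 0$ to all primes of $KC$ above ${\mathfrak p}$, agree with the paper's own computations around Lemma \ref{lemm16}.

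The gap is the step you yourself call ``the real obstacle'', and it is precisely the content of Lemma \ref{lemm161} that goes beyond Lemma \ref{lemm16}: obtaining $w(\sigma)\in Z_{C,(p)}$ rather than $Z_{KC,(p)}$, and a congruence modulo ${\mathfrak p}$ (extended) rather than modulo a single ${\mathfrak P}$. Your descent sketch does not close this. Because of the twist $\tau(M^\varphi)=M^\varphi\rho_\varphi(\tau)$, conjugation by $\tau\in{\rm Gal}(KC/C)\simeq G$ carries a solution modulo ${\mathfrak P}$ to a solution modulo ${\mathfrak P}^\tau$: the kernel lines at a \emph{fixed} prime are not Galois-stable, so there is no fixed modulus against which to average conjugates or to run a Hilbert-90 type argument, and ``Galois-stability of the family over all primes'' is exactly the statement still requiring descent --- you supply no mechanism for it. The paper's mechanism is the trace descent of Lemma \ref{lemm1} (after normalizing a coefficient via Lemma \ref{lemm01} so that its trace is a unit mod $p$, conjugate the relation and sum the coefficients over ${\rm Gal}(KC/C)$), which yields a kernel element that is $C$-rational and still nonzero modulo ${\mathfrak p}$; only \emph{then}, because such a $W$ satisfies $W^\tau=W$, does the identity $E_\varphi^\tau=E_\varphi\circ e_\varphi\tau$ promote $W\,.\,E_\varphi\equiv 0\pmod{\mathfrak P}$ to a congruence modulo all the ${\mathfrak P}^\tau$, i.e.\ modulo ${\mathfrak p}$. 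Your proposal never invokes this (or any) rationalization tool. Finally, the closing idea that the local norm $\No_{\mathfrak p}$ relating $\Delta_p^\varphi(\eta)$ to $\Delta_p^\theta(\eta)$ should ``absorb the twist'' is misdirected: that norm plays no role in this lemma. The $C$-rationality is what is consumed later, in the proof of Theorem \ref{theo24}\,(b), where $w(\nu)\in Z_{C,(p)}$ is decomposed on a $Z_{L,(p)}$-basis of $Z_{C,(p)}$ to extract rational $\theta$-relations; with coefficients entangled in $KC$ that extraction (which must separate the $C$-part of the relation from the $K$-part carried by the $\alpha^\nu$) breaks down.
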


\begin{proof} Lemma \ref{lemm1} allowing $Z_{C,(p)}$-linear reasoning, there exists 
$W \in e_\varphi Z_{C,(p)} [G]$ such that $W \not\equiv 0 \pmod {\mathfrak p}$ is in the 
kernel of the transposed of $E_\varphi$, which may be written 
$W \,.\, E_\varphi \,.\,\equiv 0 \pmod {\mathfrak P}$ 
for ${\mathfrak P} \div {\mathfrak p}$ in $KC$.

\smallskip
The relation $E_\varphi^\tau = E_\varphi\,\circ\, e_\varphi \tau$ and the fact that 
$W$ is with coefficients in $Z_{C,(p)}$
 shows, by conjugations, that the congruence occurs modulo ${\mathfrak p}$ (extended).

\smallskip
Put $W = \sm_{\sigma \in G} w (\sigma) \sigma^{-1}$, $w (\sigma) \in Z_{C,(p)}$ for all $\sigma \in G$;
the congruence $W \, .\, E_\varphi \equiv 0 \pmod {\mathfrak p}$ may be written successively 
(since $ e_\varphi W = W$)
\begin{align*}
&\sm_{\nu \in G} \sm_{\sigma \in G} w (\sigma) \alpha^\nu \sigma^{-1} \,\nu^{-1} \equiv 
\sm_{\sigma \in G} w (\sigma) \sm_{t \in G} \alpha^{t^{-1} \sigma^{-1}} \,t \equiv 0 \pmod {\mathfrak p}, \\
&\sm_{t \in G}\big(\sm_{\sigma \in G} w(\sigma) \alpha^{t^{-1}\sigma^{-1}} \big)\,t \equiv 0 \pmod {\mathfrak p}; 
\end{align*}

so $\sm_{\sigma \in G} w(\sigma) \alpha^{t^{-1}\sigma^{-1}} \!\!\equiv 0\pmod {\mathfrak p}$, for all $t \in G$, 
whence $\sm_{\sigma \in G} w(\sigma) \alpha^{\sigma^{-1}} \!\!\equiv 0 \! \pmod {\mathfrak p}$, 
giving the non-trivial associated $\varphi$-relation modulo ${\mathfrak p}$
$$W = \sm_{\sigma \in G} w(\sigma)\,\sigma^{-1} \in e_\varphi Z_{C,(p)}[G], $$

such that $W.\, \alpha \equiv 0 \pmod {\mathfrak p}$  (but $W$ is not necessarily in
$e_\varphi \Z_{(p)}[G]$).
\end{proof}

\begin{lemma}\label{integer} In the study of the $\Delta_p^\theta(\eta)$, $\theta \ne 1$, 
one may suppose $\eta \in Z_K$.
\end{lemma}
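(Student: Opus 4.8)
The plan is to exploit the logarithmic behaviour of the generalized Fermat quotient together with the fact that, for a nontrivial $p$-adic character $\theta$, the local regulator $\Delta_p^\theta(\eta)$ is insensitive to a $G$-invariant translation of $\alpha_p(\eta)$.

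First I would clear denominators: choose $d \in \Z_{>0}$ with $\eta' := d\,\eta \in Z_K$, and take $p$ large enough to be prime to $d$ and to $\eta'$. By the logarithmic properties of $\alpha_p$ recalled in Section \ref{section1},
$$\alpha_p(\eta') \equiv \alpha_p(d) + \alpha_p(\eta) \pmod{p\,Z_{K,(p)}}.$$
Set $c := \alpha_p(d)$. Since $d \in \Q^\times$, the Galois relation $\alpha_p(d)^\sigma = \alpha_p(d^\sigma) = \alpha_p(d)$ shows that $c \in \Z_{(p)}$ is fixed by $G$ (in fact $c \equiv q_p(d)\pmod p$, the ordinary Fermat quotient of $d$). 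Thus, writing $\alpha \equiv \alpha_p(\eta)\pmod p$ in $Z_K$, we have $\alpha_p(\eta')^\nu \equiv c + \alpha^\nu \pmod p$ for every $\nu \in G$; all conjugates are shifted by the \emph{same} constant $c$.

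Next I would show that this translation leaves each factor $P^\varphi$ (for $\varphi \div \theta$, $\theta \ne 1$) unchanged. Recall $P^\varphi(X) = \hbox{det}_{V_\varphi}(L^\varphi(X))$ with $L^\varphi(X) = \sm_{\nu\in G} X_\nu\,\rho_\varphi(\nu^{-1})$. Substituting $X_\nu \mapsto X_\nu + c$ gives
$$L^\varphi(\ldots,X_\nu + c,\ldots) = L^\varphi(X) + c\, \sm_{\nu \in G} \rho_\varphi(\nu^{-1}).$$
For a nontrivial absolutely irreducible $\varphi$, the operator $\frac{1}{n}\sm_{\nu \in G}\rho_\varphi(\nu^{-1})$ is the projection of $V_\varphi$ onto its $G$-invariants $V_\varphi^{G} = 0$, so $\sm_{\nu \in G}\rho_\varphi(\nu^{-1}) = 0$. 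Hence $L^\varphi$, and therefore $P^\varphi$, is invariant under the uniform constant shift, and specializing $X_\nu \mapsto \alpha^\nu$ yields $P^\varphi(\ldots, c + \alpha^\nu, \ldots) = P^\varphi(\ldots, \alpha^\nu, \ldots)$.

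Finally I would combine these: since every $\varphi \div \theta$ is nontrivial when $\theta \ne 1$, and since $P^\varphi$ has $p$-integral coefficients (so that congruent inputs modulo $p$ give congruent outputs),
$$\Delta_p^\theta(\eta') \equiv \prd_{\varphi \div \theta} P^\varphi(\ldots, c + \alpha^\nu, \ldots) = \prd_{\varphi \div \theta} P^\varphi(\ldots, \alpha^\nu, \ldots) \equiv \Delta_p^\theta(\eta) \pmod p.$$
Thus replacing $\eta$ by the algebraic integer $\eta' = d\,\eta$ changes none of the local $\theta$-regulators with $\theta \ne 1$, and one may assume $\eta \in Z_K$ throughout. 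The only delicate point is the vanishing $\sm_{\nu \in G}\rho_\varphi(\nu^{-1}) = 0$ for nontrivial $\varphi$, which is exactly where the hypothesis $\theta \ne 1$ is essential: for $\theta = 1$ the trivial character gives $\sm_{\nu\in G}\rho_1(\nu^{-1}) = n \ne 0$, and indeed $\Delta_p^1(\eta)$ is the Fermat quotient, which is altered by the factor $d$.
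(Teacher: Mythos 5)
Your proof is correct, but it takes a genuinely different route from the paper's. The paper writes $\eta = \mu\, d^{-1}$ with $\mu \in Z_K$, $d \in \Z$, uses the same logarithmic identity $\alpha_p(\eta) \equiv \alpha_p(\mu) - \alpha_p(d) \pmod p$ with $\alpha_p(d)$ Galois-invariant, but then argues at the level of the relation modules: a $\theta$-relation with $\theta \ne 1$ has zero augmentation modulo $p$, hence annihilates the constant $\alpha_p(d)$, so ${\mathcal L}^\theta(\eta) = {\mathcal L}^\theta(\mu)$, and the equivalence of Theorem \ref{theo24} (nullity of $\Delta_p^\theta$ modulo $p$ if and only if ${\mathcal L}^\theta \ne \{0\}$) then gives simultaneous vanishing of $\Delta_p^\theta(\eta)$ and $\Delta_p^\theta(\mu)$. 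You instead work directly with the factor polynomials $P^\varphi$ of \S\,\ref{calp}, using the identity $\sm_{\nu \in G}\rho_\varphi(\nu^{-1}) = 0$ for nontrivial $\varphi$ (it is $n$ times the projection onto $V_\varphi^G = 0$), so that each $P^\varphi$ is invariant under the uniform shift $X_\nu \mapsto X_\nu + c$; this yields the actual congruence $\Delta_p^\theta(d\eta) \equiv \Delta_p^\theta(\eta) \pmod p$, which is finer than mere simultaneous nullity, and it bypasses Theorem \ref{theo24} entirely (which the paper must cite forward, since that theorem comes after the lemma). What the paper's route buys is precisely the object used downstream, namely equality of the $G$-modules ${\mathcal L}^\theta$; that also follows at once from your displacement computation (a $\theta$-relation kills the constant $c$ for $\theta \ne 1$), but you would want to say so explicitly if the ``study'' is understood to include the relation modules and not only the regulator values. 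Both arguments hinge on the same fact --- $\alpha_p(d)$ is a Galois-invariant constant annihilated by every nontrivial character component --- and your closing remark on why $\theta = 1$ genuinely fails (the Fermat quotient of $d$ shifts $\Delta_p^1$) is exactly the paper's reason for the hypothesis $\theta \ne 1$.
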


\begin{proof} Put $\eta=\mu . d^{-1}$, $\mu \in Z_K$, $d \in \Z$. We have 
$\alpha_p(\eta) \equiv \alpha_p(\mu)-\alpha_p(d)$ $\pmod p$, and 
$\sm_{\nu \in G} u(\nu) \alpha_p(\eta)^{\nu^{-1}} \equiv 
\sm_{\nu \in G} u(\nu) \alpha_p(\mu)^{\nu^{-1}} \pmod p$,
for all $\theta$-relations relative to $\eta$,
because $\alpha_p(d)$ is invariant by Galois and $\theta \ne 1$; whence 
${\mathcal L}^\theta(\eta)= {\mathcal L}^\theta(\mu)$
and $\Delta_p^{\theta}(\eta)$ \& $\Delta_p^{\theta}(\mu)$ null (or not) at the 
same time (Theorem \ref{theo24} below).
\end{proof}

Then we shall suppose $\eta \in Z_K$ for certain Diophantine reasoning 
(essentially in Sections \ref{section6}, \ref{section7}), 
but we can keep $\eta \in K^\times$ in general statements.

\subsubsection{Main statement}\label{sub15}
The technical results of \S\,\ref{sub155} lead to the following:

\begin{theorem}\label{theo24} Let $K/\Q$ be a Galois extension of degree $n$ of Galois group $G$.
Let $\eta \in K^\times$ be such that the multiplicative $\Z[G]$-module generated by $\eta$ is of $\Z$-rank $n$. 

\smallskip  
For any unramified prime $p>2$, prime to $n$ and $\eta$, put $\eta_1 := \eta^{p^{n_p} - 1} = 
1 + p\,\alpha_p(\eta)$, $\alpha_p(\eta) \in Z_{K,(p)}$, where $n_p$ is the residue degree of $p$ in $K/\Q$. 

\smallskip
Let ${\mathcal L}$ be the $G$-module of relations 
$U = \sm_{\nu \in G} u(\nu)\, \nu^{-1} \in \Z_{(p)} [G]$ regarding $\alpha_p(\eta)$, i.e., such that,
by definition,

\medskip
\centerline{$\sm_{\nu \in G} u(\nu)\, \alpha_p(\eta)^{\nu^{-1}} \equiv 0 \pmod p, \ \,u(\nu) \in \Z_{(p)} \ 
\hbox{(Definitions \ref{defi11})}. $}

\smallskip  
Let $\theta$ be an irreducible $p$-adic character of $G$ and let $f$ be the residue degree of 
$p$ in the field of values of the absolutely irreducible characters $\varphi \div \theta$. 

\smallskip
Then, seen in $\F_p[G]$, the $G$-module ${\mathcal L}^\theta := e_\theta {\mathcal L}$ is of nonzero $\F_p$-dimension if and only if the local $\theta$-regulator $\Delta_p^\theta(\eta)$ (\S\,\ref{sub6}) is zero modulo $p$. 

\smallskip
When it is the case, the $\F_p$-dimension of ${\mathcal L}^\theta$ is $\delta f \varphi(1)$, 
with $1 \leq \delta \leq \varphi(1)$.
\end{theorem}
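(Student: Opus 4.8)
The plan is to read the whole statement off the structure of the cyclic $\F_p[G]$-module generated by $\alpha := \alpha_p(\eta)$. First I would record that the evaluation map $\Phi : \F_p[G] \to Z_{K,(p)}/p\,Z_{K,(p)}$, sending $\sm_{\nu\in G} u(\nu)\,\nu^{-1}$ to $\sm_{\nu\in G} u(\nu)\,\alpha^{\nu^{-1}}$, is left $\F_p[G]$-linear, since it is the orbit map $\xi\mapsto\xi\cdot\alpha$ for the Galois action $\nu\cdot\alpha=\alpha^\nu$. Hence ${\mathcal L}=\ker\Phi$ is a left ideal of $\F_p[G]$. As $p\notdiv g$, the algebra $\F_p[G]$ is semisimple and $\bar e_\theta$ is a central idempotent, so ${\mathcal L}^\theta=\bar e_\theta\,{\mathcal L}={\mathcal L}\cap\bar e_\theta\F_p[G]$ is a left submodule of $\bar e_\theta\F_p[G]\simeq\varphi(1)\,V_\theta$. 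Because $V_\theta$ is simple of $\F_p$-dimension $f\varphi(1)$ (Definition \ref{defi11}(iii)), this already forces ${\mathcal L}^\theta\simeq\delta\,V_\theta$ with $0\le\delta\le\varphi(1)$ and $\dim_{\F_p}{\mathcal L}^\theta=\delta f\varphi(1)$; so only the equivalence ``$\delta\ge1\iff\Delta_p^\theta(\eta)\equiv0$'' and the identification of $\delta$ remain to be proved.

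For the forward implication I would argue straight from \S\,\ref{sub155}: if ${\mathcal L}^\theta\neq0$, choose $U\in{\mathcal L}$ with $U_\theta\not\equiv0\pmod p$. By Definition \ref{defi11} and Remark \ref{rema511}(iii) this is equivalent to $U_\varphi\not\equiv0\pmod{{\mathfrak p}}$ for $\varphi\div\theta$, and then Lemma \ref{lemm16} together with Corollary \ref{coro161} yield $\Delta_p^\theta(\eta)\equiv0\pmod{p^f}$, a fortiori modulo $p$.

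The converse, together with the value of $\delta$, is the heart of the matter. Assuming $\Delta_p^\theta(\eta)\equiv0\pmod p$, i.e. $P^\varphi(\ldots,\alpha^\nu,\ldots)=\det(E_\varphi)\equiv0\pmod{{\mathfrak p}}$, Lemma \ref{lemm161} furnishes a nonzero $\varphi$-relation $W\in e_\varphi Z_{C,(p)}[G]$ with $W\cdot\alpha\equiv0\pmod{{\mathfrak p}}$. Rather than descend $W$ directly to a rational relation — which is delicate, since $W\cdot\alpha$ vanishes only at the single prime ${\mathfrak p}$ and naive averaging over ${\rm Gal}(C/\Q)$ destroys the congruence — I would compute $\dim_{\F_p}{\mathcal L}^\theta$ by base change to the residue field $k:=Z_{C,(p)}/{\mathfrak p}\simeq\F_{p^f}$. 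Extending scalars, $\bar e_\theta\F_p[G]\otimes_{\F_p}k\simeq\prod_{\varphi\div\theta}{\rm M}_{\varphi(1)}(k)$, and under this Wedderburn splitting the restriction $\Phi|_{\bar e_\theta\F_p[G]}$ becomes, on each factor, right multiplication by the matrix of $E_\varphi$: the inclusion $\ker\Phi\subseteq\ker(\,\cdot\,E_\varphi)$ is the computation of Lemma \ref{lemm16}, and the reverse inclusion is Lemma \ref{lemm161}. Hence ${\mathcal L}^\theta\otimes_{\F_p}k=\bigoplus_{\varphi\div\theta}\ker(\,\cdot\,E_\varphi)$, each summand of $k$-dimension $\varphi(1)\big(\varphi(1)-{\rm rank}\,E_\varphi\big)$. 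Since base change preserves dimensions and the $f$ conjugate endomorphisms $E_{\varphi^s}$ share a common nullity $\delta:=\varphi(1)-{\rm rank}\,E_\varphi$, I obtain $\dim_{\F_p}{\mathcal L}^\theta=f\varphi(1)\,\delta$. This simultaneously recovers the dimension formula and shows $\delta\ge1\iff E_\varphi$ is singular modulo ${\mathfrak p}\iff\Delta_p^\theta(\eta)\equiv0\pmod p$, closing the equivalence.

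The main obstacle is precisely the rank bookkeeping in this last step: one must verify that the kernel of $\Phi$ computed over $\F_p$ and the kernel computed over the extension $k=\F_{p^f}$ give the same count, so that the ${\mathfrak p}$-adic endomorphism $E_\varphi$ legitimately governs the rational module ${\mathcal L}^\theta$. This is exactly the descent of relations guaranteed by Lemma \ref{lemm1}, and I would take care to check that its argument (via the trace surjectivity of Lemma \ref{lemm01}) applies with $Z_{C,(p)}$-coefficients, and not merely with $K$-coefficients, before invoking it.
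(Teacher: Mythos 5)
Your proposal is correct, and for the delicate half of the equivalence it takes a genuinely different route from the paper. The forward direction is identical (Remark \ref{rema511}\,(iii), Lemma \ref{lemm16}, Corollary \ref{coro161}), and your semisimplicity bookkeeping for $\delta$ matches Definition \ref{defi11}\,(iii). For the converse, the paper works constructively: from the $\varphi$-relation $W\in e_\varphi Z_{C,(p)}[G]$ of Lemma \ref{lemm161} it decomposes the coefficients $w(\nu)$ on a $Z_{L,(p)}$-basis $\{z,\ldots,z^f\}$ of $Z_{C,(p)}$, extracts $f$ component relations, reduces them modulo ${\mathfrak p}$ to rational integers $r^i_{\mathfrak p}(\nu)$, and uses the fact that $\sm_{\nu} r^i_{\mathfrak p}(\nu)\,\alpha^{\nu^{-1}}$ lies in $K$ to upgrade the congruence modulo ${\mathfrak p}$ to one modulo $p$, thereby exhibiting an explicit nontrivial $\theta$-relation in ${\mathcal L}$ (with the further remark that the matrix $\bigl(r^i_{\mathfrak p}(\nu)\bigr)_{i,\nu}$ has rank $f$). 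You instead count dimensions after base change to $k=\F_{p^f}$, identifying ${\mathcal L}^\theta\otimes_{\F_p} k$ with the direct sum of the left annihilators of the $E_\varphi$; this yields the equivalence and the exact value $\delta=\varphi(1)-{\rm rank}\,E_\varphi$ in one stroke, which is cleaner and slightly finer than the paper's argument, whereas the paper's method has the advantage of producing the integral relations that are exploited in the numerical sections. One step of yours does need shoring up, and it is essentially the one you flag, though Lemma \ref{lemm1} alone is not quite the right citation: the matrix of $E_\varphi$ has entries in $Z_{KC,(p)}/{\mathfrak p}$ (a product of residue fields strictly larger than $k$), so the count $\dim_k\ker(\,\cdot\,E_\varphi)=\varphi(1)\bigl(\varphi(1)-{\rm rank}\,E_\varphi\bigr)$ is not the bare rank--nullity formula in ${\rm M}_{\varphi(1)}(k)$; one must know that the left nullspace of $E_\varphi$ modulo a prime ${\mathfrak P}\div{\mathfrak p}$ of $KC$ is defined over $k$. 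This does hold, by Galois descent from the relation $E_\varphi^\tau=E_\varphi\,\circ\, e_\varphi\tau$ established in \S\,\ref{sub155} (conjugation permutes the primes ${\mathfrak P}\div{\mathfrak p}$ and twists $E_\varphi$ only by invertible factors, so the nullspace is stable under the residue Galois group), or alternatively from the normal-basis isomorphism $Z_K/p\,Z_K\simeq\F_p[G]$ for unramified $p$, which turns $\Phi$ into right multiplication by an element of $\F_p[G]$ and makes the whole computation internal to $k[G]$. Your instinct to re-run the trace-descent of Lemmas \ref{lemm01}--\ref{lemm1} with $Z_{C,(p)}$-coefficients is the same device the paper itself invokes at the start of the proof of Lemma \ref{lemm161}, so the concern is well placed; it just needs to be carried out in one of the two ways above.
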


\begin{proof}
 (a) If ${\mathcal L}^\theta \ne \{0\}$, there exists $U=\sm_{\nu\in G} u(\nu) \,\nu^{-1} \in {\mathcal L}$
 such that $U_\theta \not\equiv 0 \pmod p$; then $U_\varphi \not\equiv 0 \pmod {\mathfrak p}$ for all 
 $\varphi \div \theta$. 
From Lemma \ref{lemm16} and Corollary \ref{coro161}, we have $\Delta_p^\theta(\eta) \equiv 0 \pmod p$.

\smallskip
(b) Suppose $\Delta_p^\theta(\eta) \equiv 0 \pmod p$ and let $\alpha \equiv \alpha_p(\eta) \pmod p$,
$\alpha\in Z_{K,(p)}$; by the resulting nullity modulo $p$ of ${\rm Frob}^G(\alpha)$,
 there exists a relation of $\F_p$-dependence of the form 
 $\sm_{\nu \in G} u(\nu)\, \alpha^{\nu^{-1}} \equiv 0 \pmod p$, 
$u(\nu) \in \Z_{(p)}$ not all divisible by $p$, and we have 
$U = \sm_{\nu \in G} u(\nu)\, \nu^{-1} \in {\mathcal L}$
(Corollary \ref{coro14}), but we need to deduce that ${\mathcal L}^\theta \ne \{0\}$.
From Lemma \ref{lemm161}, there exists, for $\varphi \div \theta$, a non trivial
$\varphi$-relation modulo ${\mathfrak p}$ of the form $W := \sm_{\nu \in G} w(\nu)\nu^{-1}$, 
$w(\nu) \in Z_{C,(p)}$, such that $W.\,\alpha \equiv 0\!\! \pmod {\mathfrak p}$. 

If $\{z, \ldots, z^{f}\}$ is a $Z_{L,(p)}$-basis of $Z_{C,(p)}$, then 
$w(\nu) = \sm_{i =1,\ldots,f} a_i (\nu) z^i$, with $a_i(\nu) \in Z_{L,(p)}$ for all $i$ and all $\nu$, whence 
$\sm_{\nu \in G} \sm_{i =1,\ldots,f} a_i(\nu) z^i\, \alpha^{\nu^{-1}} \equiv 0 \! \pmod {\mathfrak p}$;
identifying on the basis of the $z^i$ one obtains the system of relations in $Z_{KL,(p)}$

\medskip
\centerline {$\sm_{\nu \in G} a_i(\nu) \alpha^{\nu^{-1}} \equiv 0 \pmod {\mathfrak p}, \ i = 1, \ldots, f. $}

\smallskip  
For all $i$, and all $\nu$, there exist some $r_{\mathfrak p}^i (\nu) \in \Z$ such that $ a_i(\nu) \equiv 
r_{\mathfrak p}^i (\nu) \pmod {\mathfrak p}$, whence
$\sm_{\nu \in G} a_i(\nu) \alpha^{\nu^{-1}}\equiv \sm_{\nu \in G}r_{\mathfrak p}^i (\nu) \alpha^{\nu^{-1}}
\equiv 0 \pmod {\mathfrak p}$; since $\sm_{\nu \in G}r_{\mathfrak p}^i (\nu) \alpha^{\nu^{-1}}$ is in 
$K$, this yields $\sm_{\nu \in G}r_{\mathfrak p}^i (\nu) \alpha^{\nu^{-1}}\equiv 0 \pmod p$.
Since $W$ is a non trivial $\varphi$-relation modulo ${\mathfrak p}$, the $r_{\mathfrak p}^i (\nu)$ 
are not all zero modulo $p$ and there exists a non trivial relation 
$\sm_{\nu \in G}r_{\mathfrak p}^i (\nu)\,\alpha^{\nu^{-1}}$ for at least an index $i \in \{1,\ldots,f\}$.
As $W$ is a $\varphi$-relation, this is transmitted to $\sm_{\nu \in G} a_i(\nu)\,\alpha^{\nu^{-1}}$ 
and consequently, $\sm_{\nu \in G} r_{\mathfrak p}^i (\nu) \, {\nu^{-1}}$ 
(a $\varphi$-relation invariant by $D$), is a non trivial $\theta$-relation of ${\mathcal L}$.

\smallskip
In fact one can prove that the matrix $\big ( r_{\mathfrak p}^i (\nu) \big)_{i,\nu}$ is of rank $f$.
\end{proof}

\begin{corollary}\label{coro25} When ${\mathcal L}^\theta \ne \{0\}$, we get local lifts of the form 
$\eta^{U_\theta} \in \prd_{v \div p} K_v^{\times p}$ for all $\theta$-relation 
$U_\theta \in {\mathcal L}^\theta$. If we represent, modulo $p$,  
$U_\theta \in \Z_p[G]$ by $U'_\theta \in \Z[G]$, then $\eta_1^{U_\theta}$ 
is a global element of $K^\times$ being a local $p$th pover at $p$.
\end{corollary}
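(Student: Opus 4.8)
The plan is to reduce the whole statement to the logarithmic analysis already carried out in \S\,\ref{global}\,(a), transporting the defining congruence of ${\mathcal L}^\theta$ through ${\rm log}_p$. Fix a nonzero $U_\theta \in {\mathcal L}^\theta = e_\theta\,{\mathcal L}$ and write $U_\theta = \sm_{\nu \in G} u_\theta(\nu)\,\nu^{-1}$ with coefficients in $Z_{L,(p)}$. Since $U_\theta = \sm_{\varphi \div \theta} U_\varphi$, Lemma \ref{lemm15} gives the defining congruence
$$U_\theta \,.\, \alpha_p(\eta) := \sm_{\nu \in G} u_\theta(\nu)\,\alpha_p(\eta)^{\nu^{-1}} \equiv 0 \pmod {\mathfrak p},$$
that is $\equiv 0 \pmod p$ after the identification $L_{\mathfrak p} = \Q_p$ of Definition \ref{defi11}\,(iv). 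This single congruence is the only input beyond formal manipulations.

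First I would pick the representative $U'_\theta = \sm_{\nu \in G} u'(\nu)\,\nu^{-1} \in \Z[G]$ with $U'_\theta \equiv U_\theta \pmod p$ granted by Definition \ref{defi11}\,(iv). As $\eta_1 = 1 + p\,\alpha_p(\eta) \in K^\times$ and the $u'(\nu)$ are now rational integers, $\eta_1^{U'_\theta} := \prd_{\nu \in G}\big(\eta_1^{\nu^{-1}}\big)^{u'(\nu)}$ is a genuine global element of $K^\times$ (this is the object the statement calls $\eta_1^{U_\theta}$). Using ${\rm log}_p(1+p\,x)\equiv p\,x \pmod{p^2}$ from \S\,\ref{log} together with the Galois-equivariance and additivity of ${\rm log}_p$, and then $u'(\nu)\equiv u_\theta(\nu)\pmod p$, I would compute
$${\rm log}_p\big(\eta_1^{U'_\theta}\big) = \sm_{\nu \in G} u'(\nu)\,{\rm log}_p\big(\eta_1^{\nu^{-1}}\big) \equiv p\,\big(U_\theta \,.\, \alpha_p(\eta)\big) \equiv 0 \pmod{p^2}.$$
Conjugating by the embeddings $\sigma_v$ turns this into ${\rm Log}_p\big(\eta_1^{U'_\theta}\big) \equiv 0 \pmod{p^2}$.

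Next I would invoke the local analysis of \S\,\ref{global}\,(a): any $\eta_0 \in K^\times$ with ${\rm Log}_p(\eta_0)\equiv 0\pmod{p^2}$ lies, in $\prd_{v \div p} K_v^\times$, in $\xi\cdot\prd_{v\div p}(U_v^1)^p$ with $\xi$ of prime-to-$p$ torsion for $p$ large. Since $\eta_1 \equiv 1 \pmod{p\,Z_K}$, the element $\eta_1^{U'_\theta}$ is a principal unit at every $v \div p$, so $\xi = 1$ and $\eta_1^{U'_\theta} \in \prd_{v \div p} K_v^{\times p}$, which is the second assertion. The first assertion follows formally in the $\F_p$-vector space $\prd_{v\div p} K_v^\times / \prd_{v\div p} K_v^{\times p}$: reduction $U_\theta \mapsto U'_\theta$ modulo $p$ and multiplication by the unit $p^{n_p}-1 \in \Z_p^\times$ (relating $\eta_1 = \eta^{p^{n_p}-1}$ to $\eta$) are both invertible there, so $\eta^{U_\theta}$, $\eta_1^{U_\theta}$ and $\eta_1^{U'_\theta}$ are simultaneously local $p$th powers.

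The delicate point is not this computation but giving $\eta^{U_\theta}$ a meaning when $U_\theta \in \Z_p[G]$: one must raise the semilocal unit $i_p(\eta)$ to a $\Z_p$-exponent, which forces splitting off the Teichmüller (prime-to-$p$) part at each $v \div p$ before using that $U_v^1$ is a $\Z_p$-module. Passing to $\eta_1 = \eta^{p^{n_p}-1}$ is precisely the device that kills that torsion and legitimises the reduction to the honest global element $\eta_1^{U'_\theta}$; verifying that ``being a local $p$th power'' is insensitive both to the prime-to-$p$ twist $p^{n_p}-1$ and to the change $U_\theta \mapsto U'_\theta$ is the only step requiring genuine care.
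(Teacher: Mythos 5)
Your proof is correct and is essentially the paper's own argument in logarithmic dress: your congruence $\log_p\big(\eta_1^{U'_\theta}\big) \equiv 0 \pmod{p^2}$ is exactly the paper's expansion $\eta_1^{U_\theta} = (1+p\,\alpha_p(\eta))^{U_\theta} \equiv 1 + p\,U_\theta\cdot\alpha_p(\eta) \equiv 1 \pmod{p^2}$, your passage to a local $p$th power is the same structure-of-principal-units fact (which you quote from \S\,\ref{global}\,(a) rather than reprove in one line as the paper does), and your inversion of $p^{n_p}-1$ modulo $p$ in the $\F_p$-space $\prod_{v \mid p} K_v^{\times}/K_v^{\times p}$ is the paper's identity $\eta = \eta^{p^{n_p}}\eta_1^{-1}$. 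Your added care in representing $U_\theta \in \Z_p[G]$ by $U'_\theta \in \Z[G]$ and in giving $p$-adic exponents a meaning is sound and slightly more explicit than the paper's treatment, but it does not alter the approach.
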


\begin{proof}
We have $\eta_1^{U_\theta} \!= (1+p\,\alpha_p(\eta))^{U_\theta} \equiv 
1 + p\, U_\theta \cdot \alpha_p(\eta)\! \pmod {p^2}$ and, since by definition 
$U_\theta\, \cdot\,\alpha_p(\eta) \equiv 0 \pmod p$, this yields
$\eta_1^{U_\theta} = 1 +p^2 \beta$, $\beta \in Z_{K,(p)}$.
Thus $\eta_1^{U_\theta} = (1+p\,\gamma)^p$, $\gamma \in \prd_{v \div p} K_v$, and
$\eta = \eta^{p^{n_p}} \eta_1^{-1}$ implies $\eta^{U_\theta} \in \prd_{v \div p} K_v^{\times p}$.
\end{proof}

\section{Heuristic considerations and experiments}\label{section4}

\subsection{Probabilistic methods}\label{sub11} 
If some events $E_p$, indexed by the prime numbers, are independent and of probabilities ${\rm Pr} (E_p)$, 
we may apply the heuristic principle of Borel--Cantelli that is to say: if the series $\sum_p {\rm Pr} (E_p)$ 
is convergent, then the natural conjecture is that the events $E_p$ are realized finitely many times, and that 
if it is divergent they are realized infinitely many times with a suitable density (see \cite[Chap.\,III.1]{T}). 
In our case, $E_p$ is, for $\eta \in K^\times$ fixed, the events
$$\hbox{ ``${\rm Reg}_p^G(\eta) \equiv 0 \!\pmod p$'' \ or \  ``$\Delta_p^\theta (\eta) \equiv 0 \pmod p$'' } $$

for a choice of $\theta$ for each $p$ (\S\,\ref{sub6}).
In the general case, since ${\rm Reg}_p^\theta(\eta)$ is a local norm in the extension $C/\Q$, 
such a local regulator is either prime to $p$, either divisible by $p^f$, where $f$ is the residue degree of $p$ 
in this extension; similarly, if the irreducible character $\varphi \div \theta$ is of degree $\varphi (1) \geq 2$, 
${\rm Reg}_p^G(\eta)$ is divisible by $p^{f\varphi (1)}$.

\smallskip
We shall see that the degree $\varphi (1)$ does not occur for probabilities but that, on the contrary, 
the number $\delta$ 
such that ${\mathcal L}^\theta \simeq \delta V_\theta$ occurs, as well as $f$, 
under the formula $\Frac{O(1)}{p^{f \delta^2}}$ 
which is the probability to have ``$\, \Delta_p^\theta (\eta) \equiv 0 \pmod p\ $ \& 
$\ {\mathcal L}^\theta \simeq \delta V_\theta\, $'' (\S\,\ref{HP}). 

\smallskip
 We shall neglect primes $p$ for which at least two $\theta$-regulators $\Delta_p^\theta(\eta)$
are divisible by $p$, such a probability being at most $\Frac{O(1)}{p^2}$, given the independence of the
local $\theta$-regulators (\S\,\ref{sub12}).

\smallskip
It will remain the case $\Delta^\theta_p(\eta) \equiv 0 \pmod p$ for a unique $p$-adic 
character $\theta$ of $G$ under the conditions $f=1$ and the representation 
${\mathcal L}^\theta$ being minimal (i.e., $\delta =1$); then we will have 
${\rm Reg}_p^G(\eta) \sim p^{e \varphi(1)}$ with $e = 1$, the case $e \geq 2$ 
being also of probability at most $\Frac{O(1)}{p^2}$ (\S\,\ref{subex}).

\smallskip
 The obstruction for the utilisation of the heuristic principle of Borel--Can\-telli
would come from primes $p$ satisfying the following definition:

\begin{definition}\label{defidec} A prime number $p$ constitutes a case of {\it minimal $p$-divisibility}
for the normalized regulator ${\rm Reg}_p^G (\eta)$ if ${\mathcal L}^\theta \ne 0$ 
(i.e., $\Delta^\theta_p(\eta) \equiv 0 \pmod p$) for a unique irreducible $p$-adic character 
$\theta$ of $G$ satisfying furthermore the following conditions 

\smallskip
(i) $p$ is totaly split in $C$ (i.e., $f=1$),

\smallskip
(ii) ${\mathcal L}^\theta \simeq V_\theta$ (i.e., $\delta=1$),

\smallskip
(iii) ${\rm Reg}_p^\theta (\eta) \sim p$ 
(i.e., ${\rm Reg}_p^G (\eta) \sim p^{\varphi(1)}$ has no extra $p$-divisibilities).
\end{definition}

If $G$ is Abelian, this concerns certain $p \equiv 1 \pmod d$, where $d$ is the order of $\varphi \div \theta$.

\smallskip
If $G=1$ (situation of the Fermat quotient of a rational), this occurs for all $p$.

\subsection{Principles of analysis -- linearization of the problem}\label{sub13}
Let $\eta \in Z_K$ be given such that the multiplicative $G$-module generated by 
$\eta$ is of $\Z$-rank $n$, even if any case of sub-representation may be studied 
in an analogous way.

\subsubsection{Densities vs probabilities}\label{sub14}
We can verify by experiments the following heuristic principles using the function {\it random} of PARI 
to define an arbitrary integer $\gamma$ of $K$, prime to $p$ 
(in fact we are only interested by the class modulo $p^2$ of $\gamma$):

\medskip
(i) If under a $p$-adic point of view, $\alpha_p(\gamma) \pmod p$ runs through 
the quotient ring $Z_{K,(p)}/(p) \simeq \F_p^{\,n}$,
experiments show that the statistical result remains excellent if one limits 
$\gamma$ into a small {\it Archimedean} domain (defined for instance by 
$\vert c_i\vert \ll p$ for the components $c_i$ of $\gamma$ on a basis, or by 
${\rm max}_{\nu\in G}(\vert \gamma^\nu \vert) \ll p$), 
which preserves the Diophantine aspect and proves 
an uniform distribution (required limitation when $p^n$ is very large). 
In \cite{H-B} it is proved the uniform distribution of Fermat quotients 
and it is easy to conjecturer that this is general.

\smallskip
As explained in Remark \ref{probvsdens}, we must distinguish the notion of probability ($\gamma$ fixed 
and $p \to \infty$) from that of density, purely algebaic, when they are equal to $\Frac{O(1)}{p}$; 
we establish Sections \ref{section6} and \ref{section7} the analogue of the study 
conducted in \cite{Grqf} for Fermat quotients (with numerical verifications for the groups $C_3, D_6$), 
which constitutes a serious justification of the conjectures of Section \ref{section8}.

(ii) Let $(e_i)_{i=1,\ldots,n}$ be a $\Z_{(p)}$-basis of $Z_{K,(p)}$ 
and put $\alpha_p(\gamma) = \sm_{i=1}^n A_i e_i$, $A_i \in \Z_{(p)}$; 
then, modulo $p$, the variables $A_i$ are independent and equiprobable in $\F_p$, 
and this does not depend on $K$ nor of the choice of the basis. 

Any non trivial relation of the form $\sm_{\nu \in G} u(\nu)\,\alpha_p(\gamma)^{\nu^{-1}}
\equiv 0 \pmod p$ is translated into an analogous non trivial relation on the $A_i$
(because the conjugates of the $e_j$ are linear forms on the $e_i$, independent of $p$).

\subsubsection{Main Heuristic.} \label{HP} The probability (comming from the 
corresponding density) of $\Delta_p^\theta (\eta)\equiv 0 \pmod p$ is that of 
${\mathcal L}^\theta \ne \{0\}$ (Definitions \ref{defi11}, Theorem \ref{theo24}).
If ${\mathcal L}^\theta \simeq \delta V_\theta$, $\delta \ne 0$, 
we shall justify that we must assign to this case the probability
$${\rm Prob}\Big( \hbox{${\mathcal L}^\theta \simeq \delta V_\theta$, 
$\ 1 \leq \delta \leq \varphi(1)$} \Big) \leq \dsfrac{O(1)}{p^{f \delta^2}}, $$

where $f$ is the residue degree of $\theta$, where we consider $V_\theta$ 
as a $\F_p$-representation and then, by extension of scalars, 
$V_\theta \otimes \F_{p^f} $ and $V_\varphi $ as $\F_{p^f}$-representations.

\smallskip
Indeed, we have ${\mathcal L}^\theta \otimes \F_{p^f} = 
\bigoplus_{\varphi \div \theta} {\mathcal L}^\varphi$, where 
${\mathcal L}^\varphi \simeq \delta V_\varphi$, and 
the idea comes from the fact that when ${\mathcal L}^\varphi \simeq \varphi(1) V_\varphi \simeq 
e_\varphi \F_{p^f}[G]$ (i.e., $e_\varphi \alpha_p(\eta) \equiv 0 \pmod p$), the correspondent probability is 
$\Frac{O(1)}{p^{f\varphi(1)^2}}$ (minimal) since $e_\varphi \alpha_p(\eta)$ is defined by $f \varphi(1)^2$ 
$\F_p$-independent components ($\F_p$-dimension of $\varphi(1) V_\varphi$).
But $e_\varphi \F_{p^f}[G] \simeq {\rm End}(V_\varphi)$ as an algebra of endomorphisms 
of a $\F_{p^f}$-space of dimension $\varphi(1)$.

\smallskip
Therefore, ${\mathcal L}^\varphi \simeq \delta V_\varphi$ is then seen as a sub-algebra of 
endomorphisms of a $\F_{p^f}$-space of dimension $\delta$, whence a probability-density 
$\Frac{O(1)}{p^{f\delta^2}}$ to get ${\mathcal L}^\varphi \simeq \delta V_\varphi$ 
(i.e., ${\mathcal L}^\theta \simeq \delta V_\theta$). 
The case $f = \delta =1$ establishes the case where the notion of 
probability must be substituated for that of density.

\smallskip
We shall note that the probability to have all
 the $\Delta_p^\theta (\eta)\equiv 0 \!\!\pmod p$ with each time $\delta=\varphi(1)$ 
 (i.e., $\alpha_p(\eta)\equiv 0 \pmod p$, equivalent for the $n$ components of 
 $\alpha_p(\eta)$ to be zero modulo $p$) is then $\Frac{O(1)}{p^n}$ since 
 $\sm_\theta f \varphi(1)^2 = \vert G \vert=n$. 
 This shows the consistency of the proposed heuristic.
 
\smallskip
The most frequent non trivial case is $\delta =1$ (the residue degree $f$ 
depends canonically of $p$ contrary to $\delta $ which is ``numerical''). 
For instance, from $\delta =1$ to $\delta =2$ (for $f=1$), this increases the 
probabilities from $\Frac{O(1)}{p}$ to $\Frac{O(1)}{p^{4}}$, near from zero for $p\to\infty$ 
(very well confirmed by numerical statistics, cf. \S\,\ref{subD6}).

\begin{example}\label{ex11}{\rm 
Case of $G=D_6$ ($f=1$, $1 \leq \delta \leq 2$). Let $\theta$ be the 
irreducible character of degree 2; the representation $e_\theta \F_p[G]$ is isomorphic to $2\,V_\theta$ 
where $V_\theta$ is of $\F_p$-dimension 2. 
On may generate $e_\theta \F_p[G]$ as follows (see Remark \ref{rema511}\,(ii))
\footnotesize
\begin{align*}
 U_1 &= 1-\sigma^2 + \tau -\tau\sigma, \ \ \ \sigma U_1 = \sigma-1 + \tau\sigma^2 -\tau , \ \ \ \ \,
\sigma^2 U_1 = -U_1- \sigma U_1, \\
 U_2 &= 1-\sigma - \tau +\tau\sigma, \ \ \ \sigma U_2 = -\sigma^2+\sigma + \tau -\tau\sigma^2, \ \ \ 
 \sigma^2 U_2 = -U_2- \sigma U_2,
\end{align*}
\centerline{ $\tau U_1 = -\sigma U_1$, $\ \tau\sigma U_1 = - U_1$, $\ \tau\sigma^2 U_1 = -\sigma^2 U_1$, }

\medskip
\centerline{ $\tau U_2 = - U_2$, $\ \tau\sigma U_2 = - \sigma^2 U_2$, $\ \tau\sigma^2 U_2 = -\sigma U_2$. }

\normalsize
\medskip  
The elements $U_1, \sigma U_1, U_2, \sigma U_2$ constitute a $\F_p$-basis of the 
space of the $\theta$-relations, which justifies the probability $\Frac{O(1)}{p}$ 
only for the case $\delta = 1$, but $\Frac{O(1)}{p^4}$ for $\delta = 2$. }
\end{example}

\subsection{Probabilistic independence (over \texorpdfstring{$\theta$}{Lg}) 
of the variables \texorpdfstring{$\Delta^\theta_p(\gamma)$}{Lg}}\label{sub12}
We process the case of the group $D_6$, by use of the {\it random} function, to verify two aspects: 

\smallskip
(i) The independence of the $\theta$-regulators (probability at most $\Frac{O(1)}{p^2}$ 
to get two $\theta$-regulators $\Delta^\theta_p(\gamma)$ and $\Delta^{\theta'}_p(\gamma)$ 
null modulo $p$, for $\theta \ne \theta'$).

\smallskip
(ii) the probability $\Frac{O(1)}{p}$ to have the nullity modulo $p$ of 
$\Delta_p^{\theta}(\gamma)$ for the character 
$\theta = \chi_2$ of degree 2, the case of characters of 
degree 1 being analogous.

\medskip
We consider the field $K$ (compositum of $\Q(\sqrt[3] 2)$ and of $\Q(j)$, 
where $j$ denotes a cubic root of unity) defined by the polynomial 
$$Q=x^6+9x^4-4x^3+27x^2+36x+31. $$

We take at random $\gamma$ modulo $p^2$, prime to $p$, which gives some $\alpha = \alpha_p(\gamma)$ uniformly distributed modulo $p$. 
The Pr. A-3 of \cite{Grpro} compute the conjugates of $\alpha$ on the basis 
$\{x^5, x^4, x^3, x^2, x, 1\}$.
The variable $N_0$ is the number of $\gamma$ prime to $p$.
The variables $N_1, N_2, N_3, N_{12}, N_{13} , N_{23} , N_{123}$ give the number of cases of 
simultaneous nullities of 1, 2 or 3 regulators (characters $\chi_0$, $\chi_1$, $\chi_2$ of degree 2, respectively). 

\medskip
For $p = 13$ we obtain the following values

\smallskip
$N_0= 999115$ ; $N_1= 76820$ ; $N_2= 77009$ ; $N_3= 82239$ ; 

\smallskip
$N_{12 }= 5898$ ; $N_{13 }= 6301$ ; $N_{23 }= 6453$ ; $N_{123 }= 442$, and the respective densities

\smallskip
$\frac{N_1}{N_0} = 0.076888$ ; $\frac{N_2}{N_0} = 0.07707 $ ; $\frac{N_3}{N_0} = 0.0823$ ; 

\smallskip
$\frac{N_{12}}{N_0} = 0.00590$ ; $\frac{N_{13}}{N_0} = 0.006306$ ; $\frac{N_{23}}{N_0} = 0.006458$ ; 
$\frac{N_{123}}{N_0} = 0.0004424$ ; 

\smallskip  
with $\frac{1}{p} = 0.07692$, $\frac{1}{p^2} = 0.005917$, $\frac{1}{p^3} = 0.000455$, 
whence the expected probabilities.

\medskip
For $p = 37$ we obtain the following values 

\smallskip
$N_0= 999952$ ; $N_1= 27153$ ; $N_2= 27054$ ; $N_3= 27747$ ; 

\smallskip
$N_{12 }= 718$ ; $N_{13 }= 761$ ; $N_{23 }= 755$ ; $N_{123 }= 16$, and the respective densities

\smallskip
$\frac{N_1}{N_0} = 0.0271543$ ; $\frac{N_2}{N_0} = 0.027055$ ; $\frac{N_3}{N_0} = 0.0277483$ ; 

\smallskip
$\frac{N_{12}}{N_0} = 0.000718$ ; $\frac{N_{13}}{N_0} = 0.000761$ ; $\frac{N_{23}}{N_0} = 0.000755$ ; 
$\frac{N_{123}}{N_0} = 1.600 \times 10^{-5}$,

\smallskip  
with $\frac{1}{p} = 0.027027$, $\frac{1}{p^2} = 0.00073046$, $\frac{1}{p^3} = 1.97 \times 10^{-5}$.

\subsection{Statistics on the matrix rank of the components}\label{sub112}

A first statistic experiment consists in determining the probability to have 
at least a non trivial relation between the conjugates
of $\alpha$; if $\alpha^\nu = \sm_{i=1}^n A_i(\nu) \,e_i$, 
then the matrix $\big( A_i(\nu) \big)_{i, \nu}$ must be of 
$\F_p$-rank strictly less than $n$. 
For $\theta \div \chi$, the probability of nullity modulo $p$ of a single $\Delta_p^\theta(\gamma)$ 
is $\Frac{1}{p^{f\delta^2}}$; the probability to have at least a $\Delta_p^\theta(\gamma)$ zero modulo $p$ 
for $\theta \div \chi$ is $\Frac{h}{p^{f\delta^2}}$. 
So if we denote by $h_i$, $f_i$, $\delta_i$, the above parameters for the totality of the
 $p$-adic characters of $G$ (grouped by rational characters $\chi_i$), the theoretical probability to obtain 
 a matrix of $\F_p$-rank $< n$ is given by
$$\sm_{i} \Frac{h_i}{p^{f_i\delta_i^2}} - \sm_{i< j} \Frac{h_i}{p^{f_i\delta_i^2}}\Frac{h_j}{p^{f_j\delta_j^2}} + 
\sm_{i< j< k} \Frac{h_i}{p^{f_i\delta_i^2}}\Frac{h_j}{p^{f_j\delta_j^2}} \Frac{h_k}{p^{f_k\delta_k^2}} - \cdots \,, $$

which can be verified by means of programs calculating, for some random $\gamma$,
the number of cases of $\F_p$-rank $< n$ ($G \simeq C_3, C_5, D_6$, respectively, in the variables 
$N_3$, $N_5$, $N_6$). 
Each group $G$ is given via a polynomial defining $K$, but numerical experiments show that the nature of the probabilistic results only depends on $G$ but not of the choice of $K$ nor of the polynomial defining it.

\subsubsection {Case \texorpdfstring{$G$}{Lg} cyclic of order 3\ \rm (two rational characters)}\label{subC3}
We use the Shanks polynomial $P=x^3 - 11 x^2-14 x -1$.
In the case $p\equiv 1 \pmod 3$ we have three $p$-adic characters
 of residue degree $f=1$, in the case $p\equiv 2\pmod 3$ we have a $p$-adic character
of residue degree $f=2$ and the unit character. 
We obtain the following examples (see \cite[Pr. A-4]{Grpro}), where $N_0$ is the number of tested cases 

\smallskip
$p = 41$, $N_0= 4999931 $, $N_3 = 124889$, $\frac{N_3}{N_0} = 0.024978$,
probability $0.024970$.

\smallskip
$p = 43$, $N_0= 4999952$, $N_3 = 341000 $, $\frac{N_3}{N_0} = 0.068200$,
probability $0.068685$.

\subsubsection {Case \texorpdfstring{$G$}{Lg} cyclic of order 5\ \rm (two rational characters)}\label{subC5}
It is the unique studied case for which there are 
(for $p \equiv -1 \pmod 5$) two $p$-adic characters of residue degree $f=2$.
Numerical values obtained (see \cite[Pr. A-5.1]{Grpro}):

\smallskip
$p = 7\ \,$, $N_0= 499977$, $N_5 = 71650 $, $\frac{N_5}{N_0} = 0.14330$,
probability $0.143214$.

\smallskip
$p = 19$, $N_0= 500000$, $N_5 = 29033$, $\frac{N_5}{N_0} = 0.05806$,
probability $0.057880$.

\smallskip
$p = 31$, $N_0= 500000$, $N_5 = 75737$, $\frac{N_5}{N_0} = 0.15147$,
probability $0.151214$.

\smallskip
By modification of the end of the program (\cite[Pr. A-5.2]{Grpro}),
we test the frequency of nullity modulo $p$ of the $\theta$-regulators related to 
 two $p$-adic characters ($p=31$ totally split), and only two among the four 
non trivial characters, namely for instance for $\theta_1$ and $\theta_2$ defined by 
$\theta_1(\sigma^{-1}) \equiv 2$, $\theta_2(\sigma^{-1}) \equiv 4 \pmod p$
\begin{align*}
\Delta_p^{\theta_1}(\gamma) &= \alpha + 2\alpha^\sigma + 4\alpha^{\sigma^2} + 8\alpha^{\sigma^3} 
+16 \alpha^{\sigma^4}, \\
\Delta_p^{\theta_2} (\gamma) &= \alpha + 4\alpha^{\sigma} + 16\alpha^{\sigma^2} +2 \alpha^{\sigma^3} 
+ 8\alpha^{\sigma^4}.
\end{align*}

\smallskip
For $N_0 = 1000000$, $N_1 = 943$ (number of simultaneous nullities of the two regulators), we have 
$\frac{N_1}{N_0} = 0.000943$ and the probability $0.001040$, which shows the independence of regulators 
regarding the $p$-adic characters of a same rational character.

\subsubsection {Case \texorpdfstring{$G$}{Lg} diedral of order 6\ \rm 
(three rational and $p$-adic characters)}\label{subD6}
In this case we have $h = f =1$ for all the characters. 
The results do not depend on congruence classes of the primes $p$ because 
$C = \Q$ (see \cite[Pr. A-6.1]{Grpro}):

\medskip
$p = 13$, $N_0= 49954$, $N_6 = 10794$, $\frac{N_6}{N_0} = 0.21607$,
probability $0.21347$.

$p = 17$, $N_0= 49516$, $\ \,N_6 = 8337$, $\frac{N_6}{N_0} = 0.16836$,
probability $0.16629$.

$p = 29$, $N_0= 49815$, $\ \,N_6 = 5056 $, $\frac{N_6}{N_0} = 0.10149$,
probability $0.09992$.

$p = 31$, $N_0= 40982$, $\ \,N_6 = 3854$, $\frac{N_6}{N_0} = 0.09404$,
probability $0.09368$.

$p = 37$, $N_0= 49998$, $\ \,N_6 = 3959$, $\frac{N_6}{N_0} = 0.07918$,
probability $0.07890$. 

\medskip
Then we take again the same program to make the statistics of the case 
$\delta = 2$ for the character $\chi_2$ of degree 2, 
which may be tested by computing the number $N_2$ of cases 
where the regulators $\Delta_p^1(\gamma)$ and 
$\Delta_p^{\chi_1}(\gamma)$ are nonzero modulo $p$, 
and the matrix of the components of rank 2. This is equivalent to

\medskip
\centerline{ $\Delta_p^\theta(\gamma) \equiv 0 \pmod p$ 
for $\theta = \chi_2$ and ${\mathcal L}^\theta$ of dimension 4}

\smallskip  
(\cite[Pr. A-6.2]{Grpro}). We get the following result for $p=13$

\smallskip
$N_0 = 499541$; $N_2 = 18$; $\frac{N_2}{N_0}= 3.60 \times 10^{-5}$;
 $\frac{1}{p^4}= 3.50 \times 10^{-5}$;\par
$N_1 = 34925$ (number of $\Delta_p^{\chi_2}(\gamma) \equiv 0 \pmod p$); $\frac{N_1}{N_0}=0.06991$; 
$\frac{1}{p}= 0.07692$.

\subsection{Local independence of the components on a basis}\label{sub16}
It remains to verify the nature of ``independent random variables'' of $A_1, \ldots, A_n$; 
we only give two numerical examples ($G = C_3$ and $G=D_6$).

\subsubsection{Cubique cyclic case}\label{sub17}
Let $K$ be the cubic cyclic field defined by the polynomial $x^3-11 x^2-14 x-1$, of conductor $163$. 
This is to check that the variables $A, B, C$, defining $\alpha \equiv A x^2+ B x + C \pmod p$ 
are independent.

\smallskip
The Pr. A-7 of \cite{Grpro} considers random prime to $p$ integers 
$\gamma$ modulo $p^2$, in a small sub-domain 
of $(\Z/p^2\Z)^3$. Then it computes for instance
the number of pairs $(A, B)$ (resp. $(B, C)$, $(C, A)$) 
having an arbitrary fixed value in $\F_p^2$, then the number of 
cases where $\Delta_p^\chi(\gamma) \equiv 0 \pmod p$.

\smallskip
We denote by $N_0$ the number of  prime to $p$ integers $\gamma$ modulo $p^2$ considered, by
$N_1$ the number of cases where $\Delta_p^\chi(\gamma) \equiv 0 \pmod p$ ($\chi$ rational $\ne 1$), 
by $N_2$ the number of pairs $(A, B)$ having the imposed value modulo $p$, and the program computes 
the proportions $\frac{N_1}{N_0}$, $\frac{N_2}{N_0}$, together with $\frac{2}{p}$ or $\frac{1}{p^2}$.

\smallskip
In the array below, we give two cases of residue degree $2$ in $\Q(j)/\Q$ ($j^3=1$, $j\ne 1$) 
and we continue with totally split cases
$$\begin{array}{lllllllll}
\ p &\ N_0 &\ N_1 &\ N_2 &\ \ \frac{N_1}{N_0} & \ \ \frac{N_2}{N_0} &
\ \ \frac{1}{p^2} & \\ \vspace{-0.4cm} \\
5 & 255562 & 10023 & 10155 & 0.039219& 0.039736& 0.04 & \\
11 & 499624 & 4127 & 4191 & 0.00826& 0.008388 & 0.00826& \\
\ p &\ N_0 &\ N_1 &\ N_2 &\ \ \frac{N_1}{N_0} & \ \ \frac{N_2}{N_0} &
\ \ \frac{1}{p^2} &\ \ \frac{2}{p}\\ \vspace{-0.4cm} \\
7 & 498553 & 132167 & 10275 &0.2651& 0.0206 & 0.0204 &0.286 \\
13 & 392751 & 57826 & 2401 & 0.1472 & 0.006113& 0.005917 & 0.154\\
19 & 499907 & 51293 & 1421 & 0.1025 & 0.00284 & 0.00277& 0.105
\end{array} $$

The proportions $\frac{N_2}{N_0}$ are near from $\frac{1}{p^2}$. 
In all the cases $p\equiv 1 \pmod 3$ the proportions $\frac{N_1}{N_0}$ are near from $\frac{2}{p}$
(existence of two $p$-adic characters),
and near from $\frac{1}{p^2}$ in the case $p\equiv 2 \pmod 3$.
If we only impose a numerical value, one gets 
a proportion near from $\frac{1}{p}$, and near from $\frac{1}{p^3}$ if we impose the three values.

\subsubsection{Diedral case \texorpdfstring{$D_6$}{Lg}}\label{sub18}
An analogous study uses Pr. A-8 of \cite{Grpro} and gives the expected results.
For $p=17$, we obtain for three conditions among the six components of $\alpha$,
$N_0 = 494865$, $N_3 = 111$ and $\frac{N_3}{N_0} = 0.0002243$, for $\frac{1}{p^3} =0.0002035$.

\subsection{Extra \texorpdfstring{$p$}{Lg}-divisibilities of regulators} \label{subex}
Recall the decomposition of the normalized regulator of $\eta$ (Remark \ref{rema110} 
and \S\,\ref{rema11})

\medskip
\centerline{${\rm Reg}_p^G (\eta) = \prd_\theta {\rm Reg}_p^\theta (\eta)^{\varphi(1)} \ 
{\rm and}\ \ {\rm Reg}_p^\theta (\eta) = 
\No_{\mathfrak p}\big (P^\varphi \big(\ldots, 
\hbox{$\frac{-1}{p}{\rm log}_p(\eta^\nu)$}, \ldots \big)\big)$.}

\smallskip  
In the case of minimal $p$-divisibility (Definition \ref{defidec}), we have
${\rm Reg}_p^\theta (\eta) \sim p$ for a unique $\theta$, and $ {\rm Reg}_p^G (\eta) \sim p^{\varphi(1)}$.

\smallskip
If we only suppose that $p$ is totally split in $C/\Q$ ($f=1$) and that there
exists $\theta$ such that ${\rm Reg}_p^\theta (\eta) \equiv \Delta_p^\theta (\eta) \equiv 0 \pmod p$ 
(with $\delta=1$), we may have possible extra $p$-divisibilities ${\rm Reg}_p^\theta (\eta) \sim p^e$, 
$e\geq 2$ (then ${\rm Reg}_p^G (\eta) \sim p^{e\,\varphi(1)}$ if $\theta$ is unique), 
for which we want to verify that they are of probability $\Frac{O(1)}{p^2}$.

In \cite{Grpro}, for $K=\Q(j, \sqrt[3]{2})$, $G=D_6$ (in which case any large enough $p$ is convenient 
for the test), the Pr. A-9 cheks this fact for the regulator

\medskip  
$ {\rm Reg}_p^{\chi_2} (\eta) = \hbox{$\frac{1}{\sqrt {-3} }$} 
(E_1^2+E_2^2+E_3^2 - E_4^2-E_5^2-E_6^2 -E_1. E_2-E_2.E_3 -E_3.E_1$

\hfill $+E_4.E_5+E_5.E_6+E_6.E_4) \in \Z$,

\medskip  
where the $E_i$, $1\leq i \leq 6$, are the conjugates of an integer of $K$ (indeed, one may suppose that 
$\Frac{-1}{p}{\rm log}_p(\eta)$ is represented modulo $p^2$ by an arbitrary integer $E\in K$). 

\smallskip
For $p=101$ and $10^6$ tests via {\it random}, we obtain a density of cases $e\geq 2$ equal to
$1.01 \times 10^{-4}$ for a theoretical probability $0.98 \times 10^{-4}$.

\smallskip
For $p=149$, we obtain $4.60\times 10^{-5}$ for a probability $4.50 \times 10^{-5}$.

\medskip
The case of characters of degree 1 offers no difficulty (under the condition $f=1$) and 
we shall make the heuristic assumption that it is the same for all group and all character 
in the $p$-splitted case, and in particular
that $P^\varphi \big(\ldots,\frac{-1}{p}{\rm log}_p(\eta^\nu), \ldots \big)$ 
may have any $p$-adic valuation with the corresponding probability. 
It would be interesting to prove that this property of the polynomials
$P^\varphi (X)$ is universal.

\section{Numerical study of two particular cases}\label{section5}

\subsection{Abelian case }\label{sub27}
We can always reduce to the case where $G$ is cyclic of order $n > 2$,
generated by $\sigma$ (see \S\,\ref{sub7} for the case $n \leq 2$).

\subsubsection{Example of the maximal real subfield 
of \texorpdfstring{$\Q(\mu_{11})$}{Lg}}\label{sub28}
${}$

\smallskip
a) {Search of solutions $p$ such that $\Delta_p^\theta (\eta)\equiv 0 \pmod p$}.\label{sub281} Put 
$$\hbox{$\eta = a \, x^4+b \, x^3+c \, x^2+d \, x+e\ $, with $x = \zeta_{11}+ \zeta_{11}^{-1}$} $$

(see \cite[Pr. A-11 and A-10 for the cubic case]{Grpro},).

\smallskip
(i) For $\eta = -2 \, x^4+ x^3 -3$, the solutions $p \leq 10^7$ are $31, 101, 39451$
 splitted in~$\Q(\zeta_5)$. 

\smallskip
Consider the numerical data for $p=31$
$$\begin{array}{llllllllllllll}
\alpha &\equiv& 25 x^4 & \!\!\!+\!\!\! & 10 x^3 & \!\!\!+\!\!\! & 7 x^2 & \!\!\!+\!\!\! & 21 x & \!\!\!+\!\!\! & 29 &\pmod p \\
\alpha^{\sigma} &\equiv& 4 x^4 & \!\!\!+\!\!\! & 15 x^3 & \!\!\!+\!\!\! & 25 x^2 & \!\!\!+\!\!\! & 7 x & \!\!\!+\!\!\! & 16 &
\pmod p \\
\alpha^{\sigma^2} &\equiv& 26 x^4 & \!\!\!+\!\!\! & 20 x^3 & \!\!\!+\!\!\! & 26 x^2 & \!\!\!+\!\!\! & 18 x & \!\!\!+\!\!\! & 22 & \pmod p\\
\alpha^{\sigma^3} &\equiv& 17 x^4 & \!\!\!+\!\!\! & 6 x^3 & \!\!\!+\!\!\! & 21 x^2 & \!\!\!+\!\!\! & 24 x & \!\!\!+\!\!\! & 4 &\pmod p\\
\alpha^{\sigma^4} &\equiv& 21 x^4 & \!\!\!+\!\!\! & 11 x^3 & \!\!\!+\!\!\! & 14 x^2 & \!\!\!+\!\!\! & 23 x & \!\!\!+\!\!\! & 19 &\pmod p
\end{array}$$

For $r = 4$, which is such that $\theta(\sigma) \equiv r \pmod {\mathfrak p}$ for a pair $(\theta, {\mathfrak p})$, 
we immediately have, as expected
$$\Delta^\theta_p(\eta) =\alpha + r^{-1}\alpha^{\sigma} + r^{-2}\alpha^{\sigma^2} 
+r^{-3}\alpha^{\sigma^3} + r^{-4}\alpha^{\sigma^4} \equiv 0 \pmod p$$

identically on the basis $\{x^4, x^3, x^2, x, 1\}$.

\medskip
(ii) For $\eta = 10 \, x^4 -7 \, x^3 + x -2$, we find the unique solution $p=7$,
first totaly inert case in $\Q(\zeta_5)$. The program gives that all the conjugates of $\alpha$ are zero modulo $p$
(whence moreover $\Delta^1_p(\eta) \equiv 0 \pmod p $).

\smallskip
It is clear that the inert case in $\Q(\zeta_5)/\Q$ is very rare. Furthermore, $p$ is small to compensate 
a probability $\Frac{O(1)}{p^4}$.

\smallskip
(iii) For $\eta = 10 \, x^4-7 \, x^3-3 \, x^2+ x -2$, we find $p=79$
(two $p$-adic characters $\theta$ of residue degree $f=2$; $p$ splitted in $L = \Q(\sqrt 5)$).

\smallskip
The resolvant $\alpha + \zeta_5 \alpha^\sigma + \zeta_5^2\alpha^{\sigma^2} + \zeta_5^3\alpha^{\sigma^3}
 + \zeta_5^4\alpha^{\sigma^4}$ (which corresponds to $\Delta^\varphi_p(\eta)$ for 
 $\varphi(\sigma) = \zeta_5^{-1}$) 
 is decomposed in the following way on the relativ basis $\{1, \zeta_5\}$.

\smallskip
We have the relation $\zeta_5^2 -\zeta_5 \frac{\sqrt 5 - 1}{2} + 1 =0$ defining the irreducible polynomial of 
$\zeta_5$ over $\Q(\sqrt 5)$. We then get
$\zeta_5^3 = -\zeta_5 \frac{\sqrt 5 - 1}{2} + \frac{1-\sqrt 5}{2}$, $\zeta_5^4 = -\zeta_5 + \frac{\sqrt 5 - 1}{2}$, 
and the system of relations in $K(\zeta_5)$ expressing $\Delta^\varphi_p(\eta) \equiv 0 \pmod {\mathfrak p}$
\begin{align*}
\alpha - \alpha^{\sigma^2} + \Frac{\sqrt 5 - 1}{2} (\alpha^{\sigma^4}-\alpha^{\sigma^3}) &
\equiv 0 \pmod {\mathfrak p} \\
\alpha^{\sigma} - \alpha^{\sigma^4} + \Frac{\sqrt 5 - 1}{2} (\alpha^{\sigma^2}-\alpha^{\sigma^3}) &
\equiv 0 \pmod {\mathfrak p}.
\end{align*}

Then, the ideal ${\mathfrak p}$ is for instance defined by the congruence $\sqrt 5 \equiv 20 \pmod {\mathfrak p}$, 
whence $\frac{\sqrt 5 - 1}{2} \equiv 49 \pmod {\mathfrak p}$ which defines the coefficients $r_ i (\nu)$, 
$i = 1, 2$, and $(\theta, {\mathfrak p})$. 

\smallskip
We have obtained two linear relations with independent rational coefficients
\begin{align*}
\alpha - \alpha^{\sigma^2} + 49\, (\alpha^{\sigma^4}-\alpha^{\sigma^3}) 
&\equiv 0 \pmod p \\
\alpha^{\sigma^4} - \alpha^{\sigma} + 49\, (\alpha^{\sigma^3} - \alpha^{\sigma^2}) 
&\equiv 0 \pmod p.
\end{align*}

The numerical data for $\alpha$ and its conjugates are
$$\begin{array}{llllllllllllll}
\alpha &\!\equiv& 37 x^4 & \!\!\!+\!\!\! & 13 x^3 & \!\!\!+\!\!\! & 19 x^2 & \!\!\!+\!\!\! & 3 x & \!\!\!+\!\!\! & 10 &\pmod p \\
\alpha^{\sigma} &\!\equiv& 75 x^4 & \!\!\!+\!\!\! & 24 x^3 & \!\!\!+\!\!\! & 45 x^2 & \!\!\!+\!\!\! & 73 x & \!\!\!+\!\!\! & 33 &\pmod p\\
\alpha^{\sigma^2} &\!\equiv& 5 x^4 & \!\!\!+\!\!\! & 51 x^3 & \!\!\!+\!\!\! & 22 x^2 & \!\!\!+\!\!\! & 60 x & \!\!\!+\!\!\! & 1 &\pmod p\\
\alpha^{\sigma^3} &\!\equiv& 70 x^4 & \!\!\!+\!\!\! & 33 x^3 & \!\!\!+\!\!\! & 40 x^2 & \!\!\!+\!\!\! & 8 x & \!\!\!+\!\!\! & 77 & \pmod p\\
\alpha^{\sigma^4} &\!\equiv& 50 x^4 & \!\!\!+\!\!\! & 37 x^3 & \!\!\!+\!\!\! & 32 x^2 & \!\!\!+\!\!\! & 14 x & \!\!\!+\!\!\! & 22 & \pmod p
\end{array}$$
  
which satisfy the system of the two above congruences.

\smallskip
We have the two independent relations, defining ${\mathcal L}^\theta \simeq V_\theta$ of $\F_p$-dimension 2
$$\ 1 - {\sigma^2} + 49\, ({\sigma^4}-{\sigma^3})\ \ \& \ \ {\sigma^4} - {\sigma} + 49\, ({\sigma^3}-{\sigma^2}), $$

the second one being the conjugate by $\sigma^4$ of the first one.
Whence the probability $\Frac{2}{p^2}$ (two choices $\sqrt 5 \equiv \pm 20 \pmod {\mathfrak p}$).

\medskip
b) {Computation of the density of $\Delta_p^\theta (\eta) \equiv 0 \pmod p$ as a function of $f$}.\label{sub282}
In \cite[Pr. A-12]{Grpro}, the program takes again the previous case and is concerned with the various possible residue degrees of $p$ in $\Q(\zeta_{11}+\zeta_{11}^{-1})/\Q$ to verify that the probability for 
$\Delta_p^\theta (\eta) \equiv 0 \pmod p$ is indeed $\Frac{O(1)}{p^f}$.

\smallskip
We display the theoretical probabilities, depending on the case ($f =1,2,4$), and the number 
$N_1$ of solutions compared with the number $N_0$ of tested $\eta$.

\smallskip
For $p=31$, the residue degree is 1 and we obtain the values $N_1 = 61505$, 

$\frac{N_1}{N_0} = 0.1230$ for $\frac{4}{p} - \frac{6}{p^2} + \frac{4}{p^3} - \frac{1}{p^4}= 0.12292$. 

\smallskip
For $p=19$, the residue degree is 2 and we obtain the values $N_1 = 2756$, 

$\frac{N_1}{N_0} = 0.005512$ for $\frac{2}{p^2}- \frac{1}{p^4}= 0.00553$.

\smallskip
For $p=13$, the residue degree is 4 and we obtain the values $N_1 = 17$, 

$\frac{N_1}{N_0} = 3.40\times 10^{-5}$ for $\frac{1}{p^4} = 3.50 \times 10^{-5}$.

\subsection{Case of the group \texorpdfstring{$D_6$}{Lg}} \label{sub29}
Let $k = \Q(\sqrt m)$ be the quadratic subfield of $K$ and let $\chi_1$, $\chi_2$ be
the two non trivial irreducible rational characters (and $p$-adic) of~$D_6$. We still use
$K=\Q(\sqrt[3] 2, j)$ where $j$ denotes a cubic root of unity ($m=-3$). 

\subsubsection{Recalls}
We study the three local $\chi$-regulators $\Delta^\chi_p(\eta)$, each time supposed 
non trivialy null modulo $p$ (no $\chi$-relations in $F$). We have

\smallskip
\centerline{$\alpha =\alpha_p(\eta)$, $\alpha'= \alpha^\sigma$,
$\alpha''= \alpha^{\sigma^2}$, $\beta = \alpha^\tau$, $\beta' = \alpha^{\tau\sigma} = \alpha'{}^\tau$, 
$\beta'' = \alpha^{\tau\sigma^2} = \alpha''{}^\tau$.}

\medskip
(i) Case of $\Delta^1_p(\eta)$; thus $\No_{K/\Q} (\eta)= a \ne \pm1$, in which case, $\Delta_p^1 (\eta)$ 
is the Fermat quotient of $a$.

\smallskip
(ii) Cas $\Delta^{\chi_1}_p(\eta)$; thus $\No_{K/k} (\eta) \in k^\times \setminus\! \Q^\times$ 
and we suppose that
$$\Delta_p^{\chi_1} (\eta)= \alpha +\alpha' +\alpha'' -\beta -\beta' -\beta'' \equiv 0 \pmod p. $$

If $A = \alpha +\alpha' +\alpha'' =: u+v \sqrt m$, then $\Delta_p^{\chi_1} (\eta)=
A - A^\tau = 2v \sqrt m\equiv 0 \pmod p$; 
we then have the unique condition $v \equiv 0 \pmod p$, which yields the probability $\Frac{O(1)}{p}$.

\smallskip
(iii) Case $\Delta^{\chi_2}_p(\eta)$ (considered up to the factor $\sqrt m$); we have 
${\rm dim}((F\otimes \Q)^{e_\chi}) = 4$
(case of a character of degree 2), which yields, for $\varphi = \theta=\chi_2$, the condition
\begin{align*}
\Delta_p^{\theta} (\eta)&= \alpha^{2} +\alpha'{}^2 +\alpha''{}^{2} -\beta^{2} -\beta'{}^2 -\beta''{}^{2}\\
& \hspace{2cm} - \alpha \alpha' - \alpha' \alpha'' -\alpha'' \alpha + \beta \beta'+ \beta' \beta''+\beta'' \beta 
\equiv 0 \pmod p
\end{align*}
(cf. Example \ref{ex5}).
The calculation of the three representations 
$${\mathcal L}^{\theta'}\simeq \delta' V_{\theta'}, \ \ 0\leq \delta' \leq \varphi'(1), $$

allows us to know what are the $\Delta_p^{\theta'}(\eta)$ equal to zero modulo $p$, 
even if we can exclude the case 
where $\Delta_p^1(\eta)$ or $\Delta_p^{\chi_1}(\eta)$ is zero modulo $p$. 

\smallskip
We begin with examples concerning the $p$-adic character $\theta = \chi_2$.
The Pr. A-13 of \cite{Grpro} computes the conjugates of $\alpha$ on the basis of powers of $x = \sqrt[3] 2 + j$.
This allows us to find the relations of $\F_p$-dependence of these conjugates, under the form
$$c_1 \alpha + c_2 \alpha^{\sigma} + c_3\alpha^{\sigma^2}
+ c_4\alpha^\tau + c_5\alpha^{\tau\sigma}+ c_6\alpha^{\tau\sigma^2} \equiv 0 \pmod p.$$

\subsubsection{Case \texorpdfstring{$\eta = x^5-3x^4-7x^2+x -1$}{Lg}}
We find the solutions $p = 7, 13, 69677, 387161$, up to $10^7$.

\smallskip
a) For $p=7$, we have the following numerical data
$$\begin{array}{lllllllllllllll}
 \alpha &\equiv& &0x^5& \!\!\!+\!\! \! &2x^4 & \!\!\!+\!\! \! &1 x^3 & \!\!\!+\!\! \! &1 x^2 &
  \!\!\!+\!\! \! & 5x & \!\!\!+\!\! \! & 0 & \pmod p \\
 \alpha^\sigma &\equiv& & 1 x^5 & \!\!\!+\!\! \! &1 x^4 & \!\!\!+\!\! \! & 6x^3 &
  \!\!\!+\!\! \! & 3x^2 & \!\!\!+\!\! \! & 5x & \!\!\!+\!\! \! & 2 &\pmod p \\
 \alpha^{\sigma^2} &\equiv& & 0x^5 & \!\!\!+\!\! \! & 2x^4 & \!\!\!+\!\! \! &
  3x^3 & \!\!\!+\!\! \! &0x^2& \!\!\!+\!\! \! & 4x & \!\!\!+\!\! \! &0 &\pmod p \\
 \alpha^\tau &\equiv& & 0x^5 & \!\!\!+\!\! \! & 5x^4 & \!\!\!+\!\! \! & 6x^3 &
  \!\!\!+\!\! \! & 6x^2 & \!\!\!+\!\! \! & 2x & \!\!\!+\!\! \! & 6 &\pmod p \\
 \alpha^{\tau\sigma} &\equiv& & 0x^5 & \!\!\!+\!\! \! & 5x^4 & \!\!\!+\!\! \! &
  4x^3 & \!\!\!+\!\! \! & 0x^2& \!\!\!+\!\! \! & 3x & \!\!\!+\!\! \! & 6 &\pmod p\\
 \alpha^{\tau\sigma^2} &\equiv& & 6x^5 & \!\!\!+\!\! \! & 6x^4 & \!\!\!+\!\! \! &
 1 x^3 & \!\!\!+\!\! \! & 4x^2 & \!\!\!+\!\! \! & 2x & \!\!\!+\!\! \! & 4 &\pmod p,
\end{array}$$

which yields the two linearly independent $\F_p$-relations 
$$\hbox{$\alpha -\alpha^{\sigma} + \alpha^\tau- \alpha^{\tau\sigma^2}\equiv 0 \pmod p$ \ \ \& \ \  
$\alpha - \alpha^{\sigma^2} + \alpha^\tau - \alpha^{\tau\sigma} \equiv 0 \pmod p$,} $$

and their lifts 
$$\hbox{$\eta_1^{1 - \sigma+ \tau -\tau \sigma^2 } \equiv 1 \pmod {p^2}$  \ \ \& \ \  
$\eta_1^{1 - \sigma^2 + \tau - \tau\sigma } \equiv 1 \pmod {p^2}$. }$$

For the $\theta$-relation $U= 1-\sigma+\tau-\tau \sigma^2$ we obtain $\sigma^2U = -U-\sigma U$,
$\tau U = -\sigma^2 U$, $\tau \sigma U = - \sigma U$, $\tau \sigma^2 U = - U$, and $U$ generates
a space of dimension 2 (${\mathcal L}^\theta \simeq V_\theta$).

\smallskip
b) For $p=13$, we obtain the relations
$$\hbox{$\alpha - \alpha^{\sigma^2}+ \alpha^{\tau} - \alpha^{\tau\sigma^2} \equiv 0\ \pmod p$ \ \ \& \ \  
$\alpha^{\sigma^2} - \alpha^{\sigma} + \alpha^{\tau\sigma} - \alpha^{\tau} \equiv 0 \ \pmod p$}$$

and their lifts 
$$\hbox{$\eta_1^{1- \sigma^2 + \tau - \tau\sigma^2 } \equiv 1 \pmod {p^2}$  \ \ \& \ \   
$\eta_1^{ \sigma^2 -\sigma + \tau\sigma - \tau } \equiv 1 \pmod {p^2}$. }$$

 For the $\theta$-relation
$U=1- \sigma^2 + \tau - \tau\sigma^2$, we obtain $\sigma^2 U = -U - \sigma U$, $\tau U = U$, 
$\tau \sigma U = \sigma^2 U$, $\tau \sigma^2 = \sigma U$ (${\mathcal L}^\theta \simeq V_\theta$). 

\smallskip
c) In the case of $p=69677$, we find the coefficients $(c_1, c_2, c_3, c_4, c_5, c_6)=$

\smallskip
\centerline{$(53404, 39540, 46410, 69676, 1, 0)\, \ \& \, \ (23267, 16273, 30137, 69676, 0, 1)$}

\smallskip
and a similar conclusion.

\subsubsection{Case \texorpdfstring{$\eta= x^5-x^4-7x^2+x-1$, $p=7$}{Lg}}
We obtain four independent $\F_p$-linear relations as 
$$\hbox{$\alpha^\tau - \alpha\equiv 0 \pmod p$ \ \ \& \ \ 
$\alpha+\alpha^\sigma+\alpha^{\sigma^2}\equiv 0 \pmod p$,}$$

and their conjugates.

\smallskip
Thus the three regulators are zero modulo $p$. But for $\theta$, ${\mathcal L}^\theta$ is generated by
$U= e_\theta (1-\tau)$ and by $\sigma U$; we have $\sigma^2 U = -U - \sigma U$, 
$\tau U = -U$, $\tau\sigma U = -\sigma^2 U$,
$\tau\sigma^2 U = -\sigma U$ (${\mathcal L}^\theta \simeq V_\theta$).

\subsubsection{Case \texorpdfstring{$\eta= x^5-2x^4+4x^3-3x^2+x-1$, $p=61$}{Lg}}\label{p=61} 
The $G$-module ${\mathcal L}$ is generated by the three independent 
$\F_p$-linear relations (see \cite[Pr. A-13]{Grpro})
$$\begin{array}{lllllllllllllll}
 19\alpha+56\alpha^\sigma+46\alpha^{\sigma^2}+\alpha^\tau & \equiv 0 \pmod p\\
46\alpha+19\alpha^\sigma+56\alpha^{\sigma^2}+\alpha^{\tau \sigma^2} &\equiv 0 \pmod p\\
56\alpha+46\alpha^\sigma+19\alpha^{\sigma^2}+\alpha^{\tau \sigma} &\equiv 0 \pmod p\,.
\end{array} $$

The idempotent $e_{1}$ gives the trivial relation (because $19+56+46+1 \equiv 0 \pmod {61}$), thus
 the Fermat quotient of $\Delta_p^{1} (\eta)$ is nonzero modulo $p$.

\smallskip
We obtain the ${\chi_1}$-relation corresponding to the idempotent $e_{\chi_1}$ 
by summation of the three relations, which yields 
$$\alpha+\alpha^\sigma+\alpha^{\sigma^2}-\alpha^\tau-\alpha^{\tau\sigma}-\alpha^{\tau\sigma^2}
\equiv 0 \pmod p$$

(whence for $\theta=\chi_1$ the nullity modulo $p$ of the $\theta$-regulator $\Delta_p^{\theta} (\eta)$). 

\smallskip
In fact it is a trivial nullity, the program finding that all the primes $p$ are solution 
for $\Delta_p^{\theta}(\eta)\equiv 0 \pmod p$; the conjugates of $\eta$ satisfy to 
$$\eta^{1+\sigma+\sigma^2-\tau-\tau\sigma-\tau\sigma^2}=1. $$

The choice of $\eta$ being random, this fact was a pure coincidence !

\smallskip
For $\theta=\chi_2$ (of degree 2), the $\theta$-regulator 
$\Delta_p^{\theta} (\eta)$ is zero modulo $p$ (non trivially) and this corresponds to
the following $\theta$-relation by use of $e_{\theta} = \frac{1}{3}(2-\sigma-\sigma^2)$
$$-\alpha + 36\alpha^\sigma+ 26 \alpha^{\sigma^2}+21\alpha^\tau 
+ 20 \alpha^{\tau\sigma} + 20\alpha^{\tau\sigma^2} \equiv 0 \pmod p. $$

By conjugation, this last relation generates a $\F_p$-space of dimension 2 
(in other words, ${\mathcal L}^\theta \simeq V_\theta$).
Indeed, for the corresponding $\theta$-relation
$$U=-1 + 36\sigma+ 26{\sigma^2}+21\tau + 20{\tau\sigma} + 20{\tau\sigma^2}, $$

we have by definition $\sigma^2 U= -U - \sigma U$ and we find the relations 
$\tau U = 24U+51\sigma U$, $\tau\sigma U = 27U+37\sigma U$, $\tau\sigma ^2 U = 10U+34\sigma U$.

\subsubsection{Case \texorpdfstring{$\eta=3 x^5-20x^4+15 x^3+16x^2+9x+21$, $p=7$}{Lg}}
We find
\begin{align*}
&\alpha \equiv \alpha^{\sigma} \equiv \alpha^{\sigma^2} \equiv 
6 x^5 + 2 x^4 + 4 x^3 + 3 x^2 + 6 \pmod p, \\
&\alpha^\tau\equiv \alpha^{\tau\sigma} \equiv
\alpha^{\tau\sigma^2}\equiv x^5 + 5 x^4 + 3 x^3 + 4 x^2 + 6 \pmod p. 
\end{align*}

This case for which the $G$-module ${\mathcal L}^\theta$ is of $\F_p$-dimension 4 
(${\mathcal L}^\theta \simeq 2 \,V_\theta \simeq e_{\theta}\,\F_p[G]$) is very rare, 
as we have seen \S\,\ref{subD6} (probability $\Frac{O(1)}{p^4}$), because we must 
take $\eta$ in such a way that ${\rm rg}(F)= 6$ and that none of the
$\Delta^\chi_p(\eta)$, $\chi = 1, \chi_1$, be zero modulo $p$, which is here the case.

\section{Sets of residues modulo \texorpdfstring{$p$}{Lg} in 
\texorpdfstring{$Z_K$}{Lg}}\label{section6}
The application of the Borel--Cantelli principle only depends on the 
obstruction of minimal $p$-divisibility (Definition \ref{defidec}).
So we propose in this section and the next one to remove this 
obstruction by means of the same heuristic used in \cite{Grqf} 
for Fermat quotients of rational integers. 

\smallskip
The fundamental point being the use of the Archimedean metric together with the $p$-adic one.

\subsection{Definition of sets of residues.}

\subsubsection{Recalls on Fermat quotients.}\label{princip}
In the case $K=\Q$, we work in the set of residues ${\mathcal I}_p := [1, p[$ 
to find the $z \in {\mathcal I}_p$ such that $\Delta_p^1(z)= q_p(z) \equiv 0 \pmod p$ 
(cf. \S\,\ref{sub7}\,(i)) or more generally 
$\Delta_p^1(z) \equiv u \pmod p$ for a given $u \in [0, p[$. 
We then study the invariants $m_p(u)$ (number of $z \in {\mathcal I}_p$ such that 
$\Delta_p^1(z) \equiv u \pmod p$) and 
$$M_p = {\rm max}_{u\in [0, p[} \, \big(m_p(u) \big)$$

(maximal number of repetitions of the Fermat quotient). 
Then, we observe the stability of $M_p = O({\rm log}(p))$, or 
$M_p =\Frac{ {\rm log}(p)}{ {\rm log}_2(p)} \cdot (1+ \epsilon(p))$ (see \cite{Grcompl} for 
improvements and discussion about this question), then the fact that a $u_0\in [0, p[$ such 
that $m_p(u_0)=M_p$ is random, and that the proportion of Fermat quotients obtained in $[0, p[$, 
by at least a $z \in {\mathcal I}_p$, tends to $1-e^{-1} \approx 0.63212$ when $p \to \infty$.

\smallskip
It is the analysis of these numerical results which suggests the existence of a
binomial law of probability on the $m_p(u)$, $u\in[0, p[$, with parameters 
$\big( p-1, \frac{1}{p} \big)$, giving :

\smallskip
\centerline{${\rm Prob}\big( m_p(u) \geq m\big) = 
\Frac{1}{p^{p-1}} \sm_{j=m}^{p-1} \hbox{$\binom{p-1} {j}$} (p-1)^{p-1-j} \ \ 
\hbox{(\cite[Section 4]{Grqf}). }$}

\medskip
In particular, the probability to have $m_p(u) \geq 1$, (i.e., $u \in [0, p[$ is reached),
is precisely rapidely equal to $1-e^{-1} \approx 0.63212$ when $p \to \infty$.

\medskip
If we apply this heuristic to $a \geq 2$ fixed and $\Delta_p^1(a) \equiv 0 \pmod p$, $p\to\infty$,
the solutions $z\in {\mathcal I}_p$ to $\Delta_p^1(z) \equiv 0 \pmod p$
are at least $h :=\big \lfloor \frac{{\rm log}(p)}{{\rm log}(a)} \big\rfloor$ in number (for the $z = a^j$, 
$1\leq j \leq h$, and are said {\it exceptional solutions)},
in which case, an elementary analytical calculation gives a probability of the form
$${\rm Prob}\big( q_p(a) \equiv 0 \! \!\!\pmod p\big) \leq \dsfrac{O(1)}{p^{ {\rm log}_2(p)/ {\rm log}(a)-O(1) }}, 
\hbox{ for $p \to \infty$.}$$ 

As $M_p = O({\rm log}(p))$ and since $M_p \geq m_p(0) \geq h$ in the case of such exceptional solutions,
one can say that $M_p \approx m_p(0) \approx h =O({\rm log}(p))$, even if $M_p>m_p(0)$ for some
reasons explained \S\,\ref{remfond} (i).

\smallskip
When $m_p(0)= O({\rm log}(p))$ (or $m_p(0)=M_p$) without the existence of $a \ll p$ such that
 $\Delta_p^1(a) \equiv 0 \pmod p$, we shall speak of {\it abundant solutions} for the 
 $z\in {\mathcal I}_p$ such that $\Delta_p^1(z) \equiv 0 \pmod p$. 
This means that a number almost maximal of repetitions to $\Delta_p^1(z) \equiv u \pmod p$ 
takes place for $u=0$.
The case of exceptional solutions is a (rarest) particular case of abundant solutions.

\subsubsection{Generalization for dimension \texorpdfstring{$n>1$}{Lg}.}\label{dimn} 
In the case of a field $K \ne \Q$, the ring of integers $Z_K$ is of $\Z$-dimension $n>1$, 
and similarly for $Z_K/p \,Z_K$ as $\F_p$-vectoriel space. Consequently, a natural 
set ${\mathcal I}_p$ in this case is for instance

\smallskip
\centerline{${\mathcal I}_p = \big\{ \sm_{i=1}^n z_i\,e_i, \ \,z_i \in 
\hbox{$]-\frac{p}{2}, \frac{p}{2}[$}\ \ \forall \, i \big\}$, }

\smallskip  
where $\big( e_i \big)_{i=1,\ldots,n}$ is a $\Z$-basis of $Z_K$. 
The other choice $z_i \in [1, p[$ is not possible because we need a 
complete set of residues modulo $p\,Z_K$ being also ``Archimedean'', that is to say
of the form $\{z \in Z_K,\ \vert z_i \vert_\infty < R\}$, where $R$ simply 
depends on $p$, as for $R=\frac{p}{2}$, because, contrary to the case 
$n=1$, $e_1=1$, the signs are not controled (especially if $K$ is not real). 

\smallskip
As for dimension 1, the fondamental principle still consists in the consideration of a fixed $\eta \in Z_K$ 
with primes $p\to\infty$, such that $\Delta_p^\theta(\eta) \equiv 0 \pmod p$, to remark that the first powers 
$\eta^j$ of $\eta$ are still in ${\mathcal I}_p$ (Lemma \ref{lem3}) and verify 
$\Delta_p^\theta(\eta^j) \equiv 0 \pmod p$ (Theorem \ref{thm1}), giving $O({\rm log}(p))$ 
exceptional solutions leading to the same conclusion as for the case $K=\Q$.

\smallskip
On the other hand, for $n>1$, the probabilistic study of the $\Delta_p^\theta(z)$, $z\in {\mathcal I}_p$ (in particular 
the computation of the $m_p(u)$ and of $M_p$), is numericaly out of range for very large prime numbers $p$ 
(program with loops needing $p^n$ calculations) and we must define another process allowing the use
of large $p$, while preserving the statistical relevance.

\smallskip
Before we can give an overview of these computations in ${\mathcal I}_p$ for dimension $n>1$ by means of
the cyclic cubic field $K= \Q(x)$, $x=\zeta_7+\zeta_7^{-1}$, where $\zeta_7$ is a $7$th root of unity 
(Pr. B of \cite{Grpro}). Put 
$$\hbox{$z = a\,x^2+b\,x+c \in {\mathcal I}_p$, $\ \ a, b, c \in ]-\frac{p}{2}, \frac{p}{2}[$.} $$

To limit ourselves to the conditions of Definition \ref{defidec}, we suppose 
$p\equiv 1 \pmod 3$ and we fixe $\theta \ne 1$ (defined by means 
of $r \in [1, p[$ of order 3 modulo $p$).

\smallskip
This raises the problem of weighting the values $m_p(u)$ and $M_p$ (very large); 
we have computed the quantities $m'_p(0) = \Frac{m_p(0)}{N_p}$, $M'_p = \Frac{n\,(p-1)\, M_p\,}{N_p}$, 
where $N_p = p^3-1$ or $(p-1)^3$ (depending on whether $n_p=3$ or $1$) 
is the number of triples $(a,b,c)$ such that $z$ is prime to $p$; 
these quantities coincide with the expressions of the case $n=1$. 
We denote by $u$ an element of $[0, p[$ which realises $M_p$.

\smallskip
The case of $M'_p$ is more difficult concerning a possible multiplicative 
constant (the factor $n$ seems coherent since it 
takes into account the action of $G$ on ${\mathcal I}_p$ 
when this set is a $G$-module, case where the chosen basis is a normal basis). 
We still obtain $M'_p = O({\rm log}(p))$.

For primes $p< 67$, the value of $\Frac{M'_p}{ {\rm log}(p)} = 
\Frac{3\, (p-1)\, M_p\,}{N_p\times {\rm log}(p)}$ is near from $1$, 
but it seems that this quantity is decreasing and rapidely bounded by $1$; 
the case $p=61$ is particular because $u=0$ (exceptional solutions: 
$m'_p = 1.92651$, $\frac{M'_p}{ {\rm log}(p)} = 1.41159$)
\footnotesize
$$\begin{array}{llllll}
p = 67 & n_p = 3 & u = 9 & N_p = 300762 \\
m'_p(0) = 0.98046 & M_p = 4732 & M'_p = 3.11520 & 
\frac{3\, (p-1)\, M_p\,}{N_p\times {\rm log}(p)} = 0.74354\vspace{0.08cm} \\
p = 73 & n_p = 3& u = 5 & N_p = 389016 \\
m'_p(0) = 0.99537 & M_p = 5568 & M'_p = 3.09161 & 
\frac{3\, (p-1)\, M_p\,}{N_p\times {\rm log}(p)} = 0.72290 \vspace{0.08cm} \\
& \ldots & \ldots & \vspace{0.08cm} \\
p = 139 & n_p = 1& u = 72 & N_p = 2628072 \\
m'_p(0) = 0.99107 & M_p = 19322 & M'_p = 3.04379 & 
\frac{3\, (p-1)\, M_p\,}{N_p\times {\rm log}(p)} = 0.61684 \vspace{0.08cm} \\
p = 151 & n_p = 3 & u = 75 & N_p = 3442950 \\
m'_p(0) = 0.97416 & M_p = 23458 & M'_p = 3.06600 & 
\frac{3\, (p-1)\, M_p\,}{N_p\times {\rm log}(p)} = 0.61108
\end{array} $$
\normalsize

\subsubsection{Another approach for dimension \texorpdfstring{$n>1$}{Lg}.}\label{Asub2}
The problem is multiplicative since the $O({\rm log}(p))$ first powers of $\eta$ must
belong to the Archimedean set $I_p \subseteq {\mathcal I}_p$ ($I_p$ to be defined) 
which must contain the exceptionnal 
solutions when $\Delta_p^\theta(\eta) \equiv 0 \pmod p$. 
Moreover, the numerical aspect needs to work in a ``structure of dimension 1'' 
by analogy with the case $K=\Q$. 
Give for this the following definitions:

\begin{definition}\label{defresidus}
We make choice of an integer basis $\big( e_i \big)_{i=1,\ldots,n}$ of $K$, 
and for all $\gamma \in Z_K$ we put
$\gamma = \sm_{i=1}^n c_i\, e_i$, $c_i \in\Z$ for all $i$.

(i) We call residue modulo $p$ of $\gamma$ the integer $[\gamma]_p := \sm_{i=1}^n [c_i]_p \, e_i$ 
of $Z_K$, with 
$$\hbox{$[c_i]_p\in\hbox{$]-\frac{p}{2} , \frac{p}{2} ]$}\, \ \&\, \ c_i \equiv [c_i]_p \pmod p$.}$$

(ii) We define the set of residues $I_p(\gamma) := \big\{ \big[\gamma^k \big]_p, \ \ k \in [1, p[ \big\}$.

(iii) We denote by $z = \sm_{i=1}^n z_i\, e_i$, $z_i \in \hbox{$]-\frac{p}{2} , \frac{p}{2} ]$}$ for all $i$, 
any element of $I_p(\gamma)$.
\end{definition}

In the case $n=1$ of the Fermat quotient of fixed $\eta=a$,
if $\gamma=g$ is a primitive root modulo $p>2$, $I_p(g) :=\{ [g^k ]_p, \ k \in [1, p[ \} = 
\{-\frac{p-1}{2}, \ldots, -1, 1, \ldots, \frac{p-1}{2}\}$
(up to the order). The set $\{ [a^k ]_p, \ k \in [1, p[ \} \subseteq I_p(g)$ has a periodicity 
if $a$ is not a primitive root modulo $p$ and we must not base the statistical study
on this set, but on $I_p(g)$. Moreover, the group of roots of unity (here $\pm1$) 
must be taken into account.

\smallskip
In the general case, let $\mu_K$ be the group of roots of unity of the field $K$.
We denote by $D$ the order of $\eta$ in $(Z_K/p\,Z_K)^\times \simeq \prd_{v \div p} F_v^\times$, 
$F_v \simeq \F_{p^{n_p}}$, and by $d \div D$ the order of 
$\eta$ in $\Big(\prd_{v\div p}F_v^\times \Big) \Big / i_p(\mu_K)$, 
where we suppose that $\eta$ generates a multiplicative $\Z[G]$-module of rank $n$. 
We have the following result for all prime $p$ large enough:

\begin{lemma}\label{lem2}
(i) There exists $\gamma \in Z_K$ such that $[\eta]_p=\big [\gamma^{(p^{n_p}-1)/D} \big]_p$.

\smallskip
(ii) If $n_p>1$, then $d$ (hence $D$) does not divide $p-1$.

\smallskip
(iii) We have $D\geq d\geq \dsfrac{{\rm log} (p-1)}{{\rm log} (c_0(\eta))}$, where 
$c_0(\eta) = {\rm max}_{\sigma \in G}(\vert \eta^\sigma \vert )$.
\end{lemma}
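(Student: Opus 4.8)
The plan is to exploit the isomorphism $(Z_K/p\,Z_K)^\times \simeq \prd_{v \div p} F_v^\times$ in which every factor $F_v^\times$ is cyclic of the \emph{same} order $N := p^{n_p}-1$; this common order, forced by $K/\Q$ being Galois, is what makes all three assertions clean. Write $A := \prd_{v\div p} F_v^\times$ and let $\bar\eta$ be the image of $\eta$, of order $D$.

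For (i), I would note that for $m := N/D$ the subgroup $A^m$ of $m$-th powers equals $\{x \in A : x^D = 1\}$: in each cyclic factor $C_N = \langle g\rangle$ one has $C_N^{\,m} = \langle g^{N/D}\rangle$, the unique subgroup of order $D$. Since $\bar\eta^D = 1$ by definition of $D$, the element $\bar\eta$ lies in $A^m$, so $\bar\eta = \bar\gamma^{\,m}$ for some $\gamma \in Z_K$, i.e. $\gamma^{(p^{n_p}-1)/D} \equiv \eta \pmod{p\,Z_K}$; passing to residues (Definition \ref{defresidus}) gives $[\eta]_p = \big[\gamma^{(p^{n_p}-1)/D}\big]_p$.

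For (iii), the inequality $D \geq d$ is immediate from $d \div D$. Having order $d$ means there is $\zeta \in \mu_K$ with $\eta^d \equiv \zeta \pmod{p\,Z_K}$; since $\eta$ generates a module of $\Z$-rank $n$ it has infinite order modulo $\mu_K$, so $\eta^d - \zeta \ne 0$. Thus $\eta^d - \zeta$ is a nonzero element of $p\,Z_K$, whence $\vert \No_{K/\Q}(\eta^d - \zeta)\vert \geq p^{\,n}$. Bounding each conjugate by $\vert (\eta^\sigma)^d - \zeta^\sigma\vert \leq c_0(\eta)^d + 1$ yields $p^{\,n} \leq (c_0(\eta)^d+1)^n$, i.e. $c_0(\eta)^d \geq p-1$, which upon taking logarithms is the assertion (note $c_0(\eta) > 1$ since a full-rank $\eta$ is not a root of unity, by Kronecker).

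The delicate part is (ii), and this is where I expect the real work. Assuming $d \div p-1$, one gets $\eta^{p-1} \equiv \zeta \pmod{p\,Z_K}$ for some $\zeta \in \mu_K$. Fix $v \div p$ and let $s \in G$ be the Frobenius at $v$; as $p$ is unramified and $n_p > 1$, the element $s$ has order $n_p>1$, so $s \ne 1$. Reducing modulo $\mathfrak p_v$ and using $s(x) \equiv x^p$, the identity $\bar\eta_v^{\,p} = \bar\zeta_v\,\bar\eta_v$ becomes $s(\eta) \equiv \zeta\,\eta \pmod{\mathfrak p_v}$. Now I would split into two cases. If $s(\eta) = \zeta\,\eta$ holds in $Z_K$, then $\eta^{\,s-1} = \zeta \in \mu_K$ with $s - 1 \ne 0$ in $\Z[G]$, contradicting $\langle \eta\rangle_G \otimes \Q \simeq \Q[G]$. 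Otherwise $s(\eta) - \zeta\,\eta$ is a \emph{nonzero} element divisible by $\mathfrak p_v$, so $p \div \No_{K/\Q}(s(\eta)-\zeta\,\eta)$; but as $(s,\zeta)$ ranges over the finite set $G \times \mu_K$ these norms form a finite collection of fixed nonzero integers, so this can occur only for finitely many $p$ — excluded once $p$ is large enough. Hence $d \notdiv p-1$, and since $d \div D$ this also forces $D \notdiv p-1$. The main obstacle is precisely organising this dichotomy so that the ``numerical coincidence'' branch is absorbed by the finitely-many-primes clause while the ``structural'' branch is killed by the full-rank hypothesis; the remaining verifications are routine.
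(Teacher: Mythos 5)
Your proof is correct. For parts (ii) and (iii) it is essentially the paper's own argument. In (ii) the paper uses the identical Frobenius congruence $\eta^{\tau_v}\equiv\eta^p\pmod{{\mathfrak p}_v}$ and the identical dichotomy; the only difference is how the root of unity is disposed of: the paper raises to the power $r=$ order of $\zeta$, getting $\eta^{r\tau_v}\equiv\eta^r\pmod{{\mathfrak p}_v}$, and then lets $p$ divide the fixed nonzero discriminant ${\mathcal D}(\eta^r)$, while you keep $\zeta$ and let $p$ divide one of the finitely many fixed nonzero norms $\No_{K/\Q}(\eta^s-\zeta\eta)$, $(s,\zeta)\in G\times\mu_K$. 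In both versions the structural branch is excluded by the rank-$n$ hypothesis and the numerical branch by taking $p$ large, exactly as you organise it. In (iii) the paper writes $\eta^d=\zeta+\Lambda p$ and passes to a conjugate with $\vert\Lambda\vert\geq 1$, whereas you take the full norm and bound $p^n\leq(c_0(\eta)^d+1)^n$; same estimate, same conclusion.

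Part (i) is where you genuinely diverge. You invoke the structural fact that in $A=\prod_{v\mid p}F_v^\times$, a product of cyclic groups of the \emph{common} order $N=p^{n_p}-1$, the image of the $m$-th power map (with $m=N/D$) coincides with the kernel of the $D$-th power map, so $\bar\eta$ is automatically an $m$-th power. This is shorter and cleaner than the paper's proof, which chooses generators $g_v$, writes $\eta_v=g_v^{\lambda_v}$, extracts $\lambda=\gcd_v(\lambda_v)$, verifies that the tuple $(g_v^{\mu_v})_v$ has order $N$, and finally normalizes $\gamma\mapsto\gamma^\mu$ with $\mu$ prime to $N$. The trade-off is that the paper's construction buys a stronger conclusion: its $\gamma$ has diagonal image of \emph{maximal} order $p^{n_p}-1$ modulo $p$, and that extra property is precisely what \S\,\ref{Asub2}\,(b) uses afterwards, namely that $I_p(\gamma)=\{[\gamma^k]_p,\ k\in[1,p[\,\}$ has $p-1$ distinct elements and contains $[\eta]_p$. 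An arbitrary $m$-th root, which is all your argument delivers, may have much smaller order, since in each cyclic factor the exponent of a root is only constrained modulo $D$. So you do prove the lemma as stated, but to make it serve its downstream purpose you would need one more step — adjusting the root, componentwise, so that every prime divisor of $N$ fails to divide some exponent — which in effect reinstates the paper's gcd normalization.
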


\begin{proof} (i) For all $v\div p$, let $g_v \in Z_K$ whose image in $F_v^\times$ is a generator, 
and let $\eta_v$ be the image of $\eta$. 
We put $\eta_v = g_v^{\lambda_v}$ then $\eta_v = g_v^{\lambda\,\mu_v}$ where 
$\lambda = {\rm p.g.c.d.\,}((\lambda_v)_v)$. 
Then $(g_v^{\mu_v})_v$ is of order $p^{n_p}-1$. It is sufficient to take 
$\gamma \in Z_K$ whose diagonal image in 
$\prd_{v \div p}F_v^\times$ is equal to $(g_v^{\mu_v})_v$ to 
obtain $\eta \equiv \gamma^\lambda \pmod p$. 

\smallskip
Replacing $\gamma$ by $\gamma^\mu$, $\mu$ prime to $p^{n_p}-1$, 
one may suppose that $\lambda = \frac{p^{n_p}-1}{D}$.

\smallskip
(ii) Let $v \div p$ and let $\tau_v \in G$ be the corresponding Frobenius automorphism; it is such that 
$\eta^{\tau_v} \equiv \eta^{p} \pmod {{\mathfrak p}_v}$ in $K$, whence 
$\eta^{\tau_v-1} \equiv \eta^{p-1} \pmod {{\mathfrak p}_v}$.
If we suppose $\eta^{p-1} \equiv \zeta \pmod p$, $\zeta \in \mu_K$ of order $r \geq 1$,
then $\eta^{r(\tau_v-1)} \equiv 1 \pmod {{\mathfrak p}_v}$, which leads to
 $\eta^{r\tau_v} \equiv \eta^r \pmod {{\mathfrak p}_v}$.
But the integer $\eta^{r\tau_v} - \eta^r$ is nonzero since $\eta^r$ is not in a strict subfield of $K$ and if
${\mathcal D}(\eta^r)$ is its discriminant, it is a nonzero rational integer hence not divisible by $p$ 
for all $p$ large enough (absurd).

\smallskip
(iii) We have $\eta^d = \zeta + \Lambda\,p$, where $\zeta \in \mu_K$, $\Lambda \in Z_K \setminus \{0\}$. 
Up to conjugation, one may suppose that $\vert \Lambda \vert \geq 1$. 
This yields $\vert \eta \vert^d \geq \vert \Lambda \vert\,p -\vert \zeta \vert\geq p-1$, and finally
$d \geq \Frac{{\rm log} (p-1)}{{\rm log} (\vert \eta \vert)} \geq \Frac{{\rm log} (p-1)}{{\rm log} (c_0(\eta))}$.
\end{proof}

The set $\{ [\gamma^t]_p , \ \, t\in [0, p^{n_p}[ \}$ is the union of $p^{n_p-1}$ sets
$I_p^{(\lambda)} = \{ [\gamma^{\lambda p + k}]_p$, $k \in [0, p [ \}$, $\lambda \in [0, p^{n_p-1}[$. 
A first heuristic is to say that these sets $I_p^{(\lambda)}$
have the same statistical behavior concerning the numbers $m_p(u)$ and $M_p$.
We may in general consider the set $I_p (\gamma) := \{ [\gamma^k]_p , \ k \in [1, p [ \}$. 
We distinguish two cases about numerical experiment:

\medskip
a) Case $n_p>1$. In general $\big\vert I_p (\eta)\big \vert = p-1$ except if $D<p$ (e.g. $p=5$, $n_p=2$, 
$\eta^3 \equiv 1 \pmod p$). 
But when $p\to\infty$ one can use the following heuristic/conjecture:

\begin{heuristic}\label{miracle} {\it We suppose that $K$ is distinct from $\Q$ and from a quadratic field. 
The primes $p$ for which $n_p>1$ and $\eta$ is of order $D$ modulo $p$, with $D<p$, are finite in number. }
\end{heuristic}

Put $n=n_p g_p$ and let $(\eta_v)_{v\div p}$ be the image of $\eta$ in $\prd_{v\div p} F_v^\times$. 
To say that $\eta$ is of order a divisor of $D$ modulo $p$ is equivalent to the $g_p$ independent 
conditions $\eta_v^D=1$ for all $v \div p$ whose probability is 
$\Big(\dsfrac{D}{p^{n_p}-1} \Big)^{g_p} \sim \dsfrac{D^{g_p}}{p^{n_pg_p}}$. 
If we sum over the $D < p$, we obtain the upper bound
$O(1) \dsfrac{p^{g_p+1}}{p^{n_p g_p}} = \dsfrac{O(1)}{p^{n_pg_p-g_p-1}}$, 
which is clear for $(n_p- 1)g_p>2$ and needs a particular study in the case 
$n_p=1$ and in the case of a quadratic field $K$ with $p$ inert.

\smallskip
In this last case, we can define a ``structure of dimension 1'' in the following way. We replace $\eta$
by $\eta' = \eta^{\tau - 1}$ where $\tau$, is a generator of $G$ and the Frobenius automorphism at $p$; 
we then have $\eta' \equiv \eta^{p-1} \!\!\pmod p$ and $\eta'$ is of order $D' \div p+1$ modulo~$p$. 
There exists $\gamma$ of order $p+1$ modulo $p$ such that $I_p(\gamma)$ contains 
$\eta' \equiv \gamma^{(p+1)/D'} \!\!\pmod p$. 
As $\eta^{2\tau}=\eta^{\tau+1}\eta' =: a \, \eta'$ the theory of the
$\Delta_p^\theta(\eta)$ is identical to that of $\Delta_p^\theta(\eta')$ for $\theta \ne 1$ and moreover, $\eta'$
is independent of $p$ and remains ``small''. We shall have $[\eta'{}^j]_p = \eta'{}^j$ for $1\leq j \leq h' = O(h)$ 
because $D'>h'$ as in Lemma \ref{lem2}.

\begin{remark} 
By using an analytical argument of \cite{T}, one may replace $\dsfrac{O(1)}{p^{n_pg_p-g_p-1}}$ 
by the upper bound
$\dsfrac{C_\epsilon}{p^{n_pg_p-g_p-\epsilon}} \ \hbox{(for all $\epsilon>0$ and $p$ large enough)}$,
which allows us to eliminate cubic and quartic cases, but not the quadratic case for which we have conjecturally 
infinitely many solutions $p$ (cf. \cite{Grcub}).
\end{remark}

The point (ii) of Lemma \ref{lem2} enforces this heuristic.
Thus we shall base the statistical study on $I_p = I_p (\eta)$. We admit, as for the case of
Fermat quotients (cf. \cite{H-B}), that the $\Delta_p^\theta(z)$ are uniformly distributed from
any set with $p-1$ elements of residues $z$ generated by the powers of a fixed integer.

\medskip
b) Case $n_p=1$ ($p$ totally split in $K$).
The order $D$ of $\eta$ modulo $p$ is a divisor of $p-1$ and the probability for this order to be a strict divisor 
of $p-1$ is $1-\Frac{\phi(p-1)}{p-1}$, where $\phi$ is the Euler function,
which roughly is between $\frac{1}{2}$ and $1-\frac{1.781}{{\rm log}_2(p)}$. 
So we cannot consider $ I_p (\eta)$.
We use Lemma \ref{lem2}\,(i) to create a set of residues of the form 
$I_p(\gamma) = \{ [\gamma^k]_p , \ k \in [1, p [ \}$ which contains $[\eta]_p$ and which has $p-1$ elements. 
We may always choose $\gamma$ such that $[\eta]_p = [\gamma^{(p-1)/D}]_p$. 

\smallskip
From Lemma \ref{lem2}\,(iii), $I_p(\gamma)$ contains
$d=O({\rm log}(p))$ distinct residues of the form $[\eta^j]_p$ for $1 \leq j \leq d$ which are not in $\mu_K$. 

\smallskip
For the numerical experiments, we shall use $I_p=I_p(\gamma)$
generated by a $\gamma$ having the good generating properties because the goal is to verify the validity
of the existence of a binomial probability law for the values of the $m_p(u)$, which is a property of $I_p$
and not a property of its elements; in other words, $I_p$ must be the analogue of $I_p(g)$ for the dimension 1 
and if we study a fixed $\eta$ (analogue of $a \geq 2$  for the dimension 1) when $p \to \infty$, one may say that
$\eta$ belongs to a suitable $I_p(\gamma)$ in which the heuristic applies (as for $a \in I_p(g)$).

\smallskip
The programs do not make the distinction between $\eta$ and $\gamma$ insofar as the case
$\vert I_p \vert < p-1$ is very rare. We suppress from $I_p$ the roots of unity $\zeta \in \mu_K$ because 
$\alpha_p(\zeta)=0$ would modify the statistics (we still meet this case in dimension 1 where 
$\{-1, 1\} \subset I_p = ]-\frac{p-1}{2}, \frac{p-1}{2}[$).

\smallskip
After, for the computation of the $m_p(u)$ and of $M_p$, relative to $I_p$, 
we shall prove that if $\Delta_p^\theta(\eta)\equiv 0 \pmod p$ 
(analogue of $q_p(a)\equiv 0 \pmod p$), we have
$\Delta_p^\theta(\eta^j)\equiv 0 \pmod p$ (analogue of 
$q_p(a^j)\equiv 0 \pmod p$), for all $j \leq h=O({\rm log}(p))$
(Lemma \ref{lem3} and Theorem \ref{thm1} below).

\subsubsection{Fundamental Archimedean principle.}\label{archi}
 If for instance, $\eta=\gamma = 2+i \in \Z[i]$, with $i^2=-1$, this yields
$\eta^2=3+4\,i$, $\ \eta^3=2+11\, i$, $\ \eta^4=-7+24\,i$, $\ \eta^5=-38+41\, i$, 
$\ \eta^6=-117+44\, i$, $\ \eta^7=-278-29\, i$,\ldots

\smallskip
We see that if $p\to\infty$, the residues $[\eta^{j } ]_p$ will coincide with the exact values
(not reduced), $\eta^j$, for a finite number of indices $j$, and after we shall have the 
corresponding residues; for $p=47$ we get $]-\frac{p}{2} , \frac{p}{2}[ = [-23, 23]$ and

\medskip
$I_p = \big \{$$[\eta]_p =\eta$, $\ [\eta^2]_p= \eta^2$, $\ [\eta^3]_p=\eta^3$, $\ 
[\eta^4]_p=-7-23\,i$, $\ [\eta^5]_p=9-6\, i$, $\ [\eta^6]_p=-23-3\, i$, $\ [\eta^7]_p=4+18\, i$, $\ldots \}$. 

\medskip
More precisely, we have the following result:

\begin{lemma}\label{lem3} Let $\eta \in Z_K \setminus \{0\}$, be an integer of $K$, 
distinct from a root of unity, and let 
$c_0(\eta) = {\rm max}_{\sigma \in G}(\vert \eta^\sigma \vert )$.
Then there exists an explicit constant $\Gamma(K) \geq 1$, independent of $\eta$ and $p$, 
such that $\big[\eta^j \big]_p=\eta^j$ for all $j$ such that
$$\hbox{$1\  \leq\  j\  \leq\  \dsfrac{ {\rm log} (p-1) - {\rm log} (2\,\Gamma(K))}{{\rm log} (c_0(\eta))}$} $$

(since $\vert \No_{K/\Q}(\eta) \vert \geq 1$ and $\eta$ 
is not a root of unity, we have $c_0(\eta) >1$).
\end{lemma}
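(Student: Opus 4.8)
The plan is to reduce the identity $[\eta^j]_p = \eta^j$ to a purely Archimedean size bound on the coordinates of $\eta^j$, and then to control those coordinates by the sizes $|\eta^\sigma|^j$ of the conjugates, which are governed by $c_0(\eta)^j$. Writing $\eta^j = \sm_{i=1}^n c_i^{(j)}\, e_i$ with $c_i^{(j)} \in \Z$, Definition \ref{defresidus} shows that $[\eta^j]_p = \eta^j$ holds exactly when every coordinate $c_i^{(j)}$ already lies in $]-\frac{p}{2}, \frac{p}{2}]$. Thus it suffices to establish a bound of the shape $|c_i^{(j)}| \leq \Gamma(K)\, c_0(\eta)^j$, with $\Gamma(K)$ depending only on $K$ and the fixed basis, and then to impose $\Gamma(K)\, c_0(\eta)^j \leq \frac{p-1}{2}$.

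First I would fix an embedding $K \hookrightarrow \C$, so that the $n$ Archimedean embeddings of $K$ are realised by the $\sigma \in G$ and $c_0(\eta) = {\rm max}_{\sigma \in G}(|\eta^\sigma|)$ is the house of $\eta$. Let $M := \big(e_i^\sigma\big)_{\sigma \in G,\, 1 \leq i \leq n}$ be the matrix of conjugates of the basis; since $\det(M)^2$ is the discriminant of $(e_i)_i$, a nonzero rational integer, $M$ is invertible over $\C$. Conjugating the relation $\eta^j = \sm_i c_i^{(j)} e_i$ by all $\sigma \in G$ yields the linear system $M\,\mathbf{c}^{(j)} = \mathbf{v}^{(j)}$, where $\mathbf{c}^{(j)} = (c_i^{(j)})_i$ and $\mathbf{v}^{(j)} = \big((\eta^\sigma)^j\big)_\sigma$. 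Inverting gives $\mathbf{c}^{(j)} = M^{-1}\mathbf{v}^{(j)}$, hence for every $i$ one gets $|c_i^{(j)}| \leq \big(\sm_\sigma |(M^{-1})_{i\sigma}|\big)\, {\rm max}_\sigma |\eta^\sigma|^j \leq \Gamma(K)\, c_0(\eta)^j$, where I set $\Gamma(K) := {\rm max}\big(1,\, \|M^{-1}\|_\infty\big)$, the maximal absolute row-sum of $M^{-1}$ (stabilised to be $\geq 1$). Crucially, $\Gamma(K)$ is built solely from the inverse of the fixed conjugate matrix $M$, so it is independent of $\eta$ and of $p$, while all the $\eta$- and $j$-dependence is concentrated in the single factor $c_0(\eta)^j$.

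It then remains to assemble the estimate. For $j$ in the asserted range, $j\,{\rm log}(c_0(\eta)) \leq {\rm log}(p-1) - {\rm log}(2\Gamma(K))$, that is $\Gamma(K)\, c_0(\eta)^j \leq \frac{p-1}{2}$; combined with the bound above this gives $|c_i^{(j)}| \leq \frac{p-1}{2} < \frac{p}{2}$ for every $i$, so each coordinate lies in $]-\frac{p}{2}, \frac{p}{2}[\, \subset\, ]-\frac{p}{2}, \frac{p}{2}]$ and therefore $[c_i^{(j)}]_p = c_i^{(j)}$, whence $[\eta^j]_p = \eta^j$. The final point, $c_0(\eta) > 1$, is Kronecker's theorem: if all $|\eta^\sigma| \leq 1$, the relation $\prd_\sigma |\eta^\sigma| = |\No_{K/\Q}(\eta)| \geq 1$ forces every $|\eta^\sigma| = 1$, making $\eta$ a root of unity, against the hypothesis. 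I expect the only genuine content to be this passage from conjugate sizes to coordinate sizes; the remaining ingredients — invertibility of $M$ through the non-vanishing discriminant, the harmless handling of the half-open interval, and Kronecker's theorem — are routine.
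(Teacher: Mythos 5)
Your proof is correct and follows essentially the same route as the paper's: decompose $\eta^j$ on the fixed integral basis, invert the conjugate matrix $\big(e_i^\sigma\big)$ (invertible via the discriminant), bound each coordinate by the maximal absolute row sum of the inverse times $c_0(\eta)^j$, and conclude from $\Gamma(K)\,c_0(\eta)^j \leq \frac{1}{2}(p-1)$. The only cosmetic differences are that you force $\Gamma(K)\geq 1$ by taking a maximum with $1$ (the paper instead shows the row-sum maximum is automatically $\geq 1$ by expanding $1$ on the basis), and you spell out the Kronecker argument for $c_0(\eta)>1$, which the paper only asserts.
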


\begin{proof} Put $\eta^j = \sm_{i=1}^n A_{j, i} \, e_i$, $A_{j, i} \in \Z$ for all $i=1,\ldots, n$.
We have $\eta^{j\,\sigma} = \sm_{i=1}^n A_{j, i} \,e_i^\sigma$ for all $\sigma \in G$. 
The matrix $\big(e_i^\sigma \big)_{i, \sigma}$ is invertible (the square of its determinant 
is the discriminant of the field $K$); the coefficients $\Gamma_i^\sigma$ of the inverse 
matrix are elements of $K$ independent of $\eta, p, j$, and

\medskip
\centerline{$A_{j, i} = \sm_{\sigma \in G} \Gamma_i^\sigma\, \eta^{j\,\sigma} , \ \ i= 1, \ldots, n$. }

\medskip
A sufficient condition to have $\vert A_{j, i} \vert < \frac{1}{2}\, p$ for all $i$, is that a common upper bound 
of these numbers be less than $\frac{1}{2}\, (p-1)$. But we have

\medskip
\centerline{$\big \vert \sm_{\sigma \in G} \Gamma_i^\sigma\, \eta^{j\,\sigma} \big \vert \leq 
\sm_{\sigma \in G}\vert \Gamma_i^\sigma\vert \, \vert \eta^{j\,\sigma}\vert 
\leq c_0(\eta)^j\,\sm_{\sigma \in G}\vert \Gamma_i^\sigma\vert $. }

Put 
\begin{equation}\label{Gamma}
\Gamma(K) := {\rm max}^{}_{\,i=1,\ldots,n}\big(\sm_{\sigma \in G}\vert \Gamma_i^\sigma\vert \big)
\end{equation}

(maximum of the sums of the lines); then it is sufficient to have 
$$c_0(\eta)^j \cdot \Gamma(K) \leq \Frac{1}{2}(p-1), $$
 
whence the result.
If $1=\sm_{k=1}^n \lambda_k e_k$, $\lambda_k \in \Z$, we have 
$$\hbox{$\sm_{\sigma \in G} \Gamma_i^\sigma \times 1^\sigma
= \sm_{\sigma \in G} \sm_{k=1}^n \Gamma_i^\sigma \lambda_k e_k^\sigma 
= \sm_{k=1}^n\delta_{i,k}\lambda_k = \lambda_i$,\, for all~$i$; } $$ 

there exists at least an index $i$ such that $\sm_{\sigma \in G} \vert\Gamma_i^\sigma \vert \geq 1$.
\end{proof}

The general case is thus analogous to that of Fermat quotients and leads to the following 
result with the notation of Lemma \ref{lem3}:

\begin{theorem}\label{thm1} Let $\eta\in Z_K$ generating a multiplicative 
$\Z[G]$-module of rank $n$. 
Let $p$ be large enough and let $I_p=I_p(\gamma)$ 
(Definition \ref{defresidus}) be such that $\vert I_p \vert = p-1$ 
and such that $\eta \in I_p$; let $\theta$ be an irreducible $p$-adic 
character of $G$. 

\smallskip  
If $\Delta_p^\theta(\eta) \equiv 0 \pmod p$ we have $z_j := \eta^j \in I_p$ 
and $\Delta_p^\theta(z_j) \equiv 0 \pmod p$ for all $j$ 
such that $1 \leq j \leq h$, where 
$h=h_p(\eta) := \dsfrac{ {\rm log} (p-1) - {\rm log} (2\,\Gamma(K))}{{\rm log} (c_0(\eta))}$ 
(Lemma \ref{lem3} and Relation \eqref{Gamma}). Moreover, $z_j \notin \mu_K$.
\end{theorem}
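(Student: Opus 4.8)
The plan is to read off all three assertions of Theorem \ref{thm1} from Lemma \ref{lem3} together with the elementary multiplicative behaviour of the generalized Fermat quotient $\alpha_p$ and the equivalence of Theorem \ref{theo24}; no new idea should be needed, the task being to assemble facts already in place. First I would record that the hypothesis ``$\langle\eta\rangle_G$ of $\Z$-rank $n$'' forces $\eta\notin\mu_K$, which will dispose of the last clause at once. The main point demanding care is the containment $z_j\in I_p$, where one must use the explicit construction of $I_p(\gamma)$ from \S\,\ref{Asub2} rather than the bare hypothesis $\eta\in I_p$.

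To settle $z_j=\eta^j\in I_p$, I would invoke Lemma \ref{lem3}: by the very definition of $h=h_p(\eta)$, for every $j$ with $1\le j\le h$ the coefficients of $\eta^j$ on the chosen integral basis lie in $]-\tfrac{p}{2},\tfrac{p}{2}]$, so that $[\eta^j]_p=\eta^j$. It then suffices to see that the reduced residue of $\eta^j$ belongs to $I_p(\gamma)$, and here I would split according to \S\,\ref{Asub2}: when $n_p>1$ one takes $\gamma=\eta$, so $[\eta^j]_p\in I_p(\eta)$ for $j\in[1,p[$ by definition; when $n_p=1$ one takes $\gamma$ of order $p-1$ modulo $p$, so that $I_p(\gamma)$ exhausts all $p-1$ nonzero residues modulo $p$, and $\eta^j$, being prime to $p$, has its residue among them. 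In either case $\eta^j=[\eta^j]_p\in I_p$.

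Next I would prove $\Delta_p^\theta(\eta^j)\equiv 0\pmod p$. The key input is the logarithmic property $\alpha_p(\eta^j)\equiv j\,\alpha_p(\eta)\pmod{p\,Z_{K,(p)}}$ recorded in the Introduction, together with the Galois-equivariance $\alpha_p(\eta^\sigma)=\alpha_p(\eta)^\sigma$. Fixing $\alpha\equiv\alpha_p(\eta)\pmod p$ in $Z_K$, a representative of $\alpha_p(\eta^j)$ is then $j\,\alpha$, with conjugates $j\,\alpha^\nu$ for $\nu\in G$. Since each $P^\varphi(X)$ is homogeneous of degree $\varphi(1)$ (\S\,\ref{sub3}), the specialization $X_\nu\mapsto j\,\alpha^\nu$ multiplies $P^\varphi$ by $j^{\varphi(1)}$, whence
$$\Delta_p^\theta(\eta^j)\equiv\prd_{\varphi\div\theta}P^\varphi(\ldots,j\,\alpha^\nu,\ldots)=j^{\,f\varphi(1)}\,\Delta_p^\theta(\eta)\pmod p,$$
where $f$ is the number of $\varphi\div\theta$. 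As $1\le j\le h<p$, the integer $j$ is a $p$-adic unit, so the assumed vanishing $\Delta_p^\theta(\eta)\equiv 0\pmod p$ forces $\Delta_p^\theta(\eta^j)\equiv 0\pmod p$. The same conclusion follows conceptually from Theorem \ref{theo24}: multiplying $\alpha$ by the unit $j$ leaves the module of relations ${\mathcal L}$ unchanged, so ${\mathcal L}^\theta(\eta^j)={\mathcal L}^\theta(\eta)\ne\{0\}$.

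Finally, $z_j\notin\mu_K$ is immediate, for if $\eta^j$ were a root of unity for some $j\ge 1$ then $\eta$ would itself have finite order, contradicting ${\rm rg}(F)=n\ge 1$. I expect the only genuinely delicate step to be the $I_p$-membership of the second paragraph, since there one cannot argue from $\eta\in I_p$ alone but must appeal to how $\gamma$ was selected in \S\,\ref{Asub2}; everything else is a formal consequence of Lemma \ref{lem3}, the homogeneity of the $P^\varphi$, and Theorem \ref{theo24}.
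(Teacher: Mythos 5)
Your overall architecture matches the paper's proof: split on $n_p$ according to \S\,\ref{Asub2}, use Lemma \ref{lem3} to identify $[\eta^j]_p$ with $\eta^j$ for $j \le h$, and transfer the vanishing of $\Delta_p^\theta$ from $\eta$ to $\eta^j$ via $\alpha_p(\eta^j) \equiv j\,\alpha_p(\eta) \pmod p$ together with the invertibility of $j$ modulo $p$. Your homogeneity computation $\Delta_p^\theta(\eta^j) \equiv j^{f\varphi(1)}\Delta_p^\theta(\eta) \pmod p$ is a clean, slightly more direct variant of the paper's argument (which instead observes ${\mathcal L}^\theta(\eta^j) = {\mathcal L}^\theta(\eta)$ and invokes Theorem \ref{theo24}; you give that version too), and your disposal of $z_j \notin \mu_K$ (torsion of $\eta^j$ would force $\eta$ itself to be torsion, contradicting rank $n \ge 1$) is simpler than the paper's, which deduces it from $h < d \le D$ in Lemma \ref{lem2}.

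There is, however, a genuine error at exactly the step you flagged as delicate. In the case $n_p = 1$ you assert that, since $\gamma$ has order $p-1$ modulo $p$, ``$I_p(\gamma)$ exhausts all $p-1$ nonzero residues modulo $p$,'' and you then place $[\eta^j]_p$ in $I_p$ merely because $\eta^j$ is prime to $p$. This is false for every $K \ne \Q$: the residues in question live in $Z_K/p\,Z_K$, whose group of invertible elements has order $(p-1)^n$ when $p$ is totally split, whereas $I_p(\gamma)$ has only $p-1$ elements; being prime to $p$ does not place the residue of $\eta^j$ anywhere near the cyclic subgroup $\langle \gamma \bmod p\rangle$. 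What saves the conclusion is not primality to $p$ but the hypothesis $\eta \in I_p$: it gives $\eta \equiv \gamma^{e} \pmod p$ for some $e \in [1,p[$, hence $\eta^j \bmod p$ lies in $\langle \gamma \bmod p\rangle$, and since (for $n_p=1$ with $\vert I_p\vert = p-1$) $\gamma$ has order exactly $p-1$, that cyclic group is precisely the set whose centered representatives constitute $I_p$, so $[\eta^j]_p \in I_p$. Alternatively, argue as the paper does: take $e = (p-1)/D$ from Lemma \ref{lem2}\,(i) and use $h < d \le D$ (Lemma \ref{lem2}\,(iii)) to get $e\,j \le p-1$, whence $[\eta^j]_p = [\gamma^{e j}]_p \in I_p$ directly from Definition \ref{defresidus}. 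Either repair is one line, but as written your justification fails for any field of degree $n>1$, which is the whole setting of the theorem.
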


\begin{proof} Put $\eta=[\gamma^e]_p$. The case $n_p >1$ where $\gamma = \eta$ is obvious since $e=1$.
If $n_p=1$ and $\eta = \gamma^{\frac{p-1}{D}}$, $e=(p-1)/D$ and Lemma \ref{lem2}\,(iii) shows that 
$$D\ \geq\ d\ \geq\ \dsfrac{ {\rm log} (p-1)}{ {\rm log} (c_0(\eta))} \ >\  h; $$ 

consequently, the conditions $e\,h \leq p-1$ and $z_j \notin \mu_K$ are always fullfiled.
We know that $\alpha_p(\eta^j) \equiv j\,\alpha_p(\eta) \pmod p$ for all $j$ and that we have
$\eta \equiv \gamma^e \pmod p$, $e \in [1, p[$.
If we restrict ourselves to the $j\leq h$, we get $\eta^j \equiv \gamma^{e\,j} \pmod p$ and 
$\eta^j =\big[\eta^j \big]_p = \big[\gamma^{e\,j} \big]_p =: z_j \in I_p$ since $e\,j \leq p-1$.

\smallskip
By definition of the $G$-modules ${\mathcal L}^\theta$ (whose non triviality is equivalent to the nullity of
the corresponding $\Delta_p^\theta$), we have ${\mathcal L}^\theta(\eta^j) = {\mathcal L}^\theta(\eta)$ 
in $\F_p[G]$ because any $\theta$-relation $\sm_{\nu \in G} u(\nu) \alpha_p(\eta)^{\nu^{-1}} \equiv 0 \pmod p$, comming from ${\mathcal L}^\theta(\eta)$, is equivalent to

\smallskip
\centerline {$\sm_{\nu \in G} u(\nu) \alpha_p(\eta^j)^{\nu^{-1}} \equiv
 j\,.\,\sm_{\nu \in G} u(\nu) \alpha_p(\eta)^{\nu^{-1}} 
\equiv 0 \pmod p$, }

\smallskip  
$j \leq h$ never being divisible by $p$. 
So the $\Delta_p^\theta(\eta^j)$, characterized via the ${\mathcal L}^\theta(\eta^j)$, 
are all zero modulo $p$ as soon as $\Delta_p^\theta(\eta)$ is zero, and as was said, 
$\eta^j \in I_p$ for $1 \leq j \leq h$. 

\smallskip
This implies the existence of at least $h = O({\rm log}(p))$ exceptional solutions, 
relatively to the integer $\eta$.
\end{proof}

\section{Removal of the obstruction of minimal \texorpdfstring{$p$}{Lg}-divisibility}\label{section7}

\subsection{The invariants \texorpdfstring{$m_p(u)$}{Lg} and 
\texorpdfstring{$M_p$}{Lg}.}\label{repetitions}
For given $p$, $\theta$ and $u \in [0, p[$, let $m_p(u)$ be the number of $z\in I_p$ 
having a $\theta$-regulator $\Delta_p^\theta(z)$ congruent to $u$ modulo $p$. We denote by 
$$M_p= {\rm max}_{u\in [0, p[} \, \big(m_p(u) \big)$$ 

the {\it maximal number of repetitions}. 
We suppose a part of the conditions of Definition \ref{defidec} for $p$ and $\theta$, namely $f=\delta=1$.

\smallskip
We then obtain a remarkable stability for $M_p$, very regular function of $p$ which can be the subject of the
following heuristic as for Fermat quotients (\S\,\ref{princip}):

\begin{heuristic} \label{heur1} {\it For all $p\geq 2$ and all irreducible $p$-adic character $\theta$ of $G$,
given such that $f=\delta=1$ (cf. Definition \ref{defidec}), the number 
$M_p= {\rm max}_{u\in [0, p[} \, \big(m_p(u) \big)$ of residues $z\in I_p$ having, modulo $p$, the same 
local $\theta$-regulator, is $O({\rm log}(p))$ (see \cite{Grcompl} for more discussion about this). }
\end{heuristic}

As the mean value of $m_p(0)$ is near from 1, the abundant case (i.e., when $m_p(0)=O({\rm log}(p))$)
is as rare as the exceptional case (i.e., when $\Delta_p^\theta(\eta) \equiv 0 \pmod p$,
generating $h=O({\rm log}(p))$ solutions in $I_p$ of the form $\eta^j$, $j=1, \ldots, h$).

\subsection{Numerical experiments.}\label{exp}\ 
Let us give numerical justifications for properties of $m_p(u)$ and $M_p$. 
In the programs and results hereinafter, we start from a very simple numerical value
of $\gamma$ (many experiments have shown a high stability of results regarding this choice) 
and we compute  the set $I_p$ of residues $z$ of the form $ [\gamma^k]_p$, $k=1,\ldots, p-1$, 
then the values $\Delta_p^\theta(z) \pmod p$ which are managed in a list $L$ to determine 
$m_p(0)$ and $M_p=m_p(u_0)$ for a suitable $u_0$. 

\smallskip
We shall take $\gamma = \eta$ if $I_p$ satisfies the conditions mentioned \S\,\ref{Asub2}.

\subsubsection{Cyclic cubic case, \texorpdfstring{$p$}{Lg} inert 
in \texorpdfstring{$\Q(j)$}{Lg} (\texorpdfstring{$j^3=1$, $j\ne 1$}{Lg}), 
\texorpdfstring{$\theta\ne 1$}{Lg}.}\label{cubicin}
In this case, the statistical study of the $\Delta_p^\theta(z)$, for $z \in I_p$, 
is not necessary as we have explained since from the main heuristic \ref{HP}, we would have
${\rm Prob}\big (\Delta_p^\theta(z) \equiv 0 \!\! \pmod p\big ) = \Frac{O(1)}{p^2}$; 
however, one can calculate the values $m_p(0)$ and $M_p$ to see that 
$m_p(0)>0$ is very rare and in order to see what happens for $M_p$. 

\smallskip
Here, for $x = \zeta_7+\zeta_7^{-1}$, let $K=\Q(x)$ be the cubic cyclic field of conductor $7$, 
let $G = \{1, \sigma, \sigma^2\}$, and take $p\equiv -1 \pmod 6$ (in other words $f=2$).

\smallskip
For $\theta\ne 1$, we have 
$$\Delta_p^\theta(z) = \alpha^2+\alpha^{2\sigma}+\alpha^{2\sigma^2} 
-\alpha \alpha^\sigma -\alpha^\sigma \alpha^{\sigma^2} -\alpha^{\sigma^2} \alpha$$ 

(where $\alpha = \alpha_p(z)$), which is rational modulo $p$ (Pr. B-1 of \cite{Grpro}). 

\smallskip
Let $I_p$ be generated by $\gamma = x^2+2$; concerning the $328$ prime numbers 
$p\equiv -1 \pmod 6$, $5999 < p < 11999$, we have $m_p(0) > 0$ only for $5$ values of $p$ 
(i.e., $p=6761, 7937, 8861, 9941, 10739$) and then $323$ cases where $m_p(0) = 0$. 
But all the cases $m_p(0) > 0$ are due to the fact that there exists $d \div p-1$, $d\ne p-1$, such that 
$\gamma^d \equiv \rho \pmod p$, where $\rho$ is a rational;
so, for $z=[\rho^d]_p$, $\Delta_p^\theta(z)$ is trivially zero modulo $p$, and these cases are to be excluded
as explained \S\,\ref{Asub2}.

\smallskip
For $p\equiv 1 \pmod 6$, $6001 < p < 12001$, we shall find $134$ values of $p$, among $327$, for which 
$m_p(0) =0$, and the numbers $m_p(0)\ne 0$ will have higher mean values.
On the other hand, $M_p$ does not seem to depend on the decomposition of $p$ in~$\Q(j)$.

\smallskip
We have extracted the following examples for $p\equiv -1 \pmod 6$; the parameter 
$u_0 \in [0, p[$ furnishes an integer (among several a priori) such that $M_p=m_p(u_0)$
\footnotesize
$$\begin{array}{lllllll}
p = 59999 & n_p = 3 & u_0 = 25910 \\
m_p(0) = 0& M_p = 7& M_p/{\rm log(p)} = 0.63624 \vspace{0.08cm} \\
p = 60017& n_p= 1 & u_0 =51505 \\
m_p(0) = 0 & M_p=7 & M_p/{\rm log(p)} =0.63622\vspace{0.08cm} \\
p = 60029& n_p= 3& u_0 =19677 & \\
m_p(0) = 0& M_p = 7& M_p/{\rm log(p)}=0.63621\vspace{0.08cm} \\
p = 60041& n_p= 3 & u_0 =59841 & \\
m_p(0) = 0& M_p = 8 & M_p/{\rm log(p)}=0.72708 \vspace{0.08cm}\\ 
\end{array} $$
\normalsize 

We have for instance $m_p(0)=0$ and $M_p=8$ for $p = 60041$ ($u_0 = 59841$), 
and we obtain the following residues 
$z=[\gamma^j]_p$ solutions to $\Delta_p^\theta(z) \equiv 59841 \pmod p$)
\footnotesize
$$\begin{array}{rrrr}
\hbox{exponent\ \,} j & \hspace{3cm} \hbox{residues\ \,} \big[\gamma^k \big]_p \vspace{0.15cm} \\
 12869 & -17167 \, x^2 + 1730 \, x + 28097 \\
 31327 & 17781 \, x^2 + 4775 \, x + 25387 \\
 32191 & 3615 \, x^2 - 27037 \, x - 25973 \\
 39129 & 6079 \, x^2 + 24215 \, x + 18753 \\
 44870 & -11178 \, x^2 + 24638 \, x + 12843 \\
 54374 & 3053 \, x^2 - 24995 \, x - 12010 \\
 56394 & -3461 \, x^2 + 16186 \, x + 7608 \\
 56651 & -19244 \, x^2 - 9845 \, x + 3277
\end{array} $$
\normalsize

\subsubsection{Cyclic cubic case, \texorpdfstring{$p$}{Lg} splitted in 
\texorpdfstring{$\Q(j)$}{Lg} (\texorpdfstring{$j^3=1$, $j\ne 1$}{Lg}).}\label{cubicdec}

We then have $p\equiv 1 \pmod 6$, i.e., $f=1$. There are two $p$-adic $\theta$-regulators 
$$\Delta_p^\theta(z) = \alpha+r^2\,\alpha^{\sigma}+r\alpha^{\sigma^2}, $$ 

where $\alpha = \alpha_p(z)$ and where $r$ is one of the two elements of order 3 modulo $p$. 

\smallskip
In that case, $\Delta_p^\theta(z)$ is a ``resolvant of Hilbert modulo $p$''
which is congruent to a rational modulo the prime ideal $\mathfrak p$ associated with $\theta$. 
But Pr. B-2 of \cite{Grpro} gives $\Delta_p^\theta(z) = u_2 x^2+u_1 x+u_0$ which supposes that we use
a congruence of the form $x \equiv R \pmod {\mathfrak p}$ in order to obtain 
$\Delta_p^\theta(z) \equiv u \pmod p$.
We proceed in a different way: to get a rational, we multiply $\Delta_p^\theta(z)$ by
$$H:=x+r x^\sigma+r^2 x^{\sigma^2} \pmod p$$ 

which serves as a ``conjugate resolvant'' once for all; it is not divisible by $p$.

\smallskip
We generate $I_p$ by means of $\gamma=x^2+2$. We have extract the following examples
\footnotesize
$$\begin{array}{llllll}
p = 60037 & n_p = 3& u = 26443 \\
m_p(0) = 0 & M_p = 8 & M_p/{\rm log(p)}=0.72709 \vspace{0.05cm} \\
p = 60091 & n_p = 3 & u = 32679 \\
m_p(0) = 1 & M_p = 7&M_p/{\rm log(p)}=0.63615 \vspace{0.05cm} \\
p = 60103 & n_p = 1& u = 22560 \\
m_p(0) = 0 & M_p = 9 & M_p/{\rm log(p)}=0.81789 \vspace{0.05cm}\\
p = 60127 & n_p = 3& u = 55712 \\
m_p(0) = 1 & M_p = 7 & M_p/{\rm log(p)}=0.63612 \vspace{0.05cm} 
\end{array} $$
\normalsize

\subsubsection{Example of \texorpdfstring{$M_p=m_p(u)$}{Lg} for 
\texorpdfstring{$u>0$}{Lg}.}\label{5011}

Still in $K = \Q(x)$, with $x= \zeta_7+\zeta_7^{-1}$, we consider $\gamma=-5\,x^2 + 2\,x + 3$ 
(Pr. B-3 of \cite{Grpro}).
For $p = 5011$, we find a maximal number $M_p=7$ of $z\in I_p$ such that
$\Delta_p^\theta(z) \equiv u_0 \pmod p$ for $u_0 = 418$
(we then have $m_p(0) = 1$ and $M_p/{\rm log(p)}=0.82165$)
\footnotesize
$$\begin{array}{rrrr}
\hbox{exponent\ \,} j & \hspace{2.5cm} \hbox{residues\ \,} \big[\gamma^j \big]_p \vspace{0.15cm} \\
1233 & 2043 \, x^2 - 540 \, x - 359 \\
1297 & 810 \, x^2 + 74 \, x + 1078 \\
1932 & -1415 \, x^2 + 962 \, x - 1352 \\
2465 & 577 \, x^2 + 1380 \, x + 1727 \\
2941 & -1735 \, x^2 - 172 \, x + 1553 \\
3848 & 1168 \, x^2 - 816 \, x + 70 \\
4339 & -320 \, x^2 - 426 \, x + 468
\end{array} $$
\normalsize

\subsubsection{Diedral case of order 6.}\label{d6}
We consider the field $K=\Q(j, \sqrt[3]{2})$ (where $j$ denotes a cubic root of unity), with Galois group
$G=D_6 = \{1, \sigma, \sigma^2, \tau, \tau\sigma, \tau\sigma^2 \}$, 
and the unique irreducible $p$-adic character 
$\theta$ of degree 2 for which (with $\alpha := \alpha_p(z)$)

\medskip  
$\Delta_p^\theta (z) = \hbox{$\frac{1}{\sqrt {-3} }$}
\big(\alpha^{2} +\alpha^{2}{}^{\sigma} +\alpha^{2}{}^{\sigma^2} 
-\alpha^{2}{}^{\tau} -\alpha^{2}{}^{\tau\sigma} -\alpha^{2}{}^{\tau\sigma^2} 
- \alpha \alpha^{\sigma} - \alpha^{\sigma} \alpha^{\sigma^2} -\alpha^{\sigma^2} \alpha$

\hfill $+ \alpha^{\tau} \alpha^{\tau\sigma}+ \alpha^{\tau\sigma} \alpha^{\tau\sigma^2} +
\alpha^{\tau\sigma^2} \alpha^{\tau} \big)$.

\smallskip
We use here $I_p$ generated by $\gamma= 2\,x^5+2\,x^3+x -1$ (Pr. B-4 of \cite{Grpro})
\footnotesize
$$\begin{array}{llllll}
p = 3559 & u_0 = 2946 & \\
m_p(0) = 1 & M_p = 6 & M_p/{\rm log(p)}=0.73374 & \vspace{0.15cm} \\
p = 3571 & u_0 = 2286 & \\
m_p(0) = 1 & M_p = 5& M_p/{\rm log(p)}=0.61120 & \vspace{0.15cm} \\
p = 3581 & u_0 = 1 & \\
m_p(0) = 0 & M_p = 7 & M_p/{\rm log(p)}=0.85539 &\vspace{0.15cm} \\
p = 3583 & u_0 = 1852 & \\
m_p(0) = 0 & M_p = 6 & M_p/{\rm log(p)}=0.73314 &\vspace{0.15cm} 
\end{array} $$
\normalsize

We consider $\gamma= x^5+2\,x^4 -2\,x^3 -x +1$.
For $p = 1709$, we obtain $m_p(0) = 1$ and $M_p=6$ for $u_0 = 487$ 
(we have $M_p/{\rm log(p)}=0.80605$); 
whence the array of the $z=\big[\gamma^j \big]_p$ such that 
$\Delta_p^\theta (z) \equiv 487 \pmod p$
\footnotesize
$$\begin{array}{rrrr}
\hbox{exponent\ \,} j & \hbox{residues\ \,} \big[\gamma^j \big]_p \vspace{0.15cm} \\
 51 & -179 \, x^5 + 718 \, x^4 + 739 \, x^3 + 688 \, x^2 + 553 \, x - 159 \\
 81 & -212 \, x^5 - 730 \, x^4 - 634 \, x^3 + 849 \, x^2 - 161 \, x - 556 \\
 759 & -649 \, x^5 + 324 \, x^4 - 729 \, x^3 + 675 \, x^2 - 423 \, x + 149 \\
 1079 & 552 \, x^5 - 364 \, x^4 + 136 \, x^3 + 52 \, x^2 + 799 \, x + 335 \\
 1291 & 651 \, x^5 + 584 \, x^4 + 334 \, x^3 + 263 \, x^2 + 437 \, x + 624 \\
 1567 & 99 \, x^5 + 566 \, x^4 - 292 \, x^3 + 152 \, x^2 + 529 \, x - 645
\end{array} $$
\normalsize

 We shall examine in which way it is possible to have $m_p(0) = O({\rm log}(p))$ (abundant solutions) 
apart from the case of exceptional solutions, important point to justify the existence of a binomial probability law.

\subsubsection{Cas where \texorpdfstring{$m_p(0) = O({\rm log}(p))$}{Lg} apart from 
the exceptional case.}\label{nonex}

We intend to give numerical examples of prime numbers $p$ for which $I_p$ 
(generated by $\gamma = \eta \ll p$)
has $m_p(0) = O({\rm log}(p))$ solutions $z \in I_p$ to $\Delta_p^\theta(z) \equiv 0 \pmod p$,
in the case where these solutions {\it are not of the form}

\centerline{$\mu^j = [\mu^j ]_p$, $1 \leq j \leq h'= 
\Big\lfloor \dsfrac{ {\rm log} (p-1) - {\rm log} (2\,\Gamma(K))}{{\rm log} (c_0(\mu))} \Big\rfloor$ }

\smallskip
of the exceptional case when $\Delta_p^\theta(\mu) \equiv 0 \pmod p$, $\mu \ll p$. 
The numerical experiments show that this is as rare as in the exceptional 
case and we shall conclude on these various cases in \S\,\ref{remfond}.

\smallskip
(i) Case $G=1$.
Although this has been studied in \cite{Grqf} (array of \S\,4.3.1 giving pairs $(p, m_p(0))$ 
with $m_p(0)\geq 6$), by comparison one may see again the case of Fermat quotient for which
we always have $I_p = [1, p[$. 

\smallskip
We obtain the following array in which we have fixed $a=14$, to find cases $q_p(a) \equiv 0 \pmod p$ 
(exceptional solutions $a^j$) and the cases $m_p(0)=O({\rm log}(p))$ (abundant solutions $z$), 
for primes $p$ such that $3 \leq p \leq 10007$ (Pr. B-0 of \cite{Grpro})
\footnotesize
$$\begin{array}{llllll}
\hbox{$p$} \hspace{1.8cm} & \hbox{$u \in [0, p[$ tels que $q_p(z)=u$ for 
$M_p=m_p(u)$} & M_p & m_p(0) \vspace{0.20cm} \\ 
 p = 11 & abundant \ (z=3, 9) & M_p = 2 & m_p(0) = 2 \\
 & u = 0, 5 & & \vspace{0.10cm} \\
 p = 29 & exceptional \ (z=14) & M_p = 3 & m_p(0) = 1 \\ 
 & u = 24, 16, 1 & & \vspace{0.10cm} \\
 p = 353 & exceptional \ (z=14, 196) & M_p = 6 & m_p(0) = 2 \\
 & u = 297, 275 & & \vspace{0.10cm} \\
 p = 653 & abundant \ (z=84, 120, 197, 287, 410) & M_p = 5 & m_p(0) = 5 \\
 & u = 0, 99, 360, 241, 353, 617, 119, 399 & & \vspace{0.10cm}  \\
 p = 4909 & abundant \ (z=2189, 2234, 2406, 3266, 4649) & M_p = 5 & m_p(0) = 5 \\
 &u = 0, 4651, 2785, 3967, 648, 3544, 3322, 2381, \\
& 1843, 3465, 1089, 1483, 4171 && \vspace{0.10cm} \\
 p = 5107 & abundant \ (560, 1209, 1779, 2621, 4295, 4361) & M_p = 6 & m_p(0) = 6 \\
 & u =0, 2705, 4159 && 
\end{array}$$
\normalsize

(ii) Case $G=C_3$ (Pr. B-5 of \cite{Grpro}).
We use the cubic cyclic field of conductor $7$ and $I_p$ generated by $\gamma=x^2+x+2$.

\smallskip
For $p =2053$ (the least example with $M_p=m_p(0)=7$) we obtain the following residues 
$z=[\gamma^j]_p$ such that $\Delta_p^\theta(z) \equiv 0 \pmod p$ (for the unique $u_0=0$)
\footnotesize
$$\begin{array}{rrr}
\hbox{exponent\ \,} j & \hbox{residues\ \,} \big[\gamma^j \big]_p\vspace{0.10cm} \\ 
186 & 871 \, x^2 - 930 \, x + 496 \\ 
500 & 57 \, x^2 + 272 \, x + 478 \\
559 & -691 \, x^2 - 1003 \, x - 881 \\
1399 & 258 \, x^2 + 1002 \, x - 349 \\
1870 & -375 \, x^2 - 212 \, x + 240 \\ 
1981 & -464 \, x^2 + 818 \, x - 783 \\ 
2034 & 121 \, x^2 + 610 \, x + 524
\end{array}$$
\normalsize

 The example is clear since the exponents $j$ are not the first powers of a $\mu \in I_p$, $\mu \ll p$, 
and since there are no other solutions.

\smallskip
For $I_p$ generated by $\gamma=2\,x^2+x+3$ and $p=1987$, we have $M_p=m_p(0)=5$ 
for $u= 1026, 454, 282, 180, 0, 1734, 117, 325, 1225$
and an analogous array of residues for $u_0=0$.

\smallskip
For $\gamma=2\,x^2+x+2$, $p=37, 307, 2347$ give non exceptional abundant solutions. 
Only $p=79$ leads to a mixed case ($M_p=m_p(0)=4$), with $u=0, 71$ and the array 
of residues for $u_0=0$
\footnotesize
$$\begin{array}{rrrr}
\hbox{exponent\ \,} j & \hbox{residues\ \,} \big[\gamma^j \big]_p\vspace{0.10cm} \\ 
1 & 2 \, x^2 + x + 2 \\
2 & 17 \, x^2 + 8 \, x + 4 \\
20 & 19 \, x^2 - 11 \, x + 15 \\
35 & -35 \, x^2 - 33 \, x + 19
\end{array}$$
\normalsize

(iii) Diedral case of degree 6 (Pr. B-6 of \cite{Grpro}). The character $\theta$ of degree 2 allows us to confirm
 the previous computations.
For abundant solutions, the $m_p(0)\approx M_p$ maximum, equal to 6, 
is given by the following example, where $I_p$, for $p=331$, is generated by the integer
$\gamma = -x^5+x^4-x^3-x^2 +1$
\footnotesize
$$\begin{array}{rrr}
\hbox{exponent\ \,} j & \hbox{residues\ \,} \big[\gamma^j \big]_p\vspace{0.10cm} \\ 
48 & 59 \, x^5 - 46 \, x^4 - 87 \, x^3 + 141 \, x^2 + 158 \, x + 40 \\
102 & -61 \, x^5 - 114 \, x^4 + 119 \, x^3 + 11 \, x^2 - 125 \, x - 120 \\
138 & -123 \, x^5 - 122 \, x^4 - 79 \, x^3 - 61 \, x^2 + 22 \, x - 71 \\
155 & 91 \, x^5 + 100 \, x^4 + 136 \, x^3 + 138 \, x^2 + 152 \, x + 147 \\
180 & 152 \, x^5 - 8 \, x^4 - 59 \, x^3 - 165 \, x^2 + 92 \, x - 131 \\
322 & 49 \, x^5 - 158 \, x^4 - 13 \, x^3 - 14 \, x^2 - 33 \, x - 23
\end{array}$$
\normalsize

For $\gamma = -x^5-x^4+x^3-x^2-x +1$, $p=379$, we have a case of abundant solutions 
with $M_p=m_p(0)=5$ and the following residues
\footnotesize 
$$\begin{array}{rrr}
\hbox{exponent\ \,} j & \hbox{residues\ \,} \big[\gamma^j \big]_p\vspace{0.10cm} \\ 
49 & -147 \, x^5 - 39 \, x^4 - 73 \, x^3 + 138 \, x^2 + 40 \, x + 129 \\
104 & -169 \, x^5 - 105 \, x^4 - 45 \, x^3 - 180 \, x^2 - 174 \, x + 7 \\
149 & -91 \, x^5 + 48 \, x^4 - 155 \, x^3 + 62 \, x^2 + 183 \, x + 35 \\
223 & -178 \, x^5 - 14 \, x^4 - 101 \, x^3 + 150 \, x^2 - 189 \, x + 107 \\
304 & -103 \, x^5 + 131 \, x^4 + 3 \, x^3 + 165 \, x^2 + 140 \, x + 189
\end{array}$$
\normalsize

We have the following examples in the intervals of variations of the $81$ values of 
$\gamma = ax^5+bx^4+ cx^3+dx^2+ex +1$ (coefficients in $\{-1, 0, 1\}$), of the program 
for $2000 \leq p \leq 2500$):

\smallskip
a) For $\gamma = x^5 -x^4-x +1$ and $p=2441$, we have $m_p(0)=2$, $M_p=6$ (with $u_0=1426$) 
for an exceptional solution (but only for $h=1$) and the array
\footnotesize
$$\begin{array}{rrr}
\hbox{exponent\ \,} j & \hbox{residues\ \,} \big[\gamma^j \big]_p\vspace{0.15cm} \\ 
 1 & x^5 - x^4 - x + 1\\
915 & -442 \, x^5 - 129 \, x^4 - 125 \, x^3 - 651 \, x^2 - 645 \, x + 376
\end{array}$$
\normalsize

For $u_0=1426$ we obtain the following array of the residues $z$ giving $M_p=m_p(u_0)$
\footnotesize
$$\begin{array}{rrr}
\hbox{exponent\ \,} j & \hbox{residues\ \,} \big[\gamma^j \big]_p\vspace{0.15cm} \\ 
1839 & -169 \, x^5 - 867 \, x^4 - 402 \, x^3 - 891 \, x^2 - 357 \, x - 680 \\ 
2034 & 35 \, x^5 - 939 \, x^4 - 181 \, x^3 + 388 \, x^2 - 841 \, x - 226 \\ 
2054 & 449 \, x^5 - 212 \, x^4 + 1097 \, x^3 - 651 \, x^2 + 1191 \, x - 478 \\ 
2171 & 688 \, x^5 - 525 \, x^4 - 635 \, x^3 + 334 \, x^2 + 181 \, x - 783 \\ 
2194 & -909 \, x^5 + 335 \, x^4 - 1136 \, x^3 - 1033 \, x^2 - 970 \, x + 557 \\ 
2353 & 780 \, x^5 - 1126 \, x^4 + 968 \, x^3 - 264 \, x^2 - 294 \, x - 107
\end{array}$$
\normalsize

b) For $\gamma = x^5 -x^4+x^3-x^2 +1$ and $p=2441$, we have $M_p=m_p(0)=5$
 ($u=2158, 2057, 724, 359, 0, 717$) for abundant solutions and an analogous array.

\medskip
c) For $\gamma = x^5 -x^3-x +1$ and $p=2087$, we also have $M_p=m_p(0)=5$
 (with $u= 1335, 950, 670, 1840, 506, 1541, 1102, 280, 1973, 60, 0$) for abundant solutions.

\medskip
d) Among the $81$ generators $\gamma$ one still find 4 cases of exceptional solutions and 4 cases of 
distinct abundant solutions.

\subsubsection{Conclusions -- Fundamental Remarks}\label{remfond} Examine the main features 
of the notions of exceptional and abundant solutions. The number $\eta$ is fixed and $p\to\infty$.

\smallskip
(i) {\it Exceptional solutions.}
If $\Delta_p^\theta(\eta)\equiv 0 \pmod p$, this generates at least $h$
solutions $z_j =\eta^j= [\eta^j]_p \in I_p$, $j = 1, \ldots, h$, and this yields $m_p(0) \geq h = O({\rm log}(p))$ 
(these solutions are also abundant). 
If we admit that $M_p=O({\rm log}(p) )$, we obtain $M_p \geq m_p(0)\geq h$.
We shall often have $M_p > m_p(0) \geq h$ taking into account that 
$M_p=m_p(u_0)$, $u_0 \in [0, p[$, and that $u_0=0$ is less probable, 
even if several $u$ realise $M_p$; moreover, $M_p>m_p(0)$, when $u_0 \ne 0$,
can be explained by the fact that if $\Delta_p^\theta(z) \equiv u_0 \pmod p$, then, in general,
$\Delta_p^\theta(\eta^k z) \equiv u_0 \pmod p$ (obvious in the case of linear $\Delta_p^\theta(\mb)$ in 
the conjugates of $\alpha_p(\mb)$; see \cite[\S\,4.2.2 ($\delta$)]{Grqf} for Fermat quotients).

\smallskip
(ii)  {\it Abundant solutions.}
If $\Delta_p^\theta(\eta) \not\equiv 0 \pmod p$ and $m_p(0) = O({\rm log}(p))$ we then have
$O({\rm log}(p))$ solutions $z'_i \in I_p$, $i = 1, \ldots, h' := m_p(0)$, where in that case, the solutions $z'_i$ 
are a priori uniformly distributed in $I_p$ (recall that from \cite{H-B}, Fermat quotients 
are uniformly distributed modulo $p$ and that this property is probably general).

\smallskip
(iii)  {\it Conclusion.} The exceptional case may be seen as the case where, 
{\it by accident}, $\eta$ is part of the solutions $z'_j$, in which case {\it we have necessarily} 
$z'_1=\eta$, $z'_2=\eta^2$, \ldots, $z'_h=\eta^h$, with additional $z'_i$, 
without one can say that the successive powers of $\eta$ establish some relations of probabilistic dependence.
Moreover, we shall obtain ``mixed cases'' (i.e., when there exists $\mu \ne \eta$ in $I_p$, $\mu \ll p$, such that 
$\Delta_p^\theta(\mu) \equiv 0 \pmod p$ giving $h' \ll O({\rm log}(p))$ solutions, in part exceptional).

\smallskip
It follows from all this, that the two cases (i) and (ii) are of similar probabilities, the exceptional case 
being less probable by definition in which case, only the consideration of the
``abondant'' case is coherent with the existence of a classical law of probability for the set of 
solutions $z'_i$ which are not subject to any condition. 

\smallskip
In other words, the ``exceptional'' case would not be particular, despite the
appearances, and it would be liable to the same probability as for Fermat quotients 
(\cite[\S\,4.3.2]{Grqf}), a probability which becomes (for instance) $O(\frac{1}{p^2})$ 
for $p > p_0$ very large, which we will analyse again.

\begin{remark}
The number $\eta$ being given, we intend to compare the probability to have a 
prime number $p$ such that $\Delta_p^\theta(\eta) \equiv 0 \pmod p$ (exceptional solutions),
with the probability to have $\Delta_p^\theta(\eta) \equiv u \pmod p$, {\it for fixed $u$ in $\N$, 
independently of~$p$} (which is the case of $u=0$). 
The numerical aspect needs to take $u$ ``fixed and small'' and to search the prime numbers $p$ such that
$\Delta_p^\theta(\eta) \equiv u \pmod p$. It is then found the same degree of rarity whatever the choice of $u$.

\smallskip
For instance if $\eta=x^2-3\,x+2$ ($x=\zeta_7+\zeta_7^{-1}$, Pr. B-7 of \cite{Grpro}), in the interval
$7< p \leq 60000001$, we get the rare pairs of solutions 

\smallskip
$(p, u)=(61, 0)$, $(5419, 0)$, $(19, 1)$, $(37, 2)$, 
$(3229, 3)$, $(43, 4)$, $(31, 5)$, $(613, 5)$, $(\emptyset, 6)$, $(79, 7)$, $(42712981, 7)$. 

\smallskip
We can use negative $u$ and we get similar results, as $(607, -1)$, $(143137, -1)$.
\end{remark}

\subsection{On the existence of a binomial law for \texorpdfstring{$m_p(u)$}{Lg}.}\label{loi} 
Besides the previous justifications, we can complete the analysis in the following 
quantitative manner which results from a very simple computation, given for the case 
of Fermat quotients $q_p(z)$, $z\in [1, p[$, as well as
for the case of local regulators $\Delta_p^\theta(z)$, $z\in I_p$, 
for the group $D_6$ and $\theta$ of degree 2 (Pr. B-8, B-12, B-13 of \cite{Grpro}, 
the last program testing more general probabilities).

\smallskip
In the two cases, we have computed the mean (under a great lot of prime numbers) 
the proportions $C/N$, where for $p$ fixed, $C$ is the number of values $u \in [0, p[$ 
such that there exists at least a $z\in [2, p-1[$ (resp. $z \in I_p$) such that 
$q_p(z) \equiv u \pmod p$ (resp. $\Delta_p^\theta(z) \equiv u \pmod p$). 

\smallskip
The very remarquable approximation of the result with $1-e^{-1} \approx 0.632120$ leads to the following 
conjecture/heuristic:

\begin{conjecture} \label{heur21} {\it Let $K/\Q$ be Galois of degree $n$ and of Galois group $G$.
We assume to study the case $f = \delta=1$ (i.e., $p$ totaly split in the field $C$ of values 
of $\varphi \div \theta$, ${\mathcal L}^\theta \simeq V_\theta$, cf. Definition \ref{defidec}) 
for $p$ and for the irreducible $p$-adic character $\theta$.

\smallskip  
Then the mean value of the proportion of $u \in [0, p[$ of the form $\Delta_p^\theta(z) \pmod p$, $z \in I_p$ 
(Definition \ref{defresidus}), is equal to $1-e^{-1} \approx 0.632120$, for $p \to \infty$. }
\end{conjecture}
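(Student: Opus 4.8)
The plan is to read the statement as an occupancy (balls--in--bins) assertion and to reduce it to correlation estimates for the family $\big(\Delta_p^\theta([\gamma^k]_p)\big)_{1\le k<p}$. We are dropping the $p-1$ residues $z\in I_p$ into the $p$ classes of $[0,p[$ through the map $z\mapsto\Delta_p^\theta(z)\pmod p$, and we must count how many classes are hit. In the idealized model where these $p-1$ values are independent and uniform on $[0,p[$ --- which is precisely the binomial/Poisson law underlying Heuristic \ref{heur1} --- the probability that a fixed class $u$ is missed is $\big(1-\frac1p\big)^{p-1}$, so the expected proportion of hit classes is $1-\big(1-\frac1p\big)^{p-1}\to1-e^{-1}$ as $p\to\infty$. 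Thus the entire content of the conjecture is that the deterministic family $\big(\Delta_p^\theta([\gamma^k]_p)\big)_k$ is ``random enough'' that its occupancy statistics agree, in the mean over $p$, with this independent model.

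First I would set up the method of factorial moments. Writing $C$ for the number of $u\in[0,p[$ with $m_p(u)\ge1$, one has $\Frac{C}{p}=1-\Frac{1}{p}\,\#\{u:m_p(u)=0\}$, so it suffices to show that the empty-class proportion tends to $e^{-1}$ on average. Since the Poisson law of parameter $1$ has all its factorial moments equal to $1$ and is determined by them, the method of moments reduces the conjecture to proving, for each fixed $j\ge1$, that (in the mean over $p$)
\[
\Frac1p\sum_{u\in[0,p[} m_p(u)\big(m_p(u)-1\big)\cdots\big(m_p(u)-j+1\big)\ \longrightarrow\ 1 .
\]
The left-hand sum counts ordered $j$-tuples of distinct exponents $k_1,\dots,k_j\in[1,p[$ for which $\Delta_p^\theta([\gamma^{k_1}]_p)\equiv\cdots\equiv\Delta_p^\theta([\gamma^{k_j}]_p)\pmod p$; expanding the $j-1$ coincidence conditions by additive characters $\psi$ of $\F_p$ turns each factorial moment into a mixed exponential sum whose diagonal term (all $\psi$ trivial) supplies the expected main term $(p-1)(p-2)\cdots(p-j+1)/p^{\,j-1}\approx p$.

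The first factorial moment ($j=1$) is automatic, since $\sum_u m_p(u)=|I_p|=p-1$. The decisive inputs begin at $j=2$: writing $\psi_0$ for the trivial character and $S(\psi):=\sum_k\psi\big(\Delta_p^\theta([\gamma^k]_p)\big)$, the identity $\sum_u m_p(u)^2=\frac1p\sum_\psi|S(\psi)|^2$ shows that the second factorial moment is governed by the pair correlation of the $\Delta_p^\theta([\gamma^k]_p)$. Here the equidistribution behind \S\,\ref{sub14} --- Heath-Brown's bounds for Fermat quotients (\cite{H-B}) and their analogues --- gives nontrivial cancellation in each $S(\psi)$, but mere square-root cancellation in a single sum is insufficient: matching the Poisson second moment requires the precise mean value $\frac1{p^2}\sum_{\psi\ne\psi_0}|S(\psi)|^2\to1$, that is, genuine second-order statistics rather than an upper bound. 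I would also record the rigidity check that the naive scaling $\Delta_p^\theta(\eta^k)\equiv k^{\varphi(1)}\Delta_p^\theta(\eta)\pmod p$ --- coming from $\alpha_p(\eta^k)\equiv k\,\alpha_p(\eta)\pmod p$ and the homogeneity of $P^\varphi$ of degree $\varphi(1)$ --- holds only for the \emph{unreduced} powers $\eta^k$ and is far too structured to produce coverage $1-e^{-1}$, confirming that all the pseudorandomness must be created by the Archimedean reduction $[\,\mb\,]_p$ of \S\,\ref{archi}.

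The hard part, which I expect to be the genuine obstacle rather than a routine verification, is the family of higher correlations for $j\ge2$. The function $k\mapsto\Delta_p^\theta([\gamma^k]_p)\pmod p$ couples two incompatible structures: the nonlinear $p$-adic operator $\alpha_p(z)=\frac1p\big(z^{p^{n_p}-1}-1\big)$, and the non-algebraic reduction $[\,\mb\,]_p$, which is a coordinatewise floor map into $]-\frac p2,\frac p2]^n$ and is not a polynomial or rational map over $\F_p$. Consequently the Weil--Deligne estimates that would ordinarily bound the associated exponential sums --- and, more to the point, pin down their mean square --- do not apply, and controlling the off-diagonal contributions is exactly where present technology stops. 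This is already the situation in the classical specialization $K=\Q$, $G=1$, $\theta=1$, where $\Delta_p^1(z)=q_p(z)$ is the Fermat quotient: there only first-moment equidistribution is known, and the coverage limit $1-e^{-1}$ is not a theorem. For this reason I would present the assertion, as the author does, as a conjecture: the factorial-moment reduction to Poisson$(1)$ occupancy is clean and structural, but the decisive correlation estimates linking the $p$-adic and Archimedean metrics remain out of reach, with the numerical data of \S\,\ref{exp} standing in for the missing analytic input.
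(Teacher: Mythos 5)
Your assessment is correct, and it matches the paper's own treatment: this statement is a conjecture, and the paper offers no proof of it. Its entire justification is empirical --- PARI computations (Pr.\ B-8, B-12, B-13 of \cite{Grpro}) of the mean proportion $C/N$ of attained values $u$ over many primes, giving $0.632133$ for $C_3$ and $0.631711$ for $D_6$ --- together with the remark that $1-e^{-1}$ is exactly the probability of ``at least one solution'' under the binomial law with parameters $\big(p-1,\frac{1}{p}\big)$ of Heuristic \ref{heur2} (point (iv) of \S\,\ref{consheur}). Your occupancy derivation of $1-e^{-1}$ is precisely that heuristic, so the core of your proposal coincides with the paper's. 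What you add, and what the paper does not contain, is the factorial-moment (Poissonization) reduction and its exponential-sum translation: the observation that the conjecture would follow from the correlation statements $\frac1p\sum_u m_p(u)(m_p(u)-1)\cdots(m_p(u)-j+1)\to 1$ for each fixed $j$, and that already at $j=2$ one needs the exact asymptotic $\frac{1}{p^2}\sum_{\psi\ne\psi_0}|S(\psi)|^2\to 1$, which no square-root-cancellation upper bound can deliver. This framing is a genuine sharpening of the paper's discussion: it isolates exactly which analytic input is missing, and your point that the Archimedean reduction $[\,\cdot\,]_p$ is not an algebraic map over $\F_p$ (so Weil--Deligne technology does not apply) correctly explains why the paper must stop at numerics, even in the classical case $K=\Q$ of Fermat quotients.

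One small slip, with no downstream effect: the number of ordered $j$-tuples of \emph{distinct} exponents in $[1,p[$ is $(p-1)(p-2)\cdots(p-j)$, i.e.\ $j$ factors, not $(p-1)\cdots(p-j+1)$; with the corrected count the diagonal term is indeed $\approx p$ as you assert, so the first factorial moment normalization and the limit $1$ per class come out as you state.
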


The program for $C_3$ gives the value $0.632133$ and that for $D_6$ gives $0.631711$.
As we recall at the point (iv) below, it is also the probability (assuming a binomial law) of the existence
of at least one solution $z \in I_p$ to $\Delta_p^\theta(z) \equiv u \pmod p$ for fixed~$u$. 

\smallskip
Then in a complement (unpublished, accessible via \cite{Grcompl}), 
we have estimated, in various manner, the mean value 
of $M_p$ for the binomial law of probability with parameters $\Big(p-1, \hbox{$\Frac{1}{p}$}\Big)$, 
when $p\to\infty$ (we ignore if any theoretical result is known).

\subsubsection{Second principal heuristic.}\label{consheur}
The previous arguments suggest the existence of a binomial law 
with parameters $\Big(p-1, \hbox{$\Frac{1}{p}$} \Big)$, because we can consider that we realise the
$p-1$ ``random selection'' $z \in I_p$ for which we ask how many times we get the event 
$\Delta_p^\theta(z)\equiv u \pmod p$, $u\in [0, p[$ given.
 The second parameter $\Frac{1}{p}$ is an approximation of 
 ${\rm Prob}\big(\Delta_p^\theta(z)\equiv u \pmod p\big)$.
 
\smallskip
In fact one can verify that any minor modification of these parameters does not change the conclusion.

\begin{heuristic} \label{heur2} {\it Let $K/\Q$ be Galois, of degree $n$ and of Galois group $G$.
We assume to study the case $f = \delta=1$ (cf. Definition \ref{defidec}) for $p$ and for the irreducible
 $p$-adic character $\theta$ of $G$. Let $I_p$ as defned in Definition \ref{defresidus}.

\smallskip  
Let $u \in [0, p[$ be fixed. Let $m \in [0, p[$, $m \ll p$; then the probability to have at least
$m$ values $z_1, \ldots, z_m \in I_p$ such that $\Delta_p^\theta(z_j)\equiv u \pmod p$ for 
$j=1, \ldots, m$, is given by the expression

\centerline{${\rm Prob}\big(m_p(u) \geq m\big) = \Frac{1}{p^{p-1}}
 \sm_{j=m}^{p-1} \hbox{$\binom{p-1} {j}$} (p-1)^{p-1-j}$. }}
\end{heuristic}

 We resend to \cite[\S\,4.4]{Grqf} for identical calculations leading to the following facts,
from the simpler formula

\centerline{ ${\rm Prob}\big(m_p(u) \geq m \big)=
1 -\Big (1-\Frac{1}{p} \Big)^p \Frac{p}{p-1}\,\sm_{j=0}^{m-1} \Frac{1}{(p-1)^j} 
\times \hbox{$\binom{p-1} {j}$}$, }

(i) $\ \Frac{1}{p^{p-1}} \sm_{j=m}^{p-1} \hbox{$\binom{p-1} {j}$} (p-1)^{p-1-j} < 
\Frac{1}{p^m} \, \hbox{$\binom{p-1} {m}$}$ for all $m \leq p-1$.

(ii) $\ {\rm Prob}\big(m_p(u) \geq m \big) \approx 1 - 0.3678 
\times \sm_{j=0}^{m-1} \Frac{1}{(p-1)^j} \times \hbox{$\binom{p-1} {j}$}$.

\smallskip
(iii) The probability to have 0 solutions is near from $e^{-1} \approx 0.3678$. 

\medskip
(iv) The probability to have at least one solution is near from $1-e^{-1} 
\approx 0.63212$; for at least 3 (resp. 4) solutions, 
we obtain $0.0803$ (resp. $0.0189$).

\medskip
For an experimental confirmation, see the Pr. 14 of \cite{Grpro}.

\medskip
In the context of minimal $p$-divisibility, we obtain the following results, where 
$$h = \dsfrac{ {\rm log} (p-1) - {\rm log} (2\,\Gamma(K))}{{\rm log} (c_0(\eta))}, $$

with $c_0(\eta) = {\rm max}_{\sigma \in G}(\vert \eta^\sigma \vert )$ (Lemma \ref{lem3}):

\begin{lemma} \label{encadre}
(i) We have for $p\to\infty$ the inequalities (\cite[Lemma 4.6]{Grqf})

\centerline{ ${\rm exp} \Big(-1 + \Frac{1}{p} \big (h + \Frac{1}{2} \big )\, \Big)<
\Frac{\Frac{1}{p^{p-1}} \cdot\sm_{j=h}^{p-1} \hbox{$\binom{p-1} {j}$} (p-1)^{p-1-j} } {\Frac{1}{p^{h}}
\cdot \hbox{$\binom{p-1} {h}$} } \leq 1$. }
 
(ii) It follows ${\rm Prob}\big(\Delta_p^\theta(\eta) \equiv 0 \pmod p \big) < C_\infty(\eta) \times 
\Frac{1}{p^h} \, \hbox{$\binom{p-1} {h}$}$ for $p\to\infty$, where $C_\infty(\eta)$ is between 
$e^{-1} \approx 0.36788$ and $1$.
\end{lemma}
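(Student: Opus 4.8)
The plan is to handle the two assertions separately, reading the quotient in (i) as the ratio of a binomial upper tail to its leading term. First I would observe that, by Heuristic \ref{heur2}, the numerator
$$\frac{1}{p^{p-1}}\sum_{j=h}^{p-1}\binom{p-1}{j}(p-1)^{p-1-j}$$
is exactly $\mathrm{Prob}\big(m_p(u)\geq h\big)$, the upper tail of the binomial law with parameters $\big(p-1,\tfrac1p\big)$, while the denominator $\tfrac{1}{p^{h}}\binom{p-1}{h}$ dominates each single term of that tail. The upper bound (ratio $\leq 1$) is then immediate from the inequality (i) stated just after Heuristic \ref{heur2}, which gives $\mathrm{Prob}\big(m_p(u)\geq h\big)<\tfrac{1}{p^{h}}\binom{p-1}{h}$, hence ratio $<1$. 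For the lower bound I would retain only the $j=h$ term of the tail, so that the ratio exceeds $\big(1-\tfrac1p\big)^{p-1-h}$, and reduce the claim to the elementary estimate
$$\Big(1-\tfrac1p\Big)^{p-1-h}>\exp\Big(-1+\tfrac1p\big(h+\tfrac12\big)\Big).$$

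To prove this last inequality I would expand $\log\big(1-\tfrac1p\big)=-\tfrac1p-\tfrac1{2p^{2}}-\cdots$, bound the remaining tail of the series by $\tfrac{1}{3p^{2}(p-1)}$, multiply through by the positive factor $p-1-h$, and verify that the resulting $\tfrac1{p^{2}}$-correction collects to $\tfrac{1+3h}{6p^{2}}+\tfrac{h}{3p^{2}(p-1)}>0$; this positivity is precisely what forces the constant $\tfrac12$ in the exponent. The whole computation is \cite[Lemma 4.6]{Grqf} transported verbatim, since the two inequalities are purely arithmetic facts about the binomial $\big(p-1,\tfrac1p\big)$ and are insensitive to the field $K$.

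For part (ii) the key link is Theorem \ref{thm1}: when $\Delta_p^\theta(\eta)\equiv 0\pmod p$, the $h$ residues $\eta,\eta^{2},\ldots,\eta^{h}$ all lie in $I_p$ and all satisfy $\Delta_p^\theta(\eta^{j})\equiv 0\pmod p$, so the event $\{\Delta_p^\theta(\eta)\equiv 0\}$ forces $m_p(0)\geq h$. Consequently $\mathrm{Prob}\big(\Delta_p^\theta(\eta)\equiv 0\pmod p\big)\leq \mathrm{Prob}\big(m_p(0)\geq h\big)$, and feeding this into part (i) yields the asserted bound, with $C_\infty(\eta)$ equal to the ratio occurring there. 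Since $h=O(\log p)$, the quantity $\tfrac1p\big(h+\tfrac12\big)\to 0$, so the lower exponential tends to $e^{-1}$ and $C_\infty(\eta)$ is squeezed between $e^{-1}$ and $1$ as $p\to\infty$.

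The main obstacle is conceptual rather than computational: everything rests on the passage from the genuine arithmetic probability $\mathrm{Prob}(\Delta_p^\theta(\eta)\equiv 0)$ to a binomial tail, which is legitimate only under Heuristic \ref{heur2} together with the exceptional-solution count of Theorem \ref{thm1}. The delicate point there is that the $h$ forced solutions $\eta^{j}$ be \emph{distinct} residues of $I_p$ and none a root of unity, a fact guaranteed by the inequality $D\geq d>h$ of Lemma \ref{lem2}(iii). The remaining analytic step, the logarithmic estimate above, is then routine once the sign of the $1/p^{2}$-term is pinned down.
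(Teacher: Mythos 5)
Your proposal is correct: for (i), keeping only the $j=h$ term of the tail gives the ratio lower bound $\big(1-\frac{1}{p}\big)^{p-1-h}$, and your expansion of $\log\big(1-\frac{1}{p}\big)$ with the tail bounded by $\frac{1}{3p^2(p-1)}$ does yield the positive correction $\frac{1+3h}{6p^2}+\frac{h}{3p^2(p-1)}$, which is exactly what produces the constant $\frac12$ in the exponent; and for (ii), the chain ``$\Delta_p^\theta(\eta)\equiv 0 \Rightarrow m_p(0)\geq h$ (Theorem \ref{thm1}) $\Rightarrow$ binomial tail (Heuristic \ref{heur2}) $\Rightarrow$ bound from (i)'' is the intended link. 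The paper itself supplies no in-text proof, delegating both inequalities to \cite[Lemma 4.6]{Grqf} and the binomial formalism stated after Heuristic \ref{heur2}, so your reconstruction coincides with the paper's (outsourced) argument rather than taking a different route.
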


\begin{lemma}\label{theoremconv} 
 The series $\sm_{p> 2}\ \dsfrac{1}{p^h}\,\cdot  \binom{p-1} {h}$ is convergent (\cite[Lemma 4.7]{Grqf}).
\end{lemma}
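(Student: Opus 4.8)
The plan is to bound each term crudely and then exploit the fact that $h = h_p(\eta)$ grows like ${\rm log}(p)$, so that $h!$ outgrows every fixed power of $p$. First I would discard the $p$-dependence inside the binomial coefficient. Using the standard estimate $\binom{p-1}{h} \leq \frac{(p-1)^h}{h!} < \frac{p^h}{h!}$ (reading $h$ as $\lfloor h_p(\eta)\rfloor$, which only strengthens the inequality), one gets immediately
$$\frac{1}{p^h}\,\binom{p-1}{h} < \frac{1}{h!}.$$
Hence it suffices to prove that $\sm_{p> 2} \frac{1}{h!}$ converges, where $h = h_p(\eta) = \frac{{\rm log}(p-1) - {\rm log}(2\,\Gamma(K))}{{\rm log}(c_0(\eta))}$ and $c_0(\eta)>1$, so that $h \to \infty$ with $p$.

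Next I would insert Stirling's formula in the form ${\rm log}(h!) = h\,{\rm log}(h) - h + O({\rm log}(h))$, giving $\frac{1}{h!} = {\rm exp}\big(-h\,{\rm log}(h) + h + O({\rm log}(h))\big)$. Since $h = \frac{{\rm log}(p)}{{\rm log}(c_0(\eta))}\,(1 + o(1))$ as $p\to\infty$, we have ${\rm log}(h) = {\rm log}_2(p) - {\rm log}({\rm log}(c_0(\eta))) + o(1)$, whence
$$-h\,{\rm log}(h) + h = -\frac{{\rm log}(p)}{{\rm log}(c_0(\eta))}\,\big({\rm log}_2(p) - O(1)\big).$$
Exponentiating yields the bound $\frac{1}{h!} \leq p^{-{\rm log}_2(p)/{\rm log}(c_0(\eta)) + O(1)}$, which is precisely the shape announced in Theorem \ref{heur3}.

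Finally, since ${\rm log}_2(p) \to \infty$, there is a $p_0$ (depending only on $\eta$ through $c_0(\eta)$ and $\Gamma(K)$) such that for all $p > p_0$ the exponent satisfies $\frac{{\rm log}_2(p)}{{\rm log}(c_0(\eta))} - O(1) > 2$, so that the general term is eventually dominated by $\frac{1}{p^2}$. As $\sm_p \frac{1}{p^2}$ converges, so does $\sm_{p> 2} \frac{1}{p^h}\,\binom{p-1}{h}$, and in fact the convergence is extremely fast.

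The hardest part will be controlling the Stirling error uniformly: because $h$ both tends to infinity and varies with $p$ (and is really $\lfloor h_p(\eta)\rfloor$), I must check that the $O({\rm log}(h))$ term and the final $O(1)$ in the exponent stay bounded independently of $p$, so that the double-logarithmic gain ${\rm log}_2(p)$ is not eaten away. This is exactly the computation already carried out for Fermat quotients in \cite[Lemma 4.7]{Grqf}; once the estimate $\frac{1}{h!} \leq p^{-{\rm log}_2(p)/{\rm log}(c_0(\eta)) + O(1)}$ is secured, convergence follows at once.
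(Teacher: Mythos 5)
Your proof is correct and is essentially the same calculation the paper relies on: the paper gives no inline argument but defers to \cite[Lemma 4.7]{Grqf}, where precisely this elementary estimate --- bounding $\binom{p-1}{h}\le (p-1)^h/h!$ so that the general term is at most $1/h!$, then using Stirling (or simply $h!\ge (h/e)^h$) together with $h\sim \log(p)/\log(c_0(\eta))$ to obtain $1/h! \le p^{-\log_2(p)/\log(c_0(\eta))+O(1)}$ --- is carried out for Fermat quotients. The fact that your final exponent matches the bound stated in Theorem \ref{heur3} confirms you have reconstructed the intended proof.
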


So we obtain the Theorem \ref{heur3} which is modified,
compared with the case of Fermat quotient, only by the effective constant $c_0(\eta)$ 
and the term $O(1)$ which can be precised.

\section{\texorpdfstring{$p$}{Lg}-adic conjectures}\label{section8}

\subsection{Introduction}\label{prem}
 The previous general result leads to several consequences, 
 or interpretations, that we shall call {\it Conjectures} 
 insofar as we consider that, under the second principal 
 Heuristic \ref{heur2}, any situation leads to the application
 of the Borell--Cantelli principle. 
 These conjectures come from the suitable use of a $p$-adic regulator of an 
 $\eta \in K^\times$ and of its $\theta$-components, for $p \to \infty$, 
 knowing that it is always possible to suppose $\eta \in Z_K$ for the 
 Archimedean aspects of the probabilistic reasoning 
($\theta \ne 1$, cf. Lemma \ref{integer}).

\smallskip
In algebraic number theory one speaks of {\it``for almost all prime number $p$''} to
mean {\it ``all prime number $p$ except a finite set $\Sigma$''}. 
But other weaker definitions are possible
in the probabilistic number theory (cf. \cite[Chap.\,III.3.1]{T}).
The statements of this section will be given under the strong form (algebraic). 

\smallskip
Whatever the exacness or not of our heuristics,
these conjectures are given independently and many seem to be very natural and credible.

\subsection{Local interpretation of 
\texorpdfstring{$\Delta_p^{\theta}(\eta) \equiv 0 \pmod p$}{Lg}}\label{interlocale}

Let $\eta \in K^\times$ be such that the multiplicative $\Z[G]$-module $F$ generated 
by $\eta$ is of $\Z$-rank $n$ and let $\theta$ be an irreducible $p$-adic character of 
$G$ such that $\Delta_p^{\theta}(\eta) \equiv 0 \pmod p$.
From the Corollary \ref{coro25} to Theorem \ref{theo24}, this is equivalent to 
the existence of a non trivial $\theta$-relation
${U_\theta} := \sm_{\nu \in G} u (\nu)\,\nu^{-1} \in {\mathcal L}^\theta$ such that 
$\eta^{U_\theta} \in \prd_{v \div p} K_v^{\times p}$.

We shall consider this writing as a property of ``partial local $p$th power at $p$'' of $\eta$, 
according to the following definition:

\begin{definition}\label{defi28} Let $\eta \in K^\times$. We assume that the multiplicative 
$\Z[G]$-module $F$ generated by $\eta$
is of $\Z$-rank $n$. Let $p$ be a large enough prime number and let

\smallskip
\centerline{$F_{(p)} :=\Big \{\eta_0 \in F, \ \eta_0 \in \prd_{v \div p} K_v^{\times p} \Big \} . $}

We shall say that $\eta$ is a {\it partial local $p$th power at $p$} if ${\rm dim}_{\F_p}(F/F_{(p)}) < n$.
\end{definition}

In the context of this definition, we have the exact sequence
$$0 \longrightarrow {\mathcal L}(\eta) \longrightarrow \F_p[G] \longrightarrow F/F_{(p)} \to 1, $$
obtained by associating with $U \in \F_p[G]$ the element $\eta^{{(p^{n_p}-1)}\, .\, U'}$ modulo $F_{(p)}$
where $U'$ is any representative of $U$ in $\Z[G]$.

\begin{remarks}\label{rema260} Since by assumption $F$ is of $\Z$-rank $n$ and 
whithout $p$-torsion (for all $p$ large enough), we have $F/F^p \simeq \F_p[G]$; in particular
$${\rm dim}_{\F_p}(F/F^p)^{e_\theta} = f\,\varphi(1)^2, $$ 

for all $\theta$, where $f$ is the residue degree of $\theta$, cf. \S\,\ref{defi01}\,(ii).
This yields the following:

\smallskip
(i) The condition ${\rm dim}_{\F_p} (F/F_{(p)}) <n$ is equivalent to the existence of a non trivial 
$\theta$-relation $U_\theta \in e_\theta \Z_{(p)}[G]$ modulo $p$, such that 
$\eta^{U_\theta}$ is in $F_{(p)}$ and not a global $p$th power in $K^\times$ 
because $F\cap K^{\times p} = F^p$ for $p$ large enough.
Indeed, we have $F \subseteq E^S$ (group of $S$-units) where 
$S$ is a suitable finite set of prime ideals of $K$. If we suppose $p$ 
large enough such that $p$ does not divide
the orders of the torsion groups  ${\rm tor}_\Z(E^S)$ and ${\rm tor}_\Z(E^S/F)$, 
then $F$ is a direct facteur in $E^S$
and $E^S = F \oplus H$; if $\eta' \in F$ is such that $\eta' = x^p$, $x \in K^\times$, then 
$x\in E^S$ and it may be written $x =x_F \times x_H$, 
whence $x_H^p = 1$, $x_H=1$ and $\eta' = x_F^p\in F^p$.

\smallskip
(ii) We have 
$$(F/F_{(p)})^{e_\theta} \simeq e_\theta\F_p[G]/{\mathcal L}^\theta 
\simeq \F_p^{(\varphi(1) - \delta) \cdot f \cdot \varphi(1)}, $$

since the dimension is $t\, f \varphi(1)$, $0 \leq t \leq \varphi(1)$, 
which leads to the relation $t = \varphi(1) - \delta$ since 
${\mathcal L}^\theta \simeq\delta V_\theta$ is of $\F_p$-dimension $\delta f \varphi(1)$.

\smallskip
(iii) To say that $\eta \in F_{(p)}$, is to say that $F_{(p)} = F$, hence ${\mathcal L} = \F_p[G]$, of probability 
$\Frac{O(1)}{p^n}$, a case to be ignored for $n>1$ and $p \to\infty$.
\end{remarks}

In the forthcoming Sections \ref{sub35} and \ref{sub380} we shall look at the reciprocal 
aspect of these local $p$th power properties for almost $p$.

\subsection{Case of the characters of order 1 or 2}\label{sub35} 
 We return to known particular cases (see \S\,\ref{sub7}).

\subsubsection{Case of a rational}\label{sub366}
We consider $K=\Q$ with a rational $a \in \Q^\times$, $a \ne \pm 1$. If $p$ is 
an odd prime number prime to $a$, we have the elementary following result 
which is a particular case of the above (for $\theta=1$ and $U_\theta = 1$): 

\begin{lemma}\label{lemm29}
The Fermat quotient $\Frac{a^{p-1}-1}{p}$ of $a$ is zero modulo $p$ if and only if $a \in \Q_p^{\times p}$.
\end{lemma}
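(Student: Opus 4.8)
The plan is to work entirely inside $\Q_p^\times$ and to reduce both sides of the equivalence to a single condition on the principal-unit part of $a$. Since $a \in \Q^\times$ is prime to $p$, one has $v_p(a) = 0$, so $a \in \Z_p^\times$; consequently $a \in \Q_p^{\times p}$ if and only if $a \in (\Z_p^\times)^p$, because a $p$-th power $b^p$ with $b = p^m w$, $w \in \Z_p^\times$, has valuation $pm$, which forces $m = 0$ here. Thus I may replace the statement by: $q_p(a) \equiv 0 \pmod p$ if and only if $a \in (\Z_p^\times)^p$.

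First I would reformulate the left-hand side. For $K = \Q$ one has $n_p = 1$ and $\alpha_p(a) = \frac{1}{p}(a^{p-1}-1) = q_p(a)$, so by definition $a^{p-1} = 1 + p\,q_p(a)$ with $q_p(a) \in \Z_p$. Hence $q_p(a) \equiv 0 \pmod p$ is equivalent to $a^{p-1} \equiv 1 \pmod{p^2}$, i.e. to $a^{p-1} \in 1 + p^2\Z_p$.

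Next I would use the decomposition $\Z_p^\times \simeq \mu_{p-1} \times (1+p\Z_p)$ (valid for $p>2$), writing $a = \omega\, u$ with $\omega \in \mu_{p-1}$ and $u \in 1+p\Z_p$. Since $\omega^{p-1} = 1$, one has $a^{p-1} = u^{p-1}$. The elementary point is that on $1 + p\Z_p$ both power maps can be read off the binomial expansion: for $u = 1 + pt$, $t \in \Z_p$, one gets $u^{p-1} \equiv 1 + (p-1)p\,t \pmod{p^2}$, and moreover $(1+p\Z_p)^p = 1 + p^2\Z_p$ (standard for $p>2$, e.g. via the isomorphism ${\rm log}_p : 1+p\Z_p \to p\Z_p$, under which the $p$-th power map becomes multiplication by $p$). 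Using that $p-1$ is a unit of $\Z_p$ while $p$ is not, this yields
\[
u^{p-1} \in 1 + p^2\Z_p \iff (p-1)t \equiv 0 \pmod p \iff t \equiv 0 \pmod p \iff u \in 1 + p^2\Z_p = (1 + p\Z_p)^p .
\]
Combining with the fact that the $p$-th power map is a bijection of $\mu_{p-1}$ (as $\gcd(p,p-1)=1$), one obtains $(\Z_p^\times)^p = \mu_{p-1}\times(1+p^2\Z_p)$, so $a \in (\Z_p^\times)^p$ if and only if $u \in 1+p^2\Z_p$. Both conditions are therefore equivalent to the single condition $u \in 1 + p^2\Z_p$, which closes the equivalence.

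There is essentially no hard obstacle; the only point requiring care is the interplay between raising to the $(p-1)$-st power (which preserves the filtration $1 + p^k\Z_p$, being multiplication by a unit after ${\rm log}_p$) and raising to the $p$-th power (which shifts the filtration by one level) -- this is exactly what makes the vanishing of $q_p(a)$ modulo $p$ detect genuine $p$-th powers. I note that the implication $q_p(a)\equiv 0 \Rightarrow a \in \Q_p^{\times p}$ is also a direct instance of Corollary \ref{coro25} (taking $G = 1$, $\theta = 1$, $U_\theta = 1$, the single place $v \div p$ giving $K_v = \Q_p$), so one could alternatively deduce that direction from the general machinery and establish only the converse by the computation above.
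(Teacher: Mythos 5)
Your proof is correct. It is worth noting, though, that it takes a different route from the paper: the paper gives no standalone argument for Lemma \ref{lemm29}, but presents it as the degenerate case $G=1$, $\theta=1$, $U_\theta=1$ of its general machinery, namely the equivalence of Theorem \ref{theo24} (nullity of $\Delta_p^\theta(\eta)$ modulo $p$ iff ${\mathcal L}^\theta\ne\{0\}$) combined with Corollary \ref{coro25}, whose proof specializes to: $q_p(a)\equiv 0\pmod p$ gives $a^{p-1}=1+p^2\beta=(1+p\gamma)^p$, whence $a=a^p\,(a^{p-1})^{-1}\in\Q_p^{\times p}$, the converse coming from the exact sequence of \S\,\ref{interlocale}. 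You instead argue entirely inside $\Z_p^\times$ via the decomposition $\Z_p^\times\simeq\mu_{p-1}\times(1+p\Z_p)$, reducing both sides of the equivalence to the single condition $u\in 1+p^2\Z_p$ on the principal-unit component $u$ of $a$. The two arguments share the same arithmetic kernel (the filtration facts that raising to the power $p-1$ preserves the level $1+p^k\Z_p$ while raising to the power $p$ shifts it, i.e.\ $(1+p\Z_p)^p=1+p^2\Z_p$ for $p>2$), but yours is self-contained, treats both implications symmetrically, and makes the structural reason transparent, whereas the paper's presentation buys coherence with its narrative: the Fermat-quotient criterion is exhibited as the simplest instance of the local $\theta$-regulator formalism that the whole section is generalizing. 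Your closing observation that the forward implication is an instance of Corollary \ref{coro25} correctly identifies this connection.
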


But we know, from a result of Silverman \cite{Si} when $a \in \N$, $a \geq 2$, 
that under the $ABC$ conjecture the set of primes $p$ 
such that $a^{p-1} \not\equiv 1 \pmod {p^2}$ is infinite.\,\footnote{\,Silverman 
proves that for all integer $a \geq 2$, the set of these prime numbers 
$p\leq x$ is of cardinal $\geq c \,{\rm log}(x)$. 
This result has been extended by Graves and Murty in \cite{GM} to the $p \equiv 1 \pmod k$, for all
 fixed $k\geq 2$, in which case, the set of these $p \leq x$ is of cardinal 
 $\geq c \, \frac{ {\rm log} (x) }{ {\rm log} ( {\rm log} (x))}$, still under the $ABC$ conjecture. }

The statistical study shows that this result is a very weak form of the reality. 
In other words, we have the following very reasonable conjectural property:

\begin{conjecture}\label{conj291}
 Let $a \in \Q^\times$; if $a \in \Q_p^{\times p}$ for all prime $p$ except a finite number then $a = \pm 1$. 
\end{conjecture}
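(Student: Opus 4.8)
The plan is to convert the global hypothesis into a purely local Fermat-quotient condition and then contradict it with a non-Wieferich theorem. First I would apply Lemma \ref{lemm29}: for every odd prime $p$ not dividing the numerator or denominator of $a$, one has $a \in \Q_p^{\times p}$ if and only if the Fermat quotient $q_p(a) \equiv 0 \pmod p$, i.e.\ $a^{p-1} \equiv 1 \pmod{p^2}$. Since the finitely many primes dividing $a$ are harmless and fall into the exceptional set anyway, the hypothesis ``$a \in \Q_p^{\times p}$ for all but finitely many $p$'' becomes the assertion that every sufficiently large prime $p$ is a \emph{Wieferich prime for the base} $a$. The conclusion $a = \pm 1$ is then the contrapositive of the statement that a base $a \neq \pm 1$ admits infinitely many non-Wieferich primes, and it is this infinitude that I would aim to establish.

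Next I would reduce to a base on which such a statement is available. Writing $a = b/c$ with $\gcd(b,c)=1$ and $a \neq \pm 1$, the Wieferich condition $a^{p-1} \equiv 1 \pmod{p^2}$ reads $b^{p-1} \equiv c^{p-1} \pmod{p^2}$, i.e.\ $q_p(b) \equiv q_p(c) \pmod p$. When $c = \pm 1$ this is exactly the Wieferich condition for the integer $\lvert b\rvert \geq 2$, to which Silverman's theorem \cite{Si} applies directly (Graves--Murty \cite{GM} even allows the primes to be taken in a prescribed progression): under the $ABC$ conjecture there are infinitely many $p$ with $b^{p-1} \not\equiv 1 \pmod{p^2}$, contradicting the hypothesis. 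For a genuine denominator $\lvert c\rvert \geq 2$ I would run Silverman's argument in its homogeneous form, applying the $ABC$ inequality to the coprime triples $(c^{n},\, b^{n}-c^{n},\, b^{n})$ for $n = 1, 2, \ldots$ to bound the powerful part of $b^{n}-c^{n}$; then for infinitely many $n$ a primitive prime divisor $p$ of $b^{n}-c^{n}$ occurs to the first power, so $p \parallel b^{n}-c^{n}$, whence $v_p(a^{p-1}-1)=v_p(a^{n}-1)=1$ by the lifting-the-exponent relation (the order of $a$ modulo $p$ being $n \mid p-1$). Such a $p$ is non-Wieferich for $a$, and we again obtain the required infinitude and conclude $a = \pm 1$.

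The main obstacle is that every step producing non-Wieferich primes is, at present, conditional on the $ABC$ conjecture: unconditionally it is not known that a fixed base $a \geq 2$ has infinitely many non-Wieferich primes, so the argument above proves Conjecture \ref{conj291} only under $ABC$, and the rational-denominator case additionally needs the $ABC$ estimate in the homogeneous form just indicated. A sharper route would be to prove directly that the set of Wieferich primes for $a$ is \emph{finite} --- which is precisely what the binomial-law heuristic and the Borel--Cantelli principle of \S\ref{section7} predict; this would yield the conjecture a fortiori, but it lies well beyond current unconditional methods and is exactly the point where the probabilistic model of the paper stands in for a proof.
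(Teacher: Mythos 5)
The statement you were asked to prove is labelled a \emph{conjecture} in the paper, and the paper offers no proof of it: Conjecture \ref{conj291} is motivated in \S\,\ref{sub366} precisely by Silverman's $ABC$-conditional theorem \cite{Si} (an integer base $a\geq 2$ has infinitely many non-Wieferich primes under $ABC$, refined by Graves--Murty \cite{GM}) together with the statistical heuristics of Sections \ref{section6}--\ref{section7}. Your proposal follows exactly this route --- the dictionary of Lemma \ref{lemm29} between ``$a \in \Q_p^{\times p}$'' and ``$q_p(a)\equiv 0 \pmod p$'', then Silverman under $ABC$ --- and you are right to flag that the result is conditional and that the unconditional statement is precisely where the paper substitutes its Borel--Cantelli/binomial-law heuristic for a proof. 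What you add beyond the paper is worthwhile: Silverman's theorem as cited applies only to $a \in \N$, $a \geq 2$, whereas the conjecture concerns arbitrary $a \in \Q^\times$, and your homogeneous adaptation for $a = b/c$ with $\lvert c\rvert \geq 2$ (applying the $ABC$ inequality to the coprime triples $(c^n,\, b^n-c^n,\, b^n)$, extracting for infinitely many $n$ a primitive prime divisor $p$ with $p \parallel b^n - c^n$, then lifting the exponent with ${\rm ord}_p(a) = n \mid p-1$ to get $v_p(a^{p-1}-1)=1$) is a correct way to cover a genuine denominator. In short: as an $ABC$-conditional argument your proof is sound and matches the paper's own justification, with a useful extra step for rational bases; as an unconditional proof it does not exist --- nor does one in the paper, which is exactly why the statement is a conjecture.
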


We may consider this statement as a very particular local--global principle in comparison to those existing in 
class field theory (then purely algebraic as the ``Hasse principle'' for powers, recalled in
Proposition~\ref{prop32}). 

\smallskip
We might call it a {\it Diophantine local--global principle} in the perspective that 
``$a \in \Q_p^{\times p}$ for almost all $p$'' would be equivalent to ``$a \in \Q^{\times p}$ for almost all $p$''.

\subsubsection{Case of a unit of a quadratic field \texorpdfstring{$K=\Q(\sqrt m)$}{Lg}}\label{sub37} 
If $\eta =x+ y\sqrt m$, we have $\eta^{p^{n_p} - 1} = 1 + p\,\alpha_p(\eta)$, $\alpha_p(\eta) = u + v\sqrt m$, 
whence $\Delta^\theta_p (\eta)\equiv 2v\sqrt m \pmod p$ for $\theta \ne 1$. 
Thus $\Delta^\theta_p(\eta)\equiv 0 \!\pmod p$ if and only if $v \equiv 0 \!\pmod p$. 

Suppose $m>0$ and that $\eta$ is a unit $\varepsilon$ of $\Q(\sqrt m)$; 
we have $u \equiv 0 \pmod p$ and 
still $\Delta^\theta_p (\varepsilon)\equiv 2v\sqrt m \pmod p$; the nullity modulo $p$ of 
$\Delta^\theta_p (\varepsilon)$ 
implies $\alpha_p(\eta)\equiv 0 \pmod p$, and $\varepsilon$ is a local $p$th power. 
In a conjectural viewpoint, we are reduced to the previous situation of a rational. 
Thus it would be sufficient to prove (via a suitable form of the $ABC$ conjecture) 
that the relation $\varepsilon^{p^{n_p}-1} \not\equiv 1 \pmod {p^{2}}$ occurs for infinitely 
many $p$, to be able to state the analogue
for $\varepsilon$ of the Conjecture \ref{conj291}, then the fact that if 
$\varepsilon^{p^{n_p}-1} \equiv 1 \pmod {p^2}$
for almost all $p$, then $\varepsilon = \pm 1$.

\subsection{Generalization for the degree \texorpdfstring{$n$}{Lg}}\label{sub380}
We may suppose that the above process is valuable for the general case where 
$\eta \in K^\times$ is arbitrary and
would be ``partial local $p$th power at $p$'' (Definition \ref{defi28}) for almost all $p$.
This supposes first the analysis of the local $p$th power case in the usual sense.

\subsubsection{Conjecture about the local \texorpdfstring{$p$}{Lg}th powers}\label{sub38} The rational case 
(Conjecture \ref{conj291}) showed the reasonableness of the following kind of statements corresponding
to the writing $\eta^{p^{n_p} - 1} - 1 = p\, \alpha_p(\eta)$,
in the case (statistiquely very exceptional) where $\alpha_p(\eta) \equiv 0 \pmod p$,
equivalent to ${\mathcal L} = \F_p[G]$) of probability $\Frac{O(1)}{p^n}$ 
(Remark \ref{rema260} and \S\,\ref{HP}).

\begin{conjecture}\label{conj31} Let $K$ be any number field and let $\eta \in K^\times$. If 
$\eta \in \prd_{v \div p} K_v^{\times p}$ for all prime number $p$ except a finite number, then $\eta$ 
is a root of unity of $K$.
\end{conjecture}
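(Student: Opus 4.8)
The plan is to split the statement into an equivalent \emph{$p$-adic congruence} and a \emph{Diophantine} core, and to reduce the conclusion to Kronecker's theorem. First I would translate the hypothesis into the language of the generalized Fermat quotient. For all but finitely many $p$ the element $\eta$ is a unit at every $v \div p$, so $\eta$ lies in $\mathcal{O}_{K_v}^\times \simeq \mu_{p^{n_p}-1}\times U_v^1$, where $U_v^1$ is $\Z_p$-free of rank $n_p$. Since $p \notdiv p^{n_p}-1$ the $p$-th power map is bijective on the tame part, so $\eta \in K_v^{\times p}$ if and only if its principal-unit component is a $p$-th power, i.e. if and only if $\eta^{p^{n_p}-1}=1+p\,\alpha_p(\eta)\in (U_v^1)^p$; for $p>2$ the $p$-adic logarithm turns this into $\alpha_p(\eta)\equiv 0 \pmod{{\mathfrak p}_v}$. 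Running over all $v \div p$ shows that the hypothesis (``$\eta\in F_{(p)}$'' in the sense of Definition \ref{defi28}) is equivalent to ${\mathcal L}=\F_p[G]$, i.e. $\alpha_p(\eta)\equiv 0 \pmod p$, for almost all $p$ (Remark \ref{rema260}\,(iii), \S\,\ref{HP}).

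Next I would reduce the desired conclusion to a height statement. By Kronecker's theorem a nonzero algebraic number is a root of unity precisely when its Weil height $h(\eta)$ vanishes, i.e. when $\vert\eta\vert_v\leq 1$ at every place. Using Lemma \ref{integer} I may work with $\eta\in Z_K$ for the characters $\theta\neq 1$, the rational scalar being exactly the base case of Conjecture \ref{conj291} and Lemma \ref{lemm29}; then $h(\eta)>0$ means $c_0(\eta)={\rm max}_{\sigma\in G}(\vert\eta^\sigma\vert)>1$. It therefore suffices to prove the contrapositive: if $\eta$ is not a root of unity, then $\eta^{p^{n_p}-1}\not\equiv 1 \pmod{p^2}$ for infinitely many $p$, contradicting the hypothesis.

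The heart is thus a \emph{generalized non-Wieferich} statement. From $\eta^{p^{n_p}-1}-1=p\,\alpha_p(\eta)$ and $\alpha_p(\eta)\equiv 0 \pmod p$, taking norms shows the bad primes are those with $p^{2n}\div \No_{K/\Q}\big(\eta^{p^{n_p}-1}-1\big)$; but this norm has archimedean size $\asymp M(\eta)^{p^{n_p}-1}$ with Mahler measure $M(\eta)>1$, so its growth is far too fast for an elementary counting bound, which is precisely why the statement is delicate. The route I would take mirrors Silverman's treatment of the rational case \cite{Si}: apply an $ABC$-type inequality (over $K$, or over $\Q$ to the relation $\eta^{p^{n_p}-1}=1+p\,\alpha_p(\eta)$) to bound the powerful part of $\eta^{p^{n_p}-1}-1$ by $O_\varepsilon\big((p^{n_p})^{\varepsilon}\big)$, so that $p^2$ can divide it only on a density-zero, conjecturally finite, set, forcing infinitely many $p$ with valuation exactly $1$.

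The main obstacle is exactly this last step: unconditionally it is the open problem of the infinitude of non-Wieferich primes, established only under the $ABC$ conjecture even for $K=\Q$ (Conjecture \ref{conj291}, \cite{Si}). I therefore do not expect an unconditional argument; the realistic outcome is a proof conditional on an $ABC$ conjecture for number fields, or, alternatively, the probabilistic route of this paper: under Heuristic \ref{heur2} the Borel--Cantelli principle (Theorem \ref{heur3}, with the $O({\rm log}(p))$ exceptional solutions $\eta^j$ of Lemma \ref{lem3}) renders the set of bad $p$ finite, which yields the conjecture heuristically. The converse implication --- a root of unity is a local $p$-th power for almost all $p$, since the tame $p$-th power map is bijective --- is elementary, so the characterization is consistent.
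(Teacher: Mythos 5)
The statement you were asked to prove is a \emph{conjecture} in the paper: the paper offers no proof, only the observation that the hypothesis ``$\eta \in \prd_{v \div p} K_v^{\times p}$'' is equivalent to $\alpha_p(\eta) \equiv 0 \pmod p$ (i.e.\ ${\mathcal L} = \F_p[G]$, of heuristic probability $\Frac{O(1)}{p^n}$, Remark \ref{rema260}\,(iii) and \S\,\ref{sub38}), together with the remark that the statement ``could result from a generalization of the theorem of Silverman'' \cite{Si} under an $ABC$ conjecture for number fields, or from the Borel--Cantelli heuristics of Theorem \ref{heur3}. Your proposal reproduces exactly this analysis --- the tame/principal-unit local translation, the reduction to a generalized non-Wieferich statement, and the honest acknowledgment that this core step is open and only reachable conditionally --- so it takes essentially the same (necessarily conditional) approach as the paper, and the gap you flag is precisely why the statement remains a conjecture.
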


This could result from a generalization of the theorem of Silverman, using here the $ABC$ conjecture 
for the number fields (see for instance the paper of Waldschmidt \cite{W} giving an important list of 
applications and consequences). But the conjecture can be formulated independently.

\smallskip
This statement is to be compared with the very classical ``Hasse principle'' for powers, much stronger, 
and which is the following (cf. e.g. \cite[II.6.3.3]{Grcft}):

\begin{proposition}\label{prop32} Let $\Pl_K$ (resp. $\Pl_p$) be the set of places 
(resp. of $p$-places) of~$K$.

\smallskip
Let $\eta \in K^{\times}$ and let $p$ be a prime number; let $\Sigma$ be a 
finite set of places of $K$. 

\smallskip
(i) If $\eta$ is a local $p$th power for all place $v \in \Pl_K \setminus \Sigma$, then $\eta \in K^{\times p}$.

\smallskip
(ii) There exist infinitely many (non effective) sets $T$ of places of $K$ such that if $\eta$ 
is a local $p$th power for all place 
$v \in T$, then $\eta \in K^{\times p}$.\,\footnote{\,The classical statements always 
suppose that $\Sigma$ is finite (to eliminate some pathological places) in order to 
use the density theorem (Chebotarev) which is expressed by means
of particular progressions having canonical densities; but to be certain that some 
of these progressions (finite in number), necessary for the proof, meet the complementary 
of $\Sigma$, this one must be ``almost all'', because as soon as an (unknown) infinite 
family would be missing, it could be that ``by accident'' 
it contains the Frobenius that we need. We see the difference that may occur 
between a general algebraic reasoning and a reasoning on significantly less strong 
assumptions involving for instance sets $\Sigma$ of zero density (\S\,\ref{prem}).}
\end{proposition}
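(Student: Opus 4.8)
The plan is to reduce both parts to a single density statement obtained from the Chebotarev density theorem applied to the splitting field of $X^p-\eta$. First I would dispose of the trivial case $\eta\in K^{\times p}$, for which both implications hold vacuously, and assume throughout that $\eta\notin K^{\times p}$; the whole point is then to manufacture a \emph{positive-density} set of places $v$ at which $\eta$ fails to be a local $p$th power. Part (i) follows at once from this, since a set of positive density cannot be contained in the finite exceptional set $\Sigma$.

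To set this up I would check that $\eta\notin K^{\times p}$ forces $[K(\eta^{1/p}):K]=p$ (the polynomial $X^p-\eta$ is irreducible over $K$, by Vahlen--Capelli for prime exponent), and then pass to the Galois closure $\widetilde L:=K(\zeta_p,\eta^{1/p})$. Writing $e:=[K(\zeta_p):K]$, which divides $p-1$, a norm argument using $\gcd(e,p)=1$ shows $\eta\notin K(\zeta_p)^{\times p}$, so $N:={\rm Gal}(\widetilde L/K(\zeta_p))$ is cyclic of order $p$, generated by the Kummer element $\sigma$ with $\sigma(\eta^{1/p})=\zeta_p\,\eta^{1/p}$ and $\sigma(\zeta_p)=\zeta_p$. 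The decisive observation is that $\sigma$ permutes the $p$ roots $\zeta_p^{\,i}\eta^{1/p}$ cyclically, hence acts \emph{without fixed point} on the root set on which $G:={\rm Gal}(\widetilde L/K)$ acts transitively; the same holds for every element of $N\setminus\{1\}$.

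Next I would translate fixed points into local $p$th powers: for $v$ unramified in $\widetilde L$ and $w\mid v$, one has $\eta\in K_v^{\times p}$ iff $X^p-\eta$ has a root in $K_v$ iff the Frobenius at $w$ fixes one of the roots. Since the conjugacy class of $\sigma$ lies in the normal subgroup $N$ and consists of fixed-point-free $p$-cycles, Chebotarev produces a positive-density set of places $v$ whose Frobenius is conjugate to $\sigma$, and at every such $v$ the element $\eta$ is not a local $p$th power. Infinitely many of these $v$ lie outside the finite set $\Sigma$, contradicting the hypothesis of (i); hence $\eta\in K^{\times p}$.

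For (ii) the same analysis quantifies the bad set $S(\eta)$ of places where $\eta$ is not a local $p$th power. Every $v$ with $\mathrm{Frob}_v\in N\setminus\{1\}$ belongs to $S(\eta)$, so by Chebotarev $S(\eta)$ has density at least $\dfrac{p-1}{pe}\ge \dfrac1p$ (using $e\le p-1$), uniformly in $\eta\notin K^{\times p}$. Consequently any set $T$ of places whose complement has upper density $<\tfrac1p$ must meet $S(\eta)$, so the hypothesis ``$\eta$ is a local $p$th power at every $v\in T$'' already forces $\eta\in K^{\times p}$; there are infinitely many such $T$, obtained by deleting from $\Pl_K$ any infinite family of density $<\tfrac1p$, and they are non-effective precisely because $S(\eta)$ cannot be located without the Frobenius data. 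The main obstacle I anticipate is exactly this passage from (i) to (ii): Chebotarev only controls \emph{densities}, so the naive ``$\Sigma$ finite'' must be replaced by a genuine uniform lower bound on the density of $S(\eta)$, together with the verification that deleting a sufficiently thin infinite set of places can never accidentally remove all of $S(\eta)$ --- which is the very phenomenon the footnote warns against.
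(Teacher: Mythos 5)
Your proof is correct, but there is nothing in the paper to compare it against: Proposition \ref{prop32} is not proved there, it is quoted as a classical result with a pointer to \cite[II.6.3.3]{Grcft}, and only the footnote sketches, informally, the density mechanism that any proof must rest on. Your argument supplies that mechanism in a self-contained way: irreducibility of $X^p-\eta$ for prime exponent (so no Grunwald--Wang pathology can occur), the norm argument giving $\eta\notin K(\zeta_p)^{\times p}$ whenever $\eta\notin K^{\times p}$, the fixed-point-free action of every nontrivial element of ${\rm Gal}(\widetilde L/K(\zeta_p))$ on the roots of $X^p-\eta$, and the Dedekind--Chebotarev dictionary converting this into a set of places of density at least $(p-1)/(pe)\ge 1/p$ at which $\eta$ is not a local $p$th power. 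Part (i) follows since a finite $\Sigma$ cannot contain an infinite set, and your version of (ii) is in fact a sharpening of the stated one: \emph{any} $T$ whose complement has upper density $<1/p$ works, the bound being uniform in $\eta$ because $e=[K(\zeta_p):K]$ depends only on $K$ and $p$. This makes rigorous and quantitative exactly the phenomenon the paper's footnote alludes to, namely that an excluded set of places must be thin enough not to swallow the positive-density set of places with non-split Frobenius. Two minor points deserve care in the write-up: use one fixed notion of density (natural, or Dirichlet) in both the Chebotarev input and the comparison ``lower density of the bad set $S(\eta)$ versus upper density of $\Pl_K\setminus T$'', and discard the finitely many places dividing $p\,\eta$ or ramified in $\widetilde L$ before invoking the correspondence between roots of $X^p-\eta$ in $K_v$ and fixed points of Frobenius; neither affects the density counts.
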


The difference, regarding the Hasse principle, operates in two times: starting from $p$ and the set $\Pl_p$, 
we begin to say, in the Conjecture \ref{conj31}, that $\eta$ is a local $p$th power for all $v \in \Pl_p$
 (i.e., we take the infinite set $\Sigma = \Pl_K \setminus\! \Pl_p$; or else we can say that we try to take
$T = \Pl_p$), but after we suppose that this local property (a kind of ``weaker Hasse principle'') 
is true for almost all $p$, in which case $\eta$ would be conjecturally in $K^{\times p}$ for almost all 
$p$ (Diophantine local-global principle), hence a root of unity.

\smallskip
The ``ultimate'' conjecture giving a link with the theories of the $\Delta^\theta_p(\eta)$ is Conjecture 
\ref{conj34} of \S\,\ref{sub39}. Before, let us examine the general case of units of number fields
which confirms the previous analysis..

\subsubsection{Particular case of the group of units -- Spiegelungssatz}\label{grunit}
We have the following specific statement (cf. e.g. \cite[II.6.3.8]{Grcft}).

\begin{proposition}\label{prop33} Let $\eta$ be a unit of $K$ and let $p$ be a prime number; let $S_p$ be a 
finite set of places of $K$ such that the $p$-class group of $K' := K(\mu_p)$ be generated by the 
$p$-classes of the prime ideals ${\mathfrak P_{v'}}$ of $K'$ for the places $v'$ of $K'$ above $S_p$.

\smallskip
If $\eta \in K_v^{\times p}$ for all place $v\in S_p \cup \Pl_p$, then $\eta \in K^{\times p}$.
\end{proposition}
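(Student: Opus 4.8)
The plan is to pass to the field $K' := K(\mu_p)$, where Kummer theory is available, and to recognize the Kummer extension $L := K'(\eta^{1/p})/K'$ as an everywhere-unramified abelian extension of exponent $p$ which is completely split at the primes above $S_p$; the generation hypothesis on the $p$-class group of $K'$ then forces $L = K'$.

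First I would reduce from $K$ to $K'$. Since $[K':K]$ divides $p-1$ and is therefore prime to $p$, the restriction $K^\times/K^{\times p}\to K'^\times/K'^{\times p}$ is injective: writing $1 = a\,[K':K] + b\,p$ with $a,b\in\Z$, a relation $\eta = \xi^p$ in $K'$ gives $\eta^{[K':K]} = \No_{K'/K}(\eta) = \No_{K'/K}(\xi)^p$, hence $\eta = \big(\No_{K'/K}(\xi)^{a}\,\eta^{b}\big)^{p}\in K^{\times p}$. So it suffices to prove $\eta \in K'^{\times p}$. Because $\eta$ is a unit of $K$ it is a unit of $K'$, so the ideal $(\eta)$ is trivial and the Kummer extension $L$, of degree $1$ or $p$ over $K'$, can ramify only at the places of $K'$ dividing $p$.

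Next I would feed in the local hypotheses. For a place $v'\mid v$ of $K'$ one has $K_v\subseteq K'_{v'}$, so $\eta\in K_v^{\times p}$ makes $\eta$ a local $p$th power at every $v'$ above $S_p\cup\Pl_p$. At the places above $\Pl_p$ this means $L/K'$ splits completely, in particular is unramified there; combined with the unit argument of the previous step, $L/K'$ is then unramified at all finite places (for $p$ odd the archimedean places are harmless). Consequently $L$ lies in the maximal unramified abelian $p$-extension of $K'$, and class field theory provides a surjection $\Cl(K')_p \twoheadrightarrow {\rm Gal}(L/K')$ carrying the class of a prime ideal to its Frobenius. The splitting of $L/K'$ at the places above $S_p$ says exactly that each generator $[\mathfrak{P}_{v'}]$ has trivial Frobenius, i.e.\ lies in the kernel of this map; since by assumption these classes generate $\Cl(K')_p$, the surjection is trivial, so $L=K'$ and $\eta\in K'^{\times p}$, whence $\eta\in K^{\times p}$ by the first step.

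The crux I expect is the unramifiedness of $L/K'$ at the primes above $p$: it is precisely the hypothesis that $\eta$ be a local $p$th power at every $v\in\Pl_p$ that kills the potential ramification there and lets one argue with the honest (unramified) class group rather than a ray class group with conductor supported at $p$, the unit hypothesis playing the parallel role at all other finite places. A careful treatment of $p=2$ and of the archimedean places (narrow versus ordinary class group) would be required in full generality, but is avoided here under the standing assumption that $p$ is large and odd.
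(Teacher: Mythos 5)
Your proof is correct, and since the paper states this proposition without proof (it simply cites \cite[II.6.3.8]{Grcft}), there is no in-paper argument to diverge from: the Kummer-theoretic route you take --- $L=K'(\sqrt[p]{\eta})/K'$ is unramified outside $p$ because $\eta$ is a unit, unramified (indeed split) above $p$ by the hypothesis at $\Pl_p$, completely split above $S_p$, hence trivial by the Artin map together with the generation hypothesis on the $p$-class group, followed by the norm descent from $K'$ to $K$ --- is exactly the mechanism the paper itself invokes just below, in \S\,\ref{grunit}, for the extension $N' := K'(\sqrt[p]{\eta^{U_\theta}})$. Your closing caveat about $p=2$ and the archimedean places (ordinary versus narrow class group) is the right point to flag, and it is harmless here given the paper's standing assumption that $p$ is large and odd.
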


The Conjecture \ref{conj31} only concerns the set $\Pl_p$ instead of 
$S_p \cup \Pl_p$ for a well-chosen finite set $S_p$ (not sufficiant to 
have a globale $p$th power), but we assume, in the conjecture, that 
this weaker hypothesis is true for almost all $p$. 
The two systems of assumptions coincide when the $p$-class group of the field $K'$
is trivial ($S_p = \emptyset$), but we can be more precise (cf. \cite[I.6.3.1 and II.1.6.3]{Grcft}).

\smallskip
Let $\eta$ be a Minkowski unit of a totally real field $K$; we can always choose 
$\eta$ non global $\ell$th power for all prime $\ell$.
If there exists a $\theta$-relation ${U_\theta} \not \equiv 0 \pmod {p\Z_{(p)}[G]}$
for which $\eta^{U_\theta} \in \prd_{v \div p} K_v^{\times p}$ 
(i.e. $\Delta_p^\theta(\eta)\equiv 0 \pmod p$, cf. \S\,\ref{interlocale}) 
then the extension $N' := K'(\sqrt[p] {\eta^{U_\theta}})$ of $K'$ 
is unramified and $p$-split which leads, by class field theory, 
to the following information: 
let $\Cl_{K'}^{\Pl'_p}$ be the quotient of the $p$-class group
$\Cl_{K'}$ by the $p$-subgroup of classes of prime ideals ${\mathfrak P}' \div p$ in $K'$ and let
$\theta^* := \omega \theta^{-1}$, where $\omega$ is the $p$-adic Teichm\"uller character defined
from a primitive $p$th root of unity $\zeta_p$ by 
$$\hbox{$\zeta_p^s = \zeta_p^{\omega(s)}\ $ for all $s \in {\rm Gal}(K'/K)$.} $$

Then it is the $\theta^*$-component of $\Cl_{K'}^{\Pl'_p}$ which is non trivial and 
${\rm Gal}(N'/K')$ is isomorphic to a quotient of $\Big( \Cl_{K'}^{\Pl'_p} \Big)^{e_{\theta^*}}$.
This is equivalent to the existence of a $p$-split  $\theta^*$-extension $N'$ of $K'$, of degree 
a power of $p$, contained in $K'(\sqrt[p] {F})/K'$, where $F$ (independent of $p$) is the $G$-module 
generated by $\eta$. Such a situation for infinitely many $p$ seems excessive. 

\smallskip
Apart from the case of units we have another situation:
take for $K=\Q$ the example of $\eta = a\in \Q^\times$, $a \ne \pm 1$. 
Then the Proposition \ref{prop33} is no longer valid
because it only applies if the ideal $\eta\,Z_K$ is the $p$th power of an ideal, but if 
$a^{p-1} \equiv 1 \pmod {p^2}$ the extension $\Q'(\sqrt[p] a)/\Q'$ 
is unramified at $p$ (and $p$-split) but ramified at the places of $\Q'$ 
dividing $a$; if $T$ is the set of prime divisors of $a$, we must replace the 
$p$-Hilbert class field $H'$ of $\Q'$ by its generalization $H'{}^{T'}$, the 
maximal Abelian $p$-extension unramified outside the places of the set $T'$ above $T$.

\smallskip
This $p$-extension $H'{}^{T'}/\Q'$ is finite because $T$ does not contain $p$
 (it is essentially a $p$-ray class field $K'_{\mathfrak m'}$, ${\mathfrak m'}$
built on ${T'}$) and it plays a role analogous to that of $H'$; here we sall have 
$\theta^*=1^* = \omega$ and a similar analysis.

\subsection{Conjectures on the \texorpdfstring{$p$}{Lg}-adic regulators 
\texorpdfstring{${\rm Reg}_p^G(\eta)$}{Lg}}\label{sub39}
The results of \S\,\ref{interlocale} invite to propose the following conjectures 
stronger than the conjectures of \S\S\,\ref{sub35}, \ref{sub380}. 

\begin{conjecture}\label{conj341} Let $K/\Q$ be a Galois extension of degree $n$, 
of Galois group $G$.
Let $\eta \in K^\times$ be such that the multiplicative $\Z[G]$-module $F$ generated 
by $\eta$ is of $\Z$-rank $n$. Then for all $p$ large enough, $\eta$ is not a partial local 
$p$th power at $p$, in other words, we have 
$\Big \{\eta_0 \in F,\ \eta_0 \in \prd_{v \div p} K_v^{\times p} \Big\}=F^p$, 
equivalent to ${\mathcal L}(\eta)=\{0\}$.
\end{conjecture}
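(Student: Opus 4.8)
The conjecture is, by Theorem~\ref{theo24} together with Corollary~\ref{coro25} and Definition~\ref{defi28}, equivalent to the assertion that the $G$-module ${\mathcal L}(\eta)$ vanishes for all $p$ large enough, i.e. that ${\mathcal L}^\theta = \{0\}$ for every irreducible $p$-adic character $\theta$, i.e. that $\Delta_p^\theta(\eta) \not\equiv 0 \pmod p$ for all $\theta$, i.e. that the normalized regulator ${\rm Reg}_p^G(\eta)$ is a $p$-adic unit for all large $p$. This is precisely the finiteness statement of Theorem~\ref{heur3}\,(ii), so the plan is to prove that the set of primes $p$ with $\Delta_p^\theta(\eta) \equiv 0 \pmod p$ for some $\theta$ is finite, by applying the Borel--Cantelli principle (\S\,\ref{sub11}) to the events $E_p$: ``$\Delta_p^\theta(\eta) \equiv 0 \pmod p$ for some $\theta$''. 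Since ${\rm rg}(F) = n$ by hypothesis, Lemma~\ref{nul} and Remark~\ref{rema51} guarantee that no \emph{trivial} nullity occurs, so every surviving contribution is of genuinely probabilistic nature.

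First I would dispose of all non-minimal components. For a fixed $\theta$ and a fixed $\delta \geq 1$, the heuristic of \S\,\ref{HP} assigns ${\rm Prob}\big(\Delta_p^\theta(\eta) \equiv 0 \ \& \ {\mathcal L}^\theta \simeq \delta\,V_\theta\big) \leq \Frac{O(1)}{p^{f\delta^2}}$, where $f$ is the residue degree of $\theta$. Whenever $f\delta^2 \geq 2$ the associated series $\sm_p \Frac{O(1)}{p^{f\delta^2}}$ converges; summing over the finitely many pairs $(\theta,\delta)$ of this type and invoking Borel--Cantelli shows that only finitely many $p$ fall into these cases. After this reduction only the case of \emph{minimal $p$-divisibility} (Definition~\ref{defidec}), namely $f = \delta = 1$ for a single $\theta$, can survive, and it is here that the real difficulty lies: the associated density is $\Frac{O(1)}{p}$, and since $\sm_p \Frac{1}{p}$ diverges, the naive Borel--Cantelli estimate is inconclusive.

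The heart of the argument is therefore the Archimedean refinement of this borderline case. The plan is to exploit Lemma~\ref{lem3} and Theorem~\ref{thm1}: if $\Delta_p^\theta(\eta) \equiv 0 \pmod p$, then the first $h = h_p(\eta) = \Frac{{\rm log}(p-1) - {\rm log}(2\,\Gamma(K))}{{\rm log}(c_0(\eta))} = O({\rm log}(p))$ powers $\eta^j$ all lie in the residue set $I_p$ and satisfy $\Delta_p^\theta(\eta^j) \equiv 0 \pmod p$, forcing $m_p(0) \geq h$. Under the binomial law with parameters $\big(p-1, \Frac{1}{p}\big)$ postulated in Heuristic~\ref{heur2}, Lemma~\ref{encadre} then yields ${\rm Prob}\big(\Delta_p^\theta(\eta) \equiv 0 \pmod p\big) < C_\infty(\eta)\,\Frac{1}{p^h}\binom{p-1}{h}$, and Lemma~\ref{theoremconv} shows that $\sm_{p>2} \Frac{1}{p^h}\binom{p-1}{h}$ converges. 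Borel--Cantelli now applies even to the minimal case, and summing over the finitely many $\theta$ of $G$ gives the finiteness of bad primes, hence ${\mathcal L}(\eta) = \{0\}$ for all $p$ large enough.

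The main obstacle is that this final step rests on two unproven pillars. The first is the very existence of the binomial probability law of Heuristic~\ref{heur2}, which governs the passage from a density of size $\Frac{O(1)}{p}$ to a probability of size $\Frac{O(1)}{p^h}\binom{p-1}{h}$; this is so far supported only by the numerical statistics of \S\S\,\ref{sub11}--\ref{loi}. The second is the independence across primes needed to legitimately invoke Borel--Cantelli. Turning the plan into a theorem would require either proving that the values $\Delta_p^\theta(z)$, for $z$ ranging over $I_p$, are distributed according to the conjectured law — an arithmetic equidistribution statement in the spirit of \cite{H-B} — or supplying a Diophantine input of $ABC$ type, exactly as Silverman's theorem does in the rational case $K = \Q$ (\S\,\ref{sub366}, Conjecture~\ref{conj291}). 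Absent such input, the statement must remain conjectural.
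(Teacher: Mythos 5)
Your proposal reconstructs essentially the paper's own justification: the statement is a conjecture there as well, supported (not proved) by exactly the chain you describe --- the equivalence with ${\mathcal L}(\eta)=\{0\}$ via Theorem \ref{theo24}, Corollary \ref{coro25} and Definition \ref{defi28}, the elimination of the cases $f\delta^2 \geq 2$ by Borel--Cantelli, and the treatment of the minimal case $f=\delta=1$ (Definition \ref{defidec}) through the exceptional-solution mechanism of Lemma \ref{lem3} and Theorem \ref{thm1} combined with the binomial law of Heuristic \ref{heur2}, i.e., Theorem \ref{heur3}. You also correctly identify the two unproven pillars (the binomial law and the independence required by Borel--Cantelli), which is precisely why the paper states this as a conjecture rather than a theorem.
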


The following statement is in fact equivalent to the previous one.
Recall that for all irreducible $p$-adic character $\theta$ of $G$, we have 
${\rm Reg}_p^\theta(\eta) \equiv \Delta_p^\theta(\eta) \!\!\pmod p$ and that 
$${\rm Reg}_p^G(\eta) := p^{-n}\, \hbox{det} \big ({\rm log}_p(\eta^{\tau\sigma})\big )_{\sigma, \tau \in G}$$

(the normalized $p$-adic regulator of $\eta$, cf. Definitions \ref{defimodif} (i)) is factorized into
$${\rm Reg}_p^G(\eta) = \prd_\theta {\rm Reg}_p^\theta(\eta)^{\varphi(1)} \ \hbox{(Remark \ref{rema110}).}$$

\begin{conjecture}\label{conj34} Let $K/\Q$ be a Galois extension of degree $n$, of Galois group $G$.
Let $\eta \in K^\times$ be such that the multiplicative $\Z[G]$-module generated by 
$\eta$ is of $\Z$-rank $n$, and let ${\rm Reg}_p^G(\eta)$ the normalized $p$-adic regulator of $\eta$.

Then for all $p$ large enough, ${\rm Reg}_p^G(\eta)$ is a $p$-adic unit.
\end{conjecture}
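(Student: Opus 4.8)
The plan is to reduce the statement to a finiteness assertion about local $\theta$-regulators and then to deploy the probabilistic machinery assembled in Sections \ref{section4}--\ref{section7}. First I would use the factorization ${\rm Reg}_p^G(\eta) = \prd_\theta {\rm Reg}_p^\theta(\eta)^{\varphi(1)}$ together with the congruences ${\rm Reg}_p^\theta(\eta) \equiv \Delta_p^\theta(\eta) \pmod p$ (Remark \ref{rema110}). Since each ${\rm Reg}_p^\theta(\eta)$ is either a $p$-adic unit or divisible by $p^f$, the product ${\rm Reg}_p^G(\eta)$ is a $p$-adic unit if and only if $\Delta_p^\theta(\eta) \not\equiv 0 \pmod p$ for \emph{every} irreducible $p$-adic character $\theta$ of $G$. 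By Theorem \ref{theo24} this is equivalent to ${\mathcal L}^\theta = \{0\}$ for all $\theta$, i.e.\ to ${\mathcal L}(\eta)=\{0\}$; thus the target coincides with Conjecture \ref{conj341}, and everything comes down to showing that, for $p$ large enough, there is no nontrivial $\F_p$-linear relation among the conjugates of $\alpha_p(\eta)$.

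Next I would set up the Borel--Cantelli bookkeeping. For each $\theta$ let $E_p^\theta$ be the event ``$\Delta_p^\theta(\eta)\equiv 0 \pmod p$''. By the Main Heuristic of \S\,\ref{HP}, whenever ${\mathcal L}^\theta \simeq \delta\,V_\theta$ the corresponding probability is at most $\Frac{O(1)}{p^{f\delta^2}}$. Summing over the finitely many $\theta$ and over $p$, the contributions with $f\delta^2 \geq 2$ form a convergent series, so by Borel--Cantelli they occur for only finitely many $p$; the same applies (with total probability $\Frac{O(1)}{p^2}$) to simultaneous nullity of two regulators (\S\,\ref{sub12}) and to the extra $p$-divisibilities $e\geq 2$ (\S\,\ref{subex}). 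What survives is exactly the case of \emph{minimal $p$-divisibility} of Definition \ref{defidec}: a single $\theta$ with $f=\delta=e=1$, carrying the naive probability $\Frac{O(1)}{p}$, for which $\sm_p \Frac{1}{p}$ diverges.

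The crux is to remove this obstruction by passing from density to genuine probability for $\eta$ fixed and $p\to\infty$. By Lemma \ref{integer} I may take $\eta \in Z_K$ (the $\theta=1$ case being the classical Fermat quotient). Working inside the Archimedean residue set $I_p=I_p(\gamma)$ of Definition \ref{defresidus}, Lemma \ref{lem3} and Theorem \ref{thm1} show that if $\Delta_p^\theta(\eta)\equiv 0 \pmod p$ then the first $h=h_p(\eta)=O({\rm log}(p))$ powers $\eta^j$ lie in $I_p$ and satisfy $\Delta_p^\theta(\eta^j)\equiv 0 \pmod p$ (the exceptional solutions). Assuming the binomial-law Heuristic \ref{heur2}, the event $m_p(0)\geq h$ has probability at most $\Frac{1}{p^{h}}\binom{p-1}{h}$ (Lemma \ref{encadre}\,(i)), whence
$$
{\rm Prob}\big(\Delta_p^\theta(\eta) \equiv 0 \!\pmod p\big) < C_\infty(\eta)\cdot \Frac{1}{p^{h}}\,\binom{p-1}{h} \leq \Frac{C_\infty(\eta)}{p^{\, {\rm log}_2(p)/{\rm log}(c_0(\eta)) - O(1)}},
$$
which is Theorem \ref{heur3}\,(i). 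Since $\sm_{p>2} \Frac{1}{p^{h}}\binom{p-1}{h}$ converges (Lemma \ref{theoremconv}), Borel--Cantelli now applies even in the minimal case, giving finitely many $p$ with $\Delta_p^\theta(\eta)\equiv 0 \pmod p$ (Theorem \ref{heur3}\,(ii)). Combined with the previous paragraph, only finitely many $p$ can fail, which is the assertion of Conjecture \ref{conj34}.

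The hard part will be justifying the binomial-law Heuristic \ref{heur2} itself, i.e.\ the legitimacy of replacing density by probability. The delicate point is that the exceptional solutions $\eta^j$ are \emph{forced}, deterministic members of $I_p$ rather than independent draws, so one cannot invoke a classical binomial law verbatim. The analysis of \S\,\ref{remfond} argues heuristically that the exceptional and abundant cases are equiprobable and that $u=0$ is no more likely than any other fixed target, but this rests on the numerical evidence for the proportion $1-e^{-1}$ (Conjecture \ref{heur21}) rather than on a theorem. Making it rigorous would require either an unconditional equidistribution result for $\Delta_p^\theta(z)$, $z\in I_p$, sharp enough to control $m_p(0)$, or a Diophantine input of $ABC$ type (cf.\ \S\,\ref{sub38}) bounding the frequency of $\eta^{p^{n_p}-1}\equiv 1 \pmod{p^2}$; both are presently out of reach, which is precisely why the statement remains a conjecture rather than a theorem.
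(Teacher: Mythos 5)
Your proposal follows essentially the same route as the paper: the factorization into $\theta$-components with ${\rm Reg}_p^\theta(\eta) \equiv \Delta_p^\theta(\eta) \pmod p$, the equivalence with ${\mathcal L}(\eta)=\{0\}$ via Theorem \ref{theo24} (hence with Conjecture \ref{conj341}), Borel--Cantelli disposal of the cases $f\delta^2 \geq 2$ and of extra divisibilities, and removal of the minimal $p$-divisibility obstruction through the exceptional-solutions mechanism (Lemma \ref{lem3}, Theorem \ref{thm1}) and the binomial-law Heuristic \ref{heur2} with Lemmas \ref{encadre} and \ref{theoremconv}, exactly as in Theorem \ref{heur3}. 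You also correctly identify that the argument is conditional on Heuristic \ref{heur2}, which is precisely why the paper states this as a conjecture rather than a theorem.
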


\begin{remark}\label{rema35} The Conjecture \ref{conj34} implies 
the Leopoldt--Jaulent conjecture \cite{J} for all prime $p$ except 
a finite number, but it is preferable to admit this last one, very classical, 
and to say that the Conjecture \ref{conj34} is a stronger version (cf. \S\,\ref{global}, (a) and (b)).
By negation, we get that if there exist infinitely many primes $p$ 
such that  ${\rm Reg}_p^G(\eta) \equiv 0 \pmod p$,
then the $\Z$-rank of the $\Z[G]$-module generated by $\eta$ is $<n$.
\end{remark}

\subsection{Conjectures about the Abelian 
\texorpdfstring{$p$}{Lg}-ramification for real fields} \label{sub360}
Let $H^{p\rm r}$ be the maximale Abelian $p$-ramified (i.e., unramified outside $p$) 
$p$-extension of a {\it real Galois number field $K$ satisfying the Leopoldt conjecture for all $p$}. 
Let $\widehat K$ be the cyclotomic $\Z_p$-extension of $K$ and let 
${\mathcal T}_p = {\rm Gal} \big (H^{p\rm r}/\widehat K \big )$. For all $p$ large enough, 
$\vert {\mathcal T}_p\vert $ has the same $p$-adic valuation as the normalized regulator of $K$

\medskip
\centerline{$p^{1-n}\,{\mathcal R}_p(K) \sim 
\prd_{\theta \ne 1} {\rm Reg}_p^\theta(\varepsilon)^{\varphi(1)}$, } 

where $\varepsilon$ is a fixed suitable Minkowski unit of $K$ (\cite{Coa}, \cite[III.2.6.5]{Grcft}).

\smallskip
The Conjecture \ref{conj34} implies the following conjecture that we may 
state for a non necessary Galois (nor real) field because if $K$ is {\it any}
Galois field and ${\mathcal T}_p$ the torsion subgroup of the Galois group
${\rm Gal} \big (H^{p\rm r}/\widetilde K \big )$, where $\widetilde K$ is the
compositum of the $\Z_p$-extensions of $K$, for $K' \subseteq K$, 
${\mathcal T}_p(K')$ is isomorphic to a subgroup of ${\mathcal T}_p(K)$ 
under the Leopoldt conjecture
(\cite[IV, \S 2]{Grcft}), and any component ${\mathcal T}_p^\theta$, 
$\theta$ odd, is trivial for all $p$ large enough since it depends on the 
$\theta$-component of the $p$-class group of $K$ (\cite[III.2.6.1, Fig. 2.2]{Grcft}):

\begin{conjecture}\label{conj345}
The invariant $\prd_p {\mathcal T}_p$ is finite, for all number field satisfying 
the Leopoldt conjecture for all $p$.
\end{conjecture}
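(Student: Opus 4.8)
The plan is to derive Conjecture \ref{conj345} from Conjecture \ref{conj34}, reducing the finiteness of $\prd_p {\mathcal T}_p$ to the assertion that ${\mathcal T}_p$ is trivial for all but finitely many $p$. Each ${\mathcal T}_p$ is a finite $p$-group, so the product over all primes is finite exactly when only finitely many factors are nontrivial; it therefore suffices to prove ${\mathcal T}_p = 1$ for $p \gg 0$. First I would settle the case of a real Galois field $K$ satisfying the Leopoldt conjecture for all $p$, where \S\,\ref{sub360} supplies, for $p$ large enough, the equality of $p$-adic valuations $v_p(\vert {\mathcal T}_p \vert) = v_p \big( p^{1-n}\,{\mathcal R}_p(K) \big)$, together with the factorization $p^{1-n}\,{\mathcal R}_p(K) \sim \prd_{\theta \neq 1} {\rm Reg}_p^\theta(\varepsilon)^{\varphi(1)}$ for a fixed Minkowski unit $\varepsilon$.

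The decisive step is then to apply the conjectural $p$-adic unit property character by character. The Minkowski unit $\varepsilon$ generates a $\Z[G]$-module carried by the nontrivial irreducible characters $\theta \neq 1$ (the trivial character contributing only the Fermat quotient of $\No_{K/\Q}(\varepsilon) = \pm 1$, which is trivially null and excluded from the normalized regulator). On each such $\theta$-component the unit behaves as a full generator, so Conjecture \ref{conj34} (equivalently its local form Conjecture \ref{conj341}), together with the congruence ${\rm Reg}_p^\theta(\varepsilon) \equiv \Delta_p^\theta(\varepsilon) \pmod p$ of Remark \ref{rema110}, yields that ${\rm Reg}_p^\theta(\varepsilon)$ is a $p$-adic unit for all $p$ large enough. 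Hence $\prd_{\theta \neq 1} {\rm Reg}_p^\theta(\varepsilon)^{\varphi(1)}$ is a $p$-adic unit, $v_p(\vert {\mathcal T}_p \vert) = 0$, and ${\mathcal T}_p = 1$ for $p \gg 0$, which disposes of the real Galois case.

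For an arbitrary field satisfying Leopoldt I would follow the reduction sketched before the statement in \S\,\ref{sub360}: passing to the Galois closure and using the embeddings ${\mathcal T}_p(K') \hookrightarrow {\mathcal T}_p(K)$ (valid under Leopoldt), split ${\mathcal T}_p$ into its even and odd $\theta$-components. The odd components are controlled by the $\theta$-part of the $p$-class group and vanish for all $p$ large enough by reflection (Spiegelungssatz, cf. \S\,\ref{grunit} and Proposition \ref{prop33}); the even components reduce to the maximal real subfield, already handled above. Collecting both cases gives ${\mathcal T}_p = 1$ for $p \gg 0$, whence $\prd_p {\mathcal T}_p$ is finite.

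The hard part is twofold. First, the entire deduction is conditional on Conjecture \ref{conj34}, which itself rests on the binomial Heuristic \ref{heur2} and the removal of the minimal $p$-divisibility obstruction of Definition \ref{defidec}; an unconditional argument would have to establish that obstruction-free behavior directly. Second, even granting Conjecture \ref{conj34}, the genuinely delicate points are to make the valuation identity $v_p(\vert {\mathcal T}_p \vert) = v_p \big( p^{1-n}{\mathcal R}_p(K) \big)$ precise and uniform in $p$ (it is known only for $p$ large under Leopoldt, and the chosen Minkowski unit must remain non-degenerate at every such $p$), and to handle the even/odd splitting together with the functorial embeddings in the complex or non-Galois case, where no single regulator formula is available and the class-group input for the odd part is essential.
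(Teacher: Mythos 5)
Your derivation matches the paper's own justification: Conjecture \ref{conj345} is presented there precisely as a consequence of Conjecture \ref{conj34}, via the valuation identity $\vert {\mathcal T}_p \vert \sim p^{1-n}{\mathcal R}_p(K) \sim \prod_{\theta \ne 1} {\rm Reg}_p^\theta(\varepsilon)^{\varphi(1)}$ for a real Galois field with Minkowski unit $\varepsilon$, and, for an arbitrary field satisfying Leopoldt, the embedding ${\mathcal T}_p(K') \hookrightarrow {\mathcal T}_p(K)$ into a Galois overfield together with the triviality for $p$ large of the odd components ${\mathcal T}_p^\theta$ (controlled by the $p$-class group), the even part falling back to the regulator case. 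The caveats you flag (the Minkowski unit having rank $n-1$ rather than $n$, and uniformity in $p$ of the valuation identity) are exactly the steps the paper leaves implicit.
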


\begin{remarks}\label{prat}
(i) Recall that an arbitrary field $K$, such that $ {\mathcal T}_p=1$ under 
the Leopoldt conjecture, is said to be {\it $p$-rational}
and that in this case the arithmetic of $K$ becomes essentially trivial (see a synthesis of the properties in 
\cite[IV.3\,(b)]{Grcft}, \cite{MN}, and the links with the $p$-regularity whose beginnings are in
\cite{GrK2} then \cite{JN}, among an abundant subsequent bibliography on the subject
recalled in \cite{GrBP}). 

\smallskip
For $K$ real and for all $p\geq 2$, the $p$-rationality implies easily the 
Greenberg conjecture (\cite{Gre}, \cite{GrCG}), 
which clarifies the context.

\smallskip
(ii) We shall deduce, from the above, analogous properties on the residue of the $p$-adic z\^eta function 
(\cite[Appendix]{Coa}, and \cite{Se2}). When the $p$-valuation of 
$\zeta_K(2-p)$ is negative, it is equal to $-1$ (\cite[Th\'eor\`eme 6]{Se2}); in the contexte of Conjecture 
\ref{conj345}, we would have $\Frac{\zeta_K(2-p)}{\zeta_\Q(2-p)} \sim \vert {\mathcal T}_p\vert = 1$ 
for all $p$ large enough (\cite{Hat}).

\smallskip
(iii) Let $S$ be the set consisting of the $p$-places of $K$ and infinite places, 
and let $G_S(K)$ be the Galois group of the maximal $S$-ramified (i.e., unramified 
outside $S$) algebraic extension of $K$; then in a cohomological point of view, we 
would have the duality ${\rm H}^2(G_S(K), \Z_p) \simeq {\mathcal T}_p^*=1$, for all 
$p$ large enough.
\end{remarks}

\subsection{More general cohomological justifications.}\label{subBK}
 We give here some comments about results whose mathematical level largely exceeds
 any heuristic approach, but this confrontation has seemed to us very convincing.
We may refer to several papers in \cite{BK} of which \cite{Ko} and \cite{Ng}. 

\smallskip
The main central idea, related to the conjecture of Bloch--Kato, is that there exists, 
in a rather systematic way, global {\it finite} invariants whose $p$-adic specializations, 
of a cohomological nature, are the arithmetical objects 
(more or less classical) of a number field $K$ (as the $p$-class groups, 
the groups ${\mathcal T}_p$ of $p$-ramification, 
some $p$-adic regulators, certain \'etale cohomological groups,\ldots). 
This conjectural point of view is universally admitted, all the more that 
some proofs have been given quite extensively.
Let us recall briefly the main known results:

\smallskip
We start from the notation of \S\,\ref{sub360}. For $m\in \Z$, let $\Z_p(m)$ be the $G_S(K)$-module $\Z_p$ 
provided with the action defined by the character $\chi^m$, where
$\chi : G_S(K) \to \Z_p^\times$ is the character of the action of $G_S(K)$ on $\mu_{p^\infty}$. 

\smallskip
We say that $K$ is $(p, m)$-rational if ${\rm H}^2(G_S(K), \Z_p(m))$ is trivial; the usual $p$-rationality
mentioned \S\,\ref{sub360} corresponds to $m=0$ which seems to be the most delicate case. 
The finiteness of ${\rm H}^2(G_S(K), \Z_p(m))$
is equivalent to a $m$-analogue of the Leopoldt conjecture in terms of ``suitable $p$-adic regulators''.

\smallskip
The results on the finiteness of some global objects whose $p$-adic specializations are the
${\rm H}^2(G_S(K), \Z_p(m))$, for $m$ fixed, are the following ones (from private indications by 
Thong Nguyen Quang Do):

\smallskip
(i) For $m \geq 2$, this is a consequence of the ``Quillen--Lichtenbaum conjecture'' 
now Voevodsky Theorem. 
The finiteness comes from that of the ${\rm K}$-theory groups ${\rm K}_{2m-2}(Z_K)$ 
via a non trivial isomorphism of the following form ($p>2$)

\smallskip
\centerline {${\rm K}_{2m-2}(Z_K) \otimes \Z_p \simeq {\rm H}^2(G_S(K), \Z_p(m))$. }

\medskip
(ii) The case $m=1$ corresponds, under a similar form (due to the fact that the cohomology group is not finite 
because of the Brauer group), to the Gross conjecture, and the case $m<0$ is essentially unknown: 
if $m<0$ is odd, then ${\rm H}^2(G_S(K), \Z_p(m))$ is finite; the case $m<0$ even is unknown.

\smallskip
(iii) The case $m=0$ defines the framwork ``Leopoldt conjecture and torsion $p$-group ${\mathcal T}_p$,
dual of ${\rm H}^2(G_S(K), \Z_p)$'', a framework in which a similar situation is conjectured, in the line 
of the previous ``motivic work'' of Voevodsky. 

\smallskip
Thus our conjectural Diophantine approach is enforced by the deep results recalled above, 
the interest being that the notion of (normalized) $p$-adic regulator of an arbitrary algebraic number 
is more general.

\section{Conclusion}\label{concl} 
We have tried to give a maximum of justifications, in particular by the fact that when the probabilities 
of $p$-divisibility of ${\rm Reg}_p^G(\eta)$ are at most $\Frac{O(1)}{p^2}$, the heuristic principle of 
Borel--Cantelli suggests a finite number of solutions $p$ and even no solution most of the time 
since the sum of the $\Frac{1}{p^2}$ is very small

\smallskip
\centerline{$\sm_{p\geq 2} \, \Frac{1}{p^2} \approx 0.45$, 
$\ \ \  \sm_{p\geq 10^4}\, \Frac{1}{p^2} \approx 9 \times 10^{-6}$.}

\smallskip
It remains the case of minimal $p$-divisibility ${\rm Reg}_p^G(\eta) \sim p^{\varphi(1)}$ (Definition \ref{defidec}) 
which is a possible obstruction if the Heuristic \ref{heur2} is inaccurate; in that case, the ``expected number
of solutions'' $p \leq x$ would be $O(1){\rm log}_2 (x)+O(1)$ and the corresponding arithmetical
 $p$-adic invariants 
(seen in \S\,\ref{subBK}) would have, for all $p$ large enough, a minimal canonical structure of $G$-module
(e.g. ${\rm H}^2(G_S(K), \Z_p) \simeq V_\theta$ for a unique $\theta$ such that $f=\delta=1$).

\smallskip
It would be useful to have an analytical estimation of $M_p$ which precises the notions of 
exceptional and abundant solutions (cf. \S\,\ref{repetitions}; see also \cite{Grcompl}). 

\smallskip
But if there is some consistency of mathematics, then we can believe that such conjectures 
of finiteness are legitimate.

\smallskip
For instance, we can deduce from this study that the Leopoldt--Jaulent conjecture on the non nullity of
the $p$-adic regulators is an extremely weak form of the reality.

\subsection*{Acknowledgments} I would like to thank  J\'an Min\'a\v{c} for his friendly 
support about this article, Thong Nguyen Quang Do for his ``cohomological'' comments, 
and G\'erald Tenenbaum for useful information on probabilistic number theory. Finally I warmly 
thank the anonymous Referee for his careful reading and remarks.

\end{document}